\numberwithin{equation}{section}
\newcommand\Id{{\mathds{1}}}
\newcommand{\bphi}{{\Phi}}
\newcommand{\brho}{{\rho}}
\newcommand{\BB}{\mathcal{B}}
\newcommand{\CC}{\mathcal{C}}
\newcommand{\DD}{\mathcal{D}}
\newcommand{\EE}{\mathcal{E}} 
\newcommand{\FF}{\mathcal{F}} 
\newcommand{\FFF}{\mathbf{F}} 
\newcommand{\HHH}{\mathbf{H}}
\newcommand{\II}{\mathcal{I}}
\newcommand{\JJ}{\mathcal{J}}
\newcommand{\KK}{\mathcal{K}}
\newcommand{\LL}{\mathcal{L}}
\newcommand{\MM}{\mathcal{M}}
\newcommand{\NN}{\mathcal{N}}
\newcommand{\PP}{\mathcal{P}}
\newcommand{\RR}{\mathcal{R}}
\newcommand{\TT}{\mathcal{T}}
\newcommand{\AAc}{\mathcal{A}}
\newcommand{\ii}{\mathbf{i}}
\newcommand{\id}{{\rm id}}
\newcommand{\real}{\mathbb{R}}
\newcommand{\complex}{\mathbb{C}}
\newcommand{\integer}{\mathbb{Z}}
\newcommand{\AAA}{\mathbb{A}}
\newcommand{\MMM}{\mathbb{M}}
\newcommand{\PPP}{\mathbb{P}}
\newcommand{\Cs}{C_{\#}}
\newcommand{\epsilons}{\epsilon_{\#}}
\newcommand{\st}{\;|\;}
\DeclareMathOperator{\Ker}{Ker}
\newcommand{\TTT}{\mathbb{T}}
\newcommand{\UU}{\mathcal{U}}
\newcommand{\VV}{\mathcal{V}}
\newcommand{\ZZ}{\mathcal{Z}}
\renewcommand{\tilde}{\widetilde}
\newcommand{\norm}[2]{\left\| #1 \right\|_{#2}}
\DeclareMathOperator{\Card}{Card}
\DeclareMathOperator{\supp}{supp} 
\newcommand{\transposee}[1]{{\vphantom{#1}}^{\mathit t}{#1}}
\newtheorem{proposition}{Proposition}[section]
\newtheorem{lemma}[proposition]{Lemma}
\newtheorem{theorem}[proposition]{Theorem}
\newtheorem{corollary}[proposition]{Corollary}
\theoremstyle{remark}
\newtheorem{remark}[proposition]{Remark}
\theoremstyle{definition}
\newtheorem{definition}[proposition]{Definition}
\newcommand\cB{{\mathcal B}}
\newcommand\cC{{\mathcal C}}
\newcommand\cD{{\mathcal D}}
\newcommand\cL{{\mathcal L}}
\newcommand\cO{{\mathcal O}}
\newcommand\cP{{\mathcal P}}
\newcommand\cR{{\mathcal R}}
\newcommand\cW{{\mathcal W}}
\newcommand\bA{{\mathbb A}}
\newcommand\bC{{\mathbb C}}
\newcommand\bF{{\mathbb F}}
\newcommand\bG{{\mathbb G}}
\newcommand\bN{{\mathbb N}}
\newcommand\bR{{\mathbb R}}
\newcommand\bT{{\mathbb T}}
\newcommand\bW{{\mathbb W}}
\newcommand\bZ{{\mathbb Z}}
\newcommand\frp{{\mathfrak p}}
\newcommand\ve{\varepsilon}
\newcommand\vf{\varphi}
\newcommand{\blambda}{{\bar \lambda}}
\newcommand{\cu}{\upsilon}
\newcommand\up{\varkappa}
\newcommand\vu{{\tau_-}}
\newcommand\vuo{{\tau_0}}
\newcommand\vus{{\tau_+}}
\newcommand\ho{{\varpi}}
\newcommand\X{X_0}
\newcommand\cWo{{\overline\cW}}
\DeclareMathOperator*{\esssup}{ess-sup}
\begin{document}

\title[Decay of correlations for piecewise hyperbolic flows]{Exponential
decay of correlations for  piecewise cone  hyperbolic contact flows}
\author{Viviane Baladi and Carlangelo Liverani}
\address{D.M.A., UMR 8553, \'{E}cole Normale Sup\'{e}rieure,  75005 Paris, France}
\email{viviane.baladi@ens.fr}

\address{Dipartimento di Matematica
Universit\`a degli Studi di Roma Tor Vergata, Italy}
\email{liverani@mat.uniroma2.it}
\thanks{
This work was started in 2008 during a visit of CL to the DMA of Ecole Normale
Sup\'erieure Paris. It was continued in 2009 during a visit (funded by
Grefi-Mefi) of VB to the University Rome 2, in 2010 during stays of both authors at
the Institut Mittag-Leffler Institute (Djursholm, Sweden), and
a visit of VB in the Mathematics Center of Lund Technical University (Sweden),
and ended in 2011 during
visits of CL to DMA, Paris and to the Fields Institute, Toronto, Canada. Also CL acknowledges the partial support of the European Advanced Grant  Macroscopic Laws and Dynamical Systems (MALADY) (ERC AdG 246953). We are  grateful to S\'ebastien Gou\"ezel for useful
conversations and for  Lemma~\ref{tricksg}. We thank  Emmanuel Giroux,
Patrick Massot,   Mikael de la Salle, and Tomas Persson, for friendly advice on contact geometry, 
contact geometry again, complex interpolation,
and complexity growth, respectively. We are also grateful to the anonymous referees for constructive comments which led to an improved presentation}
\begin{abstract}
We prove exponential decay of correlations for a realistic model of
piecewise hyperbolic flows preserving a contact
form, in dimension three. This is the first time exponential  decay of correlations
is proved for continuous-time dynamics with singularities on a manifold. 
Our proof combines
the second author's version \cite {Li} of Dolgopyat's estimates for contact
flows and the first author's work with Gou\"ezel 
\cite{BG2} on piecewise hyperbolic
discrete-time dynamics.
\end{abstract}
\date{March 15, 2012, revised following the referees' suggestions}
\maketitle
\section{Introduction, definitions, and statement of the main theorem}
\label{intro}

\subsection{Introduction}

Many chaotic continuous-time dynamical systems
posess an ergodic physical (or SRB) measure
\cite{YY}, the simplest case
being when volume is preserved (and ergodic). When such systems are mixing, it is
natural to ask at which speed decay of correlations
takes place, for H\" older observables, say.
Controlling the rate of decay of correlations is notouriously more difficult
for continuous-time  than for discrete-time dynamics:
For mixing smooth Anosov (uniformly hyperbolic) flows,
exponential decay of correlations was obtained only in the late
nineties, in dimension three (or under a  bunching assumption)
in a groundbreaking work of 
Dolgopyat \cite{Do}, while
the analogous result for Anosov diffeomorphisms had been
known for almost twenty years, see e.g.  \cite{YY}. 
Dolgopyat's result implies 
that geodesic flows on surfaces of variable strictly negative curvature
are exponentially mixing, a generalisation of the
result  in  constant
negative curvature (\cite{Moo}, \cite{Ra}) obtained
more than a dozen years before.
Liverani \cite{Li} was then able to discard the bunching assumption when 
the flow preserved a contact form, generalising Dolgopyat's result to
geodesic flows on manifolds of variable strictly negative curvature
in any dimension.

Some natural chaotic systems are only piecewise smooth. The most
prominent example is given by dispersive (Sinai) billiards (see \cite{CM}
and references therein). Exponential decay of correlations was obtained
for a discrete-time version of the billiard 
(and other piecewise hyperbolic maps)
by Young \cite{Yo} (see also the work of Chernov
\cite{Ch0}--\cite{Ch} and earlier work of Liverani \cite{Li0} on piecewise hyperbolic maps).
For the actual billiard flow, only a stretched exponential upper bound
is known, recently proved by Chernov \cite{Ch2}.
(Since then, Melbourne \cite{Me} has proved superpolynomial decay
of correlations ---  a weaker result --- in a more general setting.)

Some results of exponential decay of correlations
for piecewise hyperbolic flows or semi-flows do exist, but
under assumptions which  avoid the main difficulties, 
making them unfit for generalisation to
realistic flows with singularities (such as the Sinai billiard):
Baladi--Vall\'ee  \cite{BV} extended Dolgopyat's results
to some systems with infinite Markov partitions,
but although this idea could later be applied to Teichm\"uller flows 
(\cite{AGY}) and Lorenz-type flows
(\cite{AV}), it does not seem applicable to billiards (in the infinite
Markov partition given by Young's tower \cite{Yo}, the relation
between the metric and the initial euclidean structure is lost, making
it impossible to exploit  uniform non joint integrability).
Stoyanov \cite{Sto} obtained exponential decay for 
{\em open} billiard flows,
where  the discontinuities in fact do not play a role,
and for Axiom A flows with $C^1$ laminations \cite{Sto2}.

We believe that a new approach is needed to attack 
exponential decay of correlations for chaotic flows with
singularities. Most proofs
\footnote{An important exception is given by the ``coupling" methods introduced in this context by L-S.Young in \cite{Yo1} and greatly generalised by D.Dolgopyat in \cite{Do1}  giving rise to the ``coupling of standard pairs" method. See Chernov and others \cite{CM} for a review on how to obtain exponential mixing for the discrete-time Sinai billiard via coupling of standard pairs. Implementing this strategy for flows, let alone the billiard flow, does not currently seem an approachable task.} of exponential decay of correlations
for a map $F$, or a flow $T_t$, boil down to a spectral gap for a transfer
operator (or a one parameter semigroup of operators). Classical
approaches \cite{Yo} first reduce the hyperbolic dynamics $F$ or $T_t$
to an expanding Markov system, and a great amount
of information is lost in this procedure. We think that studying
the original transfer operators
$$
\LL \psi = \frac{\psi \circ F^{-1}}{|\det DF| \circ F^{-1}} \, , \qquad
\LL_t \psi = \frac{\psi \circ T_{-t} }{|\det DT_{t}| \circ T_{-t}}
$$ 
on a suitable space of (anisotropic)
distributions on the manifold will be the
key to obtaining exponential decay of correlations for 
many  systems which have resisted
the traditional techniques.

Appropriate 
anisotropic Banach spaces were first
introduced by Blank, Keller, and Liverani \cite{BKL} 
to give a new proof of exponential decay of correlations
for smooth Anosov diffeomorphisms (as well as other
results). This approach was developed in the
next few years for smooth discrete-time
hyperbolic dynamics by Baladi \cite{Cinfty},
Gou\"ezel-Liverani \cite{GL1}--\cite{GL2}, and  Baladi--Tsujii
\cite{BT1}--\cite{BT2}, and more recently
for some smooth hyperbolic flows
(Butterley-Liverani \cite{BuL}, Tsujii \cite{Tsujii1} \cite{Tsujii2}).
Except for the Sobolev--Triebel spaces used in \cite{Cinfty}
(where a strong assumption of regularity of the dynamical
foliation was required), it
turns out that the Banach spaces appropriate for smooth
hyperbolic dynamics are not suitable for systems with discontinuities, because
multiplication by the characteristic function of a domain, however
nice, is not a bounded operator for the corresponding norms.
For this reason, we unfortunately cannot exploit directly
Tsujii's \cite{Tsujii2} remarkable  work on smooth contact hyperbolic
flows, which would give much more than exponential decay.
Very recently, new anisotropic spaces which satisfy this
bounded multiplier property were introduced by
Demers--Liverani \cite{DL}  and
Baladi--Gou\" ezel \cite{BG1}--\cite{BG2} to obtain
in particular exponential decay of correlations for various
piecewise hyperbolic maps.
The approach of \cite{BG1}--\cite{BG2} consists in adapting the
Sobolev--Triebel space results of \cite{Cinfty} to the piecewise hyperbolic
case, exploiting a key work of Strichartz \cite{Str}, and using families
of (noninvariant) foliations to replace the actual stable foliation, which
is only measurable in general for piecewise smooth systems.

In the present paper, building on the analysis from
\cite{BG2},
we introduce anisotropic spaces adapted to 
piecewise hyperbolic {\em continuous-time}
systems. Using these spaces, we then adapt Liverani's \cite{Li} version
of the Dolgopyat estimate
for Anosov contact flows  to obtain the first result of exponential
decay of correlations for hyperbolic systems with (true)
singularities. Our result applies to various natural examples
(Subsection ~\ref{ex2c}), and we explain in Remark~\ref{rkbil} below
how close we are to solving the actual Sinai billiard
flow problem.

After this paper was completed, we learned that
Demers and Zhang \cite{DZ} obtained a
spectral proof of exponential decay of correlations for the
discrete time Sinai billiard.

\tableofcontents

\subsection{Definitions and the main theorem}

In this subsection, we  state our main result and
outline the structure
of our argument (and of the paper), ending by the main formal
definitions.

Let $T_t:X_0 \to X_0$ be a piecewise $C^{2}$ cone hyperbolic flow
defined on a closed subset $X_0$ of
a compact $d$-dimensional ($d =2k+1\ge 3$)
$C^\infty$ manifold $M$
(see Definition ~ \ref{PiecHyp}, and note that $X_0$ is
the union of finitely many closed flow boxes).
Let $\alpha$ be a $C^2$ contact form on $M$. Recall that a flow generated by a vector field $V$ is the {\em Reeb flow} 
of $\alpha$ if $V\in\operatorname{Ker}(d\alpha)$ and $\alpha(V)=1$. This implies $\frac d{dt}\alpha(DT_t v)=0$ for each $v\in TM$, i.e., the flow is contact.
Assume also that $T_t$ is the Reeb flow of   $\alpha$.
In particular, $T_t$ preserves the volume $dx=\wedge^k d\alpha\wedge\alpha$, and  $|\det DT_t|\equiv 1$.
If $M=X_0$ and $T_t$ is a hyperbolic geodesic flow, or more generally
an Anosov flow preserving a contact form $\alpha$, then $T_t$ is  the
Reeb flow of $\alpha$, up to replacing $\alpha$ by $\alpha(V)^{-1}\alpha$, where
$V$ is the vector field generating the flow, see \cite[p. 1496 and \S2]{Tsujii1}.
More generally, in Appendix ~\ref{2c} we show that all ergodic
piecewise smooth  hyperbolic contact flows are Reeb. 
(For example, a billiard flow with speed one is the Reeb flow of the contact form $p\, dq$.)

Our main result is the following theorem
(its  proof can be found at the end of Section \ref{redux}):

\begin{theorem}[Exponential mixing for piecewise hyperbolic contact flows]
\label{main} Let $M$ be a compact $3$-dimensional  manifold.
Let $T_t$ be a piecewise $C^{2}$ cone hyperbolic flow
on  a closed subset $X_0$ of $M$,
which is the Reeb flow of  a $C^2$ contact form $\alpha$.
Assume in addition that  $T_t$ is ergodic for $dx$, that 
complexity grows subexponentially (Definition ~ \ref{domin}),
and that $T_t$ satisfies the transversality condition of Definition~ \ref{transcond}.
Then for each $\xi >0$  there exist $K_\xi>0$ and $\sigma_\xi>0$,
so that for any $C^\xi$ functions $\psi_1, \psi_2:M \to \complex$
$$
|\int \psi_1 (\psi_2 \circ T_t) \, dx - \int \psi_1 \, dx \int \psi_2\, dx|
\le K_\xi \|\psi_1\|_{C^\xi}\|\psi_2\|_{C^\xi} e^{-\sigma_\xi t} \, ,
\quad \forall t\ge 0 \, .
$$
\end{theorem}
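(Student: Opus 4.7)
The plan is to combine the anisotropic Banach space technology of Baladi--Gou\"ezel~\cite{BG2} (adapted to continuous time with a neutral flow direction) with Liverani's~\cite{Li} version of the Dolgopyat estimate for contact flows, applied directly to the transfer operator semigroup $\LL_t \psi = \psi \circ T_{-t}$ (the Jacobian factor is trivial since $T_t$ preserves volume).

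\textbf{Step 1: Anisotropic space.} I would construct a Banach space $\BB$ of anisotropic distributions on $X_0$ combining a stable norm (measuring $L^p$-integrability along a family of admissible \emph{non-invariant} stable leaves, as in \cite{BG2}, to cope with the fact that the true stable foliation is only measurable) and a weak norm (controlling $L^1$-type averages against smooth unstable test-functions). The flow direction is treated as a neutral (zero exponent) direction: roughly, one takes test functions whose behaviour along the flow is only $C^0$, while demanding extra transverse regularity. A Triebel-type localisation ensures that multiplication by the characteristic function of a flow box is bounded, which is essential in the piecewise-smooth setting.

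\textbf{Step 2: Semigroup and Lasota--Yorke.} Next I would show that $\LL_t$ is a strongly continuous semigroup on $\BB$ with generator $X$, and prove a Lasota--Yorke inequality of the form $\|\LL_t f\|_{\BB} \le C e^{-\sigma_1 t} \|f\|_{\BB} + C(t) \|f\|_{\BB_w}$ for some $\sigma_1>0$ and a weaker norm $\BB_w$ into which $\BB$ embeds compactly. Subexponential complexity growth controls the proliferation of pieces of the dynamical partition under iteration; the transversality condition guarantees that admissible leaves can be cut and re-joined across the singularity set without destroying the estimate. This yields a spectral gap for the resolvent $R(z)=(z-X)^{-1}$ to the right of some vertical line $\operatorname{Re}(z)=-\sigma_0<0$, modulo the spectrum on the imaginary axis, which by ergodicity of $dx$ reduces to a simple eigenvalue at $z=0$ with the constant eigenfunction.

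\textbf{Step 3: Dolgopyat--Liverani for large imaginary part.} The heart of the argument is a uniform bound $\|R(a+\ii b)^N\|_{\BB}\le C |b|^{-\eta}$ for $|b|$ large and $a$ in a small strip around $0$, obtained by iterating the semigroup along weakly stable and weakly unstable directions with the oscillatory factor $e^{-(a+\ii b)t}$. Following \cite{Li}, one uses the contact (Reeb) structure: the temporal distance function between the holonomies of two nearby stable manifolds is, up to a smooth error, governed by $d\alpha$, hence is \emph{not} degenerate along any $2$-parameter family, which gives van der Corput / stationary phase cancellation in the oscillatory integrals. To make this work in the piecewise setting I would combine this with the \cite{BG2} machinery of admissible families: integrate only over portions of leaves that avoid the discontinuity set for the required time window, using subexponential complexity to show that the bad set has small measure on the $\BB$-scale, and use the transversality condition of Definition~\ref{transcond} to guarantee that cuts do not destroy the uniform non-integrability needed for the phase estimate.

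\textbf{Step 4: Conclusion.} Combined with Step~2, the resolvent bound of Step~3 extends the quasi-compact description to a genuine strip: $\LL_t = \PP + \RR_t$ on $\BB$, where $\PP$ is the one-dimensional projection $f\mapsto (\int f\,dx)\cdot \mathbf{1}$ and $\|\RR_t\|_{\BB}\le C e^{-\sigma t}$ for some $\sigma>0$. Since $C^\xi$ functions embed continuously in $\BB$ (for $\xi>0$, after interpolation, as in \cite{BG2}) and $\BB$ embeds in the dual of $C^\xi$, pairing $\LL_t \psi_2$ with $\psi_1$ yields the claimed exponential decay with rate $\sigma_\xi$ depending on $\xi$. \emph{The main obstacle} is undoubtedly Step~3: transferring Liverani's contact-flow Dolgopyat estimate, which relies on smooth holonomies and long unbroken orbits, to a setting where the stable/unstable foliations are only measurable and where orbits are continually cut by the discontinuity set. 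Reconciling the oscillatory integral argument with the admissible-leaves formalism of \cite{BG2}, and showing that the transversality hypothesis suffices to preserve uniform non-integrability in this truncated geometry, is where the bulk of the work must lie.
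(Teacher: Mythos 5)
Your overall philosophy (anisotropic spaces with non-invariant admissible stable leaves, resolvent analysis, and a contact Dolgopyat estimate with fake foliations) matches the paper's, but two of your steps assert estimates that are not available in this setting and that the paper is specifically structured to avoid. In Step~2 you claim a genuine Lasota--Yorke inequality $\|\LL_t f\|_{\BB}\le Ce^{-\sigma_1 t}\|f\|_{\BB}+C(t)\|f\|_{\BB_w}$ with $\BB\subset\BB_w$ compact. In continuous time the flow direction is neutral, and the analogue of the composition/gluing lemma of \cite{BG2} only holds modulo a precomposition by a time-shift $\Delta$, which costs regularity in the flow direction: the actual estimate (Lemma~\ref{LY0}) has a contracted term landing in $\HHH^{r,s,q+r-s}_p$ and a ``weak'' term landing in $\HHH^{r',s',q+2r-r'-s}_p$, with an exponential factor $e^{A_0t}$, and neither target space embeds (let alone compactly) into the original one. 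Consequently one does not know that the essential spectral radius of $\LL_1$ on the space is $<1$, nor even that $A=0$; the spectral gap cannot be extracted from the semigroup directly. The paper's way out is that only the resolvent $\RR(z)=\int_0^\infty e^{-zt}\LL_t\,dt$ regains flow-direction smoothness (Lemma~\ref{bq}), at the price of powers of $|\Im z|$, and all quasi-compactness statements (Lemmata~\ref{ress}, \ref{rr}, \ref{controlCeta}) are proved for $\RR(z)$, not for $\LL_t$.

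This propagates into Step~4, where you conclude with an operator-norm decomposition $\LL_t=\PP+\RR_t$, $\|\RR_t\|_{\BB}\le Ce^{-\sigma t}$. That is exactly what cannot be proved here (it would require the spectral gap for $\LL_1$ you assumed in Step~2). The paper instead deduces Theorem~\ref{main} from the Dolgopyat bound by an inverse-Laplace/contour-shift argument: one writes $\LL_t\psi$ as $\frac{1}{2i\pi}\int e^{(a_0+ib)t}\RR(a_0+ib)\psi\,db$, uses Proposition~\ref{dolgo} together with the Lasota--Yorke estimate for the resolvent (Lemma~\ref{controlCeta}) to get $\sup_{a\in[-\sigma_0,a_0]}\|\RR(a+ib)\|\le K_1|b|^{1/2}$ on a strip, and makes the shifted integral converge by applying the identity $\RR(z)\psi=z^{-1}\psi+z^{-2}X\psi+z^{-2}\RR(z)X^2\psi$ to time-mollified observables $\psi_\epsilon$ (so that $X^2\psi_\epsilon$ is controlled), finally removing the mollification by an approximation argument. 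Your Step~3 is also missing the device that makes the Dolgopyat estimate usable: the key cancellation bound (Lemma~\ref{dolgolemma}) is proved in $L^\infty$ against $L^\infty$ and $H^1_\infty$ norms of the observable, and it is connected to the anisotropic norms only through the stable-averaging operator $\AAA_\delta$, mollifiers $\MMM_\epsilon$, Sobolev embeddings, and the isomorphism $\HHH^{r,0,0}_p\simeq H^r_p(X_0)$; without some such interface the oscillatory-integral gain cannot be converted into a bound on $\|\RR(a+ib)^n\|_{\BB}$. As written, Steps~2 and~4 would therefore fail, even though the ingredients you list (contact structure for the temporal function, transversality and subexponential complexity to control cut leaves, Lipschitz approximate unstable holonomies) are indeed the right ones for the cancellation step.
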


Our proof is based on a spectral analysis of 
the linear operator $\LL_t \psi = \psi \circ T_{-t}$, defined 
initially on bounded functions, e.g. (By definition, $\LL_t^*$ preserves $dx$.)
The strategy, following \cite{Li}, is to study $\LL_t$ as an operator on a suitable Banach space $\widetilde \HHH$
of anisotropic distributions, and to prove Dolgopyat-like estimates. 
Just like in \cite{Li}, we do not claim that the transfer operator
$\LL_1$ associated to the time-one map $T_1$ has a spectral gap. However, the resolvent
method we adapt from \cite{Li} gives us a precise description of the spectrum
of the generator $X$ of the semigroup  of operators $\LL_t$ in a 
half-plane large enough to deduce exponential
decay of correlations (see Corollaries ~ \ref{rr} and ~ \ref{specX'}).
The spaces $\widetilde \HHH^{r,s,q}_p$ that we shall use are a modification 
(see Subsection~ \ref{spaces}) of the
spaces  $\HHH^{r,s}_p$ 
($s<0<r$ and $1<p<\infty$) 
of \cite{BG2} for piecewise hyperbolic maps (the spaces in \cite{BG2} generalise earlier
constructions in  \cite{BG1} and \cite{Cinfty}, more directly related
to standard Triebel spaces). In particular, the norm is defined by taking
a supremum over a class of admissible foliations (which are compatible
with the stable cones and satisfy some regularity property).
The main difference between  $\HHH^{r,s}_p$
and $\widetilde \HHH^{r,s,q}_p$, is due to the direction of the time that must be added to
the foliation class (leading to the additional regularity
parameter $q\ge 0$). As a consequence the proof of
the key  Lemma~3.3 from  \cite{BG2}
(invariance of the class of admissible foliations
under the action of the dynamics) had to be rewritten in full detail, because
a new phenomenon appears in continuous time
(see Lemma~\ref{lemcompose}): We get invariance only modulo 
precomposition
by a perturbation $\Delta$ limited to the flow direction.
This can be dealt with, up to a
worsening of the regularity exponents in the time direction
(Lemma~\ref{noglue}). It follows
that the ``bounded" term in our Lasota-Yorke bound (Lemma~\ref{LY0})
is not really bounded.  The ``compact" term in the Lasota--Yorke
bound is not compact either, due to a loss of regularity in the flow direction
of a more elementary origin (see Lemma~\ref{CompositionDure}), which
also played a part in Liverani's \cite{Li} proof.
Like in \cite{Li}, we may overcome these problems because
we work with the resolvent 
$\RR(z)= \int_0^{\infty} e^{-zt} \LL_t \, dt$ which involves
integration along the time direction. A price needs to be paid, in
the form of a power of the imaginary part of $z$ in the estimates,
see Lemma~\ref{bq}, and note that our Lasota-Yorke estimate
for the resolvent is Lemma~\ref{controlCeta}. 
Another difference with respect to \cite{BG2} is that we need to
decompose the time $t$, taking into account the Poincar\'e maps and
the return times, so that the proof of the Lasota-Yorke estimate
Lemma~\ref{LY0} needs to be rewritten in full (the use of the Strichartz bound ~Lemma~\ref{lem:multiplier} in
the argument is also a bit different).
With respect to Liverani's argument \cite{Li} for contact Anosov flows,
the key Dolgopyat estimate Lemma~\ref{dolgolemma}
(leading to Proposition~\ref{dolgo}) uses the same idea
of ``averaging in the (un)stable direction"
(see the definition of $\AAA_\delta$ in
Section ~\ref{molll}).
The main nontrivial difference is that, to prove 
Lemma~\ref{dolgolemma}, instead of the actual strong stable foliation $W^s$
used in \cite{Li}, but which is only measurable in the present setting,
we work with ``fake stable foliations"  
which lie in the stable cones
and {\em belong to the kernel of the contact form}.
This is possible because  the arguments in \cite[\S 6, App B]{Li}
(in particular Lemma B.7 there) do not require the fact that $W^s$ is the
actual invariant foliation of the flow. What matters is that the contact form
$\alpha$ vanishes along the leaves of the fake stable foliation.

This is why, although the contact assumption
is not needed for smooth Anosov flows in dimension three \cite{Do} since the foliation is $C^1$ (by this we mean that the tangent space to the leaves vary in a $C^1$ manner),
the contact assumption is  essential in the present  setting, where the foliation is only measurable. In fact, we show in Appendix \ref{sec:unstable} that, locally, one can effectively approximate the unstable foliation by a Lipschitz foliation, yet this alone does not suffice to apply Dolgopyat's argument. The contact form is our  leverage towards the lower bounds which
yield the  ``oscillatory integral"-type cancellations we need.

For smooth contact flows it is well-known \cite[Thm 3.6]{KB}
that ergodicity implies mixing, and a similar result holds for two-dimensional dispersing billiards (\cite[\S 6.9]{CM} and references therein). Yet, we are not aware of such a general theorem for contact
systems with discontinuities, even though it is probably true. In any case, we do not deduce mixing
directly from ergodicity and the contact property: In our uniformly hyperbolic setting it  follows
from the Dolgopyat estimate, Lemma~\ref{dolgolemma}, which   gives our stronger spectral/exponential
mixing result. (Our proof is thus organised a bit differently from \cite{Li}, where mixing was given by \cite[Thm 3.6]{KB}.)

We emphasize also that Lemma ~\ref{dolgolemma} does not involve
the anisotropic norms: It is formulated as an upper bound
on the supremum norm, with respect to the supremum
and $H^1_\infty$ norms. We are able to exploit this upper bound by using the
fact that our spaces $\widetilde \HHH^{r,0,0}_p$
(when $s=0$ and $q=0$)
are isomorphic to the ordinary Sobolev spaces
$H^{r}_p$, and by
using mollification operators (see Section ~\ref{molll}), and Sobolev
embeddings.
Finally, note that  we restrict to the three-dimensional setting in
this work to simplify as much as possible the intricate
estimate in Section~\ref{carlangelo}.  The other arguments hold in general
odd dimension $d\ge 3$,  and do not become shorter or simpler for
$d=3$.
We hope that the three-dimensional assumption
limitation can be removed (bunching, however, is necessary with
the present technology, as in \cite{BG2}, to prove invariance
of admissible charts, see 
Appendix~\ref{iteratechart}).

The paper is organised as follows: Subsection ~\ref{ex2c} discusses a simple class of examples to which our result applies.
After introducing the anisotropic Banach spaces in Section ~ \ref{defspace}, we show
in Section ~\ref{redux} how to reduce our
theorem to Lasota-Yorke estimates (Lemma~\ref{LY0}) and Dolgopyat estimates
(Proposition~ \ref{dolgo}, which hinges on
Lemma~\ref{dolgolemma}). Lemma~\ref {LY0} is proved in Section ~ \ref{LYBG}.
In Section~\ref{molll} we study mollification
operators $\MMM_\epsilon$, 
and stable-averaging operators $\AAA_\delta$. 
These operators are used to reduce to Lemma~\ref{dolgolemma},
the bound in Section ~\ref{carlangelo}, which
is the heart of the paper. In Section ~\ref{carlangelo}, we follow the lines of \cite[\S5, \S6]{Li}, but we must take into account the
fact that our Banach spaces are different. Section~ \ref{dodo}
contains the proof of Proposition ~ \ref{dolgo}.
Finally  Appendix ~\ref{2c} contains some useful facts about contact flows and changes of coordinates,  Appendices ~\ref{localspaces} and~ \ref{iteratechart} detail several basic results needed to  construct and study  our Banach spaces, and
Appendices ~\ref{sec:hoihoi} and~ \ref{sec:unstable} contain constructions fundamental for the arguments in Section ~\ref{carlangelo}.
\smallskip

We end  this subsection by defining piecewise $C^2$ cone hyperbolic flows
and the  assumptions needed for our theorem.

\begin{definition}
[Cones in $\real^d$]
A $k$-dimensional cone in $\real^d$, for an integer $1\le k \le d-1$, is
a closed subset $\CC$ of $\real^d$ so that there exists
a linear coordinate system $\real^{d-k}\times \real^k$ and a maximal rank linear
map $\AAc : \real^k  \to \real^d $ for which
\begin{equation}\label{conedeff}
\CC=\{ (x,y) \in \real^{d-k}\times \real^k  \mid |x| \le |\AAc y| \}\, .
\end{equation}
In particular
\footnote{See \cite[Def. 2.1]{BG2} and the remark thereafter for a more general notion
of cone and the corresponding notion of transversality.}, a cone $\CC$
has nonempty interior, it is invariant under scalar
multiplication, and its dimension is the maximal dimension of a vector subspace
included in $\CC$.
If $\CC$ is a  $k$-dimensional cone in $\real^d$
and $\CC'$ is a  $k'$-dimensional cone in $\real^d$
(not necessarily for the same coordinate systems), we
say that $\CC$ and $\CC'$ are transversal if $\CC \cap \CC'=\{0\}$.
We say that $\CC(w)$ depends continuously on $w$ if both the coordinate system
and the map $\AAc(w)$ depend continuously on $w$.
\end{definition}

Note that in our main application to three dimensional flows, 
only one-dimensional cones are
needed.

Let $M$ be a smooth $d$-dimensional compact manifold, with $d=d_u+d_s+1$,
for integers $d_u \ge 1$, $d_s\ge 1$.
Again, the reader only interested in the application to three-dimensional flows can 
focus
on visualising the case $d_u=d_s=1$. (It would not shorten the exposition
to restrict to that case.)

\begin{definition}
[Piecewise $C^2$ cone hyperbolic flows]\label{PiecHyp} 
A measurable flow $T_t: X_0 \to X_0$  is a  piecewise
$C^{2}$ hyperbolic flow  on a closed subset $X_0$ of
$M$ if there exist $\epsilon_0>0$ and finitely many  
codimension-one $C^{2}$ open hypersurfaces $\{O_i, i \in I\}$ of $M$, 
uniformly transversal to the flow direction, and for each $i\in I$,
there exists $J_i\subset I$ so that:

(0)  For each $j \in J_i$ there exists an open subset (in the sense of hypersurfaces)
$O_{i,j} \subset O_i$ so that  $O_i=\cup_{j\in J_i}O_{i,j}$
(modulo a zero Lebesgue measure set),  this union is disjoint,
and each boundary $\partial O_{i,j}$ is a finite union
of codimension-two $C^1$ hypersurfaces.
For each $j\in J_i$ there exists
a $C^2$ real-valued and strictly positive function $\tau_{i,j}$ defined on
a neighbourhood $\widetilde O_{i,j}$ of $\overline{O_{i,j}}$  (as hypersurfaces), so that
$$
T_{t}(w)\in X_0 \, , \, \forall w \in O_{i,j}\, ,
\forall t\in [0, \tau_{i,j}(z))\, ,
\quad T_{\tau_{i,j}(w)}(w)\in O_j \, , \, \forall w \in O_{i,j}\, ,
$$
and, setting,
$$B_{i,j}= \cup_{z \in O_{i,j}} \cup_{t \in [0, \tau_{i,j} (z))} T_t(z)\, ,
$$
the sets $B_{i,j}$, $i\in I$, $j\in J_i$ (called ``flow boxes") are two by two disjoint, and
$$
X_0=\cup_{i\in I}\cup_{j\in J_i} B_{i,j}
\mbox{ (modulo a zero Lebesgue measure set).}$$
For $w \in B_{i,j}$, we let  $z(w)\in O_{i,j}$ and $t(w)\in (0, \tau_{i,j}(z(w))$ be such that
\begin{equation}\label{deftwzw}w=T_{t(w)}(z(w))\, .
\end{equation}
Note that $\tau_{i,j}$ is the restriction to
$O_{i,j}$ of
the first return time
of $T_t$ to the section 
$$M_0:=\cup_k O_k\, .
$$

\smallskip

(1)   
For each $j \in J_i$  there exists 
a neighbourhood ${\widetilde B}_{i,j}$
of the closure of the flow box $B_{i,j}$ and a $C^2$ flow $T_{i,j,t}$ defined in ${\widetilde B}_{i,j}$
such that for each $w\in B_{i,j}$ and 
every $t$ such that $T_t(w)\in B_{i,j}$ we have $T_t(w)=T_{i,j,t}(w)$.
In addition,  there exists a neighbourhood $\widehat O_{i,j}$ in $M$
of the closure of $\widetilde O_{i,j}$  so that $T_{\tau_{i,j}(w)}(w):  O_{i,j}\to O_j$
extends to
a $C^{2}$ map $\PPP_{i,j}:\widehat O_{i,j}\to M$, which is a
diffeomorphism 
onto its image.
The $C^2$ map 
$$P_{i,j}:=\PPP_{i,j}|_{\widetilde O_{i,j}}$$ 
restricted to $O_{i,j}$ is the first return (Poincar\'e) map to the section
$M_0$.

\smallskip

(2) For each
$j \in J_i$,
there exist two continuous families $\CC^{(u)}_{i,j}(w)$ and $\CC^{(s)}_{i,j}(w)$ of
cones
on $\overline{B}_{i,j}$, where
$\CC^{(u)}_{i,j}(w)\subset T_w M$
is $d_u$-dimensional, $\CC^{(s)}_{i,j}(w)\subset T_w M$ is $d_s$-dimensional, 
and, denoting  the flow direction by $\mbox{flowdir}(w)\subset T_wM$,
$$\CC^{(u)}_{i,j}(w)\cap \CC^{(s)}_{i,j}(w)=\{0\}\, ,\quad
\CC^{(u)}_{i,j}(w)\cap \mbox{flowdir}(w)=\mbox{flowdir}(w)\cap \CC^{(s)}_{i,j}(w)
=\{0\}\, ;
$$
in addition, for any $t_{00}>0$, there exist
a smooth norm on $TM$ and continuous
functions $\lambda_{i,j,u}:\overline{B}_{i,j}\to (1, \infty)$
and $\lambda_{i,j,s}:\overline{B}_{i,j}\to (0,1)$  such that, for each $w \in \overline{B}_{ij}$
and each   $t\in (t_{00}, \tau_{i,j}(z(w))-t(w)]$,
letting $(k,\ell)$ be 
\footnote{Note that either $(k,\ell)=(i,j)$,
or $t=\tau_{i,j}(w)$ with $T_{i,j,t}(w) \in \overline {O_{k,\ell}}$ for $k=j$ and $\ell \in J_j$.}
such that $T_{i,j,t}(w)\in B_{k,\ell}$, we have 
$$
DT_{i,j,t}(w) \CC^{(u)}_{i,j}(w) \subset \CC^{(u)}_{k,\ell}(T_{i,j,t} (w) )\mbox{ and }
|DT_{i,j,t}(w) v|\ge \lambda^t_{i,j,u}(w) |v| \, , 
$$
for all $v\in \CC^{(u)}_{i,j}(w)$, 
and  
$$ 
DT_{i,j,t}^{-1}(T_{i,j,t}(w)) \CC^{(s)}_{k,\ell}(T_{i,j,t}(w)) \subset \CC^{(s)}_{i,j}(w)\mbox{ and }
|DT_{i,j,t}^{-1} (T_{i,j,t}(w))v|\ge \lambda_{i,j,s}^{-t}(w) |v|\, ,
$$
for all $v\in \CC^{(s)}_{k,\ell}(w)$.
\end{definition}

\bigskip

We must still formulate the transversality and complexity conditions.
We shall do this at the
level of the Poincar\'e maps $P_{i,j}$.

For $n\ge 1$, and $\ii \in I^{n+1}$, we let
$P_\ii^n=P_{i_{n-1} i_n}\circ \dots \circ P_{i_0 i_1}$, which is
defined on a neighbourhood of $O_\ii\subset M_0$, where  $O_{(i_0 i_1)}=O_{i_0, i_1}$,
and
\begin{equation}
O_{(i_0,\dots,i_{n})}=\{z\in O_{i_0, i_1} \st P_{i_0 i_1}(z)\in
O_{(i_1,\dots,i_{n})}\}\, .
\end{equation}
Conditions (0)-(1)-(2) imply that for each  iterate
of the Poincar\'e map $P^n_{\ii}$,  and every $z\in O_{\ii}$,
there exist  weakest contraction and
expansion constants
$\lambda_{\ii, s}^ {(n)}(z)<1$ and
$\lambda_{\ii,u}^{(n)}(z)>1$, and a strongest  expansion constant
$\Lambda_{\ii,u}^{(n)}(z)\ge \lambda_{\ii,u}^{(n)}(z)$. We put
$$\lambda_{s,n}(z)=\sup_{\ii} \lambda_{\ii, s}^ {(n)}(z)<1\, ,
\quad \lambda_{u,n}(z)=\inf_{\ii}\lambda_{\ii,u}^{(n)}(z)>1
\, .
$$
We can now formulate the bunching condition  on  a piecewise $C^{2}$ hyperbolic   flow: For some
$n\ge 1$
\begin{equation}
\label{bunch}
\sup_{\ii \in I^n,z\in M_0} \biggl ( \lambda_{\ii,s}^{(n)} (z)\lambda_{\ii,u}^{(n)}
(z)^{-1}\Lambda^{(n)}_{\ii,u}(z) \biggr ) <1\, .
\end{equation}
(The bunching condition \eqref{bunch}  is automatically satisfied
if $d_u=1$, which
implies that $\Lambda_{\ii,u}^{(n)}(z)/ \lambda_{\ii,u}^{(n)}(z)$
tends to $1$ as $n\to \infty$, uniformly.)
If \eqref{bunch} holds for $n$, there exists $\beta>0$ so that
\begin{equation}
\label{bunch2}
\sup_{\ii \in I^n,z\in M_0} \biggl ( (\lambda_{\ii,s}^{(n)} (z))^{1-\beta}\lambda_{\ii,u}^{(n)}
(z)^{-1}(\Lambda^{(n)}_{\ii,u}(z))^{1+\beta} \biggr ) <1\, .
\end{equation}
(It is in fact the above condition \eqref{bunch2} which appears in
our argument.)

As is usual in piecewise hyperbolic settings (see e.g. \cite{Yo}), we  assume transversality 
and subexponential  complexity.
In view of the transversality definition, it is convenient to assume
that the cone fields $\CC^{(s)}_{i,j}$ do not depend
on $i$ and $j$, i.e., they are continuous throughout (see \cite{BG2} 
for an alternative definition of transversality in the general case,
and Remark 2.4 there)
and we shall do so. (This allows us
to use a simplified definition of the
norm \eqref{defnormen}, and is useful also in the proof of Lemma~\ref{bq}
below: Otherwise a further argument is needed since we cannot apply
Strichartz' result \cite{Str}, \cite{BG1} for $q=1$, except if we have continuity at least in the time direction.)

\begin{definition}[Transversality]\label{transcond}
Let $T_t$ be a piecewise $C^{2}$ hyperbolic   flow.
We say that the flow $T_t$ satisfies the  transversality condition
if 
\begin{itemize}
\item
the cone fields $\CC^{(s)}_{i,j}$ do not depend
on $i$, $j$;
\item  each $\partial O_{i,j}$ is a finite union
of $C^1$ hypersurfaces $K_{i,j, k}$, the image of each of which by the Poincar\'e map
is transversal  to the stable cone  (i.e., for all $z \in K_{i,j,k}$, the tangent
space
$T_z (P_{i,j}(K_{i,j,k}))$ contains a $d_u$-dimensional subspace which intersects
$\CC_i^{(s)}$ only at $0$). 
\end{itemize}
\end{definition}

\begin{definition}[Subexponential complexity]
\label{domin}Let $T_t$ be a piecewise $C^{2}$ hyperbolic   flow.
For $n\ge 1$ and $\ii=(i_0,\dots,i_{n})\in I^{n+1}$, 
set
\begin{equation*}
D^b_n=\max_{z\in M_0} \Card \{ \ii=(i_0,\dots,i_{n}) \st z \in
\overline{O_{\ii}} \} \, ,
\end{equation*}
and
\begin{equation*}
D^e_n=\max_{z\in M_0} \Card \{ \ii=(i_0,\dots,i_{n}) \st z \in
\overline{P_\ii^n(O_{\ii})} \} \, .
\end{equation*}
We say that  complexity is subexponential  if 
\begin{equation}
\limsup_{n \to \infty} \frac{1}{n} \ln (D^e_n)=0 
\mbox{ and } \limsup_{n \to \infty} \frac{1}{n} \ln (D^b_n)=0\, .
\end{equation}
\end{definition}

\subsection{Examples}\label{ex2c}
We present a simple example to which our main theorem applies.

Given $M=\bT^2\times \bR$ and $\tau\in L^\infty(M,\bR)$, we define the set $X_0=\{(x,y,z)\in M\;:\; (x,y)\in \bT^{2}, z\in [0,\tau(x,y)]\}$.

To define the dynamics we consider a piecewise $C^2$ hyperbolic symplectic (with respect to the symplectic form $dx\wedge dy$) map $f:\bT^{2}\to\bT^{2}$. Let $\tau:[0,1]^{2}\to \bR_{+}$. Let $\{\hat O_i\}$ be the domains on $\bT^2$ in which $f$ is smooth and define $O_i=\{(x,y,z)\in M\;:\; (x,y)\in\hat O_i, z=0\}$, we assume that the $O_i$ are simply connected and that $f$ has a $C^2$ extension in a neighbourhood of each $\hat O_i$. 

We now define the flow on $X_0$ by
\[
T_{t}(x,y,z)=(x,y,z+t)
\]
for all $t\in[0,\tau(x,y)-z)$, while 
\[
T_{\tau(x,y)-z}(x,y,z)=(f(x,y),0)=(f_{1}(x,y), f_{2}(x,y),0) \, .
\]
The contact form is the standard one: $\alpha=dz-ydx$. In order to check that $\alpha$ is preserved by the flow, we must ensure that the form does not change while going through the roof. A direct computation shows that this is equivalent to requiring
\[
\begin{split}
&\partial_{x}\tau=y-f_{2}(x,y)\partial_{x}f_{1}(x,y)=:a\,,\\
&\partial_{y}\tau=-f_{2}(x,y)\partial_{y}f_{1}(x,y)=:b\, .
\end{split}
\] 
By the symplecticity of $f$ it follows that $a\, dx+b\, dy$ is a closed form, and hence $\tau$ is uniquely defined on each $\hat O_i$ apart from a constant. In particular, we can chose such constants as to ensure that there exists $\tau_->0$ such that $\inf \tau\geq \tau_-$.
We have thus a piecewise smooth contact flow. The sets $O_{i,j}$ are defined in the obvious way, and $P_{i,j}=T_{\tau}|_{O_{i,j}}$. If the map $f$ is uniformly hyperbolic, then one can define continuous cones $\hat \cC_{i,j}^{u}\subset \bR^2$ that are mapped strictly inside themselves by $df$ and such that each vector in them is expanded at least by some $\lambda>1$. We can then define the cones $\cC_{i,j}^u(x,y,z)=\{(\eta,\xi,\zeta)\in\bR^3\;:\; (\eta,\xi)\in\cC_{i,j}^u(x,y), \delta|\zeta|\leq \|(\eta,\xi)\|\}$. One can verify that this cone family is strictly invariant under the Poincar\'e maps and that the Poincar\'e map is hyperbolic, provided $\delta$ is chosen small enough. The transversality hypothesis is then satisfied by the flow if it is satisfied by the map $f$.

Next we provide an open set of examples in which this construction yields a flow that satisfies all our hypotheses,  many other similar examples can be constructed. Consider the map $f_0:\bT^2\to \bT^2$ defined by
\[
f_0(x,y)=\begin{cases} (x+y,\frac x2+\frac {3y}2)\mod 1\quad &\text{for }x\in[0,1), y\in[0,1-x]\\     
                                   (x+y,\frac x2+\frac {3y}2-\frac 12)\mod 1&\text{for }x\in[0,1), y\in(1-x,1)\, .
           \end{cases}
\]
Note that any cone of the type $\cC_a=\{(x,y)\;:\; |x-y|\leq a|x+2y|\}$ is strictly invariant. In particular $Df_0\cC_a\subset \cC_{\frac a4}$.\footnote{The eigenvalues of the matrix are $2,\frac 12$, for eigenvectors $(1,1)$ and $(1,-\frac 12)$. One must write the cone is such coordinates to have standard form used in Definition \ref{PiecHyp}.} To prove hyperbolicity one can first define the norm $\|(x,y,z)\|=\|(x,y)\|+\delta|\zeta|$ under which $T_t((x,y,z)$ is hyperbolic for $t\geq \tau(x,y)-z$, provided $a,\delta$ are small enough. Then one modifies the norm as to distribute the expansion and contraction in the return map evenly between $(x,y,0)$ and $(x,y,\tau(x,y))$. 
The discontinuity manifold is given by $\{y=1-x\}$, while the discontinuity line of the inverse map is $\{x=0\}$. 
Note that the discontinuity is not contained in the cone $\cC_1$, while its image is contained in $\cC_1$, thus, by defining the Poincar\'e map to be some higher power of $f_0$, the transversality holds. Since such properties are open, they hold also for all maps  $f=f_0\circ \phi$, where $\phi:\bT^2\to\bT^2$ is a smooth symplectic map sufficiently close to the identity.
For such maps, we have thus both transversality and also subexponential complexity growth since we can apply Theorem 2 of \cite{Per}.

The only property left to check is ergodicity. This follows from the ergodicity of $f$ that can be proved by applying the Main Theorem in \cite[section7]{LW}.

\begin{remark}\label{rkbil}
Our original motivation was to understand billiard flows (in dimension two, i.e., the flow
acts on a three-dimensional manifold), let us explain
now how close we are to this goal:
Sinai dispersive billiard flows  are of course contact flows. It follows from well-known results  (see e.g. \cite{CM}) 
that the ergodicity, transversality, and subexponential
complexity assumptions are satisfied for the flows of two-dimensional Sinai dispersive 
billiards with finite horizon.
Sinai billliards
are piecewise cone hyperbolic (see \cite{CM}, and also \cite[Section~ 3]{LW},
noting that our Poincar\'e map $P_{ij}$ includes the contribution
of what is called the ``collision map" $\Gamma$ there), 
except for the requirement that the flow is smooth all the way to the boundary of the domains
$B_{ij}$ (billiards flows are smooth on the open domains, but their derivatives
blow up along some of the boundaries).
This is a nontrivial difficulty, and we hope that the tools
being developed in \cite{BBG} will allow to solve it eventually.
It should be remarked that some other natural examples 
suffer from the same ``blowup of derivatives along
boundaries" problem that affects discrete and continuous-time
billiards, and hence  do not fit in our framework:
For example, consider a compact connected manifold partitioned in regions  $B_i$ with nice boundaries. Put on each region a different metric, all with strictly negative curvature, and consider the resulting geodesic flow. Generically, there will be geodesics tangent to the boundaries of the regions with non degenerate tangency. If we now consider a geodesic in $B_i$ tangent to the boundary between $B_i$ and $B_j$, then there exists an $\ve$-close geodesic that will spend a time $\sqrt \ve$ in $B_j$, and this means that the derivative of the flow at the boundary will be infinite, exactly as in the case of tangent collisions for billiards. Thus our result does not applies to examples obtained as patchwork of different geodesic flows, unless the cutting and pasting is done in the unitary tangent bundle (rather than on the manifold), where one can easily construct regions with boundaries uniformly transversal to the flow.
\end{remark}


\section{Definition of the Banach spaces $\HHH^{r,s,q}_p(R)$}
\label{defspace}
Throughout the paper $\Cs$ denotes a generic constant that may vary from line to line.

Let $\beta \in (0, 1)$ satisfy \eqref{bunch2}.
For $p\in (1, \infty)$  and real numbers $r$, $s$, and $q$,
we  shall introduce  in Subsection~ \ref{spaces}
scales of Banach spaces 
$\HHH_p^{r,s,q}(R)$ of (aniso\-tropic) distributions
on $M$, supported in $X_0$,
and parametrised by $\beta$, $p$,   $r$,  $s$,  $q$,
and a large zoom parameter $R>1$, and auxiliary real parameters $C_0>1$, $C_1>2C_0$.
When the meaning is clear, we write $\HHH_p^{r,s,q}(R)$, or just
$\HHH_p^{r,s,q}$. 
Just like in \cite{BG2}, the spaces will depend
on the stable cones and\footnote{See  \eqref{suffhyp} in Appendix~\ref{iteratechart}
for the use of $\beta$, which will
also play a role later in the compact embedding Lemma~\ref{embed}.} on $\beta$. 
In Lemma~\ref{embed}, we prove that $\HHH_p^{r,0,0}(R)$ is isomorphic
to the usual Sobolev space $H^r _p(X_0)$
on $X_0$ whenever $\max(-\beta, -1+1/p)< r < 1/p$.

\subsection{Anisotropic spaces $H^{r,s,q}_p$ in
$\real^d$ and the class $\FF(z_0,\CC^{s})$  of local foliations}
\label{sec:def_fol}

In this subsection, we  recall the anisotropic  spaces $H^{r,s,q}_p$  in $\real^d$ 
(generalising those  used in \cite{BG1} and \cite{BG2}),
and we define a class  of cone admissible
local foliations in $\real^d$  with uniformly bounded $C^1$ norms 
(in Lemma ~\ref{lemcompose} we  show that this class is invariant
under the action of the transfer operator). 
These are the two building blocks 
we shall use  in Subsection ~ \ref{spaces} to define
our spaces of anisotropic distributions.

Let $d=d_u+d_s+1$ with
$d_s \ge 1$ and $d_u\ge 1$.
(Once more, the reader is welcome to concentrate on the
case $d_s=d_u=1$.)
We write $x\in \real^d$ as $x=(x^u,x^s, x^0)$  with
$$
x^u=(x_1,\dots,x_{d_u})\, ,\, 
x^s=(x_{d_u+1},\dots, x_{d-1})\, , \,  x^0=x_{d}\, .
$$
The
subspaces $\{x^u\} \times \real^{d_s} \times \{ x^0\}$ of $\real^d$ will  be
referred to as the {\em stable leaves} in $\real^d$, 
and the lines $\{(x^u, x^s) \} \times \real^{1}$  are
the {\em flow directions} in $\real^d$. We say
that a diffeomorphism of $\real^ d$ {\it preserves stable leaves} or
{\em flow directions} if
its derivative has this property.
For $C>0$ and $x\in \real^d$, let us write 
\begin{align*}
&B(x,C)=\{y\in \real^d
\st |y^u-x^u|\le C, |y^s-x^s|\le C, |y^0-x^0|\le C\}\, ,\\
&B(x^u, x^s,C)=\{y\in \real^{d_u+d_s}
\st |y^u-x^u|\le C, |y^s-x^s|\le C\}\, ,\\
&B(x^u,C)=\{y^u\in \real^{d_u}
\st |y^u-x^u|\le C\}\, , \,\, 
B(x^s,C)=\{y^s\in \real^{d_s}
\st |y^s-x^s|\le C\}\, .
\end{align*}

We denote the Fourier transform in $\real^d$
by $\FFF$.  An element $\xi$ of the dual space of $\real^d$
will be written as $\xi=(\xi^u, \xi^s, \xi^0)$ with
$\xi^u \in \real^{d_u}$,
$\xi^s\in \real^{d_s}$ and $\xi^0 \in \real$.

The anisotropic Sobolev  spaces $H_p^{r,s,q}=H_p^{r,s,q}(\real^d)$   
belong to a class of spaces first studied by
Triebel \cite{Tr}:

\begin{definition}[Sobolev spaces $H^{r,s,q}_p$  and $H^r_p$ in $\real^d$]
\label{space} For $1<p<\infty$, $r$, $s$, and $q \in \real$,
let $H_p^{r,s,q}$ be the set of 
(tempered) distributions $v$ in $\real^d$ such that
\begin{equation}\label{norm}
\norm{v}{H_p^{r,s,q}}:=  \norm{\FFF^{-1}(a_{r,s,q}\FFF v)}{ L^p} < \infty\, ,
\end{equation}
where
\begin{equation}
a_{r,s,q}(\xi)=(1+|\xi^u|^2+|\xi^s|^2+|\xi^0|^2)^{r/2} (1+|\xi^s|^2)^{s/2}
(1+|\xi^0|^2)^{q/2} \, .
\end{equation}
We set $H^r_p=H^{r,0,0}_p$.
\end{definition}

Triebel proved that rapidly decaying $C^\infty$ functions are dense in each $H^{r,s,q}_p$ (see e.g.
\cite[Lemma 18]{BG1}). So we could equivalently define
$H_p^{r,s,q}$ to be the closure of rapidly decaying $C^\infty$ functions for the
norm (\ref{norm}).
Triebel also obtained complex interpolation results which apply
to the spaces $H_p^{r,s,q}$, see \cite[Lemma 18]{BG1}. Section 3.1 of \cite{BG1} 
contains  reminders about complex interpolation and references (such as \cite{BL} and \cite{TrB}).
In particular,  if $\BB_1$ and $\BB_2$ are two Banach spaces 
forming a compatible couple \cite[\S 2.3]{BL} and $0<\theta<1$ is real
then $[\BB_1,\BB_2]_\theta$ denotes their complex interpolation \cite[\S 4.1]{BL}.
In Appendix ~\ref{localspaces}, we adapt
the results in \cite[Section 4]{BG2} on the anisotropic spaces used there  to our current setting.

\smallskip

We next move to the definition of the cone-admissible foliations (also called admissible charts).
We shall work with local foliations indexed by points
$m$ in appropriate finite subsets of $\real^d$ (the sets will be introduced in Section ~ \ref{spaces}).  We
view  $\beta \in (0,1]$ as fixed, satisfying \eqref{bunch2}, while
the constants $C_0>1$ and $C_1>2C_0$ will be chosen later in Lemma~ \ref{lemcompose} (see also the quantifiers for the estimate
\eqref{2.10} in the proof of the Lasota-Yorke-type bound
Lemma~\ref{LY0}).
These constants play the following role: If $C_0$ is large,
then the admissible foliation covers a large domain; if $C_1$
is large, then the leaves of the foliation are almost parallel.
(We use the notation $\FF(m,\CC^{s},\beta, C_0, C_1)$ introduced in
\cite{BG2}, despite the fact that the spaces are slightly different in view
of the additional time direction.)

\begin{definition}[Sets $\FF(m,\CC^{s},\beta, C_0, C_1)$ of cone-admissible foliations]
\label{defcharts}
Let $\CC^{s}$ be a $d_s$-dimensional cone in $\real^d$, transversal to $\real^{d_u}\times
\{0\}\times \real$,  let  $\beta \in (0,1)$, let
$1<C_0 < C_1/2$, and let $m=(m^u,m^s, m^0)\in \real^d$.
Then  $\FF(m,\CC^s, \beta, C_0, C_1)$ is the set of maps
$$
\phi=\phi_F: B(m,C_0)\to \real^d
\, , 
\quad \phi_F(x^u,x^s,x^0)=(F(x^u, x^s), x^s, \tilde F(x^u, x^s,x^0))
\, ,
$$ 
where  $F:B(m^u, m^s,C_0) \to \real^{d_u}$  and
$\tilde F:B(m, C_0) \to \real$
are $C^1$, with $ F(x^u,m^s)= x^u$, $\forall |x^u|\le C_0$, 
$\tilde F(x^u, m^s,x^0)= x^0$, $\forall |x^0|\le C_0$, 
$\forall |x^u|\le C_0$,
$$
(\partial_{x^s} F(x) v,v, \partial_{x^s} \tilde F(x) v)\in \CC^{s} ,\,  
\forall v\in\real^{d_s}\, , 
\forall x \in B(m^u, m^s,m^0,C_0)\, ,
$$
and for all $(x^u, x^s, x^0),(y^u,y^s, y^0) \in B(m^u, m^s,C_0)$,
on the one hand
\begin{equation}\label{smooths}
| DF(x^u,x^s) -DF(x^u, y^s) | \le \frac {|x^s-y^s|}{C_1}   \, ,
\end{equation}
\begin{equation}\label{smoothu}
| DF(x^u,x^s) -DF(y^u, x^s) | \le \frac {|x^u-y^u|^\beta}{C_1}   \, ,
\end{equation}
and
\begin{equation}\label{smoothsecond}
| DF(x^u,x^s) -DF(x^u, y^s) - DF(y^u,x^s) +DF(y^u, y^s)| \le \frac {|x^s-y^s|^{1-\beta}|x^u-y^u|^\beta}{C_1}   \, ,
\end{equation}
and on the other hand,    
\begin{equation}\label{tildenew}
\partial_{x_0} \tilde F(x^u, x^s, x^0)\equiv 1\, ,
\end{equation}
where, writing $\tilde F(x^u, x^s, x^0)=x^0+\tilde f(x^u, x^s)$
\begin{equation}\label{smoothtilde}
|D\tilde f(x^u, x^s)-D \tilde f(x^u, y^s)|  \le \frac{|x^s-y^s|}{C_1}\, .
\end{equation}
\end{definition}

One easily proves that the set $\FF(m,\CC^{s},\beta, C_0, C_1)$ is large, adapting the
argument in \cite{BG2}, below Definition 2.8 there.
We refer to \cite[Remarks 2.10 and 2.11]{BG2} for comments on  the conditions
\eqref{smooths}, \eqref{smoothu} and \eqref{smoothsecond}, in particular, why they are natural
in view of the graph transform argument used in the proof of Lemma ~\ref{lemcompose}.

The fact that $F$ does not depend on $x^0$ is useful
e.g. in Lemma~\ref{bq}.
The smoothness condition on $\tilde f$ 
is new with respect to \cite{BG2}.  
Beware that we shall need to use  Lemma~\ref{noglue} below
because of Step 2 
in the proof of Lemma~\ref{lemcompose}, but that
Steps 4--5 of the same proof imply that we cannot force
$\tilde F(x^u, x^s, x^0)=f(x^0)$ in general (which is intuitively clear:
otherwise, all stable leaves would lie in planes $x^0=$constant, which
means that the ceiling times are cohomologous to a constant, a situation
we do not allow).

The following lemma will justify our ``foliation" terminology:
The  graphs 
$$\{(F(x^u,x^s),x^s ,\tilde F(x^u, x^s, x^0)), x^s\in B(m^s, C_0)
\}\, , \,\,
x^u \in B(m^u, C_0)\, , \, x^0\in B(m^0, C_0)\, , 
$$ 
form a partition of a neighbourhood of $m$
of size of the order $C_0$  \footnote{Through the $R$-zoomed charts to be introduced in Section~\ref{spaces},
this will correspond to a  neighbourhood of size 
of the order $C_0/R$ in the manifold.}, into sets whose ($d_s$-dimensional)
tangent space is everywhere contained in
$\CC^{s}$.  The maps $F$, $\tilde F$ thus define a local foliation, and
the map $\phi_F$  is a diffeomorphism straightening
this foliation, i.e., the leaves of the foliation are the images
of the stable leaves of $\real^d$ under the map $\phi_F$.
(The conditions in the definition 
imply that the local foliation defined by $F$, $\tilde F$ is $C^{1+Lip}$ along
the stable leaves.)
Note that the image of a flow direction by $\phi_F$
is again a flow direction, parametrised at constant unit speed.

\begin{lemma}[Admissible foliations are $C^{1+\beta}$ foliations]
\label{lempropphi}
Let $\CC^s$ be a  $d_s$-dimensio\-nal cone which is transversal to
$\real^{d_u}\times \{0\}\times \real$. Then 
there exists a constant $\Cs$ depending
only on $\CC^s$ such that, for any $1<C_0<C_1/2$, and any
$\phi_F \in \FF(m,\CC^s,\beta,  C_0, C_1)$, the map $\phi_F$ is a
diffeomorphism onto its image with
$$\norm{D\phi_F}{C^{\beta}}\le \Cs
\mbox{ and }
\norm{D\phi_F^{-1}}{C^{\beta}} \le \Cs\, .$$ 
Moreover, $\phi_F(
B(m,C_0))$ contains $B(m,\Cs^{-1} C_0)$.
\end{lemma}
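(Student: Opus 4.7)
The plan is to exploit the explicit block form of $\phi_F$ together with the normalizations $F(x^u,m^s)=x^u$ and $\tilde F(x^u,m^s,x^0)=x^0$ (which force $\tilde f(x^u,m^s)\equiv 0$), so that $\phi_F$ is a small perturbation of the identity whose Lipschitz size is controlled by $C_0/C_1$. Write
$$D\phi_F=\begin{pmatrix} \partial_{x^u}F & \partial_{x^s}F & 0 \\ 0 & I_{d_s} & 0\\ \partial_{x^u}\tilde f & \partial_{x^s}\tilde f & 1\end{pmatrix}.$$
First I would bound each block uniformly. From $F(\cdot,m^s)=\id$ we get $\partial_{x^u}F(x^u,m^s)=I_{d_u}$; integrating \eqref{smooths} in $x^s$ from $m^s$ yields $|\partial_{x^u}F(x)-I_{d_u}|\le C_0/C_1<1/2$. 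Similarly $\partial_{x^u}\tilde f(x^u,m^s)=0$ together with \eqref{smoothtilde} gives $|\partial_{x^u}\tilde f(x)|\le C_0/C_1$. The bounds on $|\partial_{x^s}F|$ and $|\partial_{x^s}\tilde f|$ come from the cone condition: the vector $(\partial_{x^s}F\cdot v,\,v,\,\partial_{x^s}\tilde f\cdot v)$ lies in $\CC^s$, whose aperture depends only on $\CC^s$, yielding $|\partial_{x^s}F|+|\partial_{x^s}\tilde f|\le \Cs$.

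Next I would establish the $C^\beta$ estimate. The Hölder modulus of $DF$ follows directly from \eqref{smooths}--\eqref{smoothsecond}: on the bounded domain $B(m,C_0)$, Lipschitz-in-$x^s$ control implies $\beta$-Hölder control in $x^s$, while \eqref{smoothu} gives $\beta$-Hölder in $x^u$, and \eqref{smoothsecond} handles the mixed term so that $[DF]_{C^\beta}\le \Cs/C_1$. For $D\tilde f$, condition \eqref{smoothtilde} yields $\beta$-Hölder in $x^s$, and combined with the normalisation $\partial_{x^u}\tilde f(\cdot,m^s)\equiv 0$ and the cone bound on $\partial_{x^s}\tilde f$ this gives the required $C^\beta$ seminorm on the remaining block; putting blocks together, $\|D\phi_F\|_{C^\beta}\le \Cs$.

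Then I would prove that $\phi_F$ is a diffeomorphism onto its image by explicit inversion rather than by invoking an abstract inverse function theorem. Given a target $(y^u,y^s,y^0)$, set $x^s:=y^s$; solve $F(x^u,y^s)=y^u$ using the fact that $F(\cdot,y^s)-\id$ has Lipschitz constant at most $C_0/C_1<1/2$, so $F(\cdot,y^s)$ is a bi-Lipschitz homeomorphism of $B(m^u,C_0)$ onto its image (Banach contraction on $x^u\mapsto x^u+(y^u-F(x^u,y^s))$); then recover $x^0:=y^0-\tilde f(x^u,y^s)$. The explicit inverse is Lipschitz with constant $\le \Cs$, and combining with $D\phi_F^{-1}(y)=(D\phi_F(\phi_F^{-1}(y)))^{-1}$, the $C^\beta$ bound on $D\phi_F$, and the uniform lower bound $|\det D\phi_F|=|\det\partial_{x^u}F|\ge 1/2$ gives $\|D\phi_F^{-1}\|_{C^\beta}\le \Cs$.

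Finally, since $\phi_F(m)=m$ (a direct consequence of the normalisations) and $\phi_F-\id$ is Lipschitz with a small constant depending on $\CC^s$ and $C_0/C_1$, a quantitative Banach-contraction / open-mapping argument shows $\phi_F(B(m,C_0))\supset B(m,\Cs^{-1}C_0)$. The main obstacle I anticipate is the $C^\beta$ bound in the $x^u$ direction for $\partial_{x^u}\tilde f$: the definition only supplies Lipschitz control in $x^s$ and bare $C^1$ regularity in $x^u$, so the $\beta$-Hölder bound must be extracted using the vanishing of $\partial_{x^u}\tilde f$ on the slice $\{x^s=m^s\}$ together with the Lipschitz-in-$x^s$ control, and an interpolation between the uniform sup bound $|\partial_{x^u}\tilde f|\le C_0/C_1$ and the $C^0$ continuity to obtain the desired uniform $C^\beta$ modulus on a bounded set.
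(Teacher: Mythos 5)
Most of your argument runs along the same lines as the paper's (which is only sketched, by reference to \cite[Lemma 2.9, App.~A.1]{BG2}): the normalisation $F(x^u,m^s)=x^u$ together with \eqref{smooths} gives $|\partial_{x^u}F-\id|\le C_0/C_1<1$, transversality of $\CC^s$ to $\real^{d_u}\times\{0\}\times\real$ bounds $\partial_{x^s}F$ and $\partial_{x^s}\tilde f$, while $\tilde f(\cdot,m^s)\equiv 0$ and \eqref{smoothtilde} bound $\partial_{x^u}\tilde f$; invertibility, the explicit triangular inverse, and the covering statement then follow exactly as you describe. (One small remark: the paper points out that \eqref{smoothsecond} is not needed here, so your use of it for the mixed term is superfluous, though harmless.)

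The genuine problem is the step you flag at the end, and your proposed remedy does not work. Definition~\ref{defcharts} provides no quantitative modulus of continuity of $D\tilde f$ in the $x^u$ variable: \eqref{smoothtilde} only controls its variation in $x^s$, and the combination ``vanishing on $\{x^s=m^s\}$ $+$ sup bound $C_0/C_1$ $+$ $C^0$ continuity'' cannot be interpolated into a uniform $C^\beta$ modulus, because bare continuity carries no modulus to interpolate with. Concretely, perturbing an admissible chart by $\tilde f\mapsto \tilde f+(x^s-m^s)\psi(x^u)$ with $|\psi|$ small and $|\psi'|\le 1/(2C_1)$ but $\psi'$ oscillating on an arbitrarily small scale keeps all the conditions of $\FF(m,\CC^s,\beta,C_0,C_1)$, yet makes the $\beta$-H\"older seminorm in $x^u$ of $\partial_{x^u}\tilde f$ (hence of that row of $D\phi_F$) arbitrarily large; it need not even be finite for a single chart, since $C^1$ does not imply $C^{1+\beta}$. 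Note that the paper's own proof does not attempt this estimate: it establishes only the uniform $C^1$ bounds and invertibility, and the H\"older regularity actually used downstream (e.g.\ the $\beta$-H\"older Jacobian invoked in the proof of Lemma~\ref{embed}) is that of $DF$, i.e.\ of $\det\partial_{x^u}F$, which does follow from \eqref{smooths}--\eqref{smoothu}. So you should either confine the $C^\beta$ claim to the $F$-block and the Jacobian (which is all that is needed), or acknowledge that the definition would have to be strengthened (adding an $x^u$-H\"older condition on $D\tilde f$) to get the full statement; as written, your interpolation step would fail.
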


The proof of  Lemma ~\ref{lempropphi}  does not require \eqref{smoothsecond}.

\begin{proof}
Lemma ~\ref{lempropphi} can be proved like \cite[Lemma 2.9]{BG2}, using
\cite[Appendix A.1]{BG2}. We just explain how to show that
the $C^1$ norm of $F$ is
uniformly bounded (the argument for
$\tilde F$ is similar, using that $\CC^s$ is transversal to $\{0\}\times \{0\}
\times \real$ and \eqref{smoothtilde}): First, $\partial_{x^s} F$ is bounded since the
cone $\CC^s$ is transversal to $\real^{d_u+1}\times\{0\}$. Next,
$F(x^u, m^s)=x^u$, so that $\partial_{x^u} F(x^u,m^s)=\id$, hence
\begin{equation}
\label{eqpartialxF}
|\partial_{x^u} F(x^u, x^s)-\id|=|\partial_{x^u} F(x^u,x^s) -\partial_{x^u} F(x^u,m^s)|
\le \frac{|x^s-m^s|}{C_1}
\le \frac{C_0}{C_1}<1 \,  .
\end{equation}
Finally, estimate (\ref{eqpartialxF}) implies that $D
F$ is everywhere invertible, and its inverse has uniformly
bounded norm.
\end{proof}


\subsection{Spaces of distributions}
\label{spaces}

In this subsection, we introduce appropriate $C^{2}$
coordinate
patches $\kappa_{i,j,\ell}=\kappa_\zeta$ on the manifold  and cones
$\CC^s_{i,j}$ in $\real^d$ (recall that the flow is piecewise $C^{2}$).
Combining them  with  admissible charts
in  suitable families $\FF(m, \CC^{s},\beta, C_0, C_1)$
we glue together the local spaces $H^{r,s,q}_p$  via a partition
of unity,  and, zooming by a large factor $R$, we define the space
$\HHH_{p}^{r,s,q}(R)$ of distributions.\footnote{This is a
modification of the space $\HHH^{r,s}_p$ in
\cite{BG2}.}   

\begin{definition}\label{extcone}
An {\em extended cone} $\CC$ is a set of four closed cones $(\CC^s,
\CC_0^s, \CC^u, \CC^u_0)$ in $\real^d$ such that:

\noindent $\CC^s\cap \CC^u=\{0\}$; $\CC^s\cap (\{0\} \times \{0\} \times \real) =\CC^u\cap (\{0\} \times \{0\} \times \real)
=\{0\}$.

\noindent $\CC^s_0$ is $d_s$-dimensional and
contains $\{0\}\times \real^{d_s}\times \{0\}$, 

\noindent $\CC^u_0$ is $d_u$-dimensional
and contains $\real^{d_u}\times \{0\}\times \{0\}$; 

\noindent $\CC^s_0\setminus \{0\}$ is contained in the
interior of $\CC^s$, 
$\CC^u_0 \setminus \{0\}$ is contained in the interior
of $\CC^u$. 

Given two extended cones $\CC$ and $\tilde\CC$, we
say that an invertible matrix $\AAc:\real^d\to \real^d$ {\em sends 
$\CC$ to $\tilde\CC$ compactly} if  $\AAc \CC^u$ is contained in
$\tilde{\CC}^u_0$, and  $\AAc^{-1}\tilde\CC^s$ is
contained in $\CC^s_0$. 
\end{definition}

For all $i \in I$ and $j \in J_i$, we fix once and for all a finite number of
open sets $U_{i,j,\ell, 0}$ of $M$, for $\ell\in N_{i,j}$, covering
$\overline{B_{i,j}}$, and included in 
the open neighbourhood of $\overline{B_{i,j}}$ where the stable and unstable
cones extend continuously
(recall Definition~\ref{PiecHyp}). Let also $\kappa_{i,j,\ell}:U_{i,j,\ell, 0} \to
\real^d$, for $i \in I$, $j \in J_i$, and $\ell\in N_{i,j}$, be a finite family
of $C^{2}$ charts mapping flow orbits to flow directions, i.e.,
for each $\tau$, each $(z,t)$ with $z\in \widetilde O_{i,j}$ and
each $t$ in a neighbourhood of $[0, \tau_{i,j}(z)-\tau]$
(given by Definition~\ref{PiecHyp})
so that $T_{i,j,\tau}(z,t)\in U_{i,j,\ell,0}$
\begin{align}\label{flowbox}
&\exists (x^u,x^s)(z)\in \real^{d-1} \, , \mbox{ s.t. }
\quad \kappa_{i,j,\ell}(T_{i,j,\tau}(z,t)) =(x^u(z),x^s(z),0)+(0,0,\tau+t)
\, ,
\end{align}
(where the function $(x^u,x^s)(\cdot)$ is
$C^{2}$, recalling (2) in Definition~\ref{PiecHyp}), and so that the images of the cones $\CC_{i,j}^{(s)}$
and $\CC_{i,j}^{(u)}$ satisfy
\begin{align}
\label{coneok}
\{0\}\times \real^{d_s}\times \{0\} \subset D\kappa_{i,j,\ell}(w) \CC_{i,j}^{(s)}(w)\, ,
\quad \real^{d_u} \times \{0\}\times \{0\} \subset D\kappa_{i,j,\ell}(w) \CC_{i,j}^{(u)}(w)\, , \,\,  \forall w \, .
\end{align}
We shall take as ``standard" contact form in $\real^d$  the following one form
\begin{equation}\label{contact}
\alpha_0= dx^0 - x^s dx^u \, .
\end{equation}
We  require in addition 
that each 
chart $\kappa_{i,j,\ell}$  is Darboux (see \cite[\S 2]{Tsujii1}), i.e., it satisfies
\begin{equation}\label{conntact}
\kappa_{i,j,\ell}^* (\alpha)=\alpha_0\, .
\end{equation}
(Condition \eqref{conntact} is used to study
the averaging operator $\AAA_\delta$ in Section ~\ref{molll}.)

Finally, for any fixed small $t_{00} >0$,
and each $i \in I$, $j \in J_i$,
let $\CC_{i,j,\ell}$ be extended cones in
$\real^d$ such that, for every
$t\ge t_{00}$ and each $x\in \real^d$ so that 
$\kappa_{i',j',\ell'}\circ T_t \circ
\kappa_{i,j,\ell}^{-1}(x)
$
is defined, 
\begin{equation}\label{concond}
D(\kappa_{i',j',\ell'}\circ T_t \circ
\kappa_{i,j,\ell}^{-1})_x \mbox{ sends
$\CC_{i,j,\ell}$ to $\CC_{i',j',\ell'}$  compactly. }
\end{equation}

Such charts and cones
exist, as we explain now. Since the flow is hyperbolic (recall (2) in Definition~\ref{PiecHyp})
and the
image of the unstable cone is included in the unstable cone,
small enlargements of the unstable cones are sent strictly into
themselves by the map $T_t$ for any $t\ge t_{00}$
(and similarly for the stable cones). 
Since
$T_t$ is the Reeb flow of $\alpha$, we can assume that
the same charts satisfy \eqref{conntact} (see \cite[p. 1496 and \S2]{Tsujii1}),
and Appendix ~\ref{2c}).
Therefore, if one considers charts with
small enough supports
satisfying \eqref{flowbox}, \eqref{coneok}, and \eqref{conntact},
locally constant cones
$\CC_{i,j,\ell}^s$, $\CC_{i,j,\ell}^u$ slightly larger than the cones
$D\kappa_{i,j,\ell}(w) \CC_{i,j}^{(s)}(w)$,
$D\kappa_{i,j,\ell}(w)\CC_{i,j}^{(u)}(w)$, and finally slightly smaller cones
$\CC^s_{i,j,\ell,0}, \CC^u_{i,j,\ell, 0}$, they satisfy the previous
requirements.  We also fix open sets
$U_{i,j,\ell, 1}$ covering $X_0$ such that
$\overline{U_{i,j,\ell, 1}}\subset U_{i,j,\ell, 0}$, and we let
$V_{i,j,\ell, k}=\kappa_{i,j,\ell}(U_{i,j,\ell,k})$, $k=0, 1$.

The spaces of distributions will depend on a large ``zoom''
parameter $R
\ge 1$: If $R\ge 1$ and
$W$ is a subset of $\real^d$, denote by $W^R$ the set $\{R\cdot z
\st z\in W\}$. Let also $\kappa_{i,j,\ell}^R(w)=R \kappa_{i,j,\ell}(w)$,
so that $\kappa_{i,j,\ell}^R(U_{i,j,\ell, k})=V_{i,j,\ell, k}^R$. Let
\begin{equation}
\label{fset}
\ZZ_{i,j,\ell}(R)
=\{ m\in V_{i,j,\ell, 0}^R \cap \integer^d\mid
B(m,C_0)\cap V_{i,j,\ell,1}^R\ne \emptyset\}\, ,
\end{equation}
and
\begin{equation}
\label{fset2}
\ZZ(R)=\{ (i,j,\ell, m) \st i\in I, j\in J_i, \ell\in N_{i,j}, m\in \ZZ_{i,j,\ell}(R)\}\, .
\end{equation}

To $\zeta=(i,j,\ell, m)\in \ZZ(R)$ is  associated the point
$w_\zeta:=(\kappa_{i,j,\ell}^R)^{-1}(m)$ of $M$. These are the
points around which we shall construct local foliations, as
follows. Let us first introduce useful notations: We write,
for  $\zeta =(i,j,\ell, m) \in \ZZ(R)$,
\begin{equation}\label{usenot}
O_\zeta=O_{i, j} \, , \quad
B_\zeta=B_{i, j} \, , \quad
U_{\zeta,k}=U_{i,j,\ell,k}\, ,\, k=0,1\, ,\quad
\kappa^R_\zeta=\kappa^R_{i,j,\ell}
\text{ and }
\CC_\zeta=\CC_{i,j,\ell}
\, .
\end{equation}
These are respectively the partition set, the chart and the
extended cone that we use around $w_\zeta$. Let us fix some
constants $C_0>1$ and $C_1>2C_0$. If $R$ is large enough, say
$R\ge R_0(C_0, C_1)$, then, for any $\zeta=(i,j,\ell, m)\in \ZZ(R)$
and any chart $\phi_\zeta\in \FF(m,\CC^s_\zeta, \beta ,C_0,C_1)$, we have
$\phi_\zeta(B(m,C_0)) \subset V_{i,j,\ell,0}^R$. For $\zeta=(i,j,\ell, m) \in
\ZZ(R)$, we can therefore consider the set of charts ($R$,
$C_0$ and $C_1$ do not appear in the notation for the sake of
brevity)
\begin{equation}\label{ourcharts}
\FF(\zeta):=\{\bphi_\zeta=(\kappa_\zeta^R)^{-1} \circ \phi_\zeta:B(m,C_0) \to M \, ,\,
\phi_\zeta\in \FF(m,\CC^s_\zeta, \beta, C_0,C_1)\}\, .
\end{equation}
The image under  a chart $\bphi_\zeta \in \FF(\zeta)$ of the stable
foliation in $\real^d$ is a local foliation around the point
$w_\zeta$, whose tangent space is everywhere contained in
$(D\kappa_\zeta^R)^{-1}(\CC^s_\zeta)$. This set is almost contained
in the stable cone $\CC^{(s)}_i(w_\zeta)$, by our choice of
charts $\kappa_{i,j,\ell}$ and extended cones $\CC_{i,j,\ell}$.

Let us fix once and for all a $C^\infty$ function\footnote{Such
a function exists since the balls of radius $d$ centered at
points in $\integer^d$ cover $\real^d$.} $\rho:\real^d\to [0,1]$  such that
\begin{equation*}
\rho(z)=0 \text{ if } |z| \ge d\qquad  \text{ and } \qquad \sum_{m\in \integer^d} \rho(z-m)=1\, .
\end{equation*}
For $\zeta=(i,j,\ell,m)\in \ZZ(R)$, let $\rho_m(z)=\rho(z-m)$, and
\[
\brho_{\zeta}:=\brho_{\zeta}(R)=\rho_m\circ \kappa_\zeta^R : M \to [0,1]\, .
\]
Since $\rho_m$ is compactly supported in
$\kappa_{i,j,\ell}^R(U_{i,j,\ell, 0})$ if $m\in \ZZ_{i,j,\ell}(R)$ (and $R$ is
large enough, depending on $d$), the above expression is
well-defined. This gives a partition of unity in the following
sense:
\[
\sum_{m\in \ZZ_{i,j,\ell}(R)} \brho_{i,j,\ell, m}(w)= 1 \,,  \forall w \in U_{i,j,\ell, 1}\, ,
\quad \brho_{i,j,\ell, m}(w)=0 \, , \forall w \notin  U_{i,j,\ell, 0}\, .
\]
Our choices ensure that the intersection multiplicity of this
partition of unity is bounded, uniformly in $R$, i.e., for any
point $w$, the number of functions such that
$\brho_{\zeta}(w)\not=0$ is bounded independently of $R$.

\medskip

The space we shall consider depends in an essential way on the
parameters $p$, $r$, $s$, and $q$. It will also depend, in an
inessential way, on  the choices we have made  (i.e.,  the
reference charts $\kappa_{i,j,\ell}$, the extended cones
$\CC_{i,j,\ell}$, the constants $C_0$ and $C_1$, the function
$\rho$, and $R\ge R_0(C_0, C_1)$): Different choices would
lead to different spaces, but all such spaces share the same
features. 

\begin{definition}[Spaces  $\HHH_{p}^{r,s,q}(R,C_0,C_1)$ of distributions on $M$]
\label{defnorm}
Let $1<p < \infty$, $r,s,q\in \real$, let $1<C_0<C_1/2$, and let
$R\ge R_0(C_0, C_1)$. For any system of charts $\Phi=\{
\bphi_{\zeta}\in \FF(\zeta) \st \zeta\in \ZZ(R)\}$, let for $\psi\in L^\infty(X_0)$
\begin{equation}
\label{defnormen}
\norm{\psi}{\Phi}=\left(\sum_{\zeta\in \ZZ(R)}
\norm
{(\brho_\zeta(R) \psi)\circ \bphi_\zeta}
{H_p^{r,s,q}}^p\right)^{1/p}\, ,
\end{equation}
and put $\norm{\psi}{\HHH_{p}^{r,s,q}(R,C_0,C_1)}
=\sup_{\Phi}\norm{\psi}{\Phi}$, the supremum ranging over all
such systems of charts $\Phi$.

The space $\HHH_{p}^{r,s,q}(R,C_0,C_1)$ is the closure, for the norm
$\norm{\psi}{\HHH_{p}^{r,s,q}(R,C_0,C_1)}$, of $\{\psi\in L^\infty(X_0)\;:\;\norm{\psi}{\HHH_{p}^{r,s,q}(R,C_0,C_1)}<\infty\}$.
\end{definition}

Recall that our assumption from Definition~\ref{transcond} implies that
the cones are continuous, this is why we replace
$(\brho_\zeta(R) \cdot \Id_{B_\zeta} \psi)\circ \bphi_\zeta$
in the definition of \cite{BG2} by
$
(\brho_\zeta(R) \cdot  \psi)\circ \bphi_\zeta
$.

\begin{remark}\label{natural2}
In general, $\HHH_{p}^{r,s,q}(R,C_0,C_1)$ is not isomorphic to a Triebel
space $H^{r,s,q}_p(X_0)$.   
However,  
Lemma~\ref{embed} 
implies  that the Sobolev space
$H_p^{\sigma}(X_0)$ is isomorphic with the Banach space
$\HHH^{\sigma,0,0}_p(R,C_0,C_1)$ if $\max(-\beta,-1+1/p) < \sigma < 1/p$,
and that
$\HHH_{p}^{\sigma,0,0}(R,C_0,C_1) \subset
\HHH_{p}^{r,s,q}(R,C_0,C_1)$ if $s \le 0$, $q \le 0$ 
and $r \le \sigma$.
\end{remark}

\section{Reduction of the theorem to Dolgopyat-like estimates}
\label{redux}

For each $t\in \real$ we define an operator on $L^\infty(X_0)$
by setting
$$\LL_t (\psi) = \psi \circ T_{-t}\, .$$
For $\beta \in (0, 1)$ satisfying \eqref{bunch2},
$p\in (1, \infty)$,  and real numbers $r$, $s$, and $q$,
we  introduced  in Subsection~ \ref{spaces}
a space
$\HHH_p^{r,s,q}(R)$ of (aniso\-tropic) distributions
on $M$, supported in $X_0$.
Since $L^\infty(X_0)$ is dense in 
$\HHH^{r,s,q}_p(R)$,  it makes sense to talk about the
extension of
$\LL_t$ to the Banach space $\HHH^{r,s,q}_p$.
In Section~ \ref{LYBG},  adapting the bounds in \cite{BG2},  we prove:

\begin{lemma}[Lasota-Yorke type estimate]\label{LY0}
Let  $T_t$ be a piecewise $C^2$ hyperbolic flow satisfying transversality
(Definition ~\ref{transcond})
and  Let $\beta \in (0,1)$ satisfy
\eqref{bunch2}. 
Fix $\epsilon >0$. 
Then, for all large enough $C_0$ and
$C_1$,   for all $1<p<\infty$, all
real numbers  $s$, $s'$, and $r$, $r'$   satisfying 
\begin{align}\label{condrs}
-1+1/p  <s'<s <0 \le  r'< r <1/p \, , \quad -\beta< r+s <0 \, ,
\end{align} 
all $q\ge 0$ satisfying
\begin{equation}\label{locall'}
(1+q/r)(r-s)<1
\end{equation}
and
\begin{align}
\label{starstar'}1/p-1<s (1+\frac{q}{r})\le &0\le r(1+\frac{q}{r}) <1/p   \, .
\end{align}
there exist $\Cs>1$ (independent of $C_1$ and $C_0$),
$t_0$,   $\tilde \tau>0$, 
and $A_0 \ge 0$,  so that for any $t\ge t_0$ there exists $R(t)$ so that
for all $R\ge R(t)$
\begin{equation}\label{reallybounded}
\| \LL_t (\psi)\|_{\HHH^{r,s,q}_p(R)} 
\le \Cs e^{A_0t}  \| \psi \|_{\HHH^{r,s,q}_p(R)} \, ,
\end{equation}
and
\begin{align}
\label{notLY} 
\| \LL_{t} (\psi)\|_{\HHH^{r,s,q}_p(R)}
&\le \Cs \lambda^{t} \| \psi \|_{\HHH_p^{r,s,q+r-s}(R)}\\ 
\nonumber
&\qquad\qquad+ \Cs  R^{2(r-r'+s-s')}e^{A_0 t} \| \psi\|_{\HHH^{r',s',q+2r-r'-s}_p(R)}\, ,
\end{align}
where
$$
\lambda := (1+\epsilon)^{1/\tilde \tau}
[ (D^e_{\lceil t/\tilde \tau\rceil})^{(p-1)/p} (D^b_{\lceil t/\tilde \tau\rceil })^{1/p} 
\|\max(\lambda_{u,\lceil t/\tilde \tau\rceil}^{-r}, \lambda_{s,\lceil t/\tilde \tau\rceil}^{-(r+s)})\|_{L^\infty}]^{1/\lceil t \rceil }\, .
$$
\end{lemma}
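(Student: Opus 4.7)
The plan is to adapt the proof of the Lasota--Yorke bound in \cite[\S 3]{BG2} to the continuous-time contact setting, working at the level of the Poincar\'e sections to decompose the flow and then invoking the admissibility-invariance lemma (Lemma~\ref{lemcompose}) and the Strichartz multiplier bound (Lemma~\ref{lem:multiplier}) locally. Fix $t\ge t_0$, a distribution $\psi$, and a target system of charts $\Phi=\{\bphi_\zeta\}_{\zeta\in\ZZ(R)}$. For each $\zeta$ insert a second (source) partition of unity $\{\brho_{\zeta'}\}$ at time $0$ and split $(\brho_\zeta\cdot(\psi\circ T_{-t}))\circ\bphi_\zeta=\sum_{\zeta'}(\brho_\zeta\circ\bphi_\zeta)\cdot((\brho_{\zeta'}\psi)\circ T_{-t}\circ\bphi_\zeta)$. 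Writing $n=\lceil t/\tilde\tau\rceil$ with $\tilde\tau$ a lower bound for the first-return times, the preimage under $T_{-t}$ of the support of $\brho_\zeta\circ\bphi_\zeta$ splits into pieces indexed by sequences $\ii\in I^{n+1}$, on each of which $T_{-t}$ factors as $T_{-t_1}\circ(P_\ii^n)^{-1}\circ T_{-t_0}$ with times $t_0(w),t_1(w)$ bounded by $\sup\tau_{i,j}$.

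The key input is Lemma~\ref{lemcompose}: applied to the full composition $T_{-t}$, it produces source charts $\bphi_{\zeta'}\in\FF(\zeta')$ such that $\bphi_\zeta^{-1}\circ T_{-t}\circ\bphi_{\zeta'}$ takes a product form (expansion by at least $\lambda_{\ii,u}^{(n)}$ in $x^u$, contraction by at most $\lambda_{\ii,s}^{(n)}$ in $x^s$, uniform-speed translation in $x^0$), precomposed with a perturbation $\Delta$ limited to the flow direction. This $\Delta$ is absorbed through the gluing Lemma~\ref{noglue}, which trades the flow-direction perturbation for a loss of $r-s$ in the time-regularity index $q$; this is the origin of the shift from $q$ on the left of \eqref{notLY} to $q+r-s$ in the contracting term, while the extra $r-r'$ in $q+2r-r'-s$ corresponds to the Sobolev interpolation cost incurred in the high/low frequency split that produces the compact term.

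With the chart composition in product form, the local operator on $H^{r,s,q}_p$ is estimated via the multiplier bound of Strichartz (Lemma~\ref{lem:multiplier}), whose hypotheses are guaranteed by the assumptions \eqref{condrs}, \eqref{locall'}, and \eqref{starstar'}. The hyperbolic dilations contribute the pointwise factor $(\lambda_{\ii,u}^{(n)})^{-r}(\lambda_{\ii,s}^{(n)})^{-(r+s)}$ (the flow direction only gives a uniform bounded Jacobian, since the chart composition is unit speed in $x^0$). Summing the $p$-th powers over source charts $\zeta'$ covering each point ($D^b_n$ overlaps) and applying H\"older in the sum over $\ii$ at the target to handle the $D^e_n$ overlapping image pieces produces the factor $(D^e_n)^{(p-1)/p}(D^b_n)^{1/p}$; the subexponential remainders (from the $C^2$ smooth corrections, the time-direction arguments, and the discrepancy between $t$ and $n\tilde\tau$) are absorbed in $(1+\epsilon)^{1/\tilde\tau}$, yielding the $\lambda^t$ contribution of \eqref{notLY}. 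The compact part is obtained by truncating the Strichartz multiplier at the frequency scale $R$: high-frequency contributions feed into the $\lambda^t$ estimate, while low-frequency contributions are controlled by the $\HHH^{r',s',q+2r-r'-s}_p$ norm at cost $R^{2(r-r'+s-s')}$, with $e^{A_0t}$ covering the growth from Jacobians and complexity. Estimate \eqref{reallybounded} follows from the same scheme with the hyperbolic gains discarded and all exponential-growth factors lumped into $e^{A_0t}$.

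The main obstacle is the bookkeeping for the flow direction combined with the Poincar\'e--time decomposition. Unlike the discrete-time argument of \cite{BG2}, here the breakpoints $t_0(w)$ and $t_1(w)$ depend on position through $\ii$ and the return-time functions $\tau_{i,j}$, so one must argue locally on each piece $O_\ii$ and then reassemble the estimates compatibly with the partition of unity. The pullback of an admissible foliation under this decomposition is only admissible after the $\Delta$-correction from Lemma~\ref{lemcompose}, and one must check that the resulting $q$-loss from Lemma~\ref{noglue} is incurred only once and does not feed back into the contracting rate. Verifying that Strichartz's bound can be applied uniformly across the pieces, and that the complexity sums line up with the precise exponents $(p-1)/p$ and $1/p$ coming from H\"older, is the technical point that requires a full rewrite of the \cite{BG2} argument in the present continuous-time contact setting.
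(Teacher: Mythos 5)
Your skeleton follows the paper's route (decompose $T_{-t}$ along the Poincar\'e returns, insert a source partition of unity, pull the target charts back with Lemma~\ref{lemcompose}, absorb the flow-direction perturbation $\Delta_m$ via Lemma~\ref{noglue}, extract the contraction factor from the hyperbolic part, and use H\"older to produce $(D^e_n)^{(p-1)/p}(D^b_n)^{1/p}$), but there is a genuine gap at the heart of the piecewise setting: you never treat the characteristic functions $\Id_{T_{\ii,t}B_{\ii,t}}$ of the dynamically refined flow boxes. Before any composition estimate one must prove that multiplication by these characteristic functions is bounded on $H^{r,s,q}_p$ with at most subexponential cost in $n$, and this is exactly where the transversality hypothesis and the Strichartz multiplier bound (Lemma~\ref{lem:multiplier}) are used: the paper decomposes each itinerary $\ii$ into blocks of length $N$ so that the lateral boundaries of each block are transversal to the stable cone, obtaining operator norm $(\Cs N^p)^{n/N}$, with a separate (easy, flow-box) application of Lemma~\ref{lem:multiplier} for the top and bottom boundaries given by the return-time graphs $\tilde\tau_{\ii,k}$. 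In your write-up Lemma~\ref{lem:multiplier} is instead invoked to estimate the composition with the hyperbolic product map, which is not what that lemma does (composition with the block-diagonal hyperbolic matrix is Lemma~\ref{CompositionDure}); and your statement that the remaining errors are ``absorbed in $(1+\epsilon)^{1/\tilde\tau}$'' silently assumes the characteristic-function multipliers cost only a subexponential factor, which is precisely the point that must be proved and is where the hypotheses \eqref{starstar'} and the transversality of Definition~\ref{transcond} actually enter.

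A second gap concerns the weak-norm term and \eqref{reallybounded}. Your mechanism for the compact part --- ``truncating the Strichartz multiplier at the frequency scale $R$'' --- is not how the factor $R^{2(r-r'+s-s')}$ arises, and it is not clear it produces a bound by the genuinely weaker norm $\HHH^{r',s',q+2r-r'-s}_p$. In the paper this term comes from the $C^{r-r'+s-s'}$ norms of the $R$-zoomed cutoffs in the partition-of-unity Lemma~\ref{lem:sum} (applied once at the end and once at the beginning, whence the exponent $2$), combined with the second norm in Lemma~\ref{CompositionDure} and with giving up the gluing step of Lemma~\ref{lemcompose}, which is what produces both the $e^{A_0t}$ growth and the index $q+2r-r'-s$ (the shift $r-r'$ is the $q$-loss in the first estimate of Lemma~\ref{CompositionDure}, not a Sobolev interpolation cost of a frequency split). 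Relatedly, \eqref{reallybounded} does not follow ``from the same scheme with the hyperbolic gains discarded'': if you keep the $\Delta_m$-correction you incur the $q\to q+r-s$ loss of Lemma~\ref{noglue} and land in the wrong space on the right-hand side. The paper avoids this by dropping the gluing in Lemma~\ref{lemcompose} (so $\Delta_m$ disappears, at the price of exponential factors absorbed into $e^{A_0 t}$); your proposal does not address how the $q$-loss is avoided for \eqref{reallybounded}.
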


Throughout, $\Cs$ denotes a constant (which can vary
from line to line) depending 
on $r$, $s$, $q$, $p$, $r'$, $q'$,
and the dynamics, possibly on $C_0$, but
not on $C_1$ or $R$, not on the iterate $t$, and not on the
parameter $z=a+ib$ of the resolvent $\RR(z)$ to be introduced soon.

Let $t_0$ be as in Lemma~\ref{LY0}, and set
\begin{equation}
\|\psi\|_{\widetilde \HHH_p^{r,s,q} (R)}
=\sup_{t \in [0,t_0]}\|\LL_t(\psi)\|_{\HHH^{r,s,q}_p(R)} \, .
\end{equation}
We get as an easy corollary of \eqref{reallybounded} 
that there exists $A\ge A_0$ so that for all large enough $C_1$ and $R$
and all $t\ge 0$
\begin{equation}\label{reallybounded'}
\| \LL_t (\psi)\|_{\widetilde
\HHH^{r,s,q}_p(R)} \le \Cs e^{ At}  \| \psi \|_{\widetilde
\HHH^{r,s,q}_p(R)} \, .
\end{equation}
(It is not clear in general that $\LL_t$ is bounded
for small $t<t_0$,
for the norm  $\HHH_p^{r,s,q} (R)$. It seems that it is necessary
in particular to find charts so that the changes of charts 
preserve stable leaves.)
The bounds \eqref{reallybounded'} and \eqref{notLY} imply
that if $q=2r-r'-s$ satisfies  \eqref{locall'} then 
for every $t>0$ 
\begin{align}\label{needit}
&\| \LL_t (\psi)\|_{\widetilde \HHH^{r,s,q}_p(R)}\le
\\ \nonumber &
\qquad \Cs^{t_0} 
\lambda^t \| \psi \|_{\widetilde \HHH_p^{r,s,q+r-s}(R)} 
+ \Cs R^{2(r-r'+s-s')} e^{A t} \| \psi\|_{\widetilde \HHH^{r',s',q+2r-r'-s}_p(R)}\, .
\end{align}

For $r,s,q,p$ and $R$
as in Lemma~\ref{LY0}, we define $\widetilde \HHH^{r,s,q}_p(R)$ to be the closure of~\footnote{In \cite{BG2}, we took the closure of $L^\infty(X_0)$
in the analogous definition. The present definition
is adapted in particular for  \eqref{strongl}. }
\begin{equation}\label{closure'}
\{\LL_t(\psi) \mid
\psi \in C^1(X_0)\, , \, t \ge 0\, , \, 
\norm{ \psi}{\widetilde \HHH_{p}^{r,s,q}(R)}<\infty\}
\end{equation}
for the norm
$\norm{\psi}{\widetilde\HHH_{p}^{r,s,q}(R)}$, setting also
$\widetilde \HHH= \widetilde \HHH^{r,s,0}_p(R)$ and
$\|\cdot \|=\|\cdot \|_{\widetilde \HHH}$. 
Note that $\widetilde \HHH$ is not included in $\widetilde \HHH^{r',s',2r-r'-s}_p(R)$, and
$\widetilde \HHH$ is not included in $\widetilde \HHH^{r,s,r-s}_p(R)$ (a fortiori there is
no compact embedding).
Therefore, the inequality \eqref{notLY} does not give a ``true"
Lasota-Yorke inequality. Like in \cite{Li}, we shall overcome this problem
by working with the resolvent $\RR(z)$
(there is a  difference with \cite{Li}
here: even the ``bounded term" of our
Lasota-Yorke inequality is unbounded!).

For $1<p<\infty$
and $\sigma\in \real$,
denote by  $H_p^{\sigma}(M)$  the standard (generalised)
Sobolev  space on $M$ and set
\footnote{For  $\max(-\beta,-1+1/p)<\sigma<1/p$, Corollary~\ref{StrStr} implies that
$H_p^{\sigma}(X_0)$ coincides with
the ``Whitney" definition, i.e., the restriction
to $X_0$ of elements in  $H_p^{\sigma}(M)$; beware however that if
$\sigma \ge 1$ then, for example, $1_{X_0}
\notin H_p^{\sigma}(X_0)$.}
$$H_p^{\sigma}(X_0)
=\{ \psi \in H_p^\sigma(M)\mid
\mbox{support}\, (\psi)\subset X_0\}\, ,
$$
endowed with the $H_p^\sigma(M)$-norm. We put  $L^p(X_0)=H^0_p(X_0)$, also for $p=\infty$.
We have the following  embedding and compact embedding properties
\footnote{
The proof requires the Jacobian
of the charts in Definition~\ref{defcharts} to be $\beta$-H\"older.}:

\begin{lemma}[Bounded and compact embeddings]\label{embed}
Let $1<p<\infty$. Then for all
$\max(-\beta,-1+1/p) <\sigma <1/p$,
the Banach space $H^\sigma_p(X_0)$ is isomorphic
with $\HHH^{\sigma,0,0}_p(R)$
and with
$\widetilde \HHH^{\sigma,0,0}_p(R)$.

If $r' \le r$, $s' \le s$, and $q' \le q$ we have the following continuous
inclusions
\begin{equation}
\label{**}
\widetilde \HHH_p^{r,s,q}(R) \subset \widetilde \HHH^{r',s',q'}_p(R)\, ,
\quad
\widetilde \HHH_p^{r,s,q}(R) \subset \widetilde \HHH^{r-|s|-|q|,0,0}_p(R)\, ,
\end{equation}
(and similarly for $\HHH_p^{r,s,q}(R)$).
If 
$$\max(-\beta, r')<r-|s|\, ,
$$
the following inclusion is compact
\begin{equation}\label{cpctt}
\widetilde \HHH_p^{r,s,0} (R)\subset \widetilde \HHH^{r',0,0}_p(R)\, .
\end{equation}
\end{lemma}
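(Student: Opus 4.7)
The proof naturally splits into three stages matching the three assertions, and the whole approach is to reduce everything to standard multiplier and compact-embedding facts for the local spaces $H^{r,s,q}_p$ by exploiting the uniform $C^{1+\beta}$ control on admissible charts provided by Lemma~\ref{lempropphi}.

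\emph{Stage 1 (identification with Sobolev spaces for $s=q=0$).} The plan is to show that, for $\max(-\beta,-1+1/p)<\sigma<1/p$, both multiplication by the characteristic function $1_{X_0}$ and composition with a uniformly $C^{1+\beta}$-bounded diffeomorphism are bounded operators on $H^\sigma_p$. The upper range $\sigma<1/p$ will come from Lemma~\ref{lem:multiplier} (Strichartz-type bound), the lower range $\sigma>-1+1/p$ from duality, and $\sigma>-\beta$ from the $C^{1+\beta}$ regularity of $\phi_F$. Combined with Lemma~\ref{lempropphi}, this yields
$$
\|(\brho_\zeta\psi)\circ \bphi_\zeta\|_{H^{\sigma,0,0}_p}\asymp \|\brho_\zeta \psi\|_{H^\sigma_p(M)}
$$
uniformly over $\bphi_\zeta\in \FF(\zeta)$, so the supremum in Definition~\ref{defnorm} is an equivalent norm. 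Summing over $\zeta$ using the uniformly bounded intersection multiplicity of $\{\brho_\zeta\}$ and the Sobolev extension/restriction theorems identifies $\HHH^{\sigma,0,0}_p(R)$ with $H^\sigma_p(X_0)$. The passage to $\widetilde\HHH^{\sigma,0,0}_p(R)$ then follows because $\LL_t=\psi\circ T_{-t}$ is composition with a piecewise $C^2$ Lipschitz map preserving $X_0$, hence uniformly bounded on $H^\sigma_p(X_0)$ for $t\in[0,t_0]$ in the same range of $\sigma$, and trivially bounded below by taking $t=0$.

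\emph{Stages 2 and 3 (monotone and compact embeddings).} For \eqref{**}, observe that for $r'\le r$, $s'\le s$, $q'\le q$ the symbol ratio $a_{r',s',q'}(\xi)/a_{r,s,q}(\xi)$ is bounded and satisfies the Mihlin--H\"ormander multiplier condition, so it defines a bounded Fourier multiplier on $L^p$ (as used in Appendix~\ref{localspaces}). This gives $H^{r,s,q}_p\hookrightarrow H^{r',s',q'}_p$ with uniform constant; composing with each $\bphi_\zeta$, taking suprema and summing as in Stage~1 then yields the first inclusion. The second inclusion is handled analogously using the pointwise bounds $\langle\xi^s\rangle\le\langle\xi\rangle$ and $\langle\xi^0\rangle\le\langle\xi\rangle$, which show $a_{r-|s|-|q|,0,0}/a_{r,s,q}$ is a bounded Mihlin-admissible symbol. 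For the compact embedding \eqref{cpctt}, the plan is to bootstrap on Stage 1: under the hypothesis $\max(-\beta,r')<r-|s|<1/p$, Stage 1 identifies $\widetilde\HHH^{r',0,0}_p(R)\cong H^{r'}_p(X_0)$, so it suffices to show that $\widetilde\HHH^{r,s,0}_p(R)$ is compactly included in $H^{r'}_p(X_0)$. Given a bounded sequence $(\psi_n)$ in $\widetilde\HHH^{r,s,0}_p(R)$, picking the admissible \emph{flat} system $\bphi^0_\zeta$ (stable leaves parallel to $\{x^u\}\times\real^{d_s}\times\{x^0\}$), Stage~2 ensures $(\brho_\zeta\psi_n)\circ \bphi^0_\zeta$ is bounded in $H^{r-|s|}_p$ locally. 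Classical Rellich--Kondrachov on each compactly supported piece yields convergence in $H^{r'}_p$ along a subsequence, and a diagonal argument over the finite index set of $\zeta$ followed by reassembling via the partition of unity gives a subsequence converging in $H^{r'}_p(X_0)$.

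The main obstacle is Stage~1, specifically establishing the bounded-multiplier property of $1_{X_0}$ and composition with admissible charts in the precise range $\max(-\beta,-1+1/p)<\sigma<1/p$; this is where the Strichartz-type Lemma~\ref{lem:multiplier} is essential, and where the $C^{1+\beta}$ rather than merely $C^1$ control of $\phi_F$ from Definition~\ref{defcharts} and Lemma~\ref{lempropphi} is used in a crucial way. Once Stage~1 is in hand, Stages~2 and~3 reduce to Mihlin-multiplier bookkeeping and classical Rellich--Kondrachov together with a standard diagonal extraction, while the passage between $\HHH^{r,s,q}_p(R)$ and $\widetilde\HHH^{r,s,q}_p(R)$ is handled uniformly by the definition of the latter as a supremum over $t\in[0,t_0]$.
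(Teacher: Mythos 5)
Your overall route coincides with the paper's: the identification of $\HHH^{\sigma,0,0}_p(R)$ with $H^\sigma_p(X_0)$ rests on uniform bounds for composition with the uniformly $C^{1+\beta}$ admissible charts (the paper delegates this chart-change equivalence to \cite[Lemma 4.4]{BG2}, which needs Lemma~\ref{Leib}); the monotone inclusions \eqref{**} are exactly the symbol-domination statements for the Triebel norms that the paper invokes; and the compact inclusion is reduced, as in the paper, to $\widetilde \HHH_p^{r,s,0}\subset \HHH_p^{r-|s|,0,0}\cong H^{r-|s|}_p(X_0)$ followed by the classical compact embedding $H^{r-|s|}_p(X_0)\subset H^{r'}_p(X_0)$ for $r'<r-|s|$ on the compact set $X_0$; your flat-chart/Rellich/diagonal extraction is just this, written out by hand.

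There is, however, one step whose justification is not valid as stated, and it is precisely the delicate point: the comparison of $\widetilde\HHH^{\sigma,0,0}_p$ with $\HHH^{\sigma,0,0}_p$, i.e., the bound $\sup_{t\in[0,t_0]}\|\LL_t\psi\|_{\HHH^{\sigma,0,0}_p}\le \Cs\|\psi\|_{\HHH^{\sigma,0,0}_p}$. You assert that $\LL_t$ is bounded on $H^\sigma_p(X_0)$ ``because $T_{-t}$ is a piecewise $C^2$ Lipschitz map preserving $X_0$''. Piecewise regularity alone does not give boundedness of the composition operator on $H^\sigma_p$ for $\sigma>0$: $\psi\circ T_{-t}$ acquires jump discontinuities across the images of the singularity hypersurfaces, and the bound is in general false for $\sigma\ge 1/p$, so any correct argument must exploit the restriction $\sigma<1/p$ through a multiplier estimate rather than through Lipschitz-ness. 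The paper handles this by invoking \eqref{prel}, proved via a ``toy'' isotropic version of the Lasota--Yorke argument of Lemma~\ref{LY0}: decompose $T_{-t}$, $t\le t_0$, into its finitely many continuity pieces, control multiplication by the indicator functions of these pieces by the Strichartz bound (this is where $-1+1/p<\sigma<1/p$ enters, together with Lemma~\ref{lem:multiplier}), compose with the $C^2$ extensions on each piece, and sum, the number of pieces being bounded for $t\le t_0$; negative exponents are then reached by duality and interpolation. Since you already list the Strichartz multiplier bound among your Stage-1 tools and the time interval is bounded, the gap is repairable, but as written this key inference is exactly the kind of reasoning that fails for dynamics with discontinuities and must be spelled out rather than asserted.
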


\begin{proof}[Proof of Lemma ~\ref{embed}]
We fix $R$, $C_0$, $C_1$. 
(For fixed $R$, the sum in \eqref{defnormen} involves a
uniformly bounded number of terms.)

The continuous embedding claims
\eqref{**}
follow
from the definitions and properties of Triebel spaces, taking
the supremum over all admissible charts
(for example, since $H_p^{r,s,q}(\real^d)$ is included in
$H_p^{r-|s|-|q|}(\real^d)$, it follows by taking the supremum over the
admissible charts that $\HHH_p^{r,s,q}$ is included in
$\HHH_p^{r-|s|-|q|,0,0}$). We thus only need to prove the compact
embedding statement and the relation with  Sobolev spaces.

Fix $s\le 0\le r$, $q$, and $r_0<r$, with
$r_0-|s|-|q|>-\beta$. 
For any admissible charts
$\phi_1, \phi_2 \in \FF(\zeta)$ for some $\zeta$ (recall
\eqref{ourcharts}), the change of coordinates $\phi_2\circ
\phi_1^{-1}$ is $C^1$ and has a (uniformly) $C^\beta$ Jacobian.
Since $r-|s|-|q|>-\beta$,
it follows from the functional analytic preliminary  in \cite[Lemma 4.4]{BG2}
(which requires Lemma~\ref{Leib} for $\tilde \beta=\beta$)
that
changing the system $\Phi$ of charts in the definition of the
$\HHH_p^{\sigma,0,0}$-norm gives equivalent norms. Hence,
$\HHH_p^{\sigma,0,0}$ is isomorphic to the Sobolev space
$H_p^{\sigma}(X_0)$. 

To prove that $\|\psi\|_{\widetilde \HHH^{\sigma,0,0}_p(R)}\le \Cs
\|\psi\|_{\HHH^{\sigma,0,0}_p(R)}$, it suffices to
apply \eqref{prel} and the definition. 
For the converse bound,
$\|\psi\|_{\HHH^{\sigma,0,0}_p(R)}\le \|\psi\|_{\widetilde \HHH^{\sigma,0,0}_p(R)}$
just use the definition and $\LL_0 (\psi)=\psi$.

To prove the compact embedding statement
\eqref{cpctt}, we 
use   $\widetilde \HHH_p^{r,s,0} \subset \HHH_p^{r-|s|,0,0}$ 
and that
the inclusion $H_p^{r-|s|}(X_0) \subset H^{r'}_p(X_0)$
is compact  since $r'<r-|s|$ and $X_0$ is compact
(see e.g. \cite[Lemma 2.2]{Cinfty}).
\end{proof}

Clearly, $\LL_0$ is the identity and
$\LL_{t'}\circ \LL_t = \LL_{t'+t}$ for all
$t, t' \in \real_+$.  
We claim that for any fixed 
$\psi \in \widetilde \HHH^{r,s,0}_p$
\begin{equation}\label{strongl}
\lim_{t \downarrow 0} \LL_t (\psi) = \psi \, .
\end{equation}
Indeed, by the definition, in particular \eqref{closure'}, we can
approach any $\psi$ in $\widetilde \HHH^{r,s,0}_p$ by a sequence,
$\LL_{t_n}(\psi_n)$ with $\psi_n\in C^1$ and $t_n\ge 0$. Then we write
$$
\LL_t (\psi) -\psi =\LL_t(\psi-\LL_{t_n}\psi_n)- \LL_{t_n}(\LL_t(\psi_n)-\psi_n)
+(\LL_{t_n}\psi_n-\psi)\, .
$$
Recall \eqref{reallybounded'}.
The first and last term in the right-hand-side
above tend to zero in $\widetilde \HHH$ as $n\to \infty$,
uniformly in $0\le t \le 1$.
Then, using the bounded inclusion $H^1_p(X_0)\subset  \widetilde \HHH$
from Lemma~\ref{embed},
$$\|\LL_{t_n}(\LL_t(\psi_n)-\psi_n)\|\le C e^{At_n}\|\LL_t(\psi_n)-\psi_n\|_{H^1_p(X_0)}\, ,
$$
so that
it suffices to see that
$\lim_{t \downarrow 0}\|\LL_t \psi - \psi \|_{H^1_p(X_0)}=0$ for any $C^1$ function
$\psi$.
Now this is easy to check, because for any fixed small $t$, the flow $T_t$
is $C^2$ except on a set of Lebesgue measure zero.
By \cite[Prop 1.18]{Da} it follows  that the map
$(t, \psi)\mapsto \LL_t (\psi)$ is jointly continuous on
$\real_+ \times \widetilde \HHH_p^{r,s,0}$.

In particular, $\LL_t$ acting on $\widetilde \HHH$ is a one parameter
semigroup, and we can define its infinitesimal generator $X$ by
$$
X (\psi)= \lim_{t \downarrow 0} \frac{\LL_t (\psi) - \psi}{t} \, ,
$$
the domain of $X$ being the set of $\psi$ for which the limit exists.

Using  Lemma~\ref{embed}
to get  $H^\sigma_p(X_0)\subset \widetilde \HHH$  boundedly
for any $\sigma \ge r$ (we take $\sigma <1$), and  since
a piecewise $C^2$ flow is $C^2$ except on a set of Lebesgue
measure $0$, it is not difficult to see 
that $C^2(X_0)$ is included in the domain
of $X$. A priori, $X$ is not a bounded operator on $\widetilde \HHH$,
but it is closed (see e.g. \cite{Da}).
For $z$ not in the spectrum of $X$ (i.e., so that
$z-X$ is invertible from the domain $D(X)$
of $X$ to $\widetilde \HHH$, or, equivalently,  bijective from
$D(X)$ to $\widetilde \HHH$), we shall consider
the resolvent 
$$\RR(z)=(z - X) ^{-1}\, ,
$$ 
which is a bounded operator.
For $z \in \complex$ with $\Re z > A$, classical results
\cite{Da} imply
\begin{equation}\label{1'}
\RR(z) (\psi)= \int_0^\infty e^{-zt} \LL_t( \psi)\, dt \, .
\end{equation}

\begin{remark}
We will deduce below from the ergodicity and contact assumptions  that $\LL_t$ does not have any eigenvalue
of modulus strictly larger than $1$ on $\widetilde \HHH$, 
and that its only eigenvalue of modulus $1$
is the simple eigenvalue corresponding to the fixed point $\psi \equiv 1$.
However, since we do not know if
the essential spectral radius of $\LL_t$ is strictly smaller than $1$, we cannot
deduce from this eigenvalue control
that $A=0$. We shall overcome this problem by working with the resolvent
$\RR(z)$.
\end{remark}

The following lemma, together with the compact
embeddings from Lemma~\ref{embed}, will allow us to
deduce from Lemma~\ref{LY0}  a bound on the essential spectral radius
of $\RR(z)$ (Lemma~\ref{ress}). In Section~\ref{dodo}, Lemma ~\ref{bq} will also be used
with the Dolgopyat
bound to obtain the key estimate, Proposition~\ref{dolgo},
on the resolvent.

\begin{lemma}[$\RR(z)$ improves regularity in the flow direction]\label{bq}
Fix $1<p<\infty$,
$s<0<r$, and $q\ge 0$ as in Lemma \ref{LY0}. Then 
for any $q'\ge q$ there exists $\Cs>0$ so that
for each $z =a+ib\in \complex$ with $a > A$, $|b| \ge 1$
\begin{align}
\|\RR(z) (\psi)\|_{\widetilde \HHH_p^{r,s,q'}(R)}
\le \Cs  \bigl ( 1+\frac{|z|}{R}\bigr )^{q'-q} 
\bigl ( \frac{1}{a-A}+1 \bigr )  \| \psi\|_{\widetilde \HHH_p^{r,s,q}(R)}\, .
\end{align}
\end{lemma}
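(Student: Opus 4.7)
The plan is to combine the integral representation $\RR(z)\psi = \int_0^\infty e^{-zt}\LL_t\psi\,dt$ from~\eqref{1'}, the uniform semigroup bound~\eqref{reallybounded'}, and the structural property~\eqref{tildenew} of admissible charts, which ensures that $\partial_{x^0}[g\circ\bphi_\zeta] = [-R^{-1}Xg]\circ\bphi_\zeta$ for every $\bphi_\zeta\in\FF(\zeta)$ and every $g$ on $M$, where $X$ is the generator of $\LL_t$.

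For $q'=q$, Minkowski's inequality applied to~\eqref{1'} together with~\eqref{reallybounded'} give directly
\[
\|\RR(z)\psi\|_{\widetilde\HHH_p^{r,s,q}(R)} \le \int_0^\infty e^{-at}\|\LL_t\psi\|_{\widetilde\HHH_p^{r,s,q}(R)}\,dt \le \frac{\Cs}{a-A}\|\psi\|_{\widetilde\HHH_p^{r,s,q}(R)}.
\]
For $q'=q+1$, I would use the above commutation relation and the Leibniz rule to compute, in each admissible chart,
\[
\partial_{x^0}\bigl[(\rho_\zeta \RR(z)\psi)\circ\bphi_\zeta\bigr]
= -R^{-1}\bigl[\rho_\zeta\,X\RR(z)\psi + (X\rho_\zeta)\,\RR(z)\psi\bigr]\circ\bphi_\zeta.
\]
The semigroup identity $X\RR(z)\psi = z\RR(z)\psi - \psi$ (valid on all of $\widetilde\HHH_p^{r,s,q}(R)$ since $\RR(z)\psi\in D(X)$), the elementary bound $\|X\rho_\zeta\|_\infty \le \Cs R$, the norm equivalence $\|u\|_{H_p^{r,s,q+1}}\sim\|u\|_{H_p^{r,s,q}}+\|\partial_{x^0}u\|_{H_p^{r,s,q}}$ on $\real^d$ (a Mikhlin-type consequence of Strichartz's multiplier theorem~\cite{Str},~\cite{BG1}, whose applicability here is exactly why one asks for the continuity of the cone fields in the time direction), and the bound just obtained for $q'=q$ combine to yield
\[
\|\RR(z)\psi\|_{\widetilde\HHH_p^{r,s,q+1}(R)}
\le \Cs\bigl(1+|z|/R\bigr)\bigl(\tfrac{1}{a-A}+1\bigr)\|\psi\|_{\widetilde\HHH_p^{r,s,q}(R)}.
\]

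For real $q' \in [q,q+1]$, the bound follows by complex interpolation in the $q$-parameter of the anisotropic Triebel scale, for which \cite[Lemma 18]{BG1} provides what is needed. For $q'>q+1$, the argument iterates: Leibniz yields
\[
\partial_{x^0}^n[(\rho_\zeta\RR(z)\psi)\circ\bphi_\zeta]
= (-R)^{-n}\sum_{k=0}^n\binom{n}{k}\bigl[(X^{n-k}\rho_\zeta)(X^k\RR(z)\psi)\bigr]\circ\bphi_\zeta,
\]
and the iterated resolvent identities valid on the whole space (for example $X^n\RR(z)^n\psi=\sum_{j=0}^n(-1)^{n-j}\binom{n}{j}z^j\RR(z)^j\psi$, derived by induction from $X\RR(z)=z\RR(z)-\Id$) combined with $\|X^j\rho_\zeta\|_\infty\le \Cs R^j$ produce the polynomial factor $(1+|z|/R)^n$. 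A final interpolation covers non-integer $q'-q$.

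The principal difficulty is to avoid applying $X$ directly to $\psi$, which is not assumed to belong to any higher domain $D(X^k)$: this is circumvented precisely by the algebraic identities above, which express $X^k\RR(z)\psi$ (or $X^n\RR(z)^n\psi$) solely in terms of powers of $\RR(z)$ acting on $\psi$. The admissibility condition~\eqref{tildenew} is what allows the chart-wise computation of $\partial_{x^0}^n$ to translate cleanly into bounds involving the intrinsic operator $X$, and the continuity of the cone fields in the time direction is essential so that Strichartz's multiplier theorem yields the $H_p^{r,s,q}$ norm equivalences uniformly over the admissible family of charts.
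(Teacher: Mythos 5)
Your argument is essentially the paper's own proof: the case $q'=q$ via the integral representation \eqref{1'} and \eqref{reallybounded'}, then the gain of one unit of flow regularity through the chart-wise identification of $\partial_{x^0}$ with $-R^{-1}X$ (a consequence of \eqref{flowbox} and \eqref{tildenew}), the resolvent identity $X\RR(z)\psi=z\RR(z)\psi-\psi$ (the paper's \eqref{partialR}), the Leibniz term in $\partial_{x^0}\rho_m$ handled by localisation (Lemmata~\ref{Leib} and~\ref{lem:localization}), the Marcinkiewicz-type equivalence $\|u\|_{H^{r,s,q+1}_p}\sim\|u\|_{H^{r,s,q}_p}+\|\partial_{x^0}u\|_{H^{r,s,q}_p}$, and complex interpolation (Remark~\ref{variants}) for fractional $q'-q$; this is exactly Subsection~\ref{endbq}.

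One caveat on your extension to $q'-q\ge 2$: the iterated identity you invoke concerns $X^n\RR(z)^n$, i.e.\ $n$ powers of the resolvent, whereas the lemma bounds a single $\RR(z)\psi$; computing $\partial_{x^0}^n$ of $\RR(z)\psi$ chart-wise produces $X^{n-1}\psi$, which is not controlled by $\|\psi\|_{\widetilde\HHH^{r,s,q}_p}$ (a single resolvent is one time-integration and gains only one flow derivative). The paper's written proof likewise only details the one-unit gain plus interpolation in the $q$-index, which is what its applications use, so your core argument stands; just do not claim the $\RR(z)^n$ identities prove the stated inequality beyond that range.
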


\begin{proof}
We first check that for any $q=q'\ge 0$ satisfying  \eqref{locall'},
\begin{equation}
\label{q=0}
\|\RR(z) (\psi)\|_{\widetilde \HHH_p^{r,s,q}}
\le  \frac{\Cs}{a-A}\| \psi\|_{ \widetilde \HHH_p^{r,s,q}}\, .
\end{equation}
This is a
simple computation, using  
(\ref{reallybounded'}):
\begin{equation*}
\|\RR(z)( \psi)\|_{\widetilde  \HHH^{r,s,q}_p}
\le \int_0^\infty e^{-at} \|\LL_{t}  \psi\|_{\widetilde \HHH^{r,s,q}_p}\, dt 
\le C \int_0^\infty e^{-(a-A)t}  \|\psi\|_{\widetilde \HHH^{r,s,q}_p} \, dt
\le C\frac{\|\psi\|}{a-A}\, ,
\end{equation*}
just note that $\int_0^\infty e^{-u}\, du=1$.

To show the claim for $q' >q$,
we shall use  the easily proved fact that 
\begin{equation}\label{partialR}
\partial_t (\RR(z) (\psi)) \circ \LL_t |_{t=t_1} = z\bigl  (\RR(z) (\LL_{t_1}(\psi))
\bigr ) - \LL_{t_1} (\psi)  \quad
\forall t_1\in \real_+\, .
\end{equation}
The rest of the proof uses the Triebel norms and the
admissible charts defined in Section~\ref{defspace}, as well as interpolation
tricks presented in Section~\ref{molll}, and is postponed to 
Subsection~\ref{endbq}.
\end{proof}

Using Lemma~ \ref{LY0}, following the arguments of \cite[\S 2]{Li},
and simplifying the argument in \cite[\S 4]{Li},  we next estimate the essential
spectral radius of $\RR(z)$:

\begin{lemma}[Essential spectral radius of $\RR(z)$]\label{ress}
In the setting of Lemma ~ \ref{LY0}, assume that $-\beta < r+s$
and
in addition that  complexity is subexponential
so that, up to choosing a  larger
$t_0$, we have $\lambda <1$.

Then, for each space $\widetilde \HHH_p^{r,s,0}(R)$ 
so that \eqref{reallybounded} and \eqref{needit} hold for
some $A_0 \le A$, and for each $z \in \complex$ with $a=\Re z > A$, 
the operator $\RR(z)$ on  $\widetilde \HHH_p^{r,s,0}(R)$
has  essential spectral radius bounded by
$(a+ \ln(1/\lambda))^{-1}$.
\end{lemma}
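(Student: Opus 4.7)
My plan is to combine the integrated Lasota--Yorke bound of Lemma~\ref{LY0} with the flow-direction smoothing of Lemma~\ref{bq}, and then conclude via Hennion's theorem together with the compact embedding from Lemma~\ref{embed}. Fix $z = a+ib$ with $a > A$, and abbreviate $V := \widetilde\HHH_p^{r,s,0}(R)$. Starting from $\RR(z)^n \psi = \frac{1}{(n-1)!}\int_0^\infty t^{n-1} e^{-zt} \LL_t \psi\,dt$ and plugging \eqref{needit} (taken with $q=0$) into the integrand, the Laplace transforms $\int_0^\infty t^{n-1} e^{-ct}\,dt/(n-1)! = c^{-n}$ applied with $c = a+\ln(1/\lambda)$ and $c = a - A$ give
$$\|\RR(z)^n\psi\|_V \le \frac{\Cs}{(a+\ln(1/\lambda))^n}\|\psi\|_{\widetilde\HHH_p^{r,s,r-s}(R)} + \frac{\Cs R^{2(r-r'+s-s')}}{(a-A)^n}\|\psi\|_{\widetilde\HHH_p^{r',s',2r-r'-s}(R)}.$$
The subexponential complexity hypothesis, together with the freedom to enlarge $t_0$, ensures $\lambda < 1$, so that $(a+\ln(1/\lambda))^{-1} < (a-A)^{-1}$.

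As this bound requires $\psi$ to have $q$-regularity that elements of $V$ do not possess, I apply it to $\RR(z)\psi$ rather than $\psi$, using the identity $\RR(z)^{n+1} = \RR(z)^n \circ \RR(z)$. Lemma~\ref{bq} with $(q,q') = (0, r-s)$ controls $\|\RR(z)\psi\|_{\widetilde\HHH_p^{r,s,r-s}(R)}$ by $\Cs (1+|z|/R)^{r-s} (a-A)^{-1}\|\psi\|_V$, and Lemma~\ref{bq} applied with parameters $(r,s,q,q') = (r',s',0,2r-r'-s)$, followed by the bounded embedding \eqref{**}, controls $\|\RR(z)\psi\|_{\widetilde\HHH_p^{r',s',2r-r'-s}(R)}$ by $\Cs_z \|\psi\|_{\widetilde\HHH_p^{r',s',0}(R)}$ with $\Cs_z$ polynomial in $|z|$. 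The net result is a Lasota--Yorke-type inequality for $\RR(z)^{n+1}$ on $V$ with a bounded part of rate $(a+\ln(1/\lambda))^{-n}$ and a compact-candidate part involving the weaker norm $\|\psi\|_{\widetilde\HHH_p^{r',s',0}(R)}$.

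Hennion's theorem then yields the claim, provided that the compact-candidate term can be majorised by a norm in which the unit ball of $V$ sits precompactly. The main obstacle will be the mismatch of parameters: because $r' \ge 0$ while the compact embedding \eqref{cpctt} of Lemma~\ref{embed} sends $V$ into $\widetilde\HHH_p^{\tilde r, 0, 0}(R)$ only for $\tilde r\in (-\beta,\, r+s)\subset (-\beta, 0)$, one cannot directly replace $\|\psi\|_{\widetilde\HHH_p^{r',s',0}(R)}$ by $\|\psi\|_{\widetilde\HHH_p^{\tilde r,0,0}(R)}$. I plan to resolve this as in \cite{Li} by exploiting the extra $q$-regularity acquired from Lemma~\ref{bq}: an additional application of Lemma~\ref{bq} pushes $\RR(z)\psi$ into $\widetilde\HHH_p^{r',s',q}(R)$ for $q$ large enough that the anisotropic Sobolev inclusion \eqref{**} locates it in a sufficiently regular isotropic Sobolev space, into which $V$ embeds compactly by \eqref{cpctt}. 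With the resulting true Lasota--Yorke inequality, Hennion's theorem applied to the sequence $\RR(z)^n$ on $V$ delivers $r_{\mathrm{ess}}(\RR(z))\le (a+\ln(1/\lambda))^{-1}$, as required.
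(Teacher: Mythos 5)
Your reduction is, up to the last step, exactly the paper's argument: start from \eqref{Rn}, insert \eqref{needit} applied to $\RR(z)\psi$, compute the two Laplace integrals, and use Lemma~\ref{bq} twice to convert the $q>0$ norms of $\RR(z)\psi$ back into $q=0$ norms of $\psi$, at the price of powers of $(1+|z|/R)$. The genuine gap is in the compactness step needed for Hennion's theorem \cite{hen}, which you correctly identify as the obstacle but then propose to close by a mechanism that cannot work. What Hennion requires is that the norm sitting on the \emph{right-hand side} of the inequality, namely $\|\psi\|_{\widetilde\HHH_p^{r',s',0}(R)}$, be one with respect to which the unit ball of $V=\widetilde\HHH_p^{r,s,0}(R)$ is precompact; by \eqref{cpctt} this forces an isotropic exponent strictly below $r+s<0$. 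Gaining additional flow-direction regularity of $\RR(z)\psi$ via Lemma~\ref{bq} is irrelevant to this: Lemma~\ref{bq} only trades $q$-regularity of $\RR(z)\psi$ for factors of $|z|$ and the $q=0$ norm of $\psi$, and it never lowers the spatial exponent $r'$ of the norm of $\psi$ that appears. Moreover the inclusion \eqref{**} goes the wrong way for your purpose: $\widetilde\HHH_p^{r',s',q}\subset\widetilde\HHH_p^{r'-|s'|-q,0,0}$, so a large $q$ places you in a \emph{less} regular isotropic space, and $V$ can never embed, compactly or otherwise, into a ``sufficiently regular'' Sobolev space --- its best isotropic inclusion is into $H_p^{r+s}(X_0)$ with $r+s<0$.

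The paper resolves the mismatch differently: in the proof of Lemma~\ref{ress} the auxiliary exponents are taken with $s'<s$ and $-\beta<r'<s+s'<0$, i.e.\ $r'$ \emph{negative}, so that after Lemma~\ref{bq} the weak term is controlled by $\Cs\,(1+|z|/R)^{2r-r'-s-s'}\bigl(\tfrac{1}{a-A}+1\bigr)\|\psi\|_{\widetilde\HHH_p^{r',s',0}}\le \Cs\,\|\psi\|_{H_p^{r'}(X_0)}$ (Lemma~\ref{embed}, using $s'\le 0$), and then the chain $\widetilde\HHH_p^{r,s,0}\subset\widetilde\HHH_p^{r+s,0,0}\cong H_p^{r+s}(X_0)\hookrightarrow H_p^{r'}(X_0)$ is compact by \eqref{cpctt}, precisely because $r'<r+s$ and $-\beta<r+s$; one then takes $n$ large enough (depending on $|z|/R$, $r$, $r'$, $s$) and applies Hennion. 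Without this choice of a genuinely weaker (negative-exponent) norm on the right-hand side --- or some substitute argument producing a compact term --- your plan does not yield the bound $(a+\ln(1/\lambda))^{-1}$ on the essential spectral radius.
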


\begin{proof}
By induction, one gets from (\ref{1'}) that if $\Re z > A$ then
(see \cite[\S2]{Da})
\begin{equation}\label{Rn}
\RR(z)^n (\psi)= 
\frac{1}{(n-1)!} \int_0^\infty t^{n-1} e^{-zt} \LL_t( \psi)\, dt \, .
\end{equation}
From \eqref{Rn} and the consequence
\eqref{needit} of  Lemma~ \ref{LY0}, we  obtain 
for all large enough
$C_0$, $C_1$, and $R$
a constant  $\Cs$ so that, for
all $a > A$, $n\ge 0$ and $\psi \in \widetilde \HHH$, writing $z=a+ib$ 
\begin{align}
\label{LYR}
&\|\RR(z)^{n+1} (\psi)\|\le
\Cs \int_{0}^{\infty}\frac{t^{n-1}}{(n-1)!}
\bigl ( e^{-t(a+ \ln(\lambda^{-1}))} (\| \RR(z)(\psi) \|_{\widetilde \HHH_p^{r,s,r-s}} \\
\nonumber&\qquad\qquad\qquad\qquad\qquad\qquad\qquad
+ 
e^{-t(a-A)} \| \RR(z)(\psi)\|_{\widetilde \HHH_p^{r',s',2r-r'-s}}) \bigr )\, dt \\
\nonumber &\qquad\qquad\qquad\le \Cs\biggl (\frac{\|\RR(z)( \psi )\|_{\widetilde \HHH_p^{r,s,r-s}} }{(a+ \ln(1/\lambda))^{n}}+\frac{\|\RR(z)(\psi)\|_{\widetilde \HHH_p^{r',s',2r-r'-s}}}{ (a-A)^{n}}\biggr ) \, .
\end{align}

Lemma~\ref{bq} applied to $s'<s$, $-\beta <r'<s+s'$ and $q=2r-r'-s$
implies that 
$$
\|\RR(a+ib)(\psi)\|_{\widetilde \HHH_p^{r',s',2r-r'-s}}\le 
\Cs\biggl ( \frac{|z|}{R}+1\biggr )^{2r-r'-s-s'}
\biggl (\frac{1}{a-A}+1\biggr )\| \psi\|_{\widetilde \HHH_p^{r',s',0}}\, ,
$$
and applied to $r$, $s$ and $q=r-s$ gives
$$
\|\RR(a+ib)(\psi)\|_{\widetilde \HHH_p^{r,s,r-s}}\le 
\Cs
\biggl ( \frac{|z|}{R}+1\biggr )^{r-s}
\biggl (\frac{1}{a-A}+1\biggr ) \|\psi\|_{\widetilde \HHH_p^{r,s,0}}\, .
$$
Since $s'\le 0$,  Lemma~\ref{embed} gives
$$
\| \psi\|_{\widetilde \HHH_p^{r',s',0}(R)}\le \Cs
\| \psi\|_{\widetilde \HHH_p^{r',0,0}(R)}
\le \Cs \| \psi\|_{ H_p^{r'}(X_0)}\, .
$$
To prove the bound on the essential spectral
radius,  take a high enough iterate $n$, depending on 
$ |z|/R$ and on  $r$, $r'$, $s$, and 
use that  $\widetilde \HHH_p^{r,s,0} \subset \widetilde \HHH_p^{r-|s|,0,0}$ 
and that
the inclusion $\widetilde \HHH_p^{r-|s|,0,0} \subset H^{r'}_p(X_0)$
is compact by \eqref{cpctt} in
Lemma~\ref{embed} since $r'<s<r-|s|$ and $-\beta < r-|s|=r+s$.
(We use Hennion's theorem \cite{hen}.)
\end{proof}

It will next be easy to bound the spectral radius of $\RR(z)$:

\begin{lemma}[Spectral radius of $\RR(z)$]\label{rr}
Under the assumptions  of Lemma~\ref{ress}, 
for each $z \in \complex$ with $a=\Re z > A$, 
the operator $\RR(z)$ on $\widetilde \HHH_p^{r,s,0}$ has spectral radius
bounded by $a^{-1}$. In addition, if there exists $\psi\in \widetilde \HHH_p^{r,s,0}$ and $\rho\in\bC$, $|\rho|=a^{-1}$, such that $\RR(z)(\psi)=\rho\psi$, then $\psi\in L^\infty$. Conversely, if there exists $\psi\in L^1$ and $|\rho|=a^{-1}$, such that $\RR(z)(\psi)=\rho\psi$,\footnote{Remember that $\RR(z)$ is a well-defined operator both on $L^\infty$ and $L^1$, abusing notation we use the same name for the operator defined on different spaces.} then
$\psi\in \widetilde \HHH_p^{r,s,0}\cap L^\infty$. 
\end{lemma}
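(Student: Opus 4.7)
The strategy uses that, since the Reeb flow preserves $dx$, $\LL_t$ is an $L^p$-isometry for every $1\le p\le \infty$, so the integral formula \eqref{1'} gives $\|\RR(z)\|_{L^p\to L^p}\le 1/a$ for $\Re z=a>0$. To transfer the spectral bound to $\widetilde\HHH$, I argue by contradiction: suppose $\RR(z)\psi=\rho\psi$ with $\psi\in\widetilde\HHH\setminus\{0\}$ and $|\rho|>1/a$. By Lemma~\ref{ress}, $\rho$ exceeds the essential spectral radius, so $\psi\in D(X)$ with $X\psi=\mu\psi$, $\mu=z-1/\rho$ satisfying $\Re\mu>0$, and $\LL_t\psi=e^{\mu t}\psi$. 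Pairing against any $\phi\in C^\infty(M)$ and using measure-preservation,
\begin{equation*}
e^{\mu t}\langle\psi,\phi\rangle=\langle\LL_t\psi,\phi\rangle=\langle\psi,\phi\circ T_t\rangle.
\end{equation*}
By Lemma~\ref{embed} and Remark~\ref{natural2}, $\widetilde\HHH\subset H^{r+s}_p(X_0)$ (since $-\beta<r+s<0<1/p$), so the dual action factors through $H^{-(r+s)}_{p'}(X_0)$, where $0<-(r+s)<1/p'$. The piecewise $C^2$ function $\phi\circ T_t$ has $H^{-(r+s)}_{p'}$-norm bounded at most subexponentially in $t$ (using subexponential complexity and Strichartz-type multiplier estimates for piecewise smooth functions below the threshold $1/p'$), which is strictly slower than the exponential growth $e^{\Re\mu\cdot t}$ of the left-hand side. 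Hence $\langle\psi,\phi\rangle=0$ for all $\phi$, forcing $\psi=0$. The same inequalities with equality force $\rho=1/a$ and $\mu=ib$ in the boundary case $|\rho|=1/a$, so $\LL_t\psi=e^{ibt}\psi$.

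\textbf{Peripheral eigenspace lies in $L^\infty$.} The eigenspace $E:=\ker(\RR(z)-(1/a)\Id)\cap\widetilde\HHH$ is finite-dimensional (Lemma~\ref{ress}), with $\LL_t$ acting as $e^{ibt}\Id$. The corresponding $L^\infty$ eigenspace $E^\infty:=\{\tilde\psi\in L^\infty:X\tilde\psi=ib\tilde\psi\}$ is at most one-dimensional by ergodicity: $|\tilde\psi|$ is $T_t$-invariant hence a.e.\ constant, and ratios of two such eigenfunctions are likewise constant. The $\widetilde\HHH$-spectral projector $\Pi$ onto $E$, defined by a small contour integral of $\RR(\cdot)$ around $1/a$, coincides on the dense subspace $L^\infty\cap\widetilde\HHH$ (which is dense in $\widetilde\HHH$ by the definition \eqref{closure'}, since $C^1\subset L^\infty$) with the $L^\infty$-Abel projector onto $\ker(X-ib)$, by standard semigroup theory; this latter projector takes values in $E^\infty\subseteq L^\infty$. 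Finite-dimensionality of $E$ together with density of $\Pi(L^\infty\cap\widetilde\HHH)$ in $E$ then yield $E\subseteq E^\infty\subseteq L^\infty$, which proves the first half of the ``in addition'' clause.

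\textbf{Converse direction and main obstacle.} For $\psi\in L^1$ with $\RR(z)\psi=(1/a)\psi$, the $L^1$-isometry of $\LL_t$ gives $\LL_t\psi=e^{ibt}\psi$ in $L^1$, so $|\psi|$ is a.e.\ constant by ergodicity and $\psi\in L^\infty$. Since $\dim E^\infty\le 1$ and $\psi$ is a nonzero element of $E^\infty$, combining with $E\subseteq E^\infty$ from above, the existence of this $L^1$-eigenvector must force $E=E^\infty$ (both being at most one-dimensional and sharing the same generator up to scalars), placing $\psi\in\widetilde\HHH_p^{r,s,0}\cap L^\infty$. The main technical obstacle is exactly this reverse inclusion $E^\infty\subseteq E$: showing that an $L^\infty$-eigenvector of $X$ (if one exists) automatically has the anisotropic regularity to belong to $\widetilde\HHH_p^{r,s,0}$. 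This requires combining the iterated flow-direction smoothing of Lemma~\ref{bq} applied to $\psi=a\RR(z)\psi$, the identification $L^p\cong\widetilde\HHH_p^{0,0,0}$ from Lemma~\ref{embed}, and the $X\psi=ib\psi$ structure to transfer the bound through the hyperbolic geometry encoded in the anisotropic norm.
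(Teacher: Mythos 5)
Your reduction of the spectral radius bound to the identity $e^{\mu t}\langle\psi,\phi\rangle=\langle\psi,\phi\circ T_t\rangle$ fails at the decisive estimate. The claim that $\|\phi\circ T_t\|_{H^{-(r+s)}_{p'}}$ grows only subexponentially in $t$ is false: $DT_t$ expands unstable vectors exponentially, so on each smoothness piece the Lipschitz constant of $\phi\circ T_t$ grows like $\Lambda^t$, and hence the $H^{\sigma}_{p'}$-norm with $\sigma=-(r+s)>0$ grows at an exponential rate proportional to $\sigma$; subexponential complexity only controls the \emph{number} of pieces, not the derivatives inside them. Even if you accept some exponential rate, the comparison with $e^{\Re\mu\,t}$ breaks down exactly where it matters: $\Re\mu=a-\Re(1/\rho)$ is arbitrarily small when $|\rho|$ is only slightly larger than $a^{-1}$, so your contradiction does not exclude eigenvalues of modulus in $(a^{-1},a^{-1}+\epsilon)$, and the bound ``spectral radius $\le a^{-1}$'' is not obtained. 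The paper's proof never estimates a transported test function: for $\psi$ in the dense set $\cD\subset L^\infty$ it uses the $L^\infty$ bound $|\RR(z)^k\psi|_{\infty}\le a^{-k}|\psi|_{\infty}$ coming from \eqref{Rn} (see \eqref{crucialvol}) and pairs Ces\`aro averages of $\rho^{-k}\RR(z)^k\psi$ against a \emph{fixed} $\varphi$, so only $|\varphi|_{L^1}$ enters; this kills the peripheral spectral projector (and the possible nilpotent part) whenever $|\rho|>a^{-1}$, and with $|\rho|=a^{-1}$ it yields the bound \eqref{eq:Pi-bound} showing the finite-rank projectors map $\widetilde\HHH^{r,s,0}_p$ into $L^\infty$.

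Two further gaps. First, the lemma does not assume ergodicity (ergodicity enters only in the last sentence of Corollary~\ref{spX} and in the main theorem), yet your treatment of the peripheral eigenspace and the converse direction rests on it (a.e.\ constancy of $|\psi|$, $\dim E^\infty\le 1$, uniqueness of the $L^\infty$ eigenfunction); the paper's proof of this lemma uses no ergodicity at all, and your identification of the $\widetilde\HHH$-spectral projector with an ``$L^\infty$-Abel projector'' is asserted rather than proved (on $L^\infty$ there is no spectral information available to guarantee such a projector exists). Second, the converse implication ($\psi\in L^1$, $\RR(z)\psi=\rho\psi$, $|\rho|=a^{-1}$ $\Rightarrow$ $\psi\in\widetilde\HHH^{r,s,0}_p\cap L^\infty$) is precisely what you label the ``main obstacle'' and leave unproved, so half of the statement is missing. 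The paper proves it by approximating $\psi$ in $L^1$ by $\psi_\ve\in C^\infty$, using the same Ces\`aro-average/pairing device to show first that $\rho$ must lie in $\sigma(\RR(z))$ on $\widetilde\HHH^{r,s,0}_p$ (otherwise $\psi=0$), and then that $\Pi(z)\psi_{\ve_j}$ converges along a subsequence, so that $\psi$ is a linear combination of the finitely many eigenfunctions $\psi_k(z)\in\widetilde\HHH^{r,s,0}_p\cap L^\infty$; no regularity upgrade of an abstract $L^\infty$ eigenfunction is ever needed.
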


\begin{proof}
Lemma~ \ref{ress} implies that the spectrum of $\RR(z)$ outside of
the disk $\{|\rho|\leq (a+\ln(1/\lambda))^{-1}\}$, for $\Re z=a>A$, consists only of
isolated eigenvalues of finite multiplicity. Let us assume that there is
a unique maximal eigenvalue $\rho(z)$ (the case of finitely many maximal eigenvalues is treated in exactly the same way, apart for the necessity of a heavier notation).
If $|\rho(z)|\geq a^{-1}$ then by spectral decomposition, \cite{Ka}, we can write
\[
\RR(z)=\rho(z)\Pi(z)+N(z)+Q(z)\, ,
\]
where\footnote{To ease notation, we will suppress the $z$ dependence when irrelevant or no confusion can arise.} $\Pi, N, Q$ commute and $\Pi Q=NQ=0$; $\Pi, N$ are finite rank, $\Pi^2=\Pi$, $\Pi N=N$, and, if $N\neq 0$, there exists $d(z)\in\bN$ such that $N(z)^{d(z)+1}=0$ while $N(z)^{d(z)}\neq 0$. In addition there exist $C(z)>0$, $\rho_0(z)<\rho(z)$ such that $\|Q^n(z)\|\leq C(z)\rho_0(z)^n$. Note that\footnote{The convergence is in $\widetilde \HHH_p^{r,s,0}$.}
\[
\begin{split}
\lim_{n\to\infty}\frac1{n^{d+1}}\sum_{k=0}^{n-1}\rho^{-k}\RR(z)^k
&=\lim_{n\to\infty}\frac1{n^{d+1}}\sum_{k=0}^{n-1}\left[\sum_{j=0}^d\binom kj\rho^{-j}\Pi N^j+\cO(\rho^{-k}\rho_0^k)\right]\\
&=C_d N^d\, ,
\end{split}
\]
for some appropriate constant $C_d>0$. 

In the following, we will use two properties of $\widetilde \HHH_p^{r,s,0}$: There exists a set $\cD\subset L^\infty(X_0)$ that is dense in $\widetilde \HHH_p^{r,s,0}$ (see \eqref{closure'} and remember that $\LL_t$ is a contraction in $L^\infty$); if $\psi\in \widetilde \HHH_p^{r,s,0}$ and $\int_M\psi \vf\, dx=0$ for all $\vf\in C^2$, then $\psi=0$ (this follows from the embedding properties stated in Lemma \ref{embed} and the fact that $C^\infty$ is dense in the usual Sobolev spaces).

Let $\psi\in \cD$, then for $k\geq 1$, using that $\LL_t$ preserves volume,
\begin{equation}\label{crucialvol}
\left|\rho^{-k}\RR(z)^k(\psi)\right|_\infty
\leq |\rho^{-k}|\int_0^\infty \frac{t^{k-1}e^{-at}}{(k-1)!}  |\psi\circ T_{-t}|_\infty \, dt\leq |\rho^{-k}| a^{-k}|\psi|_\infty\, .
\end{equation}
But then, if $|\rho(z)|>a^{-1}$, for each $\vf\in C^2$, we have
\begin{equation}\label{eq:r-growth}
\begin{split}
C_{d(z)} \left|\int N^{d(z)}(z)\psi\cdot \varphi\, dx\right|
&\leq  \lim_{n\to\infty}\frac1{n^{d(z)+1}}\sum_{k=0}^{n-1}\left|\rho(z)^{-k}\int \RR(z)^k(\psi)\cdot \varphi\, dx\right|\\
&\leq  \lim_{n\to\infty}\frac{1}{n^{d(z)+1}}\sum_{k=0}^{n-1}|\rho(z)|^{-k}a^{-k}|\psi|_\infty |\varphi|_{L^1}=0\, .
\end{split}
\end{equation}
Hence $N^d\psi=0$, but then the density of $\cD$ implies $N^d=0$ contrary to the hypotheses. The only possibility left is that $N=0$, but then, arguing as before, one would obtain $\Pi=0$, also a contradiction.

Next, note that if $|\rho(z)|=a^{-1}$, then \eqref{eq:r-growth} implies again $N=0$ (no Jordan blocks). In other words we have the spectral representation
\begin{equation}\label{eq:spec-bound}
\RR(z)=\sum_{k}a^{-1}e^{i\theta_k(z)}\Pi_k(z)+Q(z)\, ,
\end{equation}
where $\theta_k\in\bR$, $\Pi_k\Pi_j=\delta_{jk}\Pi_k$, $\Pi_kQ=Q\Pi_k=0$ and $\|Q^n\|\leq C\rho_0^n$ for some constants $C>0$, $\rho_0\in (0,a^{-1})$.

Moreover, for $\theta\in\bR$, we have
\begin{equation}\label{eq:standard-spec}
\lim_{n\to\infty} \frac 1n\sum_{m=0}^{n-1} a^{m}e^{-im\theta}\RR(z)^m=\begin{cases}\Pi_k(z) \quad&\text{ if }\theta=\theta_k(z)\\
                   0&\text{otherwise.}
\end{cases}
\end{equation}
Hence, for each $\psi\in\cD$ and $\vf\in C^\infty$, arguing as in \eqref{crucialvol},
\begin{equation}\label{eq:Pi-bound}
\left|\int \Pi_k(z) \psi\cdot \vf\, dx\right| \leq |\psi|_{L^\infty} |\vf|_{L^1}\, .
\end{equation}
This implies that $\Pi_k(\cD)\subset L^\infty$, but since the range of $\Pi_k$ is finite-dimensional, it follows that the $\Pi_k(z)$ are bounded operators from $\widetilde \HHH_p^{r,s,0}$ to $L^\infty$.

On the other hand, suppose that  there exist $\psi(z)\in L^1\setminus\{0\}$ and $\rho(z)$, $|\rho(z)|=a^{-1}$, such that 
$\RR(z)(\psi(z))=\rho(z)\psi(z)$, then we can consider a sequence $\{\psi_\ve(z)\}\subset C^\infty$ that converges to $\psi(z)$ in $L^1$ and consider as before
\[
\lim_{n\to\infty}\frac 1n\sum_{k=0}^{n-1}\rho^{-k}\RR(z)^k(\psi_\ve)\, .
\]
Note that by \eqref{eq:standard-spec} such a limit always exists, and 
it is zero if $\rho(z)\not\in\sigma (\RR(z))$, while it equals $\Pi(z)(\psi_\ve(z))$ if $\rho(z)\in\sigma (\RR(z))$, where $\Pi(z)$ is the eigenprojector associated to $\rho(z)$.
In the first case, for each $\vf\in C^2$,
\[
\begin{split}
\left|\int\psi(z) \vf\, dx\right|
&\leq \lim_{\ve\to 0}\lim_{n\to\infty}\frac 1n\sum_{k=0}^{n-1}a^k\int_0^\infty
e^{-at}\frac{t^{k-1}}{(k-1)!}|\psi_\ve(z)-\psi(z)|_{L^1}\cdot|\vf\circ T(t)|_{L^\infty}\, dt\\
&\leq \lim_{\ve\to 0}|\psi_\ve(z)-\psi(z)|_{L^1}\cdot|\vf|_{L^\infty}=0\, .
\end{split}
\]
We would then have $\psi(z)=0$, which is a contradiction. Hence $\rho(z)$ must be an eigenvalue, and by the same computation as above
\begin{equation}\label{eq:close-psi}
\left|\int(\psi(z)-\Pi(z)(\psi_\ve(z))) \vf\right|\leq |\psi_\ve(z)-\psi(z)|_{L^1}\cdot|\vf|_{L^\infty}\, .
\end{equation}
Since $\Pi(z)$ is a projector we can write it as $\Pi(z)(\tilde \psi)=\sum_k\psi_k(z) [\ell_k(z)](\tilde \psi)$, where $\psi_k(z)\in \widetilde \HHH_p^{r,s,0}\cap L^\infty$, the $\ell_k(z)$ belong to the dual of $\widetilde \HHH_p^{r,s,0}$ and $\ell_k(\psi_j)=\delta_{kj}$. Then \eqref{eq:close-psi} shows that the sequences $\ell_k(\psi_\ve)$ are bounded. We can then extract a  subsequence $\{\psi_{\ve_j}(z)\}$ such that $\Pi(z)(\psi_{\ve_j}(z))$ is convergent. In turn, this shows that $\psi(z)$ is a linear combination of the $\psi_k(z)$, which concludes the proof.
\end{proof}

As a consequence of Lemmata ~\ref{ress} and ~\ref{rr},  we get:

\begin{corollary}[Spectrum of $X$]\label{spX}
Under the assumptions  of Lemma~\ref{ress},
the spectrum of $X$ on  $\widetilde \HHH=\widetilde \HHH^{r,s,0}_p(R)$
is contained in the left half-plane $\Re z \le 0$.
Also, the spectrum of $X$ on  $\widetilde \HHH$ in the half-plane
$
\{ z \in \complex \mid \Re z > \ln \lambda \}
$
consists of at most countably many isolated points, which are all eigenvalues of finite
multiplicity. The spectrum on the imaginary axis is a finite union of discrete additive subgroup of $\bR$.
If the flow is ergodic, then the eigenvalue zero has algebraic multiplicity one.
\end{corollary}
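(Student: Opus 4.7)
The plan is to use the spectral mapping correspondence between the resolvent $\RR(z)=(z-X)^{-1}$ and the generator $X$: for $z$ in the resolvent set of $X$, one has
\[
\sigma(\RR(z))\setminus\{0\}=\{(z-\mu)^{-1}:\mu\in\sigma(X)\}\, ,
\]
with matching algebraic multiplicities and spectral projectors at isolated points (see \cite{Da}). For the inclusion $\sigma(X)\subset\{\Re z\le 0\}$: by Lemma \ref{rr}, $|(z-\mu)^{-1}|\le a^{-1}$ for every $\mu\in\sigma(X)$ and every $z=a+ib$ with $a>A$; choosing $z=a+i\Im\mu$ and letting $a\to\infty$ in $|a+i\Im\mu-\mu|\ge a$ forces $\Re\mu\le 0$. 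For the discreteness in $\{\Re z>\ln\lambda\}$: if $\mu\in\sigma(X)$ has $\Re\mu>\ln\lambda$ and we pick $z=a+i\Im\mu$ with $a$ slightly above $A$, then $(z-\mu)^{-1}=(a-\Re\mu)^{-1}$ has modulus strictly greater than $(a+\ln(1/\lambda))^{-1}$, which by Lemma \ref{ress} exceeds the essential spectral radius of $\RR(z)$. Hence $(z-\mu)^{-1}$ is an isolated eigenvalue of $\RR(z)$ of finite algebraic multiplicity, and the correspondence transfers both properties to $\mu$ as an eigenvalue of $X$; the remaining ``at most countably many'' is automatic for a discrete subset of the plane.

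For the structure of $\sigma(X)\cap i\bR$: any $\mu=i\theta\in\sigma(X)$ is an isolated eigenvalue by the previous step (since $\ln\lambda<0$). Taking $z=a+i\theta$ with $a>A$, the eigenvalue $(z-\mu)^{-1}=a^{-1}$ lies on the peripheral circle of $\sigma(\RR(z))$, and the proof of Lemma \ref{rr} (equation \eqref{eq:spec-bound}) shows that peripheral eigenvalues are semisimple and their eigenvectors lie in $L^\infty$. Hence every $X$-eigenfunction $\psi$ with eigenvalue $i\theta$ belongs to $L^\infty$ and satisfies $\LL_t\psi=e^{i\theta t}\psi$. Given two such eigenfunctions $\psi_1,\psi_2$ with eigenvalues $i\theta_1,i\theta_2$, the product $\psi_1\bar\psi_2\in L^\infty\subset L^1$ satisfies $\LL_t(\psi_1\bar\psi_2)=e^{i(\theta_1-\theta_2)t}\psi_1\bar\psi_2$; when nonzero, the converse direction of Lemma \ref{rr} (applied at $z=a+i(\theta_1-\theta_2)$) places $\psi_1\bar\psi_2$ in $\widetilde\HHH$, so $i(\theta_1-\theta_2)\in\sigma_p(X)$. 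Thus, restricted to each minimal invariant subset determined by the support of an eigenfunction, the peripheral spectrum is closed under subtraction and hence forms a subgroup of $\bR$; discreteness comes from isolation. Since the eigenspace at $0$ is finite-dimensional by the previous step, the number of such components is finite, and $\sigma(X)\cap i\bR$ is a finite union of discrete additive subgroups of $\bR$.

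Finally, if $T_t$ is ergodic for $dx$, then $X\psi=0$ gives $\LL_t\psi=\psi$ for all $t\ge 0$, i.e., $\psi\circ T_{-t}=\psi$; the previous paragraph forces $\psi\in L^\infty$, whence ergodicity makes $\psi$ constant almost everywhere. The geometric multiplicity of $0$ is therefore one, and the semisimplicity from the previous paragraph (no Jordan blocks on the peripheral circle of $\RR(z)$, transferred via spectral mapping) upgrades this to algebraic multiplicity one.

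The main obstacle I expect is the subgroup structure on the imaginary axis: showing that products of peripheral eigenfunctions lie in $\widetilde\HHH$ requires the non-trivial converse direction of Lemma \ref{rr}, and the possible vanishing of such products is what forces the ``finite union'' formulation, tied to the finite number of invariant components controlled by the dimension of the eigenspace at $0$.
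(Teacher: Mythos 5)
Your proposal is correct and follows essentially the same route as the paper: the resolvent--generator correspondence combined with Lemmata~\ref{rr} and~\ref{ress} for the half-plane bound and discreteness, products of peripheral eigenfunctions together with the converse direction of Lemma~\ref{rr} for the subgroup structure, and ergodicity plus the $L^\infty$ regularity and semisimplicity (no Jordan blocks on the peripheral circle) for the simplicity of the eigenvalue $0$. The only cosmetic difference is that you work with $\psi_1\bar\psi_2$ directly, whereas the paper uses $\psi_1\psi_2$ and conjugates, and you make the algebraic-versus-geometric multiplicity step explicit where the paper leaves it implicit.
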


\begin{proof}
A nonzero
$\rho\in \complex$ lies in the spectrum of $\RR(z)$ on $\widetilde \HHH$ 
if and only if $\rho=(z-\rho_0)^{-1}$,
where $\rho_0$ lies in the
spectrum of $X$ as a closed operator on $\widetilde \HHH$ 
(see e.g. \cite[Lemma 2.11]{Da}).
The first claim then follows from Lemma ~ \ref{rr}. Indeed, if $\rho_0$ is in the spectrum
of $X$, then for all $a\ge A$ and all $b$, we have $|\rho_0-a-ib|\ge a$. The corresponding complement
of union of discs is contained in the left-half plane $\Re (\rho_0)\le 0$.
Similarly,
the second claim follows from Lemma ~ \ref{ress}.

To prove the third claim note that if $X(\psi)=ib\psi$, $b\neq 0$, then, for $z=a+ib$, $\RR(z)(\psi)=a^{-1}\psi$. 
Another simple computation, using
\cite[Theorem 1.7]{Da} 
(noting that $\tilde \psi_t = e^{
ib t} \psi$ satisfies $\partial_t \tilde \psi_t|_{t=s}= X (\tilde \psi_s)$
so that $\LL_t (\tilde \psi_0)=\tilde \psi_t$) gives 
\begin{equation}\label{eq:flow-inv}
\psi \circ T_t=e^{-ib t} \psi\, .
\end{equation}
Moreover Lemma \ref{rr} implies that $\psi\in L^\infty$. Then if $X(\psi_k)=ib_k\psi_k$, $k\in\{1,2\}$, we have $\psi_1,\psi_2\in L^\infty$ and
\[
\begin{split}
\RR(z)(\psi_1\psi_2)&=\int_0^\infty e^{-zt}(\psi_1\circ T_{-t})(\psi_2\circ T_{-t})\, dt
=\psi_1\psi_2 \int_0^\infty e^{-zt +i(b_1+b_2) t}dt\\
&=(z-ib_1-ib_2)^{-1}\psi_1\psi_2\, .
\end{split}
\]
By Lemma~ \ref{rr} again, it follows that either $\psi_1\psi_2=0$ or $ib_1+ib_2\in\sigma(X)$. On the other hand, a similar argument applied to $\bar\psi_k$ shows that $-ib_k\in\sigma(X)$. Thus $|\bar \psi_k|^2$ belongs to the finite dimensional eigenspace of the eigenvalue zero and $\{im b_k\}_{m\in\bZ}\subset \sigma(X)$. Finally, if $A$ is a positive measure invariant set, then $\Id_A$ is a eigenvector associate to zero and if $\psi$ is an eigenvector associated to zero then $\{\psi\geq \lambda\}$ are invariant sets,\footnote{We can assume $\psi$ real since, if not, then its real and imaginary part must also be invariant.} that is $\psi$ must be piecewise constant (otherwise zero would have infinite multiplicity). In other words the eigenspace of zero is spanned by the characteristic functions of the ergodic decomposition of Lebesgue.
\end{proof}

In the setting of  \cite{Li}, it was straightforward to bound
the norm of $\RR(z)^n$ by $C a^{-n}$.
Here, by Lemma~\ref{rr} we have for each $\eta>0$
a constant $C_\eta(z)$ so that $\| \RR(z)^n\|\le C_\eta(z) (a-\eta)^{-n}$
for all $n$. This abstract nonsense bound (with no control on the
$z$-dependence of $C_\eta$) will not suffice. In addition, we shall need
in Section~\ref{dodo}
a Lasota-Yorke type estimate for $\RR(z)$  improving
the one obtainable from \eqref{LYR} (which contains an unfortunate
$(a-A)^n$ factor). This is
the purpose of the following lemma:

\begin{lemma}[Lasota-Yorke estimate for $\RR(z)$]\label{controlCeta}
For $1<p<\infty$, $s<-r<0<r$, and  $R>1$ as in
setting of Lemma~\ref{LY0}, assume  in addition that  complexity is subexponential
so that  $\lambda <1$ (up to choosing a  larger
$t_0$), and assume that $|s|\in (0, 2r)$.
Then 
there exists $\Lambda>0$, depending on
$p$, but not on $r$, $s$, 
and there exists $\Cs$, so that for any  $N\ge 1$ such that
\begin{equation}\label{replacewidehatC}
(1+3N)r < \min\biggl \{\frac{1}{3},\frac{1}{p}, 1-\frac{1}{p} \biggr \}\, ,
\end{equation}
then, for  all $z=a+ib$, for $a>A$ (with $A$  given by \eqref{reallybounded'}),
and all $n \ge 0$, we have 
\begin{align}
\label{controlCeta'} 
\|\RR(z)^{n+1} \|_{\widetilde \HHH_p^{r,s,0}(R)}\le
\Cs^N \frac{  (1+\frac{|z|}{R})^{N(r-s)}}{(a-\Lambda |s|-\frac{A}{N})^n}  \, ,
\end{align}
and, for every $-1+1/p<s'\le s$,
\begin{align}
\label{controlCeta''} 
\|\RR(z)^{n+1} (\psi) \|_{\widetilde \HHH_p^{r,s,0}(R)}&\le
\Cs (\frac{1}{a-A}+1)\frac{  (1+\frac{|z|}{R})^{N(r-s)}}{(a+\ln (1/\lambda))^n}\|\psi\|_{\widetilde \HHH_p^{r,s,0}(R)}\\
\nonumber&
\qquad\qquad+
\Cs^N \frac{  (1+\frac{|z|}{R})^{(N-1)(r-s)+r-s'}}{(a-\Lambda |s|-\frac{A}{N})^n} 
\|\psi\|_{\widetilde \HHH_p^{s',0,0}(R)} \, .
\end{align}
\end{lemma}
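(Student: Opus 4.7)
\emph{Strategy.} The plan is to establish the Lasota--Yorke-type bound \eqref{controlCeta''} by induction on $N$, and then derive the pure operator bound \eqref{controlCeta'} from it by specialising $s' = s$ and invoking the continuous inclusion $\widetilde \HHH^{r,s,0}_p(R) \subset \widetilde \HHH^{s,0,0}_p(R)$ provided by the second embedding in \eqref{**} (valid since $r - |s| \ge s$ when $r \ge 0$ and $s \le 0$); the main term of \eqref{controlCeta''} is then absorbed into the compact term, giving \eqref{controlCeta'}.

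\emph{Base case $N = 1$.} Start from the resolvent representation \eqref{Rn} and apply the Lasota--Yorke inequality \eqref{needit} to $\LL_t$ under the integral sign, as was done in \eqref{LYR}: this yields a ``main'' contribution bounded by $\Cs \|\RR(z)\psi\|_{\widetilde \HHH_p^{r,s,r-s}}/(a+\ln(1/\lambda))^n$ and a ``compact'' contribution bounded by $\Cs R^{2(r-r'+s-s')}\|\RR(z)\psi\|_{\widetilde \HHH_p^{r',s',2r-r'-s}}/(a-A)^n$. To each of the two $\RR(z)\psi$ norms, apply the flow-direction smoothing Lemma~\ref{bq} (with input index $q=0$ and output index $q'=r-s$ or $q'=2r-r'-s$ respectively): the main term becomes $\Cs(1+|z|/R)^{r-s}(1/(a-A)+1)\|\psi\|_{\widetilde \HHH^{r,s,0}_p}$, while in the compact term one additionally chains the embeddings from \eqref{**}, after reducing the $q$-index to $0$ via Lemma~\ref{bq}, to reach the target norm $\|\psi\|_{\widetilde \HHH^{s',0,0}_p}$, giving $\Cs(1+|z|/R)^{r-s'}(1/(a-A)+1)\|\psi\|_{\widetilde \HHH^{s',0,0}_p}$. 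This is \eqref{controlCeta''} with $N=1$, the $\Lambda|s|$ being at this stage absorbed into $A$.

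\emph{Inductive step $N \to N+1$.} Factor $\RR(z)^{n+1} = \RR(z) \circ \RR(z)^{n}$, apply the induction hypothesis to the inner operator, and pre-compose with $\RR(z)$: one extra application of Lemma~\ref{bq} to the outer factor provides the additional $(1+|z|/R)^{r-s}$ smoothing and lets one move between the relevant anisotropic spaces while keeping $(r,s)$ fixed. The refinement of the denominator from $(a-A)^n$ to $(a - A/N)^n$ is produced by the standard averaging inside the integral \eqref{Rn}, redistributing the exponential growth $e^{At}$ of $\LL_t$ evenly across $N$ resolvent factors, exactly as in \cite[\S2]{Li}. The residual correction $\Lambda|s|$ comes from a separation of the constant $A$ in \eqref{reallybounded'} into a ``structural'' part $\Lambda|s|$, uniform in $r,s$, and a ``diffusive'' part $A_0$, obtained by complex interpolation (in the spirit of \cite[Lemma~18]{BG1}) between $\widetilde\HHH^{r,s,q}_p$ and $\widetilde\HHH^{r,0,q}_p$. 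Only $A_0$ is amortisable as $A_0/N$; the intrinsic cost $\Lambda|s|$ persists. The condition \eqref{replacewidehatC} ensures all intermediate Triebel indices $(1+3N)r$ stay below $\min\{1/3,1/p,1-1/p\}$, which is required for Lemma~\ref{bq} and the underlying Strichartz-type multiplier bound of \cite{Str}, \cite{BG1} to apply with uniform constants at every step of the iteration.

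\emph{Main obstacle.} The delicate point is the bookkeeping yielding the denominator $(a-\Lambda|s|-A/N)^n$: one must isolate the $(r,s)$-uniform contribution $\Lambda|s|$ from the amortisable part $A_0$, while simultaneously keeping the compact term sitting in $\widetilde\HHH^{s',0,0}_p$ throughout the iteration. This requires a careful interplay between \eqref{notLY}, the smoothing Lemma~\ref{bq}, and the embeddings \eqref{**}, all uniform in $N$ within the window \eqref{replacewidehatC}; the iteration is otherwise of the same flavour as the ones carried out in \cite[\S2--\S4]{Li}.
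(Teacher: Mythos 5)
There is a genuine gap, and it sits exactly at the point you flag as the ``main obstacle''. Your inductive step $N\to N+1$ cannot produce the denominator $(a-\Lambda|s|-A/N)^n$: in both \eqref{controlCeta'} and \eqref{controlCeta''} the exponent $n$ is fixed and independent of $N$, so once the induction hypothesis has been applied to the inner factor $\RR(z)^{n}$, pre-composing with one more bounded operator $\RR(z)$ (plus one application of Lemma~\ref{bq}) can only multiply the bound by constants and powers of $(1+|z|/R)$ -- it cannot sharpen the rate of decay in $n$ from $(a-\Lambda|s|-A/N)^{-n}$ to $(a-\Lambda|s|-A/(N+1))^{-n}$. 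The appeal to ``standard averaging, as in \cite[\S2]{Li}'' does not apply here, because in \cite{Li} the semigroup is power-bounded on the relevant space, whereas the whole point of this lemma is that $\LL_t$ grows like $e^{At}$ on $\widetilde\HHH^{r,s,0}_p$. The improvement in $N$ has to be produced \emph{inside a single integral} \eqref{Rn}: one splits the time as $t=N\cdot(t/N)$ and applies the Lasota--Yorke bound \eqref{needit} successively to $N$ slices of length $t/N$ (at the flow-regularity indices $q_j=j(r-s)$, which is where the condition \eqref{replacewidehatC} together with $|s|\le 2r$ is used, via \eqref{locall'} and \eqref{starstar'} -- not for Lemma~\ref{bq} itself). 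In this scheme the ``bad'' factor $e^{At/N}$ is incurred on only one slice for each weak term, while the remaining time is covered by a cheap bound on the weak space; integrating $t^{n-1}e^{-t(a-\Lambda|s|-A/N)}$ then yields the stated denominator.

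The second, related gap is your account of where $\Lambda|s|$ comes from. There is no decomposition of the constant $A$ of \eqref{reallybounded'} into a ``structural'' part $\Lambda|s|$ and an amortisable part, and complex interpolation directly between $\widetilde\HHH^{r,s,q}_p$ and $\widetilde\HHH^{r,0,q}_p$ is not available: these spaces are defined by suprema over families of admissible charts, and the paper explicitly warns that interpolation cannot be used directly for them. What is actually needed is the separate preliminary estimate \eqref{prel}: on the \emph{isotropic} space $\widetilde\HHH^{s,0,0}_p\cong H^{s}_p(X_0)$ (Lemma~\ref{embed}), the semigroup satisfies $\|\LL_t\psi\|\le\Lambda e^{\Lambda|s|t}\|\psi\|$, proved by combining the $L^p$-contraction (volume preservation), a Strichartz-type bound $\|\LL_t\psi\|_{H^{s_0}_p(X_0)}\le\tilde\Lambda e^{\tilde\Lambda t}\|\psi\|_{H^{s_0}_{p'}(X_0)}$, duality, and interpolation in the classical Sobolev scale. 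This estimate is what makes the growth over the $N-1$ ``cheap'' slices cost only $e^{\Lambda|s|t}$, and it is the ingredient your proposal never establishes; the weak terms must moreover be steered (via the commutation $\LL_\tau\RR(z)=\RR(z)\LL_\tau$ and Lemma~\ref{bq}) into exactly this isotropic space before \eqref{prel} can be invoked. Your base case $N=1$ and the final passage from \eqref{controlCeta''} to \eqref{controlCeta'} via $s'=s$ and the embedding are essentially fine, but without \eqref{prel} and the in-integral time-splitting the heart of the lemma is missing.
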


This lemma is proved in Section ~\ref{LYBG}, after the proof of Lemma~\ref{LY0}.

The main bound in the paper is the following
Dolgopyat-like estimate (in the style of \cite[Prop 2.12]{Li}), which will
be proved in Section~ \ref{dodo}:

\begin{proposition}\label{dolgo}
Under the assumptions of Lemma~\ref{ress},
and if $d=3$, 
then, up to taking larger $p>1$ and smaller $|r|$ and $|s|$, there exist 
$C_A \ge 10$, $b_0 \ge 1$,
$0<\tilde c_1<\tilde c_2$, and $\nu \in (0,1)$,  so
that 
$$
\| \RR(a+ib)^{n}\|_{\widetilde \HHH^{r,s,0}_p(R)} \le 
\left ( \frac{1}{a+\nu}\right )^{ n}\, ,
$$ 
for all  $|b|>b_0$, 
$a \in[ C_A A, b]$ and
$n \in \lceil \tilde c_1 a \ln |b|, \tilde c_2 a \ln | b| \rceil$.
\end{proposition}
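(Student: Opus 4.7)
The plan is to follow the scheme of \cite[Prop.~2.12]{Li}, adapting it to our anisotropic setting. Starting from the representation \eqref{Rn},
$$
\RR(z)^{n+1}\psi = \frac{1}{n!}\int_0^\infty t^n e^{-zt}\LL_t\psi\, dt,
$$
I would exploit the fact that the weight $t^n e^{-at}$ concentrates around $t \approx n/a$, which for $n \in [\tilde c_1 a \ln|b|,\tilde c_2 a \ln|b|]$ corresponds to a time window of length $\sim \ln |b|$ — long enough for the oscillatory integral cancellation driven by the factor $e^{-ibt}$ and the contact form (the content of Lemma \ref{dolgolemma}) to kick in.

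Since Lemma \ref{dolgolemma} is formulated in terms of the supremum and $H^1_\infty$ norms, while we work in the anisotropic space $\widetilde\HHH^{r,s,0}_p(R)$, the core idea is to insert the mollification operator $\MMM_\epsilon$ and the stable-averaging operator $\AAA_\delta$ from Section~\ref{molll} and split
$$
\RR(z)^{n+1} = \MMM_\epsilon\AAA_\delta \RR(z)^{n+1} + (\Id-\MMM_\epsilon\AAA_\delta)\RR(z)^{n+1}.
$$
On the first piece I would apply Lemma \ref{dolgolemma} via the isomorphism $\widetilde\HHH^{r,0,0}_p \cong H^r_p$ of Lemma \ref{embed} and Sobolev embedding, which should yield a gain of the form $e^{-\nu n/a}$ relative to the trivial bound $a^{-n}$, valid precisely in the time window we have chosen. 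The cost is a polynomial-in-$(1/\epsilon,1/\delta)$ loss, paid in the $\widetilde\HHH^{r,s,0}_p(R)$ norm. On the second piece I would use the Lasota--Yorke bound \eqref{controlCeta'} for $\RR(z)^n$ together with the regularity gain in the flow direction provided by Lemma~\ref{bq}, which lets one trade $q'-q$ derivatives in the flow direction against a factor $(1+|z|/R)^{q'-q}$; this is what makes the error $(\Id - \MMM_\epsilon\AAA_\delta)\RR(z)^n$ manageable once $\epsilon,\delta$ are chosen to be suitable negative powers of $|b|$.

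The matching of parameters is the delicate step. One chooses $N$ satisfying \eqref{replacewidehatC} so that Lemma~\ref{controlCeta} applies and the resulting $(1+|z|/R)^{N(r-s)}$ loss is a fixed polynomial power of $|b|$; choosing $R$ comparable to $|b|$ keeps this factor bounded. The hypothesis $a \ge C_A A$ with $C_A$ large is used to absorb the $A/N$ and $\Lambda|s|$ corrections in the denominator $(a - \Lambda|s| - A/N)$ of the Lasota--Yorke bound, which is why the statement allows us to take $|s|$ small. The Dolgopyat gain $e^{-\nu n/a}$ then has to beat the polynomial-in-$|b|$ losses from mollification and Lasota--Yorke; since $n \sim a \ln |b|$, the gain is of order $|b|^{-\nu}$ to some positive power, and the choice of $\tilde c_1$ large enough (and $\tilde c_2$ finite) guarantees the balance. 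This eventually produces the claimed bound $(a+\nu)^{-n}$, possibly after shrinking $\nu$ and enlarging $b_0$.

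The main obstacle, as in \cite{Li}, is the Dolgopyat estimate itself (Lemma \ref{dolgolemma}), which is the source of the oscillatory cancellation and is the real heart of the argument; here its proof is more subtle than in the Anosov case because we must work with ``fake'' stable foliations lying in the kernel of the contact form rather than the true (only measurable) stable foliation. Given that lemma, the obstacle at the level of the present proof is the bookkeeping: the anisotropic norm forces one to juggle several parameters ($\epsilon,\delta,N,R$, and implicitly the regularity indices $r,s,q$) simultaneously, and one must verify that the $|b|$-dependent losses paid in passing from $\widetilde\HHH^{r,s,0}_p$ to $L^\infty$ and back, via Lemmata \ref{embed}, \ref{bq}, and \ref{controlCeta}, can be absorbed by the Dolgopyat gain on the prescribed time window. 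The restriction $d=3$ enters only insofar as Lemma \ref{dolgolemma} is proved there; all the functional-analytic manipulations in this proof are dimension-independent.
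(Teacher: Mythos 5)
You have the right ingredients, but the way you assemble them would not go through, and the assembly is the actual content of this proof. Your opening split $\RR(z)^{n+1}=\MMM_\epsilon\AAA_\delta\RR(z)^{n+1}+(\Id-\MMM_\epsilon\AAA_\delta)\RR(z)^{n+1}$ is made at the level of the output, in the strong norm; but the smallness of $\Id-\MMM_\epsilon$ and of $\Id-\Id_{X_0}\AAA_\delta$ (Lemmata~\ref{mollbound2} and~\ref{Abound}) is only available as a bound from $H^{s}_p$ into the \emph{weaker} space $H^{r'}_p$ with $r'<s\le 0$, never into $\widetilde\HHH^{r,s,0}_p(R)$ itself (and $\AAA_\delta$ is not even known to be bounded on the strong space -- it is not a mollifier). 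Lemma~\ref{bq} cannot repair this: it trades regularity only in the flow direction against powers of $(1+|z|/R)$, whereas the mollification and averaging errors require regularity in the transverse variables. Moreover your placement of $\MMM_\epsilon$ after the resolvent powers makes Lemma~\ref{dolgolemma} inapplicable: its hypothesis is that the argument $\tilde\psi$ of $\RR(z)^{2m}$ lies in $H^1_\infty(M)$, and for the raw $\psi\in\widetilde\HHH$ that norm is not finite; the mollifier must act on the \emph{input}, precisely so that $\|\MMM_\epsilon\psi\|_{L^\infty}$ and $\|\MMM_\epsilon\psi\|_{H^1_\infty}$ can be estimated by Lemma~\ref{mollbound1} and Sobolev embedding at the cost of negative powers of $\epsilon=b^{-\sigma}$. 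Finally, you cannot take ``$R$ comparable to $|b|$'': $R$ is a fixed parameter of the space on which the operator norm is taken, so the factor $(1+|z|/R)^{N(r-s)}$ has to be accepted as a loss $\Cs|b|^{N(r-s)}$ and beaten by the available gains (which is why $N(r-s)$ must be small and $r,|s|$ are shrunk).

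The correct order of operations, which the paper follows, is: first write $n=Lm+m+2$ (with $m$ even and $L$ large) and apply the Lasota--Yorke inequality \eqref{controlCeta''}; this produces a strong-norm term contracted by $(a+\ln(1/\lambda))^{-Lm}$ (handled with \eqref{controlCeta'}, the choice of $L$ via \eqref{lowL}, and the lower bound \eqref{bdlogb1} on $n$ in terms of $\ln|b|$) plus a term measured in the weak norm $\|\RR(z)^{m+1}\psi\|_{H^{r'}_p(X_0)}$. Only \emph{inside} this weak-norm term are the regularising operators inserted: $\MMM_\epsilon$ on the input $\psi$, with the error $\psi-\MMM_\epsilon\psi$ paid in $H^{r'}_p$ via Lemma~\ref{mollbound2}, and then $\Id_{X_0}\AAA_\delta$ (with $\delta=b^{-\gamma}$) after $\RR(z)^{m}$, with the error paid via Lemma~\ref{Abound}; Lemma~\ref{dolgolemma} is applied to $\AAA_\delta(\RR(z)^m(\MMM_\epsilon\psi))$, and the gain $|b|^{-\gamma_0}$ together with the strong-norm contraction absorbs all the polynomial-in-$|b|$ losses, the compatibility of the resulting lower bounds \eqref{bdlogb0}, \eqref{bdlogb1}, \eqref{bdlogb4} and upper bounds \eqref{bdlogb2}, \eqref{bdlogb3}, \eqref{bdlogb5} fixing $\tilde c_1<\tilde c_2$ and $\nu$. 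Your proposal identifies the balancing issue correctly, but without this two-stage structure (Lasota--Yorke first, regularisation only on the weak-norm remainder, mollification on the input) the two pieces of your decomposition cannot both be estimated in the norm the Proposition requires.
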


(The assumption that $d=3$ is only used to prove
Lemma~\ref{dolgolemma} which is a key ingredient of the proof of Proposition~\ref{dolgo}.)

Proposition~ \ref{dolgo} immediately implies the following strengthening
of Corollary ~ \ref{spX} (just like the proof
of \cite[Cor. 2.13]{Li}):

\begin{corollary}\label{specX'}
Under the assumptions of Lemma~\ref{ress}, if $d=3$
and $s<0<r$, $1<p<\infty$ are given by Proposition~\ref{dolgo}, then
there exists  $\delta_0 >0$ so that
the spectrum of $X$ on $\widetilde \HHH^{r,s,0}_p(R)$
in the half-plane
$$
\{ z \in \complex \mid \Re (z) > -\delta_0 \}
$$
consists only of the eigenvalue $0$. If the flow is ergodic zero is a simple eigenvalue.
\end{corollary}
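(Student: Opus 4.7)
The plan is to adapt the argument of \cite[Cor.~2.13]{Li}: use Proposition~\ref{dolgo} to exclude spectrum of $X$ in a strip $\{\Re z > -\delta_0\}$ with large $|\Im z|$, and combine with Corollary~\ref{spX} to treat bounded $|\Im z|$.

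\emph{Large $|\Im z|$.} Fix $a_0:=C_A A$. For every $b$ with $|b|>b_0$, Proposition~\ref{dolgo} supplies an integer $n$ of order $a_0\ln|b|$ such that $\|\RR(a_0+ib)^n\|\le(a_0+\nu)^{-n}$; the spectral radius formula then yields $r(\RR(a_0+ib))\le(a_0+\nu)^{-1}$. Via the standard correspondence $\mu\in\sigma(X)\iff (z-\mu)^{-1}\in\sigma(\RR(z))\setminus\{0\}$, this excludes $\sigma(X)$ from the open disc $D(a_0+ib,\,a_0+\nu)$. Fix $\delta_0\in(0,\nu/2)$; at height $\Re z=-\delta_0$ each such disc spans a horizontal slab of half-width $w:=\sqrt{(a_0+\nu)^2-(a_0+\delta_0)^2}>0$ around $\Im z=b$, and unioning over $|b|>b_0$ the slabs cover $\{-\delta_0<\Re z\le a_0,\ |\Im z|>b_0+w\}$. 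Combined with $\sigma(X)\subset\{\Re z\le 0\}$ from Corollary~\ref{spX}, this yields $\sigma(X)\cap\{\Re z > -\delta_0,\ |\Im z|>b_0+w\}=\emptyset$.

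\emph{Bounded $|\Im z|$.} Shrink $\delta_0$ further so that $-\delta_0>\ln\lambda$. Corollary~\ref{spX} then says that in the region $\{|\Im z|\le b_0+w,\ -\delta_0<\Re z\}$ the spectrum of $X$ consists of at most finitely many isolated eigenvalues of finite multiplicity, and that $\sigma(X)\cap i\bR$ is a finite union of discrete additive subgroups of $\bR$. Any nontrivial such subgroup contains arbitrarily large real elements and would therefore meet $\{|\Im z|>b_0+w\}$, contradicting the previous step; hence every subgroup must be $\{0\}$, so $\sigma(X)\cap i\bR=\{0\}$, and by ergodicity and Corollary~\ref{spX} this zero is a simple eigenvalue.

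The finitely many remaining eigenvalues in the compact box $\{|\Im z|\le b_0+w,\ -\delta_0<\Re z\le 0\}$ all have strictly negative real part, so a final shrinking of $\delta_0>0$ below the minimum of those real parts finishes the argument. The only delicate point is the passage from Proposition~\ref{dolgo} to an actual resolvent-set statement: Proposition~\ref{dolgo} provides the iterated bound $\|\RR(a_0+ib)^n\|\le(a_0+\nu)^{-n}$ only for one $n$ depending on $b$, but since $r(T)\le\|T^n\|^{1/n}$ holds for every $n$, a single such value of $n$ already controls the spectral radius and hence carves out the resolvent disc $D(a_0+ib,\,a_0+\nu)$ used above.
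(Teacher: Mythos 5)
Your proposal is correct and takes essentially the same route as the paper: Proposition~\ref{dolgo} is used to exclude the spectrum of $X$ from $\{\Re z>-\delta_0,\,|\Im z|>b_0\}$, and Corollary~\ref{spX} supplies the finiteness of eigenvalues in the remaining bounded region, the additive-group structure on the imaginary axis (so a nonzero imaginary eigenvalue would force spectrum at arbitrarily large imaginary parts, a contradiction), and the simplicity of $0$ under ergodicity. The only cosmetic difference is that you carve out resolvent discs via $r(\RR(a_0+ib))\le\|\RR(a_0+ib)^n\|^{1/n}\le(a_0+\nu)^{-1}$ together with the spectral mapping theorem for resolvents, whereas the paper phrases the same mechanism through the identity \eqref{2.36} and its Neumann series.
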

\begin{proof}
By Proposition \ref{dolgo} and Corollary \ref{spX}  the set
$\{z\in \complex\;:\; \Re(z)>-\nu, \, |\Im(z)|> b_0\}$ and $\{z\in\complex\;:\; \Re(z)>0\}$ is included in
 the resolvent set of $X$. This can be deduced using, for $a,A',b\in\bR$,
\begin{equation}\label{2.36}
\RR(a+ib)=(1+(a-A') \RR(A'+ib))^{-1} \RR(A'+ib) 
\, .
\end{equation}
On the other hand, Corollary~  \ref{spX} implies that in the region $\{z\in \complex\;:\; \Re(z)>-\nu, \, |\Im(z)|\leq b_0\}$ there can be only finitely many eigenvalues. The 
first statement of the
lemma would then follow if we could prove that zero is the only eigenvalue on the imaginary axis. Suppose this is not the case and there exists $\psi$ such that $X(\psi)=ib\psi$, $0\neq |b|\leq b_0$. Then Corollary~  \ref{spX} implies that, for each $m\in \bZ$, $ibm\in\sigma(X)$, but this leads to a contradiction 
with Proposition~\ref{dolgo}
by choosing $m> b_0b^{-1}$. The last statement follows from Corollary~  \ref{spX} again.
\end{proof}

Our main theorem will then  follow:

\begin{proof}[Proof of Theorem~ \ref{main}]
Exponential decay for $\xi$-H\"older observables can be deduced from exponential decay
for $C^1$ observables by a standard approximation argument
(which may modify the decay rate). 
So it suffices to show that there exists $\sigma >0$ 
and $C>0$ so that for each $\psi$, $\varphi$ in $C^1$ with $\int \psi\, dx=0$
\begin{equation}\label{keybd}
|\int \varphi \LL_t (\psi) \, dx | 
\le C e^{-\sigma t} \| \psi\|_{C^1}  \| \varphi\|_{C^1}\, .
\end{equation}
Indeed, for any $\psi$, $\varphi$ in $C^1$, since
$\LL_t$ fixes  constant functions,
\begin{align*}
|\int \psi (\varphi \circ T_t) \, dx&-\int \psi \, dx \int \varphi\, dx |=\int \LL_t\biggl (\psi- 1 \cdot \int \psi\, dx \biggr ) \varphi \, dx \, .
\end{align*}

To show (\ref{keybd}),
like in \cite[Proof of Theorem 2.4]{Li}, we 
shall apply the following easily checked fact: Let $\BB$
be a Banach space on which $\LL_t$ is bounded. Then
for any $z$ in the resolvent set of $X$ (for $\BB$)
and any $\psi$ in the
domain of $X^2$ (for $\BB$) we have  (in $\BB$)
\begin{equation}\label{214}
\RR(z) (\psi) = z^{-1} {\psi} + z^{-2} X( \psi )
+ z^{-2}  \RR(z) (X^2 (\psi))\, .
\end{equation}

In view of applying \eqref{214} to $\BB=\widetilde \HHH=\widetilde \HHH^{r,s,0}_p$
(which will be necessary to exploit Proposition~\ref{dolgo} below), we 
fix a $C^\infty$ function $\phi:\real^+\to \real^+$, supported in $(0,1)$, with
$\int \phi(u)\, du=1$,
and, for $\psi \in C^1$,
we define (as in \cite[Proof of Theorem 2.4]{Li})
$$
\psi_\epsilon=\int_0^\infty \frac{ \phi(t/\epsilon)}{\epsilon}\LL_t(\psi)\, dt
\, .$$
(Note that $\int \psi_\epsilon\, dx=0$.)
For each $m\ge 1$ the function $\psi_\epsilon$ belongs to the domain of $X^m$
for $\widetilde \HHH$, and, letting $r_0=r+|s|$ (we assume
that $r_0 <1/p$), an easy computation shows
\begin{equation}\label{eq:XM}
\|X^m(\psi_\epsilon)\|_{H^{r_0}_p(X_0)}
\le C \epsilon^{-m} \|\phi^{(m)}\|_{L^1} \|\psi\|_{C^1}\, ,\, 
m=0, 1, 2 \, .
\end{equation}
In addition
\begin{equation}\label{app}
\|\psi_\epsilon -\psi\|_{L^\infty}
\le \int_0^\infty  \frac{\phi(u/\epsilon)}{\epsilon}
\|\psi\circ T_{-u} -\psi\|_{L^\infty} \, du
\le \epsilon \|\psi\|_{C^1} \, .
\end{equation}
Next, if $\psi, \varphi \in C^1$ then
\begin{equation}\label{eq:split-epsilon}
\begin{split}
|\int \varphi \LL_t (\psi) \, dx | 
&\le |\int \varphi \LL_t (\psi_\epsilon) \, dx | 
+|\int \varphi \LL_t (\psi_\epsilon -\psi) \, dx | \\
&\le |\int \varphi \LL_t (\psi_\epsilon) \, dx | 
+\|\varphi \|_{L^1} \cdot \|\psi-\psi_\epsilon\|_{L^\infty}
\end{split}
\end{equation}
To prove \eqref{keybd} it suffices then to prove that, for each zero average $\psi\in C^1$,
\begin{align}\label{initialpart}
&|\int \varphi \LL_t (\psi) | 
\le C e^{-\sigma_0 t} (\| X^2 (\psi)\|_{H^{r_0}_p(X_0)}+\|X (\psi)\|_{H^{r_0}_p(X_0)})  \| \varphi\|_{C^1}\, .
\end{align}
Indeed,  using \eqref{app}, \eqref{initialpart} and \eqref{eq:XM} to estimate \eqref{eq:split-epsilon} the result follows with $\sigma=\frac{\sigma_0}3$ after choosing $\varepsilon^3=e^{-\sigma_0t}$. 

Let us prove \eqref{initialpart}. We claim that \eqref{214} implies  that 
for each $a_0 >0$ 
\begin{equation}\label{2.8}
\LL_t(\psi)= \frac{1}{2 i \pi} \lim_{w \to \infty}
\int_{-w}^{w}  e^{a_0 t+ibt} \RR(a_0+ib) \psi\, db\, ,
\, \forall t >0  \, ,
\end{equation}
in the $L^\infty(X_0)$ norm.
Indeed, noting that the resolvent set
of $X$ for $L^\infty(X_0)$ contains the half-plane
$\Re (z) >0$, the identity \eqref{214} 
for $\BB=L^\infty$ implies
that for any $z$ with  $|z| > \zeta$
\begin{equation}\label{bis}
\| \RR(z) (\psi)\|_{L^\infty}
\le  \frac{ (\|\psi\|_{L^\infty}+\zeta^{-1} \|X(\psi)\|_{L^\infty}+
\zeta^{-1} \|\RR(z) (X^2(\psi))\|_{L^\infty})}{|z|}\, .
\end{equation}
Hence (adapting \cite[proof of (2.8), footnote 9]{Li}),
for almost all $x \in X_0$ and each fixed $a> \zeta$ the 
function $b \mapsto (\RR(a+ib)\psi) (x)$ is in $L^2(\real)$.
One can thus apply the inverse Laplace transform  for such
$x$ and get \eqref{2.8} pointwise (that is, the limit \eqref{2.8} takes place in
the $L^2([0,\infty], e^{-at}\, dt)$ sense, as a function of $t$).
On the other hand, $t\mapsto \LL_t (\psi)\in L^\infty(X_0)$
is continuous, and, using again \eqref{214},
$b \mapsto \RR(a+ib)\psi-(a+ib)^{-1}\psi$ is in $L^1(\real, L^\infty)$,
while, clearly, $b \mapsto \frac{e^{(a+ib)t}}{a+ib}$ is in $L^1(\real)$.
Hence, the limit in \eqref{2.8} converges in the $L^\infty(X_0)$ norm
for each $t\in \real ^+$.

The inverse Laplace transform expression  \eqref{2.8}
will be our starting point. 
We shall use a change of contour
to obtain an integral over a  vertical in the
left half-plane $\Re (z) < 0$. For this, we  first
study the map $z \mapsto \RR(z) (\psi)$.

Since the simple eigenvalue zero for $X$ corresponds to the eigenvector
$dx$ for $X^*$, and since $\int \psi\, dx=0$, Corollary ~ \ref{specX'} implies that
for suitable $s<0<r$, $1<p<\infty$, $R>1$, and $\delta_0>0$, 
the function $z \mapsto \RR(z)( \psi)$ is analytic from
the strip $\{ \Re (z) >- \delta_0\}$ to $\widetilde \HHH_p^{r,s,0}(R)$. 
Pick $-\sigma_0 \in (-\delta_0, 0)$ and
fix $b_0>1$.

We may assume that $a_0\le 1$, $\sigma_0\le 1$ and
$A\ge 1$ (recall \eqref{reallybounded'}).
We claim that  Proposition ~ \ref{dolgo}
implies that, up to taking smaller $r$ and $|s|$,
and larger
$b_0$ (possibly depending on $r$ and $s$), 
there exists  $K_1>0$ 
so that\footnote{We pick $1/2$ because any exponent 
$<1$ suffices, 
we could get arbitrarily small  exponent $>0$.} 
for $b_0\le |b| $,
\begin{equation}\label{claim1/2}
\sup_{a \in [-\sigma_0, a_0]} \| \RR(a+ib)\| \le 
K_1 \sqrt{ |b| }\, .
\end{equation}

Let us check \eqref{claim1/2}. Following \cite[Proof of Thm 2.4]{Li} we shall use
\eqref{2.36}.
Fix $A'\in [C_A A, b_0]$ and  $a \in [-\sigma_0, a_0]$ (in particular $|a|<A'$).
Setting $n= \lceil \tilde c_2 A' \ln |b| \rceil$, 
Proposition ~ \ref{dolgo} implies that  for some
$\nu>0$ 
$$
\| (a-A')^n \RR(A'+ ib)^n\|\le  (A'-a)^n (A'+\nu)^{-n}\le
(1+\nu/A')^{-n} \, .
$$
Clearly, if $|b|> b_0>1$   and $A'$ is large enough, then
$
(1+\nu/A')^{-\tilde c_2 A' \ln |b|} <  |b_0|^{-\tilde c_2 \nu}
$.

Next, if $N\ge 1$ is such that
$(1+3N)r<\min (1/3, 1/p, 1-1/p)$, 
then \eqref{controlCeta'} applied to $z=A'+ib$ and
$N$ gives,  
for all $0\le j < n$
\begin{align*}
\|(a-A')^j \RR(A'+ ib)^j\|
&\le \Cs^N \frac{ (1+|z|^{N(r-s)})(A'-a)^j}{(A'-\Lambda |s|-A/N)^{j}}
\\
&
\le \Cs^N  \frac{(1+|z|^{N(r-s)})}
{ (1-(\Lambda|s|+A/N)/A')^{j}} \, .
\end{align*}
Up to taking smaller  $r$,  larger $N$,
smaller $|s|$,  
we may assume that $A'$ is large enough so that
$$
\tilde c_2 \ln \biggl [\bigl (\frac{1}{1-((\Lambda|s|+A/N))/A'}  \bigr )^{A'} \biggr ]<2 \tilde c_2 \ln(e^{\Lambda|s|+A/N}) <1/8
\, .
$$
Finally, we can assume that $b_0>2 A'$ and
$N(r-s)<1/4$  so that
$$
(1+|z|)^{N(r-s)}\le
\Cs |b|^{N(r-s)}\le \Cs |b|^{1/4}\, .
$$
Therefore,  we can find a constant
$
K_1
$
so that for any $a \in [-\sigma_0, a_0]$,
$b_0\le |b|$, and large enough $A'$
\begin{align*}
&\|(1 + (a-A') \RR(A'+ ib))^{-1}\|\\
&\qquad\le \sum_{k=0}^\infty
\|(a-A')^{kn} \RR(A'+ ib)^{kn}\| \sum_{j=0}^{n-1}\|(a-A')^j \RR(A'+ ib)^j\|\\
&\qquad\le
\frac{\Cs}{1-(1+\frac{\nu}{A'})^{-\tilde c_2 A' \ln |b|}}
(1+|b|^{N(r-s)})c_2 A' \ln |b| \left (\frac{1}{1-\frac{\Lambda|s|+A/N}{A'}} \right )^{\tilde c_2 A' \ln |b|}
\\
&\qquad
\le K_1 |b|^{1/2} \, , \mbox{ proving (\ref{claim1/2}).}
\end{align*}

Now, since  $\|X^m(\psi )\|_{\widetilde \HHH}\le \Cs \|X^m(\psi)\|_{H^{r_0}_p(X_0)}$ 
for  $m=1, 2$, the identity (\ref{214}) gives the following upper bound for
$\| \RR(z) \psi\|_{\widetilde \HHH}$:
\begin{equation}  \frac{ \|\psi\|_{\widetilde \HHH}}{|z|}+
C(R)
\biggl (\frac{ \|X(\psi)\|_{H^{r_0}_p(X_0)}}{|z|^2}+
\frac{ \|\RR(z)\|_{\widetilde \HHH} \| X^2(\psi)\|_{H^{r_0}_p(X_0)}}{|z|^2}\biggr )\, .
\end{equation}
(The constant $C(R)$ may depend on $R$, but $R$ is fixed, so we
shall replace it by $\Cs$, slightly abusing notation.)

Therefore, 
the bound  (\ref{claim1/2}) implies that
for each fixed $b$ with $|b|\ge b_0$
the integral over the horizontal segments satisfies
\begin{align*}
&\|\int_{-\sigma_0}^{a_0}  e^{a t+ibt} \RR(a_0+ib) \psi\, da\|_{\widetilde \HHH}
\le \|\psi\|_{\widetilde \HHH}
\left |\int_{-\sigma_0}^{a_0} \frac{ e^{a t+ibt} }{a+ib}\, da
\right |\\
&\qquad \qquad\qquad +
\frac{2 \Cs |a+\sigma_0|e^{\max(\sigma_0,a)t}}{2\pi}
\left( \frac{\|X (\psi)\|_{H^{r_0}_p}}{|b|^{2}}+ K_1\frac{  \|X^2(\psi)\|_{H^{r_0}_p}}{|b|^{3/2}}\right)\, .
\end{align*}
Thus, since 
$$
\left|\int_{-\sigma_0}^{a_0}  \frac{e^{a t+ibt} }{a+ib}\, da\right|\le 
\frac{e^{\max(\sigma_0,a)t}}{|b|}
\, ,
$$ 
and  since 
the isomorphism
$H^{s}_p(X_0)\sim\HHH^{s,0,0}_p(R)$
given by Lemma~\ref{embed} imply that 
for any $\Psi \in \widetilde\HHH_p^{r,s,0}(R)$ and each $\varphi\in C^{|s|}$
\begin{equation}\label{trick}
|\int \Psi \varphi\, dx|
\le \Cs  \|\Psi\|_{H^{s}_p(X_0)} \|\varphi\|_{H^{|s|}_{p'}(X_0)}
\le\Cs  \|\Psi\|_{\widetilde \HHH} \|\varphi\|_{C^{|s|}} \, ,
\end{equation}
we get 
\begin{align*}
\|\int_{-\sigma_0}^{a_0}  e^{a t+ibt} \RR(a_0+ib) \psi\, da\|_{(C^{|s|})^*}
&\le \Cs  \|\int_{-\sigma_0}^{a_0}  e^{a t+ibt} \RR(a_0+ib) \psi\, da\|_{\widetilde \HHH}\\
&\le K_2 e^{\max(\sigma_0,a)t} \frac{\sum_{m=0}^2 \|X^m(\psi)\|_{H^{r_0}_p}}{|b|}\, ,
\end{align*}
which tends to zero for each fixed $t$ as
$|b|\to \infty$. Therefore, changing contours (the residue of the pole
at $z=0$ vanishes since $\int \psi\, dx=0$)
transforms (\ref{2.8}) into
$$
\LL_t(\psi)= \frac{1}{2 i \pi} \lim_{w \to \infty}
\int_{-w}^{w}  e^{-\sigma_0 t+ibt}
\RR(-\sigma_0+ib) (\psi)\, db \, ,
$$
where both sides above are viewed in $(C^1)^*$.
Since $|\int_{-w}^{w}  \frac{e^{-\sigma_0 t+ibt} }{-\sigma_0+ib}\, db|=O(|w|^{-1})$,
uniformly in $t$, we have
$$
\LL_t(\psi)= \frac{1}{2 i \pi} \lim_{w \to \infty}
\int_{-w}^{w}  e^{-\sigma_0 t+ibt}
(\RR(-\sigma_0+ib) (\psi)-\frac{\psi}{-\sigma_0+ib})\, db \, ,
$$
Thus, using again (\ref{214}),  (\ref{claim1/2}), 
and \eqref{trick}, we find a constant
$\Cs>0$ so that,
for  $\varphi\in C^1$, and arbitrary $t>0$,
\begin{equation}\label{keybddom}
\begin{split}
&|\int \varphi \LL_t (\psi) \, dx| \le \Cs  \frac{e^{-\sigma_0 t} \| \varphi\|_{C^1}}{2\pi}
\int_{\real} 
\| \RR(-\sigma_0+ib)   (\psi)-\frac{\psi}{-\sigma_0+ib}\|_{\widetilde \HHH} \, db  \\
&= \Cs   \frac{e^{-\sigma_0 t} \| \varphi\|_{C^1}}{2\pi}
\int_{\real} \frac{
\| \RR(-\sigma_0+ib)(X^2(\psi))+ X (\psi)\|_{\widetilde \HHH}}{ |-\sigma_0+ib|^{2}} \, db 
\\
&\le   \Cs \frac{  \| \varphi\|_{C^1}}{2\pi e^{\sigma_0 t} }
\bigl (  \|X^2(\psi)\|_{H^{r_0}_p(X_0)}\int \frac{K_1|b|^{1/2}}{\sigma_0+b^2}\, db +   \int \frac{ \|X(\psi)\|_{H^{r_0}_p(X_0)}}{\sigma_0+b^2}\, db \bigr )  \, .
\end{split}
\end{equation}
This proves equation \eqref{initialpart} and ends the proof of our main theorem.
\end{proof}

\section{The Lasota-Yorke estimates}\label{LYBG}

In this section, we prove the basic Lasota-Yorke estimate Lemma~ \ref{LY0} 
on $\LL_t$ and
the Lasota-Yorke estimate Lemma ~\ref{controlCeta} on $\RR(z)$. The section also includes
Lemma ~\ref{StrStr}, about multiplication by $1_{X_0}$.

The following easy lemma is the heart the proof of Lemma~ \ref{LY0}.
It is  the analogue of 
\cite[Lemma 4.6]{BG2}. Note however that a new phenomenon appears in
the present setting:
the loss of smoothness (of $r-r'$) in the time direction

\begin{lemma}
\label{CompositionDure}
For all $s<-r\le 0\le r$,  for all $p\in (1,\infty)$, $q\ge 0$,
and every $r'<r$, $s'\le s$, there exists a constant $\Cs$, depending only
on $r$, $s$, $p$, $s'$, $r'$, such that the following holds:

Let $D=\left(\begin{smallmatrix} A&0&0\\0&B&0\\0&0&1
\end{smallmatrix}\right)$
be a block diagonal matrix, with $A$ of dimension $d_u$,
$B$ of dimension $d_s$, and $1$ a scalar.
Assume that there exist $\lambda_u>1$,
$\lambda_s<1$ such that $|Av|\ge \lambda_u |v|$ and
$|Bv|\le \lambda_s |v|$. Then
\begin{align*}
&  \norm{w\circ D^{-1}}{H_p^{r,s,q}}\le \Cs
|\det D|^{-1/p}\bigl (\max(\lambda_u^{-r}, \lambda_s^{-(r+s)})\norm{w}{H_p^{r,s,q}}
 + \norm{w}{H_p^{r',s', q+r-r'}} \bigr )\, \\
  & \norm{w\circ D^{-1}}{H_p^{r,s,q}}\le \Cs|\det D|^{-1/p}\norm{w}{H_p^{r,s,q}}  \, .
\end{align*}
\end{lemma}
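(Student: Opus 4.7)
The plan is to reduce the bound to a Fourier multiplier estimate and decompose the frequency space so that the hyperbolic gain in the $(\xi^u,\xi^s)$-directions and the trivial action of $D$ on $\xi^0$ can be exploited separately.

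First, use $\FFF(w\circ D^{-1})(\xi) = |\det D|\,\FFF w(D^T\xi)$ and change variables inside $\FFF^{-1}$ to obtain
$$\|w \circ D^{-1}\|_{H_p^{r,s,q}} = |\det D|^{-1/p}\,\|\FFF^{-1}(m\cdot a_{r,s,q}\FFF w)\|_{L^p},$$
where $m(\xi) := a_{r,s,q}((D^T)^{-1}\xi)/a_{r,s,q}(\xi)$ (the sign of $|\det D|$ matches the convention of \cite[Lemma 4.6]{BG2}). Since $D^T$ fixes the $\xi^0$-coordinate, the factor $(1+|\xi^0|^2)^{q/2}$ cancels in $m$, and $\xi^0$ acts as a spectator parameter in the multiplier analysis.

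Next, introduce a smooth partition of unity $1=\chi_A+\chi_B$ splitting $\real^d$ into a \emph{tangential} region $A=\{|\xi^0|\le K(1+|\xi^u|^2+|\xi^s|^2)^{1/2}\}$ and a \emph{flow} region $B=\{|\xi^0|\gtrsim(1+|\xi^u|^2+|\xi^s|^2)^{1/2}\}$, with $K$ large depending on $\lambda_u,\lambda_s$. On $\supp\chi_A$, $(1+|\xi|^2)$ is equivalent to $(1+|\xi^u|^2+|\xi^s|^2)$, so the multiplier $\chi_A\,m$ reduces, by treating $\xi^0$ fiber-wise via Fubini, to exactly the situation of \cite[Lemma 4.6]{BG2}; that analysis yields the main term with factor $\max(\lambda_u^{-r},\lambda_s^{-(r+s)})$ together with the BG2 error in $(r',s')$-regularity, giving the first term of the stated estimate. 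On $\supp\chi_B$, $|\xi|\sim|\xi^0|$, and $m$ is pointwise bounded: the factor $(1+|(A^T)^{-1}\xi^u|^2+|(B^T)^{-1}\xi^s|^2+|\xi^0|^2)^{r/2}$ is comparable to $(1+|\xi^0|^2)^{r/2}$ (using $|\xi^s|\lesssim|\xi^0|$ and that $\sigma_{\min}(B)>0$), while $(1+|(B^T)^{-1}\xi^s|^2)^{s/2}/(1+|\xi^s|^2)^{s/2}$ is bounded by a constant depending on $\lambda_s$ and $s$. The elementary estimate $(1+|\xi|^2)^{r/2}\le C(1+|\xi|^2)^{r'/2}(1+|\xi^0|^2)^{(r-r')/2}$, valid on $\supp\chi_B$, then trades $r-r'$ of total regularity for $r-r'$ of flow-direction regularity, producing the error term $\|w\|_{H_p^{r',s',q+r-r'}}$; this is the promised \emph{loss of $r-r'$ of smoothness in the time direction}. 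The second (trivial) inequality of the lemma follows from the same Fourier reduction using only the crude bound $|m(\xi)|\le C$ uniformly, with no region split.

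The main obstacle is translating the pointwise multiplier bounds on $m$, $\chi_A m$, $\chi_B m$, and on the cutoffs $\chi_A,\chi_B$, into genuine $L^p$-operator bounds: one must check the anisotropic Mikhlin--H\"ormander derivative conditions and invoke the Strichartz-type multiplier theorem of \cite{Str} used in \cite{BG1,BG2}. Given the block-diagonal form of $D$ and the smoothness of the cutoffs, the required derivative estimates are uniform in $D$, depending only on $\lambda_u,\lambda_s,r,s,q$ and the dimensions, and the argument closes.
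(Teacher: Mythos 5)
Your overall skeleton---rewrite $\norm{w\circ D^{-1}}{H^{r,s,q}_p}$ as a Fourier-multiplier problem for $m=a_{r,s,q}\circ \transposee{D^{-1}}/a_{r,s,q}$, prove pointwise symbol bounds, and conclude with a Marcinkiewicz/Strichartz-type theorem, delegating the $|\det D|$ bookkeeping to \cite[Lemma 25]{BG1}---is exactly the paper's route. The gap is in the pointwise estimates: your split into regions $A,B$ defined by the \emph{original} frequencies does not match the mechanism, which rests on comparing $|\xi^0|$ with the \emph{transformed} frequencies $|U\xi^u|,|S\xi^s|$ (where $\transposee{D^{-1}}$ has blocks $U,S,1$ with $|U\xi^u|\le\lambda_u^{-1}|\xi^u|$, $|S\xi^s|\ge\lambda_s^{-1}|\xi^s|$) together with the sign condition $r+s<0$. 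Concretely, your region-$A$ claim is false: take $\xi^s=0$ and $|\xi^u|=|\xi^0|=M$, which lies in $A$ for any $K\ge1$. There $a_{r,s,q}(\transposee{D^{-1}}\xi)\ge(1+M^2)^{(r+q)/2}$ is comparable to $a_{r,s,q}(\xi)$, while $\max(\lambda_u^{-r},\lambda_s^{-(r+s)})\,a_{r,s,q}(\xi)+a_{r',s',q}(\xi)$ is smaller by the unbounded factor $\min\bigl(\max(\lambda_u^{-r},\lambda_s^{-(r+s)})^{-1},\,M^{\,r-r'}\bigr)$. So the time-direction loss $q+r-r'$ is indispensable \emph{inside} your tangential region (because $U$ can crush $|\xi^u|$ below $|\xi^0|$), and the proposed ``fiber-wise Fubini in $\xi^0$'' reduction to \cite[Lemma 4.6]{BG2} is not available: $\chi_A m$ retains $|\xi^0|^2$ inside the isotropic factor of the transformed symbol, and removing it costs constants of size $\lambda_u^{r}$, which the lemma forbids ($\Cs$ may depend only on $r,s,p,r',s'$, otherwise the Lasota--Yorke gain is destroyed; for the same reason a threshold $K$ depending on $\lambda_u,\lambda_s$ is not admissible).

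Your region-$B$ treatment has a symmetric problem: $S=\transposee{B}^{-1}$ \emph{expands} stable frequencies by at least $\lambda_s^{-1}$, with no upper bound expressible through $\lambda_s$ (the hypothesis only controls $\sigma_{\max}(B)$), so $|S\xi^s|$ may dominate $|\xi^0|$ on $B$ and the claimed comparability of $(1+|U\xi^u|^2+|S\xi^s|^2+|\xi^0|^2)^{r/2}$ with $(1+|\xi^0|^2)^{r/2}$ fails; invoking $\sigma_{\min}(B)$ introduces a $D$-dependent constant. The hypothesis $s<-r$, which you never use, is exactly what rescues this regime: when $|S\xi^s|$ dominates, $(1+3|S\xi^s|^2)^{r/2}(1+|S\xi^s|^2)^{s/2}\le 3^{r/2}(1+|S\xi^s|^2)^{(r+s)/2}$ and the expansion yields the gain $\lambda_s^{-(r+s)}$ (your ``crude'' uniform bound $|m|\le C$ also needs $r+s\le0$ for the same reason). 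Moreover, on $B$ with $|\xi^s|$ large the downgrade from $s$ to $s'\le s$ costs the unbounded factor $(1+|\xi^s|^2)^{(s-s')/2}$, so the error symbol $a_{r',s',q+r-r'}$ cannot absorb $b$ there; one must use the main term instead. The paper's proof does precisely this: a case analysis on $(|U\xi^u|,|S\xi^s|,|\xi^0|)$---unstable-dominant, stable-dominant, and $|\xi^0|$-dominant with $|S\xi^s|\le1$, the time-regularity loss and the passage to $s'$ being confined to the last case, where they are free since $|\xi^s|\le\lambda_s\le1$---and only afterwards applies the multiplier theorem. Your decomposition would have to be reorganized along these lines for the argument to close.
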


\begin{proof}[Proof of Lemma~\ref{CompositionDure}]
Write $\transposee{D^{-1}}=\left(\begin{array}{ccc} U&0&0\\0 & S&0\\0&0&1
\end{array}\right)$ with $|U \xi^u|\le
\lambda_u^{-1}|\xi^u|$ and $|S \xi^s|\ge \lambda_s^{-1} |\xi^s|$.
Let
\begin{align*}
b(\xi^u,\xi^s,\xi^0)&=a_{r,s,q}\circ \transposee{D^{-1}}(\xi^u,\xi^s,\xi^0)\\
&=(1+|U \xi^u|^2+|S\xi^s|^2+|\xi^0|^2)^{r/2}
(1+|S\xi^s|^2)^{s/2}  (1+|\xi^0|^2)^{q/2} \, .
\end{align*}
Let us prove that there exist $K_1$ and $K_1'$ depending only on $r$ and $s$
(but not on $D$), and $K_2$ depending only on $r$, $s$, $s'\le s$,  $r'<r$
(but not on $D$) so that we have
\begin{align}
\label{eq:opuispoif}
&b \le K_1 \max(\lambda_u^{-r}, \lambda_s^{-(r+s)})
a_{r,s,q}+ K_2 a_{r',s',q+ r-r'} \, , \qquad b \le K'_1 
a_{r,s,q}\, .
\end{align}
Assume that we can prove this bound, as well as the
corresponding estimates for the successive derivatives of $b$.
Then the Marcinkiewicz multiplier theorem applied to $b/(K_1
\max(\lambda_u^{-r}, \lambda_s^{-(r+s)}) a_{r,s,q}+ K_2
a_{r',s',q+r-r'})$ as in  \cite[Lemma 25]{BG1} gives
\begin{align*}
&\norm{\FFF^{-1}(b \FFF v)}{L^p} \le
C \norm{ \FFF^{-1}((K_1 \max(\lambda_u^{-r}, \lambda_s^{-(r+s)})
a_{r,s,q}+ K_2 a_{r',s',q+r-r'}) \FFF v)}{L^p}\, ,\\
&  \norm{\FFF^{-1}(b \FFF v)}{L^p} \le
C \norm{ \FFF^{-1}((K'_1 
a_{r,s,q}) \FFF v)}{L^p}\, ,
\end{align*}
(recall that $\FFF$ denotes the Fourier transform)
which yields the  two  claims of the lemma, using the arguments in
the first part of the proof of \cite[Lemma 25]{BG1}.

Let us now prove \eqref{eq:opuispoif} (the proof for the
derivatives of $b$ is similar). We shall freely use the
following trivial inequalities:
for $x\ge 1$ and $\lambda\ge
1$,
\begin{equation}
\frac{1}{\lambda}(1+\lambda x)\le 1+x \le
\frac{2}{\lambda}(1+\lambda x) \, .
\end{equation}
Assume first $  |S\xi^s|^2>|U\xi^u|^2$ and $|S\xi^s|^2 \ge 1$.
Then,  if $|S\xi^s|^2\ge |\xi^0|^2$, we get since $r\ge 0$ and $r+s<0$,
\begin{align*}
b(\xi^u,\xi^s,\xi^0) &\le (1+3 |S\xi^s|^2)^{r/2} (1+|S\xi^s|^2)^{s/2}
(1+|\xi^0|^2)^{q/2}\\
& \le 3^{r/2} (1+|S\xi^s|^2)^{r/2}(1+|S\xi^s|^2)^{s/2}(1+|\xi^0|^2)^{q/2}
\\&\le 3^{r/2} (1+\lambda_s^{-2} |\xi^s|^2)^{(r+s)/2}(1+|\xi^0|^2)^{q/2}\\
&\le 3^{r/2} (\lambda_s^{-2}/2)^{(r+s)/2} (1+|\xi^s|^2)^{(r+s)/2}(1+|\xi^0|^2)^{q/2}
\\& \le3^{r/2} 2^{-(r+s)/2} \lambda_s^{-(r+s)} a_{r,s,q}(\xi^u,\xi^s,\xi^0) \, ,
\end{align*}
and if $|S\xi^s|^2<|\xi^0|^2$, we get since $r\ge 0$ and $s<0$
\begin{align*}
b(\xi^u,\xi^s,\xi^0) &\le (1+3 |\xi^0|^2)^{r/2} (1+|S\xi^s|^2)^{s/2}
(1+|\xi^0|^2)^{q/2}\\
&\le 3^{r/2} (1+|\xi^0|^2)^{r/2}(1+|S\xi^s|^2)^{s/2}(1+|\xi^0|^2)^{q/2}
\\&\le 3^{r/2} (1+|\xi^u|^2+|\xi^s|^2+|\xi^0|^2)^{r/2} 
(1+\lambda_s^{-2} |\xi^s|^2)^{s/2}(1+|\xi^0|^2)^{q/2}\\
& \le 3^{r/2} (\lambda_s^{-2}/2)^{s/2} a_{r,s,q}(\xi^u,\xi^s,\xi^0)
\\& \le3^{r/2} 2^{-s/2} \lambda_s^{-(r+s)} a_{r,s,q}(\xi^u,\xi^s,\xi^0) \, .
\end{align*}

If $|U\xi^u|^2 >\max( |S\xi^s|^2, |\xi^0|^2)$ and $|U\xi^u|^2 \ge 1$, then
since $r\ge 0$ and $s<0$,
\begin{align*}
b(\xi^u,\xi^s,\xi^0)& \le (1+3 |U\xi^u|^2)^{r/2} (1+|S\xi^s|^2)^{s/2}
(1+|\xi^0|^2)^{q/2}\\
&\le 3^{r/2} (1+|U\xi^u|^2)^{r/2} (1+\lambda_s^{-2} |\xi^s|^2)^{s/2}(1+|\xi^0|^2)^{q/2}
\\&\le 3^{r/2} (1+ \lambda_u^{-2} |\xi^u|^2)^{r/2} (1+|\xi^s|^2)^{s/2}
(1+|\xi^0|^2)^{q/2}\\
&\le 3^{r/2} (2\lambda_u^{-2})^{r/2} (1+|\xi^u|^2)^{r/2} (1+|\xi^s|^2)^{s/2}(1+|\xi^0|^2)^{q/2}
\\&\le 3^{r} \lambda_u^{-r} a_{r,s,q}(\xi^u,\xi^s,\xi^0) \, .
\end{align*}

If $|\xi^0|^2\ge |U\xi^u|^2\ge |S\xi^s|^2$,
we get if $|S\xi ^s|^2\ge 1$, since $r\ge 0$ and $s<0$, 
\begin{align*}
b(\xi^u,\xi^s,\xi^0)& \le (1+3 |\xi^0|^2)^{r/2} (1+|S\xi^s|^2)^{s/2}(1+|\xi^0|^2)^{q/2}\\
&\le 3^{r/2} (1+|\xi^0|^2)^{r/2} (1+\lambda_s^{-2} |\xi^s|^2)^{s/2}(1+|\xi^0|^2)^{q/2}
\\&\le 3^{r/2} (\lambda_s^{-2}/2)^{s/2} (1+  |\xi^0|^2)^{r/2} (1+|\xi^s|^2)^{s/2}(1+|\xi^0|^2)^{q/2}\\
&\le 3^{r/2}   (\lambda_s^{-2}/2)^{s/2}(1+|\xi^u|^2+|\xi^s|^2+ |\xi^0|^2)^{r/2}    (1+|\xi^s|^2)^{s/2}(1+|\xi^0|^2)^{q/2}
\\&\le 3^{r/2}  (\lambda_s^{-2}/2)^{(r+s)/2}a_{r,s,q}(\xi^u,\xi^s,\xi^0) \, ,
\end{align*}
and, finally, if $|S\xi ^s|^2\le 1$, on the one hand,
\begin{align*}
b(\xi^u,\xi^s,\xi^0)& \le (1+3 |\xi^0|^2)^{r/2} (1+|S\xi^s|^2)^{s/2}
(1+|\xi^0|^2)^{q/2}\\
&\le  C_s 3^{r/2} (1+|\xi^0|^2)^{r/2} (1+ |\xi^s|^2)^{s/2}(1+|\xi^0|^2)^{q/2}\\
&\le C_s   3^{r/2}   (1+|\xi^u|^2+|\xi^s|^2+ |\xi^0|^2)^{r/2}     (1+|\xi^s|^2)^{s/2} 
(1+|\xi^0|^2)^{q/2}\, ,
\end{align*}
and on the other hand,
 \begin{align*}
&b(\xi^u,\xi^s,\xi^0) \le (1+3 |\xi^0|^2)^{r/2} (1+|S\xi^s|^2)^{s/2}
(1+|\xi^0|^2)^{q/2}\\
&\qquad
\le C_{s,s'} 3^{r/2} (1+|\xi^0|^2)^{r/2} (1+ |\xi^s|^2)^{s'/2}(1+|\xi^0|^2)^{q/2}
\\&\qquad
\le C_{s,s'} 3^{r/2}  (1+  |\xi^0|^2)^{r'/2} (1+  |\xi^0|^2)^{(q+r-r')/2}  (1+|\xi^s|^2)^{s'/2}\\
&\qquad
\le C_{s,s'} 3^{r/2}   (1+|\xi^u|^2+|\xi^s|^2+ |\xi^0|^2)^{r'/2}  
(1+  |\xi^0|^2)^{(q+r-r')/2}   (1+|\xi^s|^2)^{s'/2}
\\&\qquad\le C_{s,s'} 3^{r/2} a_{r',s',q+r-r'}(\xi^u,\xi^s,\xi^0) \, .
\end{align*}
Thus,
\eqref{eq:opuispoif} follows by choosing $K_2$ large enough depending
on $s$ and $s'$.
\end{proof}

Combining the lemma we just proved with  the results in
Appendix~\ref{localspaces} and Appendix~\ref{iteratechart}, we now prove
the Lasota-Yorke type estimate:

\begin{proof}[Proof of Lemma~\ref{LY0}]
We start with \eqref{notLY}.
We claim that  there exist $\Lambda>1$,
$\tilde \tau_0$, $\tilde \tau_1$,  
and $\Cs>0$ so that, for any
$N\ge 1$ there exists $C_1(N)$ so that, for any
$C_1\ge C_1(N)$  there exists
$t_0(C_1) \ge N$ so that, for any $t\ge t_0$
there exists   $R(t)$,   so that
for any  $R \ge R(t)$, setting $Y=2(r-r'+s-s')$
\begin{align}\label{2.10}
& \norm{\LL_t (\psi)}{\HHH^{r,s,q}_p(R,C_0,C_1)}\le \\
\nonumber& \Cs  
\biggl (\sum_{n=[t/\tilde \tau_0]}
^{[t/\tilde \tau_1 ]}(\Cs N^p)^{n/N}
(D^e_{n})^{\frac{(p-1)}{p}} (D^b_{n})^{\frac{1}{p}} \max(\lambda_{u,n}^{-r}, \lambda_{s,n}^{-(r+s)}) \biggr ) 
\norm{\psi}{\HHH^{r,s,q+r-s}_p(R,C_0,C_1)}\\
\nonumber&\,\,\, + \Cs 
R^{Y} \Lambda^{3 t}
\biggl (\sum_{n=[t/\tilde \tau_0]}
^{[t/\tilde \tau_1 ]}(\Cs N^p)^{n/N}
(D^e_{n})^{\frac{(p-1)}{p}} (D^b_{n})^{\frac{1}{p}}  \biggr )   \norm{\psi}{\HHH^{r',s',q+2r-r'-s}_p(R,C_0,C_1)} \, .
\end{align}
The bound \eqref{notLY} immediately follows from the above estimate. 

All the ingredients of the  proof of Lemma~5.1 in \cite{BG2}
are at our disposal to prove \eqref{2.10}:
The analogue of the iteration lemma  for charts \cite[Lemma 3.3]{BG2}
is  Lemma~\ref{lemcompose} in Appendix~\ref{iteratechart}.
Our Lemma~\ref{CompositionDure} plays the part of \cite[Lemma 4.6]{BG2} on
composition with
hyperbolic matrices.
The analogues of
Lemmata~ 4.1 (Leibniz formula), 4.2 (multiplication with a characteristic function),  4.3
(localisation),  4.5 (partition of unity), and  4.7 (composition with a $C^1$ diffeomorphism
which is $C^{1+Lip}$ along stable leaves)
from \cite{BG2} are Lemmata~\ref{Leib}, \ref{lem:multiplier}, \ref{lem:localization},
~\ref{lem:sum}, and ~\ref{lemcomposeD1alpha} in
Appendix ~\ref{localspaces} below. 
Note however that Lemma~\ref{lemcompose} does not have such a nice
form as \cite[Lemma 3.3]{BG2}, and this will force us in Step 2 of
the proof below to invoke a result specific to our continuous-time
setting, Lemma~\ref{noglue}. This point, together with the 
comparison of the ``continuous-time" and ``discrete-time" complexities
(via the set $\NN(t)$)
are the main differences between the present proof
and that of \cite[Lemma 5.1]{BG2}.

Before giving a detailed account of the proof of \eqref{2.10}, 
let us describe the order in which
we choose the constants. First, $N$ is fixed in the statement
(it will be used in the second step of the proof in order to
apply Lemma ~\ref{lem:multiplier}).
Then, we choose $C_1$ very large, depending on $N$,
also in the second step below, so
that the admissible charts $\phi_\zeta$ are close enough to
linear maps. Then, in Step 3 we fix $t$
very large  depending on $C_1$ (large enough so that
every branch of  $T_t$ is hyperbolic enough so that
Lemma~\ref{lemcompose} applies). Finally, we choose $R$ very large so
that, at scale $1/R$, all the iterates $T_s$ up to time $t$
look like linear maps, and all the boundaries of the sets we
are interested in look like hyperplanes. For the presentation
of the argument, we will start the proof with some values of
$C_1$, $t$, $R$, and increase them whenever necessary, checking each
time that $C_1$ does not depend on $t$, $R$, and that $t$ does not
depend on $R$, to avoid bootstrapping issues. We will denote by
$\Cs$ a constant which could
depend on $r$, $s$, $p$, $C_0$ but  does not depend on $N$, $C_1$, $t$, $R$, and may
vary from line to line.
We shall use  that 
for any $s'<s$ 
\begin{equation}\label{implicit}
\|\cdot\|_{H^{r'+s'-s,s,q}}\le \Cs \|\cdot\|_{H^{r',s',q}}\, .
\end{equation}

For every $\ii \in I^{n+1}$, we fix a small neighbourhood $\tilde
O_\ii$ of $\overline{O_\ii}$ (as a hypersurface) such that $P^n_\ii$ admits an
extension to $\tilde O_\ii$ with the same hyperbolicity
properties as the original $P^n_\ii$. Reducing these sets if
necessary, we can ensure that their intersection multiplicity
is bounded by $D_n^b$, and that the intersection multiplicity
of the sets $P^n_\ii (\tilde{O_\ii})$ is bounded by $D_n^e$.

Our assumptions (in particular the fact that the
return times are bounded from above and from
below) imply that there exist $\tilde \tau_0 >0$ and $\tilde \tau_1>0$, with the following properties:
For
each  flow box $B_{i j}$, 
for every $w\in B_{i j}$,  we let $z(w) \in {O}_{i,j}$
and $t(w)\in (0, \tau_{i,j} (z(w)))$   be 
as in \eqref{deftwzw}.
Then for every large enough $t >0$,
there exist uniquely defined  
\begin{align*}
&n=n(t,w)\ge 1\, ,\ 
\ii=\ii(t, w)=\ii(n, w) \in I^{n+1}\, , \, i_0=i\, ,\,  i_1=j
\, , \\
& 0 \le t_{n+1}(w) \le \tau_{i_{n-1} i_{n}} (P^{n}_{i_1\ldots i_{n}}(z(w))) \, , 
\end{align*}
so that, setting $t_0(w)=\tau_{i j} (z(w))-t(w)$
\begin{align}\label{deccomp}
&t=t_0(w)+t_{n+1}(w)+\sum_{\ell=1}^{n-1} \tau_{i_\ell i_{\ell+1}}(P^{\ell}_{i_1 \ldots i_\ell}(z(w))  )  \, ,
\end{align}
with $n\in [t/\tilde \tau_0 -\Lambda^{-1},t/\tilde \tau_1 -\Lambda^{-1}]$
for some constant $\Lambda$ depending only on the dynamics.

Fix $t>0$ large.
Let $\NN(t)$ be the finite set of 
possible values of $n(t, w)$,  when
$i$ and $j$ range in $I$, as $w$ ranges over $B_{i j}$. We define
for  $n \in \NN(t)$   the set 
\begin{align*}
\II(n,t)=  \{ \ii \in I^{n+1}\, \mid \ii=(i_0, \ldots, i_n)
\mbox{ appears in a decomposition \eqref{deccomp} for }
t \} \, ,
\end{align*}
and we put $\II(t)=\cup_{m \in \NN(t)} \II(m,t)$.
Introduce for $n \in \NN(t)$
and $\ii =(i_0, \ldots, i_{n+1}) \in \II(n,t)$ the refined flow boxes
$$
B_{\ii, t}=\{ w \in B_{i_0 i_1} \mid 
n=n(t,w) \, , \ii(t,w)= \ii\,\}\, .
$$
Note that if $w\in B_{\ii,t}$ then $z(w)\in O_{\ii}$, while
$t(w)$ lies between the graphs of two piecewise $C^2$ functions 
of $z(w)$, which  coincide either with
$0$ or the ceiling time, or
are the images by the flow  of the transversals
$O_{i_0 i_1}$ or $O_{i_1 i_2}$.
We let $T_{\ii, t}$ be the restriction of $T_t$ to $B_{\ii,t}$. By definition, $T_{\ii, t}$
admits a $C^2$ extension to  a neighbourhood 
$\tilde B_{\ii,t}$  of $B_{\ii,t}$.

For $\zeta=(i,j,\ell, m)\in \ZZ(R)$, let us write
$$A(\zeta)=A(\zeta,R)=(\kappa_\zeta^R)^{-1}(B(m,d)) \subset M\, .
$$
The set $A(\zeta)$ is a neighbourhood of $w_\zeta$, of diameter
bounded by $\Cs R^{-1}$, and containing the support of 
$\brho_{\zeta}$.

Let us fix some system of charts $\Phi$ as in the Definition ~
\ref{defnorm} of the ${\HHH_p^{r,s,q}(R)}$-norm. We
want to estimate $\norm{\LL_t \psi}{\Phi}$.

\emph{First step: Complexity at the end.} For any $n \in \NN(t)$
the  closures of the sets $\{T_{\ii,t}(B_{\ii,t})\st \ii \in \II(n,t) \}$,
or (up to taking a smaller neighbourhood) 
$\{T_{\ii,t}(\tilde B_{\ii,t})\st \ii \in \II(n,t) \}$
have intersection multiplicity at most $D_n^e$
(the partition cannot be refined along the time direction).
Therefore, writing\footnote{Elements of $L^\infty$ are defined almost everywhere,
and the transfer operator is defined initially on $L^\infty(X_0)$,
so the fact that 
$\bigcup_{ij} B_{ij}=X_0$ only modulo a zero
Lebesgue measure set is irrelevant.} 
$$
\LL_t (\psi)=\sum_{n \in \NN(t)} \sum_{\ii \in \II(n,t)}
\Id_{T_{\ii,t} B_{\ii,t}}( \psi)\circ T_{\ii,-t}\, ,
$$ 
we get by
Lemma \ref{lem:sum} that for each $\zeta \in \ZZ(R)$
\begin{align*}
\nonumber & \norm{(\brho_\zeta \LL_t (\psi))\circ
\bphi_\zeta}{H_p^{r,s,q}}^p
\\ 
\nonumber &\qquad\le \Cs\sum_{n \in \NN(t)} 
 (D_{n}^e)^{p-1}\sum_{\ii \in \II(n,t)}
\norm{(\brho_\zeta  \Id_{T_{\ii,t} B_{\ii,t}} \psi \circ T_{\ii,-t})\circ \bphi_\zeta}{H_p^{r,s,q}}^p \\
&\qquad\qquad+\Cs   R^{Y/2}
\sum_{\stackrel{n \in \NN(t)} {\ii \in \II(n,t)}} \lambda_{s,n}^{-Y/2}(D_{n}^e)^{p-1}
\norm{(\brho_\zeta   \Id_{T_{\ii,t} B_{\ii,t}} \psi \circ T_{\ii,-t})\circ \bphi_\zeta}{H_p^{r',s',q}}^p \, .
\end{align*}
(The factor $\Cs (\lambda_{s,n}^{-1} R)^{Y/2}=\Cs (\lambda_{s,n}^{-1} R)^{r-r'+s-s'}$ comes from the $C_{r-r'+s-s'}$ norm in \eqref{supcst} and \eqref{implicit}.)
Summing over $\zeta \in \ZZ(R)$, we obtain
\begin{align*}
&  \norm{\LL_t (\psi)}{\Phi}^p \le\\
&  \Cs 
 \sum_{n \in \NN(t)}  (D_{n}^e)^{p-1}
\sum_{\zeta \in \ZZ(R), \ii\in \II(n,t)}
\norm{(\brho_\zeta  \Id_{T_{\ii,t} B_{\ii,t}} \psi\circ
T_{\ii,-t})
\circ \bphi_\zeta}
{H_p^{r,s,q}}^p \\
&+ \Cs  R^{Y/2} 
\sum_{\stackrel{n \in \NN(t)} {\zeta \in \ZZ(R),\ii \in \II(n,t)}}  \lambda_{s,n}^{-(r-r'+s-s')}(D_{n}^e)^{p-1}
\norm{(\brho_\zeta  \Id_{T_{\ii,t} B_{\ii,t}} \psi\circ
T_{\ii,-t})
\circ \bphi_\zeta}
{H_p^{r',s',q}}^p
\, .
\end{align*}

For $i\in I$, $j\in J_i$, let  $U_{i,j,\ell, 2}$, $j\in N_{i,\ell}$, be arbitrary
open sets covering a fixed neighbourhood $\tilde B_{i,j}^0$ of
$\overline{B_{i,j}}$, such that $\overline{U_{i,j,\ell, 2}}\subset
U_{i,j,\ell, 1}$ (they do not depend on $t$ and $R$). For each $\zeta\in \ZZ(R)$, $n\in \NN(t)$, and
$\ii=(i_0,\dots,i_{n})\in \II(n,t)$ such that $T_{\ii, t} (B_{\ii,t})$
intersects $A(\zeta)$, the point $T_{\ii,-t}(w_\zeta)$ belongs to
$\tilde B_{i_0,i_1}^0$ if $R$ is large enough. We can therefore
consider $k$ such that $T_{\ii,-t}(w_\zeta)$ belongs to $U_{i_0,i_1, k,2}$. Then
$\sum_{\ell\in \ZZ_{i_0,k}(R)}\brho_{i_0,i_1, k,\ell}$ is equal to
$1$ on a neighbourhood of fixed size of $T_{\ii,-t}(w_\zeta)$, so
that $\sum_{\ell\in \ZZ_{i_0,i_1, k}(R)} \brho_{i_0,i_1, k,\ell} \circ
T_{\ii,-t}$ is equal to $1$ on $A(\zeta)$ if $R$ is large enough
(depending on $t$ but not on $\Phi$ or $\zeta$). 
Therefore, claim \eqref{converse} in Lemma~\ref{lem:localization}
(note that \eqref{locall} is \eqref{locall'})
gives, if $R$ is large enough (uniformly in
$\Phi$, $\zeta$, $k$, $\ii$)
\begin{align}
\nonumber &  \norm{
	(\brho_{\zeta} \Id_{T_{\ii,t} B_{\ii,t}} \psi
	\circ T_{\ii,-t}) \circ \bphi_\zeta
}{H_p^{r,s,q}}^p
\\
\label{Rbig} &\qquad \le
\Cs \sum_{\ell\in \ZZ_{i_0,i_1, k}(R)}
\norm{
	(\brho_\zeta  \Id_{T_{\ii,t} B_{\ii,t}}(\brho_{i_0,i_1,k,\ell} \cdot  \psi)
	\circ T_{\ii,-t}) \circ \bphi_\zeta
}{H_p^{r,s,q}}^p \, .
\end{align}

Taking $R$ large enough and summing over $\zeta\in \ZZ(R)$,
$n\in \NN(t)$,
$\ii\in \II(n,t)$ and $k$ in $N_{i_0 i_1}$ such that
$T_{\ii,-t}(w_\zeta) \in U_{i_0,i_1 k,2}$,  we get (writing
$\zeta'=(i_0,i_1, k,\ell)\in \ZZ(R)$)
\begin{align}
\label{klqjsdfml}
& \norm{\LL_t (\psi)}{\Phi}^p
\le \Cs \sum_{n,\zeta, \ii, \zeta'}
 (D_{n}^e)^{p-1}
\norm{(\brho_\zeta  \Id_{T_{\ii,t} B_{\ii,t}}( \brho_{\zeta'} \cdot  \psi)\circ T_{\ii,-t})
\circ \bphi_\zeta}
{H_p^{r,s,q}}^p \\
\nonumber
&\,+ \Cs \cdot R^{Y/2}
\sum_{n,\zeta, \ii, \zeta'} (D_{n}^e)^{p-1}\lambda_{s,n}^{-Y/2}
\norm{(\brho_\zeta  \Id_{T_{\ii,t} B_{\ii,t}}( \brho_{\zeta'} \cdot  \psi)\circ T_{\ii,-t})
\circ \bphi_\zeta}
{H_p^{r',s',q}}^p  \, ,
\end{align}
where the sum is restricted to those $(\zeta,\ii,\zeta')$ such
that the support of $\brho_{\zeta'}$ is included in $\tilde
B_{\ii,t}$, the support of $\brho_\zeta$ is included in 
$T_{\ii,t} (\tilde B_{\ii,t})$, and $B_{\zeta'}=B_{i_0 i_1}$ (this restriction will be
implicit in the rest of the proof).

\emph{Second step: Getting rid of the characteristic function.}
We claim that, if $R$ is large enough, then for any $\zeta$, $n$,
$\ii$, $\zeta'$ as in the right-hand-side of (\ref{klqjsdfml}), we have
\begin{multline}
\label{multipl}
\norm{(\brho_\zeta  \Id_{T_{\ii,t }B_{\ii,t}}
( \brho_{\zeta'} \cdot  \psi)\circ T_{\ii,-t}) \circ \bphi_\zeta}
{H_p^{r,s,q}}^p
\\ \le
\Cs (\Cs N^p)^{n/N} \norm{
(\brho_\zeta (  \brho_{\zeta'} \cdot  \psi)\circ T_{\ii,-t})
\circ \bphi_\zeta} {H_p^{r,s,q}}^p \, .
\end{multline}
To prove the above
inequality, it is sufficient to show that  multiplication
by  $\Id_{T_{\ii,t} (B_{\ii,t})}\circ
\bphi_\zeta$ acts boundedly on $H_p^{r,s,q}$, with norm bounded
by  $(\Cs N^p)^{n/N}$. 
First note that there exist $C^2$ functions 
$\tilde \tau_{\ii,k}(z,t)$, $k=0, 1$, so that the images of the
dynamically refined flow boxes
are of the form
$$
T_{\ii,t}( B_{\ii,t})=\{T_\tau(z)\mid z\in P_\ii^n(O_\ii)\, ,
\, \tau\in [\tilde \tau_{\ii,0}(z,t),\tilde \tau_{\ii,1}(z,t))\}\, .
$$
For $n\in \NN(t)$, let $\kappa = n/N$ and decompose
$\ii=(i_0,\dots,i_{n})$ into subsequences of length $N$, as
$(\ii_0,\dots,\ii_{\kappa-1})$. Then $\Id_{P_\ii^n O_\ii} \prod_{j=0}^{\kappa-1} \Id_{O_{\ii_j}} \circ
P_{\ii_j\ii_{j+1}\dots \ii_{\kappa-1}}^{-(\kappa-j)N}$. Define
a set 
$$\Omega_j=P_{\ii_j\ii_{j+1}\dots
\ii_{\kappa-1}}^{(\kappa-j)N}(O_{\ii_j})\, ,
$$ it is therefore
sufficient to show that multiplication by $\Id_{\{w\in B_{i_{n-jN}, i_{n-jN+1}}\mid
z(w)\in \Omega_j\}} \circ
\bphi_\zeta$ (this takes care of the lateral
boundaries) acts boundedly on $H_p^{r,s,q}$, with norm at most $\Cs
N^p$ and multiplication by  (this takes care of the top and bottom
boundaries)
\begin{equation}\label{lastass}
\Id_{\{w \mid z(w)\in O_{i_{n-1}, i_n}\, \, , \, \,
\tau(w)\in [\tilde \tau_{\ii,0}(w,t), \tilde \tau_{\ii,1}(w,t))\}}\circ
\bphi_\zeta
\end{equation}
acts boundedly on $H_p^{r,s,q}$, with norm at most $\Cs$.

Recall that \eqref{starstar'} holds.
Working with our flow box charts 
(see  \eqref{flowbox}), the assertion
on \eqref{lastass}
is an immediate application of Lemma~
\ref{lem:multiplier} (the number $M_{cc}$ of connected components being
then uniform
in $N$), since the functions $\tilde \tau_{\ii,k}$ are $C^2$, uniformly
in $t$, $k$, $\ii$ (they are obtained by composing the original roof
functions by a $C^2$ and hyperbolic
restriction of the composition of the Poincar\'e maps). 
Next, we assume for a moment that
each $\partial O_{i,j}$ is a finite union
of $C^1$ hypersurfaces $K_{i,j, k}$, each of which
is transversal  to the stable cone.  
Then the second step of the
proof of \cite[Lemma 5.1]{BG2} allows us to apply Lemma~
\ref{lem:multiplier} 
(using Fubini in  flow box coordinates) which
implies that, if $R$ and $C_1$ are large enough
(depending only on $N$), then 
the multiplication by the
characteristic function of each set
$\{w\in B_{i_{n-jN}, i_{n-jN+1}} \mid
z(w)\in \Omega_j\}$   has operator-norm on $H_p^{r,s,q}$
bounded by $\Cs NL$. 
Definition~\ref{transcond}
required  only transversality in the image,
but, using the fact that the cone fields $C_{i,j}^{(s)}$ 
do not depend on $i$ (see Definition~\ref{transcond}), we can apply
the arguments in \cite[Appendix C]{BG2}, in particular \cite[Theorem~C.1]{BG2}
there in the case where the set called $\Pi_1$ there coincides
with the entire manifold $M$.
This proves
~\eqref{multipl}.

Combining~\eqref{klqjsdfml} with~\eqref{multipl} and with
the analogous bound in 
$H^{r',s',q}_p$ for $-1+r<-1+1/p<r'<r$, we get
\begin{align}
\label{2step}
& \norm{\LL_t \psi}{\Phi}^p
\le \Cs  \sum_{\zeta, n, \ii, \zeta'}
(\Cs N^p)^{n/N}  (D_{n}^e)^{p-1}
\norm{(\brho_\zeta( \brho_{\zeta'} \cdot  \psi)\circ T_{\ii,-t})
\circ \bphi_\zeta}
{H_p^{r,s,q}}^p \\
\nonumber& +
\Cs   R^{Y/2}  \sum_{\zeta, n, \ii, \zeta'}\lambda_{s,n}^{-Y/2}(D_{n}^e)^{p-1}
(\Cs N^p)^{n/N}  
\norm{(\brho_\zeta( \brho_{\zeta'} \cdot  \psi)\circ T_{\ii,-t})
\circ \bphi_\zeta}
{H_p^{r',s',q}}^p 
\, .
\end{align}

\emph{Third step: Using the composition lemma.}  
In this step, we shall use
Lemma~\ref{lemcompose}, to pull the charts $\Phi_\zeta$ 
in the right hand
side of \eqref{2step} back at
time $-t$   
(glueing some  pulled-back charts together to
get rid of the summation over $\zeta$), and exploit
Lemma~\ref{CompositionDure} to obtain decay  from
hyperbolicity.

Let us partition $\ZZ(R)$ into finitely many subsets
$\ZZ^1,\dots,\ZZ^E$, such that $\ZZ^e$ is included in one of the
sets $\ZZ_{i,j,\ell}(R)$, and $|m-m'|\ge C(C_0)$ whenever
$(i,j,\ell,m)\not=(i,j,\ell, m')\in \ZZ^e$, where $C(C_0)$ is the constant
$C$ constructed in Lemma~\ref{lemcompose} (it only depends on
$C_0$). The number $E$ may be chosen independently of $t$ and
$n\in \NN(t)$.

We shall prove the following: For any ${\zeta'} \in \ZZ(R)$, any
$t>0$, any
$n\in \NN(t)$ and
$\ii\in \II(n,t)$ (such that the support of $\brho_{{\zeta'}}$ is
included in $\tilde B_{\ii,t}$ and $B_{\zeta'}=B_{i_0 i_1}$), and any
$1\le e\le E$, there exists an admissible chart
$\bphi'=\bphi'_{{\zeta'},\ii,e} \in \FF({\zeta'})$ such that
\begin{align}
\label{eqmainstep}
& \sum_{\zeta\in \ZZ^e} \norm{(\brho_\zeta( \brho_{\zeta'} \cdot  \psi)\circ T_{\ii,-t})
\circ \bphi_\zeta}
{H_p^{r,s,q}}^p
\le \Cs \chi_n \norm{( \brho_{\zeta'} \cdot\psi) \circ \bphi'_{{\zeta'},\ii,e}}{H_p^{r,s,q+r-s}}^p\\
\nonumber &\qquad\qquad+
\Cs   \lambda_{u,n}\cdot \lambda_{s,n}^{-1}
\norm{( \brho_{\zeta'} \cdot\psi) \circ \bphi'_{{\zeta'},\ii,e}}{H_p^{r', s', q+2r-r'-s}}^p
\, ,
\end{align}
where
\begin{equation}
\chi_n=\norm{
\max(\lambda_{u,n}^{-r}, \lambda_{s,n}^{-(s+r)})^p}{L^\infty}.
\end{equation}
As always, the sum on the left hand side of \eqref{eqmainstep}
is restricted to those values of $\zeta$ such that the support of
$\brho_\zeta$ is included in $T_{\ii,t} (\tilde B_{\ii,t})$

Let us fix ${\zeta'}$, $t$, $n\in \NN(t)$, $\ii\in \II(n,t)$ and $e$ as above, until the end of
the proof of \eqref{eqmainstep}. All the objects we shall now
introduce shall depend on these choices, although we shall not
make this dependence explicit to simplify the notations. Let
$i,j,\ell$ be such that $\ZZ^e\subset \ZZ_{i,j,\ell}(R)$, and let
$\JJ=\{m\st (i,j,\ell,m)\in \ZZ^e$\}. Since the points in $\JJ$ are
distant of at least $C(C_0)$, Lemma~\ref{lemcompose} will
apply.

Increasing
$R$, we can ensure that the map
\begin{equation*}
\TT:= \kappa_{\zeta}^R\circ T_{\ii,t} \circ
(\kappa_{\zeta'}^R)^{-1}
\end{equation*}
is arbitrarily close to its differential $\AAc=D\TT(\ell)$ at
$\ell:= \kappa_{\zeta'}(w_{\zeta'})$, i.e., the map
$(\TT^{-1} [\cdot + \TT(\ell)]-\ell) \circ \AAc$ is close to the
identity in the  $C^{2}$ topology, say on the ball $B(0,2d)$,
and that $n(\cdot,t)$ is constant on $ (\kappa_{\zeta'}^R)^{-1}B(0,2d)$ .
Moreover, the matrix $\AAc$ sends $\CC_{\zeta'}$ to $\CC_{\zeta}$
compactly (recall \eqref{concond}, and note
that $t_{00}$ can be fixed small while we may require $t\ge t_0$
with $t_0$ depending on $t_{00}$), and
\begin{equation}
\Cs \ge \lambda_u(\AAc,\CC_{\zeta'},\CC_{\zeta})/ \lambda^{(n)}_u( w_{\zeta'}) \ge \Cs^{-1} \, ,
\end{equation}
with similar inequalities for $\lambda_s$ and $\Lambda_u$.
In addition $\AAc(0,0,v^0)=(0,0,v^0)$ for any $v^0 \in \real$.
Since $T$ is uniformly hyperbolic and satisfies the bunching
condition
\eqref{bunch2}, we can ensure up
to taking larger $t$ (and thus $n$, and also $R$, in view of the requirements
in the beginning of the paragraph)  that $\AAc$ satisfies   the bunching
condition \eqref{suffhyp}
for the constant $\epsilon=\epsilon(C_0,C_1)$ constructed in
Lemma~\ref{lemcompose}. 

Let us make explicit the dependence of $t$ on $C_1>1$
and of $R$ on $t$ (and therefore on $C_1$).
Let $\lambda_\beta <1$ be the supremum in the
left-hand-side of the bunching condition \eqref{bunch2}
($\lambda_\beta$ only depends on the dynamics).
From \eqref{smalleps} in the proof of Lemma~\ref{lemcompose}, 
there is a constant $\Cs$ 
depending only on (the extended
cones and) $C_0$ so that we may take
$\epsilon(C_0,C_1)= \Cs C_1^{-1}$.
Therefore, there is a constant $\Cs$ depending 
only on $C_0$,  so that if 
$
n \ge   \ln (\epsilon^{-1})/\ln (\lambda_\beta^{-1})=\Cs  \ln (C_1)
$
then the bunching condition \eqref{suffhyp} holds
for $\epsilon(C_0,C_1)$.
Since $n \ge t/\tilde \tau_0-\Cs$, the condition on $n$ transforms to
$t \ge t_0(C_1)= \Cs \ln (C_1)$,
for a different constant $\Cs$ depending only on the dynamics.
Finally,  there are   $\Lambda >1$ depending only on the
dynamics,  and $\Cs$ depending only
on  the dynamics and on $C_0$, so that, for $t\ge t_0(C_1)$, if
$R\ge R(t)=\Cs \Lambda^t$ with 
$\Lambda^t \ge \Cs  C_1^{\Cs \ln \Lambda}$
then the requirements in the beginning of the previous paragraph
(and those in the first step regarding
\eqref{Rbig}) hold
for $t$.

Applying
\footnote{In order to apply Lemma~\ref{lemcompose},  we need to
extend $\TT$ to a diffeomorphism of $\real^d$, which can be done exactly
as in the third step of the proof of \cite[Lemma 5.1]{BG2}.}
Lemma \ref{lemcompose}, we obtain a block diagonal matrix $D=D_{\zeta'}$, a
chart $\phi'_\ell=\phi'_{\ell, \zeta'}$ around $\ell$, time-shifts
$\Delta_m=\Delta_{m,\zeta'}$
and diffeomorphisms $\Psi_m
=\Psi_{m,\zeta'}$, and
$\Psi_{\zeta'}$, such that, for any $m$ in the set $\JJ'$ of those
elements in $\JJ$ for which $\brho_\zeta\cdot\brho_{\zeta'}\circ
T_{\ii,-t}$ is nonzero,
\begin{equation}
\label{newchart}
\TT^{-1}\circ \phi_\zeta=\phi'_{\ell,\zeta'} \circ \Psi_{\zeta'} \circ D_{\zeta'}^{-1} \circ \Psi_{m,\zeta'} \circ \Delta_{m, \zeta'}
\end{equation}
on the set where $(\brho_\zeta\cdot\brho_{\zeta'}\circ T_{\ii,t})
\circ \Phi_\zeta$ is nonzero.

Writing $\psi_{\zeta'}=( \brho_{\zeta'} \cdot 
\psi)\circ (\kappa_{{\zeta'}}^R)^{-1}$, we have (recall that
$(i,j,\ell)$ is fixed so that $\ZZ^e\subset \ZZ_{i,j}(R)$)
\begin{align}\label{diff0}
\sum_{\zeta\in \ZZ^e}& \bigl\|(\brho_\zeta( \brho_{\zeta'} \cdot  \psi)\circ T_{\ii,-t})
\circ \bphi_\zeta\bigr\|_{H_p^{r,s,q}}^p\\
\nonumber &=
  \sum_{m\in \JJ'} \norm{(\rho_m\circ \phi_{i,j,\ell,m}) \cdot (\psi_{\zeta'}\circ \TT^{-1} \circ \phi_{i,j,\ell,m})} {H_p^{r,s,q}}^p
\\\nonumber &=
  \sum_{m\in \JJ'} \norm{ ((\rho_m\circ \phi_{i,j,\ell,m}\circ \Delta_m^{-1}) \cdot
(\psi_{\zeta'}\circ \phi'_\ell \circ \Psi \circ D^{-1} \circ \Psi_m))
\circ
\Delta_m}{H_p^{r,s,q}}^p\, .
\end{align}
Lemma~\ref{noglue} bounds the previous expression by
\begin{equation}\label{nog}
\Cs \sum_{m\in \JJ'} \norm{( (\rho_m\circ \phi_{i,j,\ell,m}\circ \Delta_m^{-1}
\circ \Psi_m^{-1}) \cdot
( \psi_{\zeta'}\circ \phi'_\ell \circ \Psi \circ D^{-1}))\circ \Psi_m)}{H_p^{r,s,q+r-s}}^p\, .
\end{equation}
Using the notations and results of Lemma ~\ref{lemcompose}, the
terms in this last equation are of the form $v\circ \Psi_m$,
where $v$ is a distribution supported in
$\Psi_m(\phi_{i,j,\ell,m}^{-1}(B(m,d)))\subset B(\Pi m,
C_0^{1/2}/2)$. 
Since the range of $\Psi_m$ contains $B(\Pi m^u,
C_0^{1/2})$, 
if $q\ge 0$ is small enough so that
$q+(r-s)<(1-r+s)/(r-s)$,
Lemma ~ \ref{lemcomposeD1alpha} gives 
$\norm{v\circ \Psi_m}{H_p^{r,s,q+r-s}}\le \Cs \norm{v}{H_p^{r,s,q+r-s}}$. Therefore
\eqref{nog} is bounded by
\begin{equation}\label{diff1}
\Cs \sum_{m\in \JJ'} \norm{ (\rho_m\circ \phi_{i,j,\ell,m}\circ \Delta_m^{-1}\circ \Psi_m^{-1} )  \cdot
( \psi_{\zeta'}\circ \phi'_\ell \circ \Psi \circ D^{-1})}{H_p^{r,s,q+r-s}}^p\, .
\end{equation}
The functions $\rho_m \circ \phi_{i,j,\ell,m}\circ \Delta_m^{-1}\circ \Psi_m^{-1}$ have
a bounded $C^1$ norm and are supported in the balls $B(\Pi
m,C_0^{1/2}/2)$, whose centers are distant by at least $C_0$,
by Lemma~\ref{lemcompose}~(a). Therefore, by the localisation
Lemma~\ref{lem:localization} (using that \eqref{locall'}
is \eqref{locall}), the sum in \eqref{diff1}
(and thus also the left-hand-side of \eqref{diff0}), is bounded by
\begin{equation}\label{toshrink}
\Cs \norm{\psi_{\zeta'}\circ \phi'_\ell \circ \Psi \circ D^{-1}}{H_p^{r,s,q+r-s}}^p\, .
\end{equation}
Similarly, but giving up the glueing in Step~2 of
Lemma~\ref{lemcompose} (and therefore the need to work
with $\Delta_m$ and the regularity loss in the flow direction),  up to the
cost of exponential growth, we get,
applying the  second estimate
in Lemma~\ref{CompositionDure} (recall that $r'\ge 0$) to the composition
with $D^{-1}$,
\begin{align}
\nonumber \sum_{\zeta\in \ZZ^e} \bigl\|(\brho_\zeta( \brho_{\zeta'} \cdot  \psi)\circ T_{\ii,-t})
\circ \bphi_\zeta\bigr\|_{H_p^{r',s',q}}^p
&\le
\Cs \lambda_{u,n} \norm{\psi_{\zeta'}\circ \phi'_\ell \circ \Psi \circ D^{-1}}{H_p^{r',s',q}}^p\\
\label{cpcterm}&\le  
\Cs \lambda_{u,n}  
\norm{\psi_{\zeta'}\circ \phi'_\ell }{H_p^{r',s',q}}^p
\, .
\end{align}
We next  apply the first estimate
in Lemma~\ref{CompositionDure} to the composition
with $D^{-1}$ in \eqref{toshrink}, in order to obtain a contraction in the $H_p^{r,s,q+r-s}$
norm, up to a bounded term in the $H_p^{r',s',q+2r-r'-s}$ norm
for $r'< r$. Since $\psi_{\zeta'}$ is supported in
$B(\ell, C_0^{1/2}/2)$ while the range of $\Psi$ contains
$B(\ell, C_0^{1/2})$ (by Lemma~\ref{lemcompose}),
Lemma~\ref{lemcomposeD1alpha} implies that the composition with
$\Psi$  is bounded. Summing up, we obtain
\begin{align}
\nonumber
\sum_{\zeta\in \ZZ^e} &
\norm{(\brho_\zeta( \brho_{\zeta'} \cdot  \psi)\circ T_{\ii,-t})
\circ \bphi_\zeta}{H_p^{r,s,q+r-s}}^p \\
\label{qklsjflmqsfd}
& \qquad \qquad
\le \Cs \bigl (\chi^{(0)}_n(w_{\zeta'})
\norm{( \brho_{\zeta'} \cdot  \psi)
	\circ (\kappa_{\zeta'}^R)^{-1}\circ \phi'_\ell }{H_p^{r,s,q+r-s}}^p
	\\  \nonumber&\qquad\qquad\qquad\qquad\qquad\qquad
	+
\norm{	( \brho_{\zeta'} \cdot  \psi)	\circ (\kappa_{\zeta'}^R)^{-1}\circ \phi'_\ell
}{H_p^{r',s',q+2r-r'-s}}^p) \, ,
\end{align}
where
\begin{equation*}
\chi^{(0)}_n(w_{\zeta'})=(
\max(\lambda_{u,n}^{-r}, \lambda_{s,n}^{-(s+r)})^p)(w_{\zeta'})
\le \chi_n \, ,
\end{equation*}
concluding the proof of \eqref{eqmainstep}. Summing over
all possible values of $\zeta'$, $n$, $\ii$, and $e$, we obtain
\begin{align}
& \nonumber \norm{\LL_t \psi}{\Phi}^p
\le \Cs  \cdot
\chi_{[t/\tilde \tau_0 ]}\sum_{n,\zeta', \ii} 
(\Cs N^p)^{n/N} (D_{n}^e)^{p-1}\sum_{e=1}^E
\norm{( \brho_{\zeta'} \psi)\circ \bphi'_{\zeta',\ii,e}}{H_p^{r,s,q+r-s}}^p \\
\label{concl} &\quad +
\Cs  \tilde \Lambda^{2[t/\tilde \tau_0 ]}\cdot R^{Y/2}
\sum_{n,\zeta', \ii} 
(\Cs N^p)^{n/N}(D_{n}^e)^{p-1}\sum_{e=1}^E \norm{( \brho_{\zeta'} \psi)\circ \bphi'_{\zeta',\ii,e}}{H_p^{r',s',q+2r-r'-s}}^p\, .
\end{align}
for some $\tilde \Lambda >1$ depending only on the dynamics.

\emph{Fourth step: Complexity at the end/ conclusion of the proof of
\eqref{notLY}.} The right hand side of
(\ref{concl}) is  of the form
$\norm{\psi}{\Phi'}^p$ for some family of admissible charts
$\Phi'$, except that 
several admissible charts may be assigned to a point $w_{\zeta'}$ for
$\zeta'\in \ZZ(R)$. Since $E$ is independent of $n$, the number of those charts
around $w_{\zeta'}$ is at most $\Cs \cdot \Card\{\ii \st \tilde
B_{\ii,t} \cap A({\zeta'})\not=\emptyset\}$. If $R$ is large enough,
we can ensure that this quantity is bounded by the intersection
multiplicity of the sets $\tilde B_{\ii,t}$, which is at most
$D_{[t/\tilde \tau_1 ]}^b$ by construction (using again
that there is no refinement in the flow direction). Therefore,  Lemma~\ref{lem:sum} gives
\begin{align*}
&\norm{\LL_t \psi}{\Phi}^p
\le \Cs 
\max_{n \in \NN(t)}\bigl \{
(\Cs N^p)^{n/N} (D_{n}^e)^{p-1} D_{n}^b \chi_n\bigr \}
 \norm{\psi}{\HHH^{r,s,q+r-s}_p(R)}^p\\
 &\, +
 \Cs   \tilde \Lambda^{2[t/\tilde \tau_0 ]}\cdot R^{Y}
\max_{n \in \NN(t)}\bigl \{
(\Cs N^p)^{n/N} (D_{n}^e)^{p-1}D_{n}^b \bigr \}
 \norm{\psi}{\HHH^{r',s',q+2r-r'-s}_p(R)}^p \, .
  \end{align*}
This concludes the proof of \eqref{2.10} and therefore of \eqref{notLY}.

\bigskip
\emph{Proof of \eqref{reallybounded}.}
To show \eqref{reallybounded}, we revisit the steps of the proof
of \eqref{2.10}, without attempting to get an exponential
contraction in the first term, or to obtain ``compactness" in the second term. 
In  the estimate \eqref{supcst} in the first step, 
we replace $H^{r-1, s, q}$ by
$H^{r, s, q}$.
We must explain how to avoid the loss from
the $H^{r,s,0}_p$ norm to the $H^{r,s,r-s}_p$-norm.
Since we may use the second bound of Lemma ~\ref{CompositionDure}
instead of the first,
this loss could occur only
through the introduction
of $\Delta_m$ in Step 1  of Lemma~\ref{lemcompose} (see Lemma~\ref{lemcomposeQ}),
invoked in  the third step of the present proof .
In Step ~1 of Lemma~\ref{lemcompose}, 
we  got rid both of the $x^s$ and $x^u$
dependence of $\tilde F^{(1)}_m$. The $x^s$ dependence was
a problem in  Step ~2  of Lemma~\ref{lemcompose} (glueing).
If we give up this glueing step  in
Lemma~\ref{lemcompose} then   $\tilde F^{(1)}_m$ 
can be allowed to depend on $x^s$,
to the cost of  exponential growth (in $\Lambda^{\tilde t}$,
for $\Lambda >1$ related to the dynamics). 
The $x^u$ dependence was
a problem in  Step 3  of Lemma~\ref{lemcompose}, where
$\tilde F^{(2)}(A x^u, Bx^s, x^0)$ could create 
exponential growth in the norm. Again, if we are willing
to deal with an exponential factor, we can allow 
$\tilde F^{(1)}_m$  to depend on $x^u$. In other words,
we can get rid of $\Delta_m$ in Lemma~\ref{lemcompose}.
This ends the proof of \eqref{reallybounded}, and of Lemma~ \ref{LY0}.
\end{proof}

Using our transversality assumption
and a simplification of the application of
Strichartz' result in Step 2 in the proof of Lemma~ \ref{LY0}, 
and recalling Lemma~\ref{embed}, we obtain the
following bound, which will be useful to exploit mollifying and
averaging operators to be introduced in the next section:

\begin{corollary}\label{StrStr}
For any $1<p<\infty$ and every $\max(-\beta,-1+1/p)<\sigma<1/p$, there is $\Cs>0$ so that
for any $\varphi$,
$$
\| \Id_{X_0}\varphi\|_{H^\sigma_p(M)}
\le \Cs \| \Id_{X_0}\varphi\|_{H^\sigma_p(X_0)}\le \Cs^2 \| \varphi\|_{H^\sigma_p(M)}\, .
$$
\end{corollary}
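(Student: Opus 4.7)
The plan is to observe that the first inequality in the statement is essentially a tautology given how $H^\sigma_p(X_0)$ is defined (as the subspace of distributions in $H^\sigma_p(M)$ supported in $X_0$, endowed with the ambient norm), provided we already know that $\Id_{X_0}\varphi\in H^\sigma_p(M)$. So the substantive content is the second inequality, namely: multiplication by $\Id_{X_0}$ is bounded on $H^\sigma_p(M)$ for $\max(-\beta,-1+1/p)<\sigma<1/p$. This is the standard ``characteristic-function-as-multiplier'' range, and should fall under the scope of Strichartz's multiplier result, which is already packaged for our purposes as Lemma~\ref{lem:multiplier}.

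The implementation would proceed as follows. First I would cover $M$ by a finite $C^2$ atlas refining the cover $\{U_{i,j,\ell,0}\}$ introduced in Subsection~\ref{spaces} (extended by additional charts covering $M\setminus X_0$), and choose a subordinate partition of unity. The partition-of-unity bound (Lemma~\ref{lem:sum}) plus the change-of-chart bound (used in the proof of Lemma~\ref{embed}, which is where the condition $\sigma>-\beta$ enters) gives an equivalence between $\|\varphi\|_{H^\sigma_p(M)}$ and the discrete $\ell^p$ norm of the chart-wise local $H^\sigma_p(\real^d)$ norms. Next, in each chart, the set $X_0$ is a finite union of closed flow boxes whose boundaries are, on the one hand, pieces of the Poincar\'e sections $O_{i,j}$ (and their images $P_{i,j}(O_{i,j})$) and, on the other hand, of the transversals $K_{i,j,k}\subset\partial O_{i,j}$ and their images $P_{i,j}(K_{i,j,k})$. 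These hypersurfaces are $C^2$ (respectively, piecewise $C^1$), and by Definition~\ref{transcond} (together with the fact that the $O_{i,j}$ are uniformly transversal to the flow, and hence to the stable cone) they are all transversal to the stable cone $\CC^{(s)}$, which under our assumption is \emph{continuous throughout}~$M$.

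The third step is to apply Lemma~\ref{lem:multiplier} to the characteristic function of $X_0$ read in each chart. Since $-1+1/p<\sigma<1/p$, the hypotheses of the multiplier result are satisfied and we obtain a uniform bound
\[
\|(\rho_\alpha \Id_{X_0}\varphi)\circ\kappa_\alpha^{-1}\|_{H^\sigma_p(\real^d)}
\le \Cs\,\|(\rho_\alpha \varphi)\circ\kappa_\alpha^{-1}\|_{H^\sigma_p(\real^d)}\, .
\]
Summing in $\ell^p$ over the (finite) atlas and invoking the norm-equivalence of Step~1 yields $\|\Id_{X_0}\varphi\|_{H^\sigma_p(M)}\le \Cs\,\|\varphi\|_{H^\sigma_p(M)}$. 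Because $\Id_{X_0}\varphi$ is supported in $X_0$, it lies in $H^\sigma_p(X_0)$ with the same norm as in $H^\sigma_p(M)$, which gives both inequalities of the statement.

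The main subtlety is geometric rather than analytic: Lemma~\ref{lem:multiplier} requires a single transversal direction per chart, whereas the stable cones in general depend on the partition element $(i,j)$. This is precisely why the paper imposes in Definition~\ref{transcond} that $\CC^{(s)}_{i,j}$ be independent of $(i,j)$, so that all boundary pieces entering $\partial X_0$ (whether they come from $O_{i,j}$, from $P_{i,j}(O_{i,j})$, or from their sub-boundaries $K_{i,j,k}$ and $P_{i,j}(K_{i,j,k})$) are transversal to one and the same continuous cone field. Without this, one would have to fall back on the heavier machinery of \cite[Appendix C]{BG2}; with it, the Strichartz bound applies directly, chart by chart, and the corollary follows.
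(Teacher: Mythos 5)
Your overall route is the paper's: the first inequality is indeed definitional (the $H^\sigma_p(X_0)$-norm \emph{is} the ambient $H^\sigma_p(M)$-norm on distributions supported in $X_0$), and the substantive bound is obtained by localising with a partition of unity and applying the multiplier bound of Lemma~\ref{lem:multiplier} to $\Id_{X_0}$ read in charts, exactly as in a simplified version of Step~2 of the proof of Lemma~\ref{LY0}; whether the bookkeeping is done in a fixed $C^2$ atlas, as you do, or through the admissible-chart norms via Lemma~\ref{embed}, as the paper indicates, is immaterial.

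There is, however, one incorrect step. You justify the harmlessness of the top and bottom pieces of $\partial X_0$ (the pieces lying in the sections $O_{i,j}$ and their images) by asserting that these sections are ``uniformly transversal to the flow, and hence to the stable cone''. The ``hence'' is false: transversality to the flow direction says nothing about the stable cone, and in the paper's own model class (Subsection~\ref{ex2c}) the sections lie in $\{z=0\}$, which contains the stable direction of the base map, so they are not transversal to $\CC^{(s)}$ at all. The correct treatment of these pieces --- the one used for \eqref{lastass} in Step~2 of the proof of Lemma~\ref{LY0} --- does not use transversality to the stable cone but the flow-box structure \eqref{flowbox}: in flow-box charts the top and bottom boundaries are graphs of $C^2$ functions over $(x^u,x^s)$ in the flow direction, and Lemma~\ref{lem:multiplier}, combined with Fubini in these coordinates, gives a uniform bound on the number of connected components along lines. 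The transversality condition of Definition~\ref{transcond} is needed only for the lateral boundaries (the flow-outs of the $K_{i,j,k}$), and even there the paper applies the arguments of \cite[Appendix C]{BG2}; the assumption that the stable cone field does not depend on $(i,j)$ is what makes those arguments applicable, not a substitute for them. With the top/bottom pieces handled as in the paper, your argument goes through and coincides with the intended proof.
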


It remains to show the Lasota-Yorke estimates for $\RR(z)$:

\begin{proof}[Proof of Lemma~\ref{controlCeta}]
We start with a premilinary bound which is also 
useful elsewhere. Its proof will use interpolation, and we refer to
Section 3.1 of \cite{BG1}  for
reminders and references (such as \cite{BL} and
\cite{TrB}) about complex interpolation. 

Clearly, $\| \LL_t (\psi)\|_{H^{0}_p(X_0)}\le \|\psi\|_{H^{0}_p(X_0)}$
for all $1\le p \le \infty$.
Fix $1<p<\infty$ and $0<s_0<1/p'=1-1/p$.
A toy-model version of the proof
of Lemma~\ref{LY0} (working on isotropic spaces, and
using the original result of Strichartz \cite{Str}) easily
gives that
$$\| \LL_t (\psi)\|_{H^{s_0}_p(X_0)}\le 
\tilde \Lambda e^{\tilde \Lambda t} \|\psi\|_{H^{s_0}_{p'}(X_0)}
\, .$$ 
Here, $\tilde \Lambda$ may depend on $R$, and $s_0$, and $p'$
which are fixed. One can see that
$\tilde \Lambda$ is uniform in $p'$ as $p'\to 1$.  By duality
$\| \LL_t \psi\|_{H^{-s_0}_p(X_0)}\le 
\tilde \Lambda e^{\tilde \Lambda t} \|\psi\|_{H^{-s_0}_p(X_0)}$.
Therefore,
using complex interpolation for $s_0<s<0$ and Lemma~\ref{embed}
we get  $\Lambda$
(which may depend on $R$, $p$, and $s_0$, which are fixed) so that
for any $-s_0<s<0$
\begin{align}\label{prel}
\| \LL_t \psi\|_{\HHH^{s,0,0}_p}
&\le \Lambda   \| \LL_t \psi\|_{H^{s}_p(X_0)}
\le \Lambda  e^{\Lambda |s| t} \| \psi\|_{H^{s}_p(X_0)}
\le \Lambda  e^{\Lambda |s| t} \| \psi\|_{\HHH^{s,0,0}_p}\, .
\end{align}

Recall \eqref{Rn}. For large integer $N$, applying  \eqref{needit}  twice, and exploiting  Lemma~\ref{bq},
we  obtain  a constant $\Cs$ so that 
for all $a > A$, $n\ge 0$, and $\psi \in \widetilde \HHH$, writing $z=a+ib$ 
\begin{align*}
\nonumber&\|\RR(z)^{n+1} (\psi)\|_{\widetilde \HHH^{r,s,0}_p}\le
\Cs \int_{0}^{\infty}\frac{t^{n-1}}{(n-1)!}
\bigl ( e^{-at- t \ln(\lambda^{-1})/N} 
(\| \LL_{t-t/N} \RR(z)(\psi) \|_{\widetilde \HHH^{r,s,r-s}_p}\\
\nonumber &\qquad\qquad\qquad\qquad\qquad\quad
+ 
e^{-at+At/N} \| \LL_{t-t/N}\RR(z)(\psi)\|_{\widetilde \HHH^{s,0,2(r-s)}_p} \bigr )\, dt\\
&\le
\Cs \int_{0}^{\infty}\frac{t^{n-1}}{(n-1)!}
\biggl [ e^{-at- 2t\ln(\lambda^{-1})/N)} \|\LL_{t-2t/N} \RR(z)(\psi) \|_{\widetilde \HHH^{r,s,2(r-s)}_p}\\
&\qquad+ 
e^{-t \ln(\lambda^{-1})/N}
e^{-at+At/N} \| \LL_{t-2t/N}\RR(z)(\psi)\|_{\widetilde \HHH^{s,0,3(r-s)}_p}\\
\nonumber &\qquad
+ \Cs \bigl (1+\frac{|z|}{R}\bigr )^{2(r-s)}(\frac{1}{a-A}+1)
e^{-at+At/N} \| \LL_{t-t/N}(\psi)\|_{\widetilde \HHH^{s,0,0}_p} \biggr ]\, dt \, .
\end{align*}
Exploiting \eqref{prel},
and applying  
\eqref{needit}   $N-2$ more times, we get
\begin{align*}
\nonumber&\|\RR(z)^{n+1} (\psi)\|_{\widetilde \HHH^{r,s,0}_p}\le
\Cs \int_{0}^{\infty}\frac{t^{n-1}}{(n-1)!}
\biggl [ e^{-t(a- \ln(\lambda^{-1}))} \| \RR(z)(\psi) \|_{\widetilde \HHH^{r,s,N(r-s)}_p}\\
\nonumber &\qquad+ 
[  e^{-(N-2)t \ln(\lambda^{-1})/N}  e^{-(N-1)\Lambda t |s|/N}
+\cdots\\
\nonumber&\qquad\quad
+  e^{-t\ln(\lambda^{-1})/N}  \bigl (1+\frac{|z|}{R}\bigr )^{(3-N)(r-s)}\frac{e^{-2\Lambda t|s|/N}}{\Cs^{N-1}}
+ \bigl (1+\frac{|z|}{R}\bigr )^{(2-N)(r-s)}\frac{e^{-\frac{\Lambda t|s|}{N}}}{\Cs^{N-2}}] \\
\nonumber&\qquad\qquad\qquad\qquad\qquad \cdot \Cs^{N-1}
\bigl (1+\frac{|z|}{R}\bigr )^{N(r-s)} (\frac{1}{a-A}+1)e^{-t(a-\Lambda |s| -\frac{A}{N})} 
\| \psi\|_{\widetilde \HHH^{s,0,0}_p} \biggr ]\, dt\\
&\le   \Cs \bigl (\frac{1}{a-A}+1\bigr)\bigl (1+\frac{|z|}{R}\bigr )^{N(r-s)}
\biggl (\frac{1 }{(a+ \ln(1/\lambda))^{n}}
+\frac{\Cs^{N-1}}{ (a-\Lambda |s|-A/N)^{n}}\biggr )  \|\psi\|_{\widetilde \HHH^{r,s,0}_p}  .
\end{align*}
This is \eqref{controlCeta'}.
We have applied \eqref{needit} successively
to $q_j=j(r-s)$,
for $j=0, \ldots, N$. 
We need conditions \eqref{locall'}  and \eqref{starstar'}
to hold for each $q_j$. It suffices to 
check both inequalities
on $q_N$, and  they are satisfied if  $|s|\le 2r$ and
\eqref{replacewidehatC} hold.
A slight modification of the above argument gives
\eqref{controlCeta''} (the condition on $N$ does not depend on $s'$).
\end{proof}

\section{Mollifiers and stable-averaging operators}\label{molll}
It will be convenient in Section ~\ref{dodo} to mollify distributions, replacing them
by nearby distributions in $C^1$.  As is often the case, our mollification
operators $\MMM_\epsilon$ are obtained through
convolution.  
In this section, we also introduce a key tool in the Dolgopyat estimate
of Section~\ref{carlangelo},
the stable averaging operators $\AAA_\delta$.

\begin{remark}
[Minkowski-type integral inequalities]\label{minkoo}
We note for further use that
for any real $r$ and  any $1<p<\infty$, there exists
$C>0$ so that for any  integrable  $\tilde \eta : \real^{d}\to \real_+$ and
any  family $\omega_{y} \in H^{r}_p(\real^d)$, uniformly
bounded in $y$,
\begin{align}\label{minko}
\norm{ \int_{\real^{d}} \tilde \eta(y) \omega_{y}(\cdot )\, dy}{H^{r}_p(\real^d)}
&\le C \int_{\real^{d}} \tilde \eta(y)
\norm{ \omega_{y}}{H^{r}_p(\real^d)}\, dy \\
\nonumber &\le C \norm{\tilde \eta}{L^1(\real^{d})}
\sup_{y} \norm{ \omega_{y}}{H^{r}_p(\real^d)}\, .
\end{align}
Indeed, if $r=0$ the above estimate is the classical Minkowski integral inequality, see e.g. \cite[App.A]{Stein}.
The case  $|r|\le 1$ may be proved by considering first $r=1$,
recalling that
$\norm{\psi}{H_p^{1}}\sim \norm{\psi}{L^p} +
\norm{D\psi}{L^p}$ (see e.g. \cite[Prop 2.1.2 (iv)+(vii)]{RS}),
then $r=-1$, using duality 
($H^{r,0,0}_p=H^{-r,0,0}_{p'}$ with $p'=1/(1-1/p)$) and  $d$-dimensional Fubini 
(see e.g. \cite[Beginning of \S4]{BG1}), and finally using complex interpolation
(see \cite{Tr}, and \cite{BL} in particular
\S2.4, \S4.1, and Theorem 5.1.2, which says
that $[L_1(\BB_1,\nu),L_1(\BB_2,\nu)]_{\theta}=L_1([\BB_1,\BB_2]_\theta,\nu)$,
applied here to $\nu=\eta\, dx$, this is Theorem 1.18.4 in \cite{Tr}). 
The cases  $|r|>1$ are handled similarly by considering
higher order derivatives.
\end{remark}

\begin{remark}
[Variants of complex interpolation]\label{variants}
We shall use two easy variants
of complex interpolation. Let $\PP_1$, $\PP_2$, 
and $\PP_3$ be linear operators acting
on $\BB_1$, $\BB_2$, an interpolation pair of Banach spaces. Then,
interpolating at $\theta\in [0,1]$ between
\begin{equation}\label{version1}
\| \PP_1 \omega\|_{\BB_1} \le C_1 \| \PP_2 \omega\|_{\BB_1}\mbox{ and }
\| \PP_1 \omega\|_{\BB_2} \le C_2 \| \PP_2 \omega\|_{\BB_1}
\end{equation}
gives 
$$
\| \PP_1 (\omega)\|_{[\BB_1,\BB_2]_\theta} \le C_1^{1-\theta} 
C_2^{\theta} \| \PP_2( \omega)\|_{\BB_1}\, ,
$$
while interpolating at $\theta$ between
\begin{equation}\label{version2}
\| \PP_1( \omega)\|_{\BB_1} \le  \|  \PP_2 (\omega)\|_{\BB_1}\mbox{ and }
\| \PP_1 (\omega)\|_{\BB_2} \le C_1  \| \PP_2 (\omega)\|_{\BB_1} + 
C_2 \|  \PP_3 (\omega)\|_{\BB_1}\, , 
\end{equation}
gives 
$$
\| \PP_1( \omega)\|_{[\BB_1,\BB_2]_\theta} \le
\max( C_1^{\theta},  C_2^{\theta}) ( \| \PP_2 (\omega)\|_{\BB_1}+
\|  \PP_3 \omega\|_{\BB_1}) \, .
$$
For \eqref{version1}, one can easily adapt the argument in the last paragraph
of \cite[Theorem 4.1.2]{BL}. Using the notation there (in particular
$\FF_{\BB_1, \BB_2}$), let $\omega\in \BB_1$. Put $g(z)=C_1^{z-1}C_2^{-z} \PP_1(\omega)$.
Then $g\in \FF_{\BB_1,\BB_2}$
with 
$
\|g\|_{\FF}\le \|\PP_2(\omega)\|_{\BB_1}
$.
In addition, $g(\theta)=C_1^{\theta-1}C_2^{-\theta} \PP_1(\omega)$
so that 
$$
\|\PP_1(\omega)\|_{[\BB_1,\BB_2]_\theta}
\le C_1^{1-\theta}C_2^{\theta}\|g\|_\FF
\le C_1^{1-\theta}C_2^{\theta}  \|\PP_2(\omega)\|_{\BB_1}\, .
$$

For \eqref{version2}, put
$g(z)=(\max(C_1,C_2))^{-z} \PP_1(\omega)$. 
Then $g\in \FF_{\BB_1,\BB_2}$
with 
$
\|g\|_{\FF}\le \max(  \|  \PP_2 \omega\|_{\BB_1},
\|\PP_2(\omega)\|_{\BB_1}+\|\PP_3(\omega)\|_{\BB_1})\|\PP_2(\omega)\|_{\BB_1}+\|\PP_3(\omega)\|_{\BB_1}
$.
In addition, $g(\theta)=(\max(C_1,C_2))^{-\theta}\PP_1(\omega)$
so that 
$$
\|\PP_1(\omega)\|_{[\BB_1,\BB_2]_\theta}
\le (\max(C_1,C_2))^{\theta} \|g\|_\FF
\le \max(C_1^\theta,C_2^\theta)  (\|\PP_2(\omega)\|_{\BB_1}+\|\PP_3(\omega)\|_{\BB_1})\, .
$$
\end{remark}

\subsection{Mollification  $\MMM_\epsilon$ of  distributions on $M$}
\label{Meps}

Fix $\epsilon_0$ small.
Let $\eta: \real^{d} \to [0,\infty )$ be a bounded and compactly supported
$C^\infty$ function, supported in $|x|\le 1$
and bounded away from zero on  $|x|\le 1/3$,
with $\int \eta(x)\, dx =1$, and set, for $0<\epsilon<\epsilon_0$,
$$
\eta_\epsilon(y)=\frac{1}{\epsilon^{d}} \eta\bigl (\frac{y}{\epsilon} \bigr )
\, .
$$

Recall the family of $C^2$ charts $\kappa_{i,j,\ell}: U_{i,j,\ell, 0} \to \real^d$
from Section~\ref{spaces} and the standard contact form $\alpha_0$
from \eqref{contact}. We have $(0, x^s , 0) \in \Ker \alpha_0$ for
all $x^s$, and we shall use this as a local fake stable foliation
to define the averaging operator $\AAA^s_\delta$ in Section~ \ref{Aeps}
and in  the proof of the Dolgopyat estimate in Section ~\ref{carlangelo}.
Fix $C^\infty$ functions $\bar \theta_{i,j,\ell}:M \to [0,1]$,
supported in the set $U_{i,j,\ell,1}$ 
(recall that $\overline U_{i,j,\ell,1} \subset U_{i,j,\ell,0}$)
and so that 
$$\sum_{i,j,\ell} \bar \theta_{i,j,\ell} (q)=1\, \quad
\forall q\in Y_0:=\cup_{i,j,\ell}\,  \bar U_{i,j,\ell,2}\, .
$$
(Recall that $U_{i,j,\ell,2}$
was introduced in Step~1 of the proof of Lemma~\ref{LY0}, with
$\overline U_{i,j,\ell,2} \subset U_{i,j,\ell,1}$,
and that the definition ensures that
$X_0$ is contained in the interior of $Y_0$.)
As before, we write $\zeta=(i,j,\ell)$ to simplify notation.
(Note that the parameter $m \in \integer^d$ does not appear in this section.)

The mollifier operator $\MMM_\epsilon$ 
is defined for   $0<\epsilon<\epsilon_0$
\footnote{In the application, we shall take $\epsilon_0$ small enough as a
function of $n=\lceil c \ln |b| \rceil$, in particular much smaller than $1/R$ and than
the ``gap" between $Y_0$ and $X_0$.}   by first setting
for $\psi \in L^\infty(X_0)$ and $x \in \kappa_{i,j,\ell}(U_{i,j,\ell,1})$, 
\begin{equation*}
(\MMM_\epsilon (\psi))_\zeta(x)
= 
\int_{\real^{d}} \eta_\epsilon ( x-y) 
\psi ( \kappa_{\zeta}^{-1}(y))
\, dy = [\eta_\epsilon *  (\psi \circ \kappa_\zeta^{-1})](x)\, .
\end{equation*}
Then,
we set for $\psi \in L^\infty(X_0)$
\begin{equation}
\MMM_\epsilon (\psi)=\sum_\zeta \bar \theta_\zeta (( (\MMM_\epsilon (\psi))_\zeta \circ  \kappa_\zeta) \, .
\end{equation}
Since $\psi$ is supported in $X_0$, then 
$\MMM_\epsilon(\psi)$ is supported in $Y_0$ for small enough $\epsilon$.
We have the following  bounds for $\MMM_\epsilon$:

\begin{lemma}\label{mollbound1}
For each $p\in (1,\infty)$,
and all  $-1<r'\le 0$ and $r' \le r <2+r'$ in $\real$,
there exists $\Cs$ so that
for all small
enough $\epsilon >0$ and every $\psi\in H^{r'}_p(M)$,
supported in $X_0$,
\begin{equation}\label{claim1}
\|\MMM_\epsilon (\psi)\|_{H^{r}_p(M)} 
\le \Cs \epsilon^{r'-r} \|\psi\|_{H^{r'}_p(M)}  \, .
\end{equation}
\end{lemma}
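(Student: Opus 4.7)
The mollifier decomposes as a finite sum of chart-localized pieces, each of which is a composition of (i) a convolution $\eta_\epsilon\ast\cdot$ on $\real^d$, (ii) composition with the $C^2$ chart $\kappa_\zeta$ (and its inverse), and (iii) multiplication by the $C_c^\infty$ partition-of-unity function $\bar\theta_\zeta$. My plan is to dispatch (ii) and (iii) by standard boundedness on Triebel spaces, thereby reducing the claim to a single scaling estimate for $\eta_\epsilon\ast\cdot$ on $\real^d$, and then establish the latter by Fourier-multiplier theory.

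\emph{Reduction.} Multiplication by $\bar\theta_\zeta\in C_c^\infty$ is bounded on $H^r_p(M)$ for any $r$. Composition with the $C^2$ diffeomorphism $\kappa_\zeta$ (or its inverse) is bounded on $H^\sigma_p$ for $\sigma\in(-2,2)$: at integer $|\sigma|\le 2$ by the chain rule and duality, and in between by complex interpolation. Since $r'\in(-1,0]$ and $r\in[r',2+r')$ both lie in $(-2,2)$, these compositions are harmless at both the source regularity $r'$ and the target regularity $r$. To make the chart-side convolution unambiguous, I will insert a cutoff $\chi_\zeta\in C_c^\infty(U_{\zeta,0})$ equal to $1$ on $U_{\zeta,1}$ and replace $\psi$ by $\chi_\zeta\psi$ in each summand (this substitution is bounded on $H^{r'}_p(M)$); choosing $\epsilon_0$ smaller than the distance in $\kappa_\zeta$-coordinates from $\supp\bar\theta_\zeta$ to $\partial U_{\zeta,0}$ ensures the modified expression equals $\MMM_\epsilon\psi$ on $\supp\bar\theta_\zeta$. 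The claim then reduces to the Euclidean estimate
\begin{equation*}
\|\eta_\epsilon\ast g\|_{H^r_p(\real^d)}\le C\epsilon^{r'-r}\|g\|_{H^{r'}_p(\real^d)},\qquad r\ge r'.
\end{equation*}

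\emph{Euclidean estimate and main obstacle.} Setting $\sigma:=r-r'\ge 0$ and using that convolution commutes with the Bessel potential $(I-\Delta)^{r'/2}$, this is equivalent to $\|\eta_\epsilon\ast f\|_{H^\sigma_p(\real^d)}\le C\epsilon^{-\sigma}\|f\|_{L^p(\real^d)}$. For integer $\sigma=k$ I will apply Young's inequality to each $\partial^\alpha(\eta_\epsilon\ast f)=(\partial^\alpha\eta_\epsilon)\ast f$ and use the scaling $\|\partial^\alpha\eta_\epsilon\|_{L^1}=\epsilon^{-|\alpha|}\|\partial^\alpha\eta\|_{L^1}$; for non-integer $\sigma$ I interpolate complexly between the two bracketing integer values. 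Equivalently, one may check directly that the Fourier symbol $m(\xi)=(1+|\xi|^2)^{\sigma/2}\hat\eta(\epsilon\xi)$ satisfies $|\partial^\gamma m(\xi)|\le C_\gamma\epsilon^{-\sigma}(1+|\xi|)^{-|\gamma|}$ --- bounding $(1+|\xi|^2)^{\sigma/2}$ trivially on $|\xi|\le 1/\epsilon$ and exploiting the Schwartz decay of $\hat\eta$ on $|\xi|\ge 1/\epsilon$ --- and invoke Marcinkiewicz's multiplier theorem. There is no single hard step here: the argument is essentially bookkeeping combining the standard Euclidean mollifier estimate with the chart-based definition of $\MMM_\epsilon$. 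The only delicate point is making sure the chart-side convolution genuinely defines a bounded operator on the relevant Sobolev spaces, which requires the cutoff $\chi_\zeta$ together with the smallness hypothesis on $\epsilon$; the upper constraint $r<2+r'$ is then tight, as it reflects precisely the $C^2$ (and no better) regularity of the charts via the composition bounds above.
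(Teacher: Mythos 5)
Your proof is correct, but it is organized differently from the paper's. You push all the chart issues into boundedness of multiplication by smooth cutoffs and of composition with the $C^2$ charts on $H^\sigma_p$ for the relevant range of $\sigma$, and then reduce everything to a single Euclidean statement, $\|\eta_\epsilon\ast f\|_{H^\sigma_p(\real^d)}\le C\epsilon^{-\sigma}\|f\|_{L^p}$, obtained after commuting the convolution with the Bessel potential $(I-\Delta)^{r'/2}$; the gain in regularity then comes purely from Young's inequality at integer $\sigma$ plus complex interpolation (or the multiplier computation). The paper instead never leaves the manifold: it keeps the source norm at $H^{r'}_p$, uses a Minkowski-type integral inequality (Remark~\ref{minkoo}) to bound the convolution as an average of translates at negative regularity, differentiates the full glued expression $\sum_\zeta\bar\theta_\zeta(\MMM_\epsilon\psi)_\zeta\circ\kappa_\zeta$ explicitly (Leibniz and chain rule through the charts and the transition maps $G_{\zeta'\zeta}$, plus the norm equivalence \eqref{important}), obtaining $\|\MMM_\epsilon\psi\|_{H^{\ell+r'}_p(M)}\le\Cs\epsilon^{-\ell}\|\psi\|_{H^{r'}_p(M)}$ for $\ell=0,1,2$, and then interpolates between consecutive values of $\ell$. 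Both routes hinge on the same mechanism (derivatives of $\eta_\epsilon$ cost $\epsilon^{-1}$ in $L^1$, the $C^2$ charts cap you at two derivatives, hence $r<2+r'$, and interpolation fills in the non-integer exponents), but yours buys a cleaner separation of the chart bookkeeping from the analytic estimate, at the price of needing composition-operator bounds at the target regularity $r$; the paper's buys self-containedness on $M$ and avoids invoking those composition bounds above regularity $1$. Two small points to tighten: your blanket claim that composition with a $C^2$ diffeomorphism is bounded on $H^\sigma_p$ for all $\sigma\in(-2,2)$ is too strong near $\sigma=-2$ (the duality argument multiplies by the Jacobian, which is only $C^1$, so it is safe only for $\sigma\ge-1$) --- harmless here since you only use $\sigma\in(-1,2)$; and the cutoff $\chi_\zeta$ should be taken $\equiv1$ on a neighbourhood of $\overline{U_{\zeta,1}}$ (or of an $\epsilon_0$-neighbourhood of $\supp\bar\theta_\zeta$ in chart coordinates) so that the modified and original convolutions genuinely agree on $\supp\bar\theta_\zeta$.
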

(Smoothness of $\eta$ is important in the proof
of \eqref{claim1}.)

\begin{lemma}\label{mollbound2}
Let $p \in (1, \infty)$ and $-1< r'<s<0$.
There exists $\Cs >0$ so that for any  small enough $\epsilon >0$,  and all $\psi\in H^{s}_p(X_0)$
\begin{equation}\label{lemma1.1b}
\norm{\MMM_\epsilon (\psi)-\psi}{H^{r'}_p(M)}
\le \Cs \epsilon^{s-r'} \norm{\psi}{H^{s}_p(X_0)} \, .
\end{equation}
\end{lemma}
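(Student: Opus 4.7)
The plan is to reduce the estimate to a local convolution bound on $\real^d$ via the partition of unity defining $\MMM_\epsilon$, and then to establish the $\real^d$ bound by complex interpolation between two elementary endpoint inequalities.

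First, I would prove the following $\real^d$ estimate: for every $r\in\real$ and every $\theta\in[0,1]$, the operator $\PP_\epsilon\colon g\mapsto \eta_\epsilon*g - g$ satisfies
\begin{equation*}
\|\PP_\epsilon g\|_{H^r_p(\real^d)}\le \Cs\,\epsilon^\theta\,\|g\|_{H^{r+\theta}_p(\real^d)}\,.
\end{equation*}
At $\theta=0$ this follows from $\|\eta_\epsilon\|_{L^1}=1$ together with the fact that convolution commutes with the Bessel potential $(1-\Delta)^{r/2}$. At $\theta=1$ it follows from Young's inequality and the translation estimate $\|g(\cdot-\epsilon y)-g\|_{L^p}\le \epsilon|y|\,\|\nabla g\|_{L^p}$ (using the compact support of $\eta$), followed again by commutation with $(1-\Delta)^{r/2}$ to lift the regularity index. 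Since $\PP_\epsilon$ sends both $H^r_p$ and $H^{r+1}_p$ into the common target $H^r_p$, the intermediate bound follows by complex interpolation of operator norms, using $[H^r_p,H^{r+1}_p]_\theta = H^{r+\theta}_p$ (see \cite[Lemma~18]{BG1}). Taking $r=r'$ and $\theta=s-r'\in(0,1)$ --- which is allowed exactly because $-1<r'<s<0$ --- yields the key local bound
\begin{equation*}
\|\eta_\epsilon * g - g\|_{H^{r'}_p(\real^d)}\le \Cs\,\epsilon^{s-r'}\,\|g\|_{H^s_p(\real^d)}\,.
\end{equation*}

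Second, I would globalize using the partition of unity $\{\bar\theta_\zeta\}$. Since $\sum_\zeta \bar\theta_\zeta\equiv 1$ on $Y_0\supset X_0$ and $\supp\psi\subset X_0$, we have
\begin{equation*}
\MMM_\epsilon\psi-\psi=\sum_\zeta \bar\theta_\zeta\cdot\bigl[\eta_\epsilon*(\psi\circ\kappa_\zeta^{-1})-(\psi\circ\kappa_\zeta^{-1})\bigr]\circ\kappa_\zeta\,.
\end{equation*}
Inserting smooth cutoffs $\chi_\zeta$ supported in $\kappa_\zeta(U_{\zeta,0})$ and identically $1$ on a neighbourhood of $\supp(\bar\theta_\zeta\circ\kappa_\zeta^{-1})$, for $\epsilon$ small enough the $\zeta$-term of the sum is unchanged when $\psi\circ\kappa_\zeta^{-1}$ is replaced by $\chi_\zeta\cdot(\psi\circ\kappa_\zeta^{-1})$, which is then a compactly supported distribution on $\real^d$. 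It then suffices to apply: (i) boundedness on $H^{r'}_p$ of multiplication by the smooth compactly supported $\bar\theta_\zeta$ and of pullback by the $C^2$ diffeomorphism $\kappa_\zeta$ (both valid for $|r'|<1$, which is our case); (ii) the local bound from Step~1 to $g_\zeta:=\chi_\zeta\cdot(\psi\circ\kappa_\zeta^{-1})$; and (iii) diffeomorphism invariance and boundedness of multiplication by $\chi_\zeta$ on $H^s_p$ to compare $\|g_\zeta\|_{H^s_p(\real^d)}$ with $\|\psi\|_{H^s_p(M)}=\|\psi\|_{H^s_p(X_0)}$. Summing over the finite index set $\{\zeta\}$ gives the claim.

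No step presents a serious analytic obstacle: the only thing to verify is the compatibility of the range $-1<r'<s<0$ with the interpolation exponent $\theta=s-r'\in(0,1)$ and with the $C^2$ regularity of the reference charts, both of which are automatic. In particular, unlike the companion Lemma~\ref{mollbound1}, which quantifies the gain of regularity produced by $\MMM_\epsilon$ and relies on the smoothness of $\eta$ through the decay of $\widehat\eta$, here the kernel $\eta$ is used only through $\int\eta=1$ and $\supp\eta\subset B(0,1)$.
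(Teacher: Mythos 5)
Your proof is correct, and it rests on the same mechanism as the paper's: a trivial $O(1)$ bound at the base regularity, an $O(\epsilon)$ bound costing one derivative, and complex interpolation at $\theta=s-r'$. The difference is organizational. The paper interpolates at the level of the manifold operator: it gets the $\theta=0$ endpoint \eqref{interp0} from Lemma~\ref{mollbound1}, and proves the $\theta=1$ endpoint \eqref{interp1} by writing $\MMM_\epsilon\psi-\psi$ through the double localisation $\bar\theta_\zeta\bar\theta_{\zeta'}$ and the change-of-chart maps $G_{\zeta'\zeta}$ (see \eqref{chain0}), then using the zero-order Taylor expansion \eqref{TaylorM} together with the Minkowski-type integral inequality \eqref{minko}. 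You instead isolate the translation-invariant model operator $g\mapsto\eta_\epsilon* g-g$ on $\real^d$, where both endpoints are elementary (Young's inequality, respectively the translation estimate plus commutation with the Bessel potential), interpolate there, and only then globalize via the single-chart decomposition $\MMM_\epsilon\psi-\psi=\sum_\zeta\bar\theta_\zeta\bigl[\eta_\epsilon*(\psi\circ\kappa_\zeta^{-1})-\psi\circ\kappa_\zeta^{-1}\bigr]\circ\kappa_\zeta$, with cutoffs $\chi_\zeta$ to make the local distributions compactly supported, and standard multiplier and $C^2$-diffeomorphism invariance for $|r'|,|s|<1$. Your route buys a cleaner argument: no cross terms, no vector-valued Minkowski inequality, and the smoothness of $\eta$ is visibly irrelevant here (it matters only in Lemma~\ref{mollbound1}); the paper's route keeps the endpoint-$1$ estimate in the same chart-level format as the rest of Section~\ref{molll}, which is convenient for reusing the identical computation in the proof of Lemma~\ref{Abound}, where the averaging kernel acts only in the $x^s$ variable and a genuinely local-in-chart Taylor argument is needed. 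Both interpolations are legitimate because they take place in classical Triebel--Sobolev spaces (on $\real^d$ in your case, on $M$ in the paper's), not in the anisotropic spaces $\HHH^{r,s,q}_p$.
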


\begin{proof}[Proof of Lemma~ \ref{mollbound1}]
We assume that
$\epsilon$ is small enough so that 
$\MMM_\epsilon(\psi)$ is supported in the interior of $Y_0$. Hence,
we can implicitly replace $H^\sigma_p(M)$ by 
$H^\sigma_p(Y_0)$ in the argument. 

Since the charts and partition of unity
are $C^{2}$, the bound \eqref{claim1} on classical Sobolev
spaces is standard,  using
Remark~\ref{minkoo}  and interpolating (interpolation
is allowed for Triebel spaces on $\real^d$
or a manifold,
beware it cannot
be used directly for our spaces $\HHH$).
We provide details for the convenience of the reader:
We shall use that for any real $-1<r'<1$ and any $1<p<\infty$
\begin{equation}\label{important}
\norm{\psi}{H^{1+r'}_p(M)}\le \Cs (\norm{D\psi}{H^{r'}_p(M)}+ \norm{\psi}{H^{r'}_p(M)})\, .
\end{equation}
To prove \eqref{important}, combine $\norm{\omega}{H_p^{1+ r'}}\sim \norm{\omega}{H^{r'}_p} +
\norm{D\omega}{H^{r'}_p}$
(this can be proved by interpolation
and duality from the corresponding results for integer  $r'\ge 0$
available e.g. in \cite[Prop 2.1.2 (iv)+(vii)]{RS}),
with \eqref{version1},
using that $\bar \theta_\zeta$ and $\kappa_\zeta$ are $C^{2}$.

The Minkowski integral inequality \eqref{minko} implies that
for each  $r'\in \real$ and $1<p<\infty$, there exists $\Cs$ so that 
$\norm{\eta_\epsilon * \omega }{H^{r'}_p}\le \Cs 
\norm{\omega }{H^{r'}_p}$ for all
$\epsilon$.
Next,
$
|D^\ell( \eta_\epsilon) (x)| \le \Cs \epsilon^{-\ell} | (D^\ell\eta)_\epsilon|
$,
which implies
$
\int |D^\ell (\eta_\epsilon)(x)|\, dx \le \Cs \epsilon^{-\ell}$
for all integers $\ell \ge 1$.
Thus, by \eqref{minko}, for any integer $\ell\ge 1$,
\begin{equation*}
\norm{(D^\ell (\eta_\epsilon* \omega) }{H^{r'}_p}
=\norm{(D^\ell (\eta_\epsilon) )* \omega }{H^{r'}_p}\le \Cs \epsilon^{-\ell}
\norm{\omega }{H^{r'}_p} \, .
\end{equation*}
Therefore, applying \eqref{minko} again, this time with
\eqref{important},
and since
$$
D (\MMM_\epsilon \psi)= \sum_{\zeta} (D\bar \theta_\zeta)
(\MMM_\epsilon(\psi))_\zeta \circ  \kappa_\zeta
+\sum_{\zeta} \bar \theta_\zeta
D(\MMM_\epsilon(\psi))_\zeta \circ \kappa_\zeta \, D \kappa_\zeta
$$
with
$D (\MMM_\epsilon(\psi))_\zeta(x)=   D(\eta_\epsilon)*(\psi \circ  \kappa_\zeta^{-1})(x)$,
and similarly for the second derivative,
and also, putting $G_{\zeta' \zeta} =  \kappa_{\zeta'} \circ \kappa_\zeta^{-1}$, if the domain of the
map is nonempty,
\begin{align*}
\label{chain00}
& [((\bar \theta_\zeta \bar \theta_{\zeta'}) \circ \kappa_\zeta^{-1})
((\MMM_\epsilon(\psi))_{\zeta'} \circ G_{\zeta' \zeta}  ] (x)
\\ 
\nonumber &\quad
= \frac{(\bar \theta_\zeta\bar \theta_{\zeta'}) (\kappa_\zeta^{-1}(x))}
{\epsilon^{d}} \int_{\real^d} \eta\bigl (\frac{G_{\zeta' \zeta}(x)-y}{\epsilon}\bigr )
\psi (\kappa_\zeta^{-1} \circ G_{\zeta' \zeta}^{-1} (y))
\, dy\, ,
\end{align*}
we find for  $\ell=0$, $1$, $2$, and $-1<r'\le   \ell + r'<2$
\begin{equation}\label{forell}
\norm{\MMM_\epsilon (\psi)} {H^{\ell+r'}_p(M)}
\le \Cs \epsilon^{-\ell} \norm{\psi}{H^{r'}_p(M)} \, .
\end{equation}
Our assumptions imply that there exists $0\le \ell \le 1$ so that
$\ell+ r' \le  r < \ell +1+r'$. Interpolating between the corresponding inequalities
\eqref{forell} (at $(r-\ell-r')/r'$), we obtain
\eqref{claim1}.
\end{proof}

\begin{proof}[Proof of Lemma~ \ref{mollbound2}]
Lemma~\ref{mollbound1} implies that
\begin{equation}\label{interp0}
\|\MMM_\epsilon (\psi)-\psi\|_{H_p^{r'}(M)}
\le \Cs \|\psi\|_{H_p^{r'}(M)}\, .
\end{equation}
We shall next prove that 
there exists a constant $\Cs$  so that for all $-1\le r'\le 0$
\begin{equation}\label{interp1}
\|\MMM_\epsilon (\psi)-\psi\|_{H_p^{r'}(M)}
\le \Cs\epsilon \|\psi\|_{H_p^{r'+1}(M)}\, .
\end{equation}
Interpolating at $\theta=1-(s-r')\in (0, 1)$
between \eqref{interp0}  and \eqref{interp1} gives the result
since $\|\psi\|_{H_p^{\sigma}(M)}=\|\psi\|_{H_p^{\sigma}(X_0)}$
for $\psi \in {H_p^{\sigma}(X_0)}$, by our definition.

Putting $G_{\zeta' \zeta} =  \kappa_{\zeta'} \circ \kappa_\zeta^{-1}$, if the domain of the
map is nonempty,  we 
have 
\begin{align}
\label{chain0}
& [(\bar \theta_\zeta \bar \theta_{\zeta'} )\circ \kappa_\zeta^{-1}
((\MMM_\epsilon(\psi))_{\zeta'} \circ G_{\zeta' \zeta}  ] (x)
\\ 
\nonumber &\quad
= \frac{(\bar \theta_\zeta\bar \theta_{\zeta'}) (\kappa_\zeta^{-1}(x))}
{\epsilon^{d}} \int_{\real^d} \eta\bigl (\frac{y}{\epsilon}\bigr )
\psi (\kappa_\zeta^{-1} \circ G_{\zeta' \zeta}^{-1} (G_{\zeta' \zeta}(x)-y))
\, dy\, .
\end{align}

Therefore, to prove \eqref{interp1},  we must bound the $H^{r'}_p$ norm of
\begin{align}\label{toestimate}
&\frac{(\bar \theta_\zeta
\bar \theta_{\zeta'} )(\kappa_\zeta^{-1}(x))}{\epsilon^d}
\int_{\real^d} \eta(y/\epsilon)
\\ 
\nonumber &\qquad\qquad\qquad\cdot
[\psi (\kappa_\zeta^{-1} \circ G_{\zeta' \zeta}^{-1} (G_{\zeta' \zeta}(x)-y))
-\psi (\kappa_\zeta^{-1} \circ G_{\zeta' \zeta}^{-1} (G_{\zeta' \zeta}(x)) ]
\, dy .
\end{align}

Setting $u=G_{\zeta' \zeta}(x)$ and
$\omega=\psi \circ \kappa_\zeta^{-1} \circ G_{\zeta' \zeta}^{-1}$,
the integral remainder term in the order-zero
Taylor expansion gives
\begin{align}\label{TaylorM}
\omega(u-y)-\omega(u)
=-\sum_{\ell=1}^d
\int_0^1 y_\ell\cdot \partial_\ell \omega(u-t y)\, dt \, .
\end{align}

Therefore, using the Minkowski integral inequality \eqref{minko}, 
and setting $\bar \theta_{\zeta\zeta'}=(\bar \theta_\zeta
\bar \theta_{\zeta'} )\circ \kappa_\zeta^{-1}$, we get that
the $H^{r'}_p$ norm
of  \eqref{toestimate} is bounded by
$$
\Cs  \sup_{\zeta,\zeta', y \in 2 \epsilon \supp(\eta), \ell} 
\|\bar\theta_{\zeta \zeta'}(x)
[ \partial_\ell \psi (\kappa_\zeta^{-1} \circ G_{\zeta' \zeta}^{-1})] (G_{\zeta' \zeta}(x)-  y ))\|_{H^{r'}_p}\, .
$$

To conclude the proof of \eqref{interp1}, use that
for all $\ell=1, \ldots, d$, we have $|y_\ell|\le \Cs \epsilon$
for $y_\ell$ in \eqref{TaylorM} and
\begin{align*}
\nonumber &\bar\theta_{\zeta \zeta'}
\cdot
\partial_\ell [\psi \circ \kappa_\zeta^{-1}] 
=\partial_\ell [
\bar\theta_{\zeta \zeta'}
(\psi \circ \kappa_\zeta^{-1} )]
-\partial_\ell [\bar\theta_{\zeta \zeta'} ]
\psi \circ \kappa_\zeta^{-1}\, ,
\end{align*}
and that
$ \norm{ \partial_\ell \omega}{H_p^{r'}}
\le \Cs \norm{  \omega}{H_p^{r'+1}}$.
\end{proof}

\subsection
{The stable-averaging operator}
\label{Aeps}

We now discuss the main technical idea used to exploit
Dolgopyat's method \cite{Do}. It is borrowed from \cite{Li} and consists in replacing
$\psi$ by its average over a piece of (fake) stable
\footnote{In \cite{Li}, unstable manifolds were used because of the dual nature
of the argument there.} manifold.
\footnote{In particular, since we do not require
the equivalent of \cite[Sublemma 3.1]{Li} but only the analogue
of \cite[Sublemma 4.1]{Li}, we need 
less smoothness: Liverani \cite{Li} required $C^4$,
although he points out that $C^{2+ \epsilon}$ should suffice
in \cite[footnote 6]{Li}.}
Put 
$$
\eta_s: \real^{d_s} \to [0,1]\,,\quad
\eta_s(x)=\Id_{\{\|x^s\|\le 1\}}\, ,
\qquad 
\eta_{s,\delta}(x^s)= \frac{1}{S_{d_s}\delta^{d_s}} \eta_s(x^s/\delta)\, ,
$$
where $S_{d_s}$ is the volume of the $d_s$-dimensional unit ball.
For fixed small $\delta >0$
(so  that
\footnote{In Section~\ref{dodo}, we shall need to choose $\delta$ small enough as a
function of $n=\lceil c \ln |b| \rceil$.}
the 
$\delta$-neighbourhood of $ \kappa_\zeta(U_{\zeta, 1})$
is included in the domain of $\kappa_\zeta^{-1}$ for each
$\zeta=(i,j,\ell)$) we set
for each
$\zeta$, $\psi \in L^\infty(X_0)$, and $x \in  \kappa_\zeta(U_{\zeta, 1})$
\begin{equation*}
(\AAA^s_{\delta} (\psi))_\zeta (x^u, x^s, x^0)
=  \int_{\real^{d_s}} \eta_{s,\delta}( x^s-y^s) 
\psi ( \kappa_\zeta^{-1}(x^u, y^s,  x^0))
\, dy^s \, .
\end{equation*}
(In the above, we implicitly extend $\psi$ by zero outside of its support $X_0$.)

We set for $\psi \in L^\infty(X_0)$ 
\begin{equation}
\AAA^s_{\delta} (\psi)=\sum_{\zeta} \bar \theta_\zeta \cdot (\AAA^s_\delta (\psi))_\zeta \circ  \kappa_\zeta \, .
\end{equation}
We shall assume that $\delta$ is small enough so that $\AAA^s_{\delta} (\psi)$
is supported in the interior of $Y_0$.
The key estimate on $\AAA^s_{\delta}$ is contained in the next lemma.

\begin{lemma}\label{Abound}
Let  $1<p<\infty$ and $-1+1/p<r'<s\le 0$.
There exists $\Cs >0$  so that for every small enough $\delta >0$, 
and every bounded function $\psi$
supported in $X_0$
$$
\|\AAA^s_{\delta} (\psi)- \psi\|_{H^{r'}_p(M)} \le \Cs \delta^{s-r'} \|\psi\|_{H^{s}_p(M)} 
\,  ,
$$
and  also
$
\|\Id_{X_0} \AAA^s_{\delta} (\psi)- \psi\|_{H^{r'}_p(X_0)} 
\le \Cs \delta^{s-r'} \|\psi\|_{H^{s}_p(X_0)} 
\,  
$.
\end{lemma}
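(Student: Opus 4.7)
The plan is to reduce the statement to an $\real^d$-estimate on the local convolution operator $T_\delta \omega := \eta_{s,\delta}*_{x^s}\omega$ (in the stable coordinates alone) and then to interpolate between two elementary endpoint bounds. Since $\sum_\zeta \bar\theta_\zeta \equiv 1$ on $Y_0 \supset X_0$ and $\psi$ is supported in $X_0$, I rewrite
$$
\AAA^s_\delta(\psi) - \psi = \sum_\zeta \bar\theta_\zeta \cdot \bigl( T_\delta \omega_\zeta - \omega_\zeta \bigr)\circ\kappa_\zeta\, ,\qquad \omega_\zeta := \psi\circ\kappa_\zeta^{-1}\, .
$$
Because the cutoffs $\bar\theta_\zeta$ are $C^\infty$ and the charts $\kappa_\zeta$ are $C^2$, multiplication by $\bar\theta_\zeta$ and composition with $\kappa_\zeta^{\pm 1}$ act boundedly on $H^\sigma_p$ for $-1+1/p < \sigma < 1/p$, a range containing both $r'$ and $s$. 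It thus suffices to prove
\begin{equation}\label{eq:target}
\|T_\delta \omega - \omega\|_{H^{r'}_p(\real^d)} \le \Cs \delta^{s-r'}\|\omega\|_{H^s_p(\real^d)}\, .
\end{equation}

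For the first endpoint, $\|T_\delta - \mathrm{id}\|_{H^s_p\to H^s_p}\le \Cs$ uniformly in $\delta$: because $T_\delta$ is a Fourier multiplier in $\xi^s$ alone, it commutes with the isotropic Bessel potential $(1-\Delta)^{s/2}$, so its $H^s_p$-boundedness reduces to its $L^p$-boundedness, which follows from Remark~\ref{minkoo} together with $\int \eta_{s,\delta}\,dy^s = 1$. For the second endpoint, $\|T_\delta - \mathrm{id}\|_{H^s_p\to H^{s-1}_p}\le \Cs\delta$: for smooth compactly supported $\omega$, the fundamental theorem of calculus gives
$$
(T_\delta\omega - \omega)(x) = -\int_{|y|\le\delta}\int_0^1 \frac{y\cdot(\nabla_{x^s}\omega)(x - (0,ty,0))}{S_{d_s}\delta^{d_s}}\,dt\,dy\, ,
$$
whence Remark~\ref{minkoo}, translation invariance of $\|\cdot\|_{H^{s-1}_p}$, and the boundedness of $\nabla_{x^s}: H^s_p \to H^{s-1}_p$ yield $\|T_\delta\omega-\omega\|_{H^{s-1}_p}\le \Cs\delta\|\omega\|_{H^s_p}$; density of smooth functions in $H^s_p(\real^d)$ extends the bound to the whole space.

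I then interpolate complexly (Remark~\ref{variants} applied to $P_1=T_\delta-\mathrm{id}$, $P_2=\mathrm{id}$, with couple $\BB_1=H^s_p(\real^d)$, $\BB_2=H^{s-1}_p(\real^d)$) at $\theta := s-r'$. The assumption $r'<s$ gives $\theta>0$, while $r'>-1+1/p$ combined with $s\le 0$ gives $\theta<1-1/p<1$, so $\theta\in(0,1)$. The standard identification $[H^s_p,H^{s-1}_p]_\theta = H^{s-\theta}_p = H^{r'}_p$ (see \cite[\S\S 2.4, 4.1]{BL}) combined with the two endpoints yields $\|T_\delta-\mathrm{id}\|_{H^s_p\to H^{r'}_p}\le \Cs^{1-\theta}(\Cs\delta)^\theta \le \Cs\delta^{s-r'}$, which is~\eqref{eq:target}, hence the first bound of the lemma. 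For the $X_0$-version, since $\Id_{X_0}\psi=\psi$ we have $\Id_{X_0}\AAA^s_\delta\psi - \psi = \Id_{X_0}(\AAA^s_\delta\psi - \psi)$, and Corollary~\ref{StrStr} (which applies because $-1+1/p<r'<1/p$) transfers the bound to $H^{r'}_p(X_0)$.

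The main obstacle is the second endpoint: the kernel $\eta_s$ is only a characteristic function and cannot itself be differentiated, so the $\delta$-gain must come from placing the derivative on $\omega$ via the Taylor identity above. This requires working initially with smooth $\omega$ and then invoking density; all other steps are essentially routine, and the range restrictions $-1+1/p<r'<s\le 0$ are what precisely make the chart pull-back, the complex interpolation, and the application of Corollary~\ref{StrStr} simultaneously admissible.
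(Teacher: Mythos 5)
Your proof is correct and follows essentially the same route as the paper's: there too the estimate comes from a zero-order Taylor expansion with integral remainder (formula \eqref{TaylorA}), converting the stable average into a factor $\delta$ times one full derivative of $\psi$, combined with the trivial uniform (Minkowski) bound and complex interpolation at exponent $s-r'$, and the $X_0$-version is deduced from Corollary~\ref{StrStr} via the same identity. The only differences are cosmetic: you interpolate on the target side for a single Euclidean convolution operator $T_\delta$, whereas the paper interpolates on the source side directly on $M$ and thereby handles explicitly (via the $\bar\theta_\zeta\bar\theta_{\zeta'}$ and $G_{\zeta'\zeta}$ terms in \eqref{chaina0}) the routine chart-localization --- namely that $\psi\circ\kappa_\zeta^{-1}$ is only defined on $\kappa_\zeta(U_{\zeta,0})$ and must be cut off before being regarded as an element of $H^{s}_p(\real^d)$ --- which your reduction leaves implicit.
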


There is  no equivalent to Lemma~\ref{mollbound1}
($\AAA^s_{\delta}$ is not a mollifier!).

\begin{proof}
We proceed as in Lemma~\ref{mollbound2} to prove the
first bound. Note in particular that
the argument there only used that $\eta\in L^1$ was a probability
density (no smoothness of $\eta$ was required).

The analogue of \eqref{chain0} is
\begin{align}\label{chaina0}
& (\bar \theta_\zeta  \bar \theta_{\zeta'}) (\kappa_\zeta^{-1}(x))
\int_{\real^{d_s}} \frac{\eta_s\bigl (\frac{y^s}{\delta}\bigr )}{S_{d_s}\delta^{d_s}}
\psi (\kappa_\zeta^{-1}  (G_{\zeta'\zeta})^{-1} (G_{\zeta'\zeta}(x)-y^s))
\, dy^s
\, .
\end{align}

To prove the analogue of \eqref{interp1},  we put 
$\omega=\psi \circ \kappa_\zeta^{-1}  \circ G_{\zeta'\zeta}^{-1}$
and consider
\begin{align*}
&(\bar \theta_\zeta  \bar \theta_{\zeta'}) (\kappa_\zeta^{-1}(x))
\int_{\real^{d_s}}
\frac{\eta_s\bigl (\frac{y^s}{\delta}\bigr )}{S_{d_s}\delta^{d_s}}
[\omega(G_{\zeta'\zeta}(x)-  y^s ) - \omega (G_{\zeta'\zeta}(x)) ]
\, dy^s\, .
\end{align*}
Then,
\eqref{TaylorM} is replaced by the following zero-order Taylor expansion with
integral remainder term
\begin{align}\label{TaylorA}
&\omega (u-y^s)-\omega(u)
=-\sum_{\ell=d_u+1}^{d_u+d_s}
\int_0^1 
y^s_\ell\cdot \partial_\ell \omega (u-t y^s)\, dt\, .
\end{align}
Note that, even if $\ell$ corresponds
to a stable coordinate, 
$\partial_\ell( \psi \circ \kappa_\zeta^{-1}  \circ G_{\zeta'\zeta})^{-1})$
can involve all partial derivatives of $\psi$,
since $G_{\zeta'\zeta}$
does not preserve stable leaves in general.
We obtain the claimed  $\Cs \delta^{s-r'}$ factor
by interpolation, since $|y_\ell|\le \Cs \delta$.

The second claim follows from the first one, Corollary~\ref{StrStr} and the
identity 
$
(\Id_{X_0}\AAA^s_{\delta} (\psi))- \psi=\Id_{X_0}(\AAA^s_{\delta} (\psi)- \psi)
$ for any $\psi$ supported in $X_0$.
\end{proof}

\subsection{End of the proof of  Lemma~\ref{bq} on $\RR(z)$}
\label{endbq}

\begin{proof}[End of the proof of  Lemma~\ref{bq} on $\RR(z)$]
Let us deduce Lemma~\ref{bq} from (\ref{partialR}).
The proof is  by interpolation, but
this interpolation must be done at the level of Triebel spaces.
Also, the special form of the admissible charts must be used.

First note that $\LL_t \RR(z)=\RR(z)\LL_t$ so that
$$
\| \RR(z)(\psi)\|_{\widetilde \HHH^{r,s,q}_p(R)}=\sup_{\tau \in [0,t_0]} \| \RR(z) \LL_\tau(\psi)\|_{\HHH^{r,s,q}_p(R)}\ \, .
$$

Let us first take $q'=1$, and $q= 0$, setting
$\tilde \psi=\RR(z)(\LL_{\tau} (\psi))$ for
$0 \le \tau \le t_0$. By
Definition \ref{defnorm}, we must consider
$$
\left(\sum_{\zeta=(i,j,\ell,m)\in \ZZ(R)}
\norm
{[\rho_m\cdot (  \tilde \psi\circ 
(\kappa_\zeta^R)^{-1} )]\circ \phi_\zeta}
{H_p^{r,s,1}}^p\right)^{1/p}\, .
$$

Now,
\begin{align}
\nonumber \norm
{[\rho_m\cdot (  \tilde \psi\circ 
( \kappa_\zeta^R)^{-1}) ]\circ \phi_\zeta}
{H_p^{r,s,1}}
& \le \Cs
\norm
{[\rho_m\cdot ( \tilde \psi\circ 
(\kappa_\zeta^R)^{-1}) ]\circ \phi_\zeta}
{H_p^{r,s,0}}\\
\label{(a)} &\qquad\qquad +
\Cs 
\norm
{\partial_{x^0}[\rho_m\cdot ( \tilde \psi\circ 
(\kappa_\zeta^R)^{-1})\circ \phi_\zeta\ ]}
{H_p^{r,s,0}}\, .
\end{align}
We have
\begin{align}
\nonumber \norm
{  \partial_{x^0}[\rho_m\cdot 
( \tilde \psi\circ  (\kappa_\zeta^R)^{-1})\circ \phi_\zeta
 ]}
{H_p^{r,s,0}}
& \le 
\Cs\norm { [(\partial_{x_0} \rho_m)\cdot 
( \tilde \psi\circ  (\kappa_\zeta^R)^{-1})]\circ \phi_\zeta
 }
{H_p^{r,s,0}}\\
\label{(b)} &\qquad+
\Cs \norm{ [\rho_m\cdot 
 \partial_{x_0} (\tilde \psi
\circ  (\kappa_\zeta^R)^{-1})]\circ \phi_\zeta
 }
{H_p^{r,s,0}}\, .
\end{align}
Recall that  $
\phi_\zeta(x^u, x^s,x^0)=(F(x^u, x^s), x^s, x^0+f(x^u, x^s))
$.
If $t>0$ is small enough,
since the charts $\kappa_\zeta=\kappa_{i,j,\ell}$ preserve  the flow direction and
time units, 
we find
\begin{align}
\nonumber &\RR(z) \LL_\tau
(\psi (  (\kappa_\zeta^R)^{-1} (F(x^u, x^s), x^s, x^0+f(x^u, x^s)))\\
\nonumber &\qquad\qquad\qquad\quad-
\RR(z)( \LL_{\tau}\psi) ((\kappa_\zeta^R)^{-1} (F(x^u, x^s), x^s, x^0-t+f(x^u, x^s)) \\
\label{(c)}    &\qquad\qquad\qquad\qquad =( \RR(z)\LL_\tau(\psi) -\RR(z)\LL_{\tau+t/R} (\psi)) (  (\kappa_\zeta^R)^{-1}\circ \phi_\zeta )(x)\, .
\end{align}
Dividing by $t$ and letting $t\to 0$, it follows from \eqref{partialR} at $t_1=\tau$
and from (\ref{(a)}-\ref{(b)}-\ref{(c)})
that
\begin{align}
\label{interpbq1}& \norm
{[\rho_m\cdot ( \RR(z)(\LL_\tau (\psi))\circ 
(\kappa_\zeta^R)^{-1} )]\circ \phi_\zeta}
{H_p^{r,s,1}}\\
\nonumber & \qquad \qquad\le
\Cs \bigl (1+ \frac{|z|}{R}\bigr )
\norm
{[\rho_m\cdot ( \RR(z)( \LL_\tau(\psi))\circ 
(\kappa_\zeta^R)^{-1} )]\circ \phi_\zeta}
{H_p^{r,s,0}}\\
\nonumber  &\qquad \qquad\qquad \qquad+\Cs \norm
{[(\partial_{x_0} \rho_m)\cdot (\RR(z)(\LL_\tau( \psi)))\circ 
(\kappa_\zeta^R)^{-1} ]\circ \phi_\zeta}
{H_p^{r,s,0}}\\
\nonumber &\qquad \qquad\qquad \qquad
+\Cs \norm
{[\rho_m\cdot (\LL_\tau (\psi)\circ 
(\kappa_\zeta^R)^{-1} )]\circ \phi_\zeta}
{H_p^{r,s,0}}\, .
\end{align}
Since
$\sum_ {m'\in \ZZ_{i,j,\ell}(R)} \rho_{i,j,\ell,m'}\equiv 1$,
we may use
Lemma ~\ref{Leib} (for $\tilde \beta=\infty$) and \eqref{converse} in
Lemma ~\ref{lem:localization} (using \eqref{locall'}=\eqref{locall}) to replace $\partial_{x^0} \rho_m$ by a finite sum
of $\rho_{m'} \partial_{x^0} \rho_m$ for neighbours $m'$
of $m$ (the number of neighbours is uniformly bounded, which gives rise
to a bounded overcounting).
For $q>0$ and $q'\in (q, q+1)$,  setting
$q''=q(1-(q-q'))$ and applying complex interpolation
(see \eqref{version2}) at $\theta=q/q''$ between
\eqref{interpbq1} and
\begin{align*}
& \norm
{[\rho_m\cdot ( \RR(z)( \LL_\tau( \psi))\circ 
(\kappa_\zeta^R)^{-1} )]\circ \phi_\zeta}
{H_p^{r,s,q''}}\\
&\qquad\qquad\qquad\qquad\qquad\le
\norm
{[ \rho_m\cdot (\RR(z)(\LL_\tau (\psi)))\circ 
(\kappa_\zeta^R)^{-1} ]\circ \phi_\zeta}
{H_p^{r,s,q''}}\, ,
\end{align*}
we get,  since $1\le (\frac{|z|}{R}+1)^{q'-q}$,
summing all terms,
\begin{align*}
&\left(\sum_{\zeta=(i,j,\ell,m)\in \ZZ(R)}
\norm{ [\rho_m\cdot 
(  (\tilde \psi
\circ  (\kappa_\zeta^R)^{-1}]\circ \phi_\zeta)}
{H_p^{r,s,q'}}^p\right)^{1/p}\\
&\le
\Cs 
\bigl (\frac{|z|}{R}+1 \bigr )^{q'-q}
 \bigl [
\bigl(\sum_{\zeta=(i,j,\ell,m)\in \ZZ(R)}
\norm{ [\rho_m\cdot 
( \RR(z) (\LL_\tau  (\psi))
\circ  (\kappa_\zeta^R)^{-1})]\circ \phi_\zeta}
{H_p^{r,s,0}}^p\bigr)^{1/p}\\
&\qquad\qquad\qquad \quad+ \bigl(\sum_{\zeta=(i,j,\ell,m)\in \ZZ(R)}
\norm{ [\rho_m\cdot 
( \LL_\tau( \psi)
\circ  (\kappa_\zeta^R)^{-1})]\circ \phi_\zeta}
{H_p^{r,s,q}}^p\bigr)^{1/p}\bigr ] \, .
\end{align*}
In view of \eqref{q=0}, this ends the proof of Lemma~\ref{bq}.
\end{proof}

\section{The Dolgopyat estimate}\label{carlangelo}

The purpose of this section is to prove the following lemma.
\begin{lemma}\label{dolgolemma}
Assume $d=3$. There exist $\Cs>0$, $\blambda>1$ and $\gamma_{0}>0$ so 
that for all $a>1\,,b>1$, $\gamma\geq \gamma_{0}$, $m\geq \Cs a\gamma\ln b$, and $\tilde\psi\in H^1_\infty(M)$
\[
\|
\bA^s_\delta (\cR(a+ib)^{2m} (\tilde \psi))\|_{L^\infty(X_0)}\leq \Cs a^{-2m} b^{-\gamma_{0}}(\|\tilde\psi\|_{L^\infty(M)}+\nu_{a}^{-m}\|\tilde \psi\|_{H^1_\infty(M)})\, ,
\]
where $\delta=b^{-\gamma}$ and $\nu_{a}=(1+a^{-1}\ln\blambda)^{-1}$.
\end{lemma}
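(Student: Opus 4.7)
The plan is to follow Liverani's strategy in \cite[\S 5, \S 6]{Li}, throughout replacing the (only measurable) strong stable foliation by ``fake'' stable foliations whose leaves lie in the kernel of the contact form $\alpha$, as discussed in Section \ref{intro}. Start from the identity \eqref{Rn} with $n=2m$,
\begin{equation*}
\RR(a+ib)^{2m}(\tilde\psi)(w) = \frac{1}{(2m-1)!}\int_0^\infty t^{2m-1}e^{-(a+ib)t}\tilde\psi(T_{-t}(w))\, dt,
\end{equation*}
and perform a standard Laplace localisation: the weight $t^{2m-1}e^{-at}/(2m-1)!$ is concentrated (up to errors that are super-polynomially small in $b$ once $m \ge \Cs a\gamma \ln b$) on $t \in [c_1 m/a, c_2 m/a]$ of length $O(m/a)$, where the trivial bound already yields $a^{-2m+1}$. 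Commuting $\AAA^s_\delta$ past the integral, it suffices to estimate, uniformly in $w\in X_0$, the oscillatory quantity
\begin{equation*}
J(w) = \int_{c_1 m/a}^{c_2 m/a} t^{2m-1} e^{-(a+ib)t}\bigl[\AAA^s_\delta(\tilde\psi\circ T_{-t})\bigr](w)\, dt.
\end{equation*}

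To extract the factor $b^{-\gamma_0}$, work in the Darboux charts $\kappa_\zeta$ from \eqref{conntact}, so that fake stable leaves correspond to planes $\{(x^u_0, \cdot, x^0_0)\}\subset \Ker(\alpha_0)$. Split $t$ via the Poincar\'e-section decomposition \eqref{deccomp} so that $T_{-t}$ factors through $O(m)$ inverse Poincar\'e return maps $P^{-1}_{i_k i_{k+1}}$ composed with flow translations; the averaging $\AAA^s_\delta$ is then represented as an average over a $\delta$-piece $\gamma_\delta$ of a fake stable curve, which by hyperbolicity is expanded by $\sim \blambda^t$ in the backward direction. The contact form supplies the key lower bound, the analogue of \cite[Lemma~B.7]{Li}: for a fake stable piece $\gamma_\delta$ staying uniformly away from the singular set, the temporal function
\begin{equation*}
\Phi(x^s, t) = t + \phi_t(x^s),
\end{equation*}
where $\phi_t(x^s)$ records the flow-direction displacement accumulated across the return maps applied to the displacement $x^s$ along $\gamma_\delta$, satisfies $|\partial_{x^s}\Phi(x^s,t)| \ge c\blambda^t$ because $\alpha$ vanishes along the fake leaf while $\alpha$ pairs to $1$ with the flow direction. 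Appendices \ref{sec:hoihoi} and \ref{sec:unstable} provide precisely the control needed so that this identity survives the iteration and so that the fake leaves remain in $\Ker(\alpha)$ modulo acceptable errors.

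The Dolgopyat cancellation then combines oscillation in the averaging variable with oscillation in $t$. Writing $x^s\in[-\delta,\delta]$ for the averaging parameter and integrating by parts against the oscillatory factor $e^{-ib\Phi(x^s,t)}$ inside $\AAA^s_\delta$, each integration by parts with respect to $x^s$ produces a factor of order $(b\blambda^t\delta)^{-1}$, modulo boundary terms controlled by $\|\tilde\psi\|_{L^\infty}$. The constraint $m \ge \Cs a\gamma\ln b$ is calibrated exactly so that $\blambda^{c_1 m/a}\delta \ge b^\kappa$ for some $\kappa>0$, which guarantees that a bounded number of integrations by parts yields the bound $\le \Cs a^{-2m}b^{-\gamma_0}\|\tilde\psi\|_{L^\infty}$ on the ``good'' set $\GG_t$ where $\gamma_\delta$ stays uniformly transversal to all discontinuities of the $O(m)$ relevant Poincar\'e maps. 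Summing over the $t$-window with the Laplace weight $a^{-2m+1}$ delivers the first term of the lemma.

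The main obstacle, and the source of the second term $\nu_a^{-m}\|\tilde\psi\|_{H^1_\infty}$, is the bad complement $\GG_t^c$ on which $\gamma_\delta$ crosses a discontinuity of some $P_{i_k i_{k+1}}$ along $[0,t]$, or becomes almost tangent to one. By the transversality hypothesis (Definition \ref{transcond}) and subexponential complexity (Definition \ref{domin}), the measure of $\GG_t^c$ is bounded by $\Cs \delta \cdot D_{\lceil t/\tilde\tau_1\rceil}^e$, which is subexponential in $m$ and therefore absorbed by any positive power of $\nu_a^{-m}$ once $m \ge \Cs a\gamma \ln b$. On $\GG_t^c$ no stable cancellation is available; to nevertheless gain the required $\nu_a^{-m}$ factor we exploit $H^1_\infty$-smoothness to regularise $\tilde\psi$ in the flow direction at scale $\delta$, and then integrate by parts once in $t$ against $e^{-ibt}$ using that $\frac{d}{dt}[t^{2m-1}e^{-at}/(2m-1)!]$ produces at most $(1+a^{-1}\ln\blambda)^{-m} = \nu_a^{-m}$ after re-normalising by its maximum at $t\sim m/a$. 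The hardest technical point is the uniform propagation of the contact lower bound $|\partial_{x^s}\Phi|\gtrsim \blambda^t$ across arbitrarily many crossings of the discontinuity manifolds $\partial O_{i,j}$; this is where the three-dimensional hypothesis $d=3$ is used, because the fake stable leaves are then one-dimensional and the required infinitesimal contact identity reduces to a scalar cocycle computation along the orbit.
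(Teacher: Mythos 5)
There is a genuine gap, and it sits at the heart of your argument: the claimed single-branch lower bound $|\partial_{x^s}\Phi(x^s,t)|\ge c\blambda^t$ is not true, and it is not the mechanism that produces cancellation. Along a (fake) stable leaf lying in $\Ker\alpha$ the temporal displacement is constant to first order -- that is precisely what the contact condition gives you -- so within one smooth piece of the backward image there is no oscillating phase in the averaging variable to integrate by parts against, let alone one whose derivative grows like $\blambda^t$. What actually happens is that $T_{-t}$ \emph{expands} the $\delta$-curve into a long union of regular pieces $W_\alpha$, and the gain in $b$ comes from \emph{pairs} of pieces $W_\alpha,W_\beta$ that land close to each other: after sliding them onto a common transversal by the Lipschitz approximate unstable holonomy of Appendix~\ref{sec:unstable}, Stokes' theorem applied to $d\alpha$ shows that the phase difference $\omega_\alpha-\omega_\beta$ has derivative bounded \emph{below} by the transverse distance $d(W_\alpha,W_\beta)$ (Lemma~\ref{lem:cancel}), which is at most of order $r$, never of order $\blambda^t$. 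One then needs the growth/counting apparatus (Lemmas~\ref{lem:r-boundary}, \ref{lem:disco}, \ref{lem:smallbox}, \ref{lem:discard}) and a Cauchy--Schwarz argument to sum over pairs; the fact that your mechanism would make all of this unnecessary, and would give gains $(b\blambda^t\delta)^{-1}$ far stronger than any known Dolgopyat estimate, is itself a signal that the step fails.

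The second term of the lemma is also obtained differently, and your route to it does not work. The exponent is $2m$ because one writes $\RR(z)^{2m}=\RR(z)^m\circ\RR(z)^m$ and applies the cancellation estimate to $f=\RR(z)^m\tilde\psi$: the only place where regularity of the observable enters is the error committed when replacing $f$ by its value along the approximate unstable fibers, which is controlled by the unstable derivative $\partial_{\up,i}f$; since the backward flow \emph{contracts} unstable vectors, $\partial_{\up,i}f\le\Cs(a+\ln\blambda)^{-m}\|\tilde\psi\|_{H^1_\infty}=a^{-m}\nu_a^{m}\|\tilde\psi\|_{H^1_\infty}$, and this is exactly where $\nu_a=(1+a^{-1}\ln\blambda)^{-1}$ comes from. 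Your proposal treats $\RR(z)^{2m}$ as a single Laplace integral and claims that one integration by parts in $t$ produces the factor $(1+a^{-1}\ln\blambda)^{-m}$; but differentiating $t^{2m-1}e^{-at}/(2m-1)!$ only yields factors of size $O(a)$ near the peak $t\sim 2m/a$ and cannot generate the dynamical rate $\ln\blambda$ at all. Likewise, the contributions of leaves that meet the discontinuities are not absorbed by the $H^1_\infty$ term (note $\nu_a^{-m}\ge1$, so ``absorbed by $\nu_a^{-m}$'' is not a gain): in the paper they are bounded by powers of $r\sim b^{-\Cs}$ times $\|f\|_\infty$ via the complexity and boundary-measure estimates. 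Finally, $d=3$ is used throughout the whole geometric construction (one-dimensional leaves, the box decomposition, the approximate unstable foliation), not merely to reduce a cocycle identity to a scalar one.
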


In the present section, $\Cs$, $\gamma_0$, and
$\bar \lambda$ denote constants which depend only on the dynamics
and not on $r$, $s$, or $p$. 

Note that, since $\overline{\cR(a+ib)}=\cR(a-ib)$ the obvious counterpart of the above lemma holds true for $b<0$.

The rest of the section consists in the proof of Lemma \ref{dolgolemma} and is a direct, but lengthy, computation.

In the present section, it is more convenient to work directly with the flow rather than with the Poincar\'e sections,
and  we will often look at the dynamics at different time steps $\vus>\vuo\gg\vu$. Let us be more precise: Remember from  \eqref{deccomp} the choice of $\tilde\tau_1\leq \inf_{i,j,z}\tau_{i,j}(z)$ 
and $\tilde\tau_0\geq \sup_{i,j,z}\tau_{i,j}(z)$. Fix once and for all $\vuo\leq \frac 14\tilde\tau_1$ and let $\mu_0=\lceil\frac{\tilde\tau_0}{\vuo}\rceil$, $\vus=\mu_0k_0\vuo\geq k_0\tilde\tau_0$.
The size of $k_0$ will be chosen large enough, but fixed, during the proof of Lemma \ref{lem:r-boundary}.  Also, we introduce the following  rough bounds for the minimal average expansion and contraction: Let  $\blambda_u=\inf_z\left[\lambda_{u,k_0}(z)\right]^{\frac 1{\vus}}>1$, $\blambda_s=\sup_z\left[\lambda_{s,k_0}(z)\right]^{\frac 1{\vus}}<1$ and set $\blambda=\min\{\blambda_u,\blambda_s^{-1}\}$. Thus the minimal expansion and contraction in a time $t\geq \vus$ will be bounded by $\blambda_{u,t}=\blambda^{t-\vus}$, $\blambda_{s,t}=\blambda^{-t+\vus}$.

This said, we start to compute. First of all it is convenient to localise in time: Consider a smooth function $\tilde p:\bR\to\bR$ such that $\operatorname{supp} \tilde p\subset (-1,1)$, $\tilde p(s)=\tilde p(-s)$, and $\sum_{\ell\in \bZ}\tilde p(t-\ell)=1$ for all $t\in\bR$. 
In the following it is convenient to proceed by very small time steps $\vu=\frac{\vus}{k_1}$, $k_1\in\bN$. Let $p(t)=\tilde p(\frac t\vu)$. 

For each $f\in L^\infty(M, \text{vol})$ we write
\begin{equation}\label{eq:step-1}
\begin{split}
\cR(z)^m(f)&=\sum_{\ell\in\bZ}\int_{0}^\infty p(t-\ell\vu) \frac{t^{m-1}}{(m-1)!}e^{-zt}\cL_t f\, dt\\
&=\sum_{\ell\in\bN^*}\int_{-\vu}^{\vu}  p(s) \frac{(s+\ell\vu)^{m-1}}{(m-1)!}e^{-z\ell\vu-zs}\cL_{\vu\ell}\cL_s f\, ds\\
&\quad+\int_{0}^{\vu} p(s) \frac{s^{m-1}}{(m-1)!}e^{-zs}\cL_s f\, ds\, .
\end{split}
\end{equation}
Next we recall the stable average introduced in Section \ref{Aeps}. Setting $p_{m,\ell,z}(s)=  p(s) \frac{(s+\ell\vu)^{m-1}}{(m-1)!}e^{-z\ell\vu-zs}$, for each $w\in\X$ we can write\footnote{\label{foo:pd} Here and in the following, when we write $\sum_{\ell\in\bN^{*}}$ we mean to include implicitly also the last term in \eqref{eq:step-1}. In any case, we will see in \eqref{eq:step3} that the total contributions of the first terms in the sum is negligible.}
\begin{equation}\label{eq:step0}
\bA^s_\delta (\cR(z)^m (f))(w)=\sum_{\ell\in\bN^{*},\zeta }\int_{-\vu}^{\vu}\hskip-.3cm p_{m,\ell,z}(s)\bar\theta_\zeta(w)\int_{W^s_{\delta, \zeta}(w)}\hskip-.5cm \tilde \frp_{\delta, \zeta}(w,\xi)\cdot\cL_{\ell\vu}\cL_s f(\xi)\, ,
\end{equation}
where (see Section \ref{Aeps}) $W^s_{\delta, \zeta}(w)=\{\kappa_\zeta^{-1}(\tilde \kappa_\zeta(w)^u, y^s,\tilde \kappa_\zeta(w)^0)\}_{y^s\in[-\delta,\delta]}\cap \X$. The integral is meant with respect to the volume form determined by the Riemannian metric restricted to $W^s_{\delta, \zeta}$ and $\tilde \frp_{\delta, \zeta}(w,\xi)=\eta_{s,\delta}(\kappa_\zeta(w)^s-\kappa_\zeta(\xi)^s)J_\zeta(w,\xi)$, where $J_\zeta$ is the Jacobian of the change of coordinates $\kappa_\zeta$ restricted to the manifold $W^s_{\delta, \zeta}$. Note that if $w$ is extremely close to a corner of $\X$, then $W^s_{\delta, \zeta}(w)$ could be extremely short, yet in such a case the integral will be trivially small. We can rewrite \eqref{eq:step0} as
\begin{equation}\label{eq:step02}
\begin{split}
\bA^s_\delta (\cR(z)^m (f))(w)=&\sum_{\ell\in\bN^*, \zeta}\int_{-\vu}^{\vu} \hskip-.3cm p_{m,\ell,z}(s)\bar \theta_\zeta(w)\int_{T_{-\ell\vu}W^s_{\delta, \zeta}}\hskip-.5cm\tilde \frp_{\delta, \zeta}\circ T_{\ell \vu}\cdot J^s_{\ell\vu}\cdot \cL_s f\\
=&\sum_{\ell\in\bN^*,\zeta}\!\!\bar\theta_\zeta(w)\hskip-.4cm\sum_{W\in \cW_\ell(w)}\int_{-\vu}^{\vu}\hskip-.4cm p_{m,\ell,z}(s)\!\!\int_{W}\hskip-.2cm\tilde \frp_{\delta, \zeta}\circ T_{\ell \vu} \cdot J^s_{\ell\vu} \cL_s f\, ,
\end{split}
\end{equation}
where  $J^s_{\ell\vu}$ is the Jacobian of the change of variable, $\int_{W^s_{\delta, \zeta}(w)}\tilde \frp_{\delta, \zeta}(w,z)\leq1$, and $\cW_\ell(w):=\{W_\alpha\}_{\alpha\in A_\ell(w)}$ is a decomposition of $T_{-\ell\vu}W^s_{\delta, \zeta}$ in {\em regular} connected pieces. By {\em regular} we mean that there exists $t\in[0,\tau_-]$ such that $T_{-t}W_\alpha$ is a $C^2$ manifold.~\footnote{ The issue here is that if $W_\alpha$ intersects one $O_i$,  then it may be discontinuous. Yet, such a lack of smoothness is only superficial since once the manifold is flowed past the section, it becomes smooth. More precisely, if $W_\alpha$ does not intersect any $O_i$, then it has uniformly bounded curvature (i.e., if $g$ is its parametrisation by arc-length, then $\|g''\|_{L^\infty}\leq \Cs$). This can be proved exactly as one proves the same bound on the curvature of the stable leaves of an Anosov map, see \cite{KH} for details.}

The decomposition $\cW_\ell$ is performed as follows. We choose $L_0>0$ such that a curve in the stable cone of length $L_0\|\lambda_{s,\mu_0k_0}\|_{L^\infty}$ intersecting an $O_i$ must lie entirely in its $\vuo/4$ neighbourhood and then we define $\cW_{k_1\kappa}$, $\kappa\in\bN$, recursively:~\footnote{ For simplicity we suppress the dependence on $w,\zeta$ when this does not create confusion.} First, let $\cW_0$ be the collection of the connected components of $W^s_{\delta, \zeta}\setminus \cup_{i,j} \partial B_{i,j}$. Given $\cW_{k_1\kappa}$ define first $\widetilde \cW_{k_1(\kappa+1)}$ to be the union of the connected regular pieces of the curves $T_{-\vus}W$ for $W\in\cW_{k_1\kappa}$. Next, if a curve $W\in \widetilde\cW_{k_1(\kappa+1)}$ is longer than $L_0$, we decompose it in curves of length $\frac 12 L_0$ apart from the last piece that will have length in the interval $[\frac 12 L_0,L_0)$. The set of curves so obtained is $\cW_{k_1(\kappa+1)}$. Finally, for the $\ell\in \{k_1\kappa+1,\dots,k_1(\kappa+1)-1\}$ we define $\cW_\ell$ as the collection of the connected regular pieces of  $T_{(k_1\kappa-\ell)\vu}W$, $W\in\cW_{k_1\kappa}$.
We will call the curves shorter than $L_0/2$ {\em short,} and the others {\em long.}
Note that, by construction, for each $\alpha\in A_\ell$ there exists $t_\alpha\in[0,\vuo]$ such that  $T_{-t_\alpha}W_\alpha$ is a $C^2$ curve with uniform $C^2$ norm and $T_{\ell\vus-t_\alpha}$, restricted to $T_{-t_\alpha}W_\alpha$, is a $C^2$ map.

The density $\tilde \frp_{\delta, \zeta}$ has the property $|\nabla\log\tilde \frp_{\delta, \zeta}|_{\infty}\leq \Cs$, for some fixed constant $\Cs$. Then, setting 
\begin{equation}\label{eq:ro-la}
\frp_{\ell, \alpha}=\frac{\tilde \frp_{\delta, \zeta}\circ T_{\ell \vu}\cdot J^s_{\ell\vu}}{Z_{\ell, \alpha}}\quad ;\quad Z_{\ell, \alpha}= \int_{W_\alpha}\tilde \frp_{\delta, \zeta}\circ T_{\ell \vu}\cdot J^s_{\ell\vu}\, ,
\end{equation}
we have $|\nabla\log\frp_{\ell,  \alpha}|_{\infty}\leq \Cs$, provided $\Cs$ is chosen large enough.\footnote{\label{foo:dist}Indeed, the flow induces 
one-dimensional maps between $T_{\kappa\vuo-t_\kappa}W_\alpha$ and $T_{(\kappa+1)\vuo-t_{\kappa+1}}W_\alpha$ ($t_k\in [0,\vuo]$ properly chosen), which, by parametrising the curves by arc-length, are uniformly $C^2$. So the claim follows by the usual distortion results on one-dimensional maps, see \cite{KH} for details.}
In addition, note that $\sum_{\alpha}Z_{\ell, \alpha}\leq1$.\footnote{In fact, the sum is exactly equal to one if no manifold is cut by the boundary of $\X$.}

Next, it is convenient to define the $r$-boundary $\partial_r(\cW_\ell)$ of the family $\cW_\ell$:
\[
\partial_r(\cW_\ell)=\cup_{\alpha\in A_\ell}\{x\in W_\alpha\;:\; d(x,\partial W_\alpha)\leq r\} \, ,
\]
where, given any two sets $A,B$, we define $d(A,B)=\inf_{x\in A, y\in B} d(x,y)$.
Not surprisingly, we will call $|\partial_r(\cW_\ell)|:=\sum_{\alpha\in A_\ell}Z_{\ell,\alpha}\int_{\{d(x,\partial W_\alpha)\leq r\}}\frp_{\ell,\alpha}$ the measure of the $r$-boundary of $\cW_\ell$.\footnote{\label{foo:meas}Note that $|\partial_r\cW_\ell |=\int_{T_{\ell\tau_-}\left[\partial_r(\cW_\ell)\right]}\tilde \frp_{\delta, \zeta}$ and hence the measure of $T_{\ell\tau_-}[\partial_r(\cW_\ell)]$ is bounded, above and below, by $\Cs\delta |\partial_r(W_\ell)|$.}

\begin{lemma}\label{lem:r-boundary}  There exists $\sigma\in (0,1)$ such that, for all $\delta$ small enough,  $|\partial_r(\cW_\ell)|\leq \Cs\max\{r, \sigma^{\frac\ell{k_1}} r\delta^{-1}\}$. In addition, for each $\upsilon\in (0,1)$ and $\sigma^{\ell}<\delta^{k_1}$, there exists $C_{\upsilon}>0$ such that the measure of  $\cup_{j\leq m} T_{j \vu}\partial_{\upsilon^{j\vu} r}(\cW_j)$ is bounded by $C_{\upsilon} r$ .
\end{lemma}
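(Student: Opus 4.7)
The plan is to reduce the estimate of $|\partial_r(\cW_\ell)|$ to a count of endpoints of pieces in $\cW_\ell$, weighted by the sub-probability measure $\nu_\ell$ of density $\tilde\frp_{\delta,\zeta}\circ T_{\ell\vu}\cdot J^s_{\ell\vu}$ with respect to arc-length. A direct change of variables using the cocycle identity for $J^s_{j\vu}$ yields $\nu_\ell=(T_{-\vu})_\ast\nu_{\ell-1}$, so the total $\nu_\ell$-mass is bounded by $1$ and the density is pointwise at most $\Cs/(\delta\blambda^{\ell\vu})$. A key consequence is that each endpoint's $r$-neighborhood in $\cW_\ell$ pulls back under $T_{(\ell-j)\vu}$ to an $(r\blambda^{-(\ell-j)\vu})$-neighborhood in $\cW_j$ carrying the same $\nu$-mass, so the "cost" of an endpoint is independent of when it was created.

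Next I would split $|\partial_r(\cW_\ell)|$ into long- and short-piece contributions. For long pieces (length $\ge L_0/2$), each endpoint's $r$-neighborhood has $\nu_\ell$-mass at most $\Cs r/(\delta\blambda^{\ell\vu})$, and the number of long pieces is bounded by $\Cs\delta\blambda^{\ell\vu}/L_0$, so they contribute at most $\Cs r$. For short pieces, the transversality hypothesis (Definition \ref{transcond}) ensures that every endpoint arises from either one of the two original endpoints of $W^s_{\delta,\zeta}$, a cut by $\cup_{ij}\partial B_{ij}$ at a multiple of $k_1$ steps, or an intermediate-step transverse crossing of a discontinuity surface, with each encounter producing only $O(1)$ new endpoints. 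The total count is therefore at most $\Cs D^e_{\lceil\ell\vu/\tilde\tau_1\rceil}$, each endpoint contributing $\Cs r/(\delta\blambda^{\ell\vu})$ by invariance. Choosing $\epsilon>0$ small enough in $D^e_n\le(1+\epsilon)^n$ so that $\sigma:=((1+\epsilon)^{1/\tilde\tau_1}\blambda^{-1})^{\vus}\in(0,1)$, the short-piece contribution is at most $\Cs\sigma^{\ell/k_1}r/\delta$, yielding the first bound $|\partial_r(\cW_\ell)|\le\Cs\max(r,\sigma^{\ell/k_1}r/\delta)$.

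For the second part of the lemma, I would invoke footnote \ref{foo:meas} to write $|T_{j\vu}\partial_{\upsilon^{j\vu}r}(\cW_j)|\le\Cs\delta|\partial_{\upsilon^{j\vu}r}(\cW_j)|$, and then apply the first part at radius $\upsilon^{j\vu}r$ to bound the right-hand side by $\Cs\max(\delta\upsilon^{j\vu}r,\sigma^{j/k_1}\upsilon^{j\vu}r)$. Summing the resulting geometric series over $j\le m$ (with common ratios $\upsilon^\vu$ and $\sigma^{1/k_1}\upsilon^\vu$, both in $(0,1)$) bounds the measure of the union by $C_\upsilon r$. The principal obstacle is the rigorous short-piece count: transversality is essential to restrict each discontinuity encounter to $O(1)$ new endpoints, the uniform distortion control from footnote \ref{foo:dist} is needed so that recently-created endpoints receive the correct pull-back density (otherwise one would lose a factor of $\blambda^{\ell\vu}$), and the interplay between subexponential complexity and the hyperbolic expansion rate $\blambda^{\vus}$ must be balanced by choosing $k_0$ (and $\epsilon$) appropriately to ensure $\sigma<1$.
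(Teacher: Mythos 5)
Your reduction to a weighted endpoint count contains the right raw ingredients (the pushforward identity for $\nu_\ell$, the uniform density bound $\Cs\delta^{-1}\blambda^{-\ell\vu}$, footnote \ref{foo:meas} for the second statement), but the counting step is where the argument breaks. The quantity $D^e_n$ of Definition \ref{domin} is a \emph{local} complexity (the maximal number of cylinders whose closures contain a single point); it does not bound the total number of pieces, or of endpoints of short pieces, of $T_{-\ell\vu}W^s_{\delta,\zeta}$. That number grows exponentially in $\ell$ — already the artificial subdivision into pieces of length $L_0/2$ forces at least (total length)$/L_0$ pieces, and the total length is of order $\delta$ times the expansion, which is governed by the \emph{maximal} expansion rate, not by $\blambda$. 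For the same reason your long-piece count $\Cs\delta\blambda^{\ell\vu}/L_0$ is not justified: multiplying an exponentially wrong count by the uniform worst-case per-endpoint mass $\Cs r\delta^{-1}\blambda^{-\ell\vu}$ does not close, because the worst-case density occurs where expansion is weakest while the piece count is driven by where it is strongest. (Transversality, Definition \ref{transcond}, plays no role here; the relevant hypothesis is subexponential complexity, and even that only enters locally.)

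The missing idea is the genealogical bookkeeping that the paper uses. Complexity is invoked only in the following local form: choosing $k_0$ (hence $\vus$) and $L_0$ suitably, a stable curve of length $<L_0$ is cut by $T_{-\vus}$ into at most $\nu\blambda^{\vus}$ pieces, $\nu<1/2$. One then assigns to each short piece of $\cWo_{\ell'}$ its last \emph{long} ancestor, so that the number of short descendants after $\ell'-\kappa$ short generations is at most $(\nu\blambda^{\vus})^{\ell'-\kappa}$ while their $r$-boundaries, mapped forward into the ancestor, have total length at most $\blambda^{-\vus(\ell'-\kappa)}r$ each step, i.e. $\nu^{\ell'-\kappa}r$ in all; bounded distortion (footnote \ref{foo:dist}) then converts this into a \emph{relative} measure bound $\Cs L_0^{-1}\nu^{\ell'-\kappa}r$ with respect to the ancestor, and summing over ancestors (whose masses $Z_{\ell,\alpha}$ sum to at most one, which is also the correct way to handle your long-piece term) gives the $\Cs r$ contribution, while pieces with no ancestor produce the $\Cs\sigma^{\ell/k_1}r\delta^{-1}$ term. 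In your scheme this relative comparison is replaced by a global count against a uniform mass bound, and no choice of $\epsilon$ or $k_0$ can repair that, since the count itself is exponential. Your treatment of the second statement (footnote \ref{foo:meas} plus a geometric series in $j$) is fine once the first bound is established.
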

\begin{proof}
If $x\in\partial_r(\cW_\ell)$ then we have the following possibilities
\begin{enumerate}
\item $T_{\ell\vu}x$ belongs to a $r\blambda^{-\vu \ell}$-neighbourhood of $\partial \left(W^s_{\delta, \zeta}\setminus \cup_{i,j} O_{i,j}\right)$.
\item there exists $n\in \bN$, $n\leq \ell/k_1$, such that $T_{n\vus-t}x$, $t\in[0,\vus]$, intersects the $\blambda^{-\vu (\ell-nk_1)}r$ neighbourhood of the {\em lateral} boundaries of the flow boxes, i.e., $\cup_{i,j}\partial B_{i,j}\setminus(O_{i,j}\cup T_{\tau_{i,j}}(O_{i,j}))$.
\end{enumerate}
Let us call $\partial_r^1(\cW_\ell)$ and $\partial_r^2(\cW_\ell)$, respectively, the parts of $\partial_r(\cW_\ell)$ that satisfy the above two conditions.
Clearly, $|\partial_r^1(\cW_\ell)|\leq \Cs\blambda^{-\vu\ell} \delta^{-1}r$. 

To analyze the second case, we must follow the creation of the $W_\alpha$. By the complexity assumption \ref{domin}, for each $\nu\in (0,1/2)$ we can chose $k_0\in\bN$ and $L_0>0$ so that for each curve $W$ in the stable direction and of size smaller than $L_0$, the number of smooth connected pieces of $T_{-\vus}W$ is smaller than $\blambda^{\vus}\nu$.

Remark that, by construction, there exists $c_1>1$ such that, for each $j\in\{0,\dots, k_1\}$ and $r'>0$, $T_{j\vu} \partial_{r'}(\cW_{\kappa k_1+j})\subset  T_{k_1\vu}\partial_{c_1r'}(\cW_{(\kappa+1) k_1})$. Accordingly, it suffices to study $\partial_r (\cW_{k_1\ell'})$, $\ell'\in\bN$. Let $\cWo_j=\cW_{k_1 j}$.

For each $W'\in\cWo_{\kappa}$ we say that $W''\in \cWo_{j}$, $j\leq \kappa$, is its {\em ancestor} if $W''$ is long and, for each $l\in\{0,\dots, \kappa-j-1\}$, $T_{l\vus}W'$ never belongs to a long element of $\cWo_{\kappa-l}$.
Now for each ancestor $W''\in\cWo_\kappa$, we can consider the short pieces that are generated\footnote{That is the set of short pieces in $\cWo_{\ell'}$ whose ancestor is the given curve.}  in $\cWo_{\ell'}$, $\ell'=\frac\ell{k_1}$, by the complexity bound their number is less 
than $\blambda^{\vus({\ell'}-\kappa)}\nu^{{\ell'}-\kappa}$. Note that the image of a curve of length $r$ in $W\in\cWo_{{\ell'}}$ under $T_{\vus({\ell'}-\kappa)}$ will be of length smaller than $\blambda^{-\vus({\ell'}-\kappa)}r$. Thus, the union of the images, call it $P_{{\ell'},\kappa,r},$ of the $r$-boundary of the short pieces of $\cWo_{{\ell'}}$ in an ancestor belonging to $\cWo_\kappa$ will have total length bounded by $\nu^{{\ell'}-\kappa}r$. By the usual distortion estimate (see footnote \ref{foo:dist}) this implies that, calling $m$ the induced Riemannian measure on $W^{s}_{\delta, \zeta}$,
\[
\frac{m(T_{\kappa\vus}P_{{\ell'},\kappa,r})}{m(T_{\kappa\vus}(W''))}\leq \Cs L_0^{{-1}}\nu^{{\ell'}-\kappa}r \, .
\]
Thus the measure of the image, in $W^{s}_{\delta, \zeta}$, of the $r$-boundary belonging to short pieces with an ancestor in $\cWo_{\kappa}$ will have measure bounded by $\Cs \delta r\nu^{{\ell'}-\kappa}$. Hence, the total measure of such pieces will be bounded by $\Cs \delta r$, while the number of pieces that do not have any ancestor must be less 
than $\lambda^{-\vus{\ell'}}\nu^{{\ell'}}$ and thus their total measure will be less than $\nu^{\ell'} r$. The first statement follows then by footnote \ref{foo:meas} choosing\footnote{The square root is for later convenience, see the proof of Lemmata \ref{lem:disco} and \ref{lem:discard}.}
\begin{equation}\label{eq:sigmachoice}
\sigma=\max\{\blambda^{-\vus},\nu\}^{\frac12}\, .
\end{equation}

The last statement follows by applying the previous results together with footnote \ref{foo:meas} again.
\end{proof}

Next, it is convenient to localise in space as well. To this end we need to define a sequence of smooth partitions
of unity. 

Given a parameter $\theta\in (0,1)$ to be chosen later, there exists $\Cs>0$ such that, for each $r\in(0,1)$, there exists a $C^{\infty}$ partition of unity $\{\phi_{r,i}\}_{i=1}^{q(r)}$   enjoying the following properties~\footnote{For example, using the function $\tilde p$ introduced to partition in time, one can define, in the charts $\kappa_\zeta$, $\tilde p(\bar kr^{\theta}/2+2r^{-\theta}\vu\eta)\tilde p(\bar j r^{\theta}/2+2r^{-\theta}\vu\xi)\tilde p(\bar i r^{\theta}/2+2 r^{\theta}/2 \vu s)$.}
\begin{enumerate}[\bf (i)]
\item for each $i\in\{1,\dots,q(r)\}$, there exists $x_i\in U_{\zeta_i,1}\subset M$ such that $\phi_{r,i}(z)=0$ for all $z\not\in B_{r^\theta}(x_i)$ (the ball, in the sup norm of the chart $\kappa_{\zeta_i}$, of radius $r^\theta$ centered at $x_i$);
\item for each $r,i$ we have $\|\nabla\phi_{r,i}\|_{L^{\infty}}\leq\Cs r^{-\theta}$;
\item  $q(r)\leq \Cs r^{-3\theta}$. 
\end{enumerate}

\begin{figure}[ht]\ 
\begin{tikzpicture}
\draw (0,-.5)--(.5,-.25) node[right=10pt]{$W^s_{\delta,\zeta}$}--(2,.5);
\draw[->] (3.5,0)--(4,0) node[below=2pt]{$T_{-\ell\vu}$}--(4.5,0);
\draw[dash pattern=on 2 pt  off 1 pt](6,-2)--(9,-2);    
\draw[dash pattern=on 2 pt  off 1 pt](9,-2)--(11,-1); 
\draw[dash pattern=on .5 pt  off 4 pt](8,-1)--(11,-1);
\draw[dash pattern=on .5 pt  off 4 pt](8,-1)--(6,-2);
\draw[dash pattern=on 2 pt  off 1 pt](6,2)--(9,2);    
\draw[dash pattern=on 2 pt  off 1 pt](9,2)--(11,3); 
\draw[dash pattern=on 2 pt  off 1 pt](8,3)--(11,3);
\draw[dash pattern=on 2 pt  off 1 pt](8,3)--(6,2);
\draw[dash pattern=on 2 pt  off 1 pt](6,-2)--(6,1.2)node[left, xshift=-6pt]{$B_{cr^\theta}(x_i)$}-- (6,2);    
\draw[dash pattern=on 2 pt  off 1 pt](9,-2)--(9,2); 
\draw[dash pattern=on .5 pt  off 4 pt](8,-1)--(8,3);
\draw[dash pattern=on 2 pt  off 1 pt](11,-1)--(11,3);
\draw[scale around={.7: (8.5,.5)},  dash pattern=on 1 pt  off 1 pt](6,-2)--(9,-2);    
\draw[scale around={.7: (8.5,.5)},  dash pattern=on 1 pt  off 1 pt](9,-2)--(11,-1); 
\draw[scale around={.7: (8.5,.5)},  dash pattern=on 1 pt  off 4 pt](8,-1)--(11,-1);
\draw[scale around={.7: (8.5,.5)},  dash pattern=on 1 pt  off 4 pt](8,-1)--(6,-2);
\draw[scale around={.7: (8.5,.5)},  dash pattern=on 1 pt  off 1 pt](6,2)--(9,2);    
\draw[scale around={.7: (8.5,.5)},  dash pattern=on 1 pt  off 1 pt](9,2)--(11,3); 
\draw[scale around={.7: (8.5,.5)}, dash pattern=on 1 pt  off 1 pt](8,3)--(11,3);
\draw[scale around={.7: (8.5,.5)}, dash pattern=on 1 pt  off 1 pt](8,3)--(6,2);
\draw[scale around={.7: (8.5,.5)}, dash pattern=on 1 pt  off 1 pt](6,-2)--(6,1.4)node[right]{$\scriptstyle B_{r^\theta}(x_i)$}-- (6,2);    
\draw[scale around={.7: (8.5,.5)}, dash pattern=on 1 pt  off 1 pt](9,-2)--(9,2);
\draw[scale around={.7: (8.5,.5)}, dash pattern=on 1 pt  off 4 pt](8,-1)--(8,3);
\draw[scale around={.7: (8.5,.5)}, dash pattern=on 1 pt  off 1 pt](11,-1)--(11,0.5) node[left, xshift=-1.5cm]{$\cdot\scriptstyle x_{i}$}--(11,3);
\draw (6.5,-.4).. controls (7.6, 0).. (8.5,.4) .. controls (9.40, .8) .. (10.5, 1.2);       
\draw[xshift=-.3cm, yshift=.6cm] (6,-.4).. controls (7.6, 0).. (8.5,.4) .. controls (9.40, .8) .. (11, 1); 
\draw (7.4,-.6).. controls (8, 0).. (8.5,.2) ; 
\draw (8.5, -.2)..controls (9,0)..(10, .8);  
\draw[->] (5,-2.5) node[below=1pt]{{\tiny discarded manifolds in $D_{\ell,i}$}}--(7.3,-.65);
\draw[->] (5,-2.5)--(8.3,-.3);
\draw[->] (6.5, -2.2)--(7,-2.2) node[below]{{\tiny \hskip 1cm unstable}}--(8.5,-2.2);
\draw[->](9.2,-2)node[below,rotate=25]{{\tiny \hskip 2cm stable}}--(10.8,-1.2);
\draw[->](11.1, -.5)node[below,rotate=90]{{\tiny \hskip 3cm flow}}--(11.1,2.5);
\draw[<-](8,3.2)--(9,3.2)node[xshift=10pt, yshift=3pt]{$\scriptstyle c r^{\theta}$};
\draw[->](9.7,3.2)--(11,3.2);
\draw[<-](6,2.2)--(6.5, 2.45)node[right, yshift=4pt, rotate=30]{$\scriptstyle c r^{\theta}$};
\draw[->](7,2.7)--(8,3.2);
\draw[<-](5.85,-2)--(5.85,-.5) node[right, xshift=-2pt, rotate=90] {$\scriptstyle cr^\theta$};
\draw[->](5.85,0.1)--(5.85,2);
\end{tikzpicture}
\caption{The manifolds $\{W_\alpha\}_{\alpha\in A_{\ell,i}}$.}
\label{fig:mani}
\end{figure}

Fix $c>2$. For each $x_i$, let  $A_{\ell, i}=\{\alpha\in A_\ell\;:\; W_\alpha\cap B_{r^\theta}(x_i)\neq \emptyset\}$, $D_{\ell,i}=\{\alpha\in A_{\ell,i}\;:\;\partial(W_\alpha\cap B_{cr^\theta}(x_i))\not\subset\partial B_{cr^\theta}(x_i)\}$, $E_{\ell,i}=A_{\ell,i}\setminus D_{\ell,i}$.  Call the manifolds with index in $D_{\ell,i}$ the {\em discarded manifolds.} We choose $c$ large enough so that a manifold intersecting $B_{r^\theta}(x_{i})$ can intersect only the front and rear vertical part of the boundary of $B_{cr^\theta}(x_i)$, see figure \ref{fig:mani}.\footnote{This can be achieved thanks to the fact that the $W_{\alpha}$ belong to the stable cone.}

Set $W_{\alpha, i}=W_{\alpha}\cap B_{cr^\theta}(x_i)$ and
\begin{equation}\label{eq:ro-la1}
\frp_{\ell, \zeta, \alpha,i}=\frac{\tilde \frp_{\delta, \zeta}\circ T_{\ell \vu}\cdot J^s_{\ell\vu}}{Z_{\ell, \alpha,i}}\quad ;\quad Z_{\ell, \alpha,i}= \int_{W_{\alpha,i}}\tilde \frp_{\delta, \zeta}\circ T_{\ell \vu}\cdot J^s_{\ell\vu}.
\end{equation}
Moreover it is now natural to chose
\begin{equation}\label{eq:vu}
k_1=\lceil r^{-\theta} \tau_+\rceil,\, \text{ hence }\vu\cong r^\theta\, .
\end{equation}

Our next step is to estimate the contribution of the manifolds  $W_{\alpha,i}$, $\alpha\in D_{\ell,i}$.
Let $K_i\subset D_{\ell,i}$ be the collection of indices for which $W_{\alpha,i}\cap (\cup_j O_{j})\neq \emptyset$, then $\cup_{\alpha\in D_{\ell,i}\setminus K_i}W_{\alpha,i}\subset \partial_{cr^\theta}(\cW_\ell)$, hence, by Lemma \ref{lem:r-boundary} the total measure of the elements in $D_{\ell,i}\setminus K_i$, is bounded by $\Cs\max\{r^\theta, \sigma^{\frac\ell{k_1}} r^\theta\delta^{-1}\}$.\footnote{Remember that the $W_{\alpha,i}$ inherit from the $B_{cr^\theta}(x_i)$ the property of having a uniformly bounded number of overlaps.} It remains to estimate how many pieces are cut by a manifold $O_{i}$. This is
done in the following lemma whose proof can be found at the end of Appendix \ref{sec:hoihoi}.

\begin{lemma}\label{lem:disco} There exists $\ell_{0}\geq 0$ such that for each $\ell\geq k_1\ell_0$, any codimension-one disk~\footnote{Here we assume the curvature of the $\widetilde O$ to be bounded by some fixed constant.} $\widetilde O$ of measure $S>0$ and $\rho_{*}>0$, if we let
$D_{\ell}(\widetilde O,\rho_{*})=\{W_{\beta,i}\;:\; d(W_{\beta,i},\widetilde O)\leq \rho_{*}\}$ , then 
\[
\sum_{\{(\beta,i): W_{\beta,i}\in D_{\ell}(\widetilde O,\rho_{*})\}}Z_{\beta,i}\leq  \Cs\left[\sqrt{S(\rho_{*}+r^{\theta})}+S\sigma^{\ell}+\delta^{-1}\sigma^{\ell}\right] \, .
\]
\end{lemma}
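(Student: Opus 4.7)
The plan is to transport the count via $T_{\ell\vu}$ back to the original stable leaf $W^s_{\delta,\zeta}(w)$. By the change-of-variables formula applied to \eqref{eq:ro-la1}, $Z_{\beta,i}$ equals the integral of $\tilde{\frp}_{\delta,\zeta}$ over the image $T_{\ell\vu}W_{\beta,i}\subset W^s_{\delta,\zeta}(w)$; and since the $x_i$-partition of unity has uniformly bounded overlap, $\sum_{(\beta,i)\in D_\ell(\widetilde O,\rho_*)} Z_{\beta,i}$ is bounded (up to a universal constant) by the $\tilde{\frp}_{\delta,\zeta}$-measure of
\[
E_\ell:=\{\,y\in W^s_{\delta,\zeta}(w)\;:\; d(T_{-\ell\vu}y,\widetilde O)\le \rho_*+cr^\theta\,\},
\]
where the $r^\theta$-enlargement accounts for each $W_{\beta,i}$ sitting in a ball of radius $cr^\theta$. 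So the task is to estimate this intersection measure on the stable leaf.

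I would then split $E_\ell$ into a ``boundary'' part and a ``generic'' part. The boundary part consists of pieces whose pull-back lies close to a discontinuity of the decomposition $\cW_j$ for some $j\le\ell$, i.e., within $\cup_{j\le\ell}T_{j\vu}\partial_{(\rho_*+cr^\theta)\blambda^{-(\ell-j)\vu}}(\cW_j)$; by the second statement of Lemma~\ref{lem:r-boundary}, this contributes at most $\Cs \sigma^{\ell}\delta^{-1}$. An analogous argument, but now treating $\widetilde O$ itself as an auxiliary discontinuity and running the complexity bound of Lemma~\ref{lem:r-boundary} through it, yields the $\Cs S\sigma^\ell$ term (this is where the hypothesis $\ell\ge k_1\ell_0$ is essential: enough subdivisions must have been performed for the contraction rate $\sigma$ in \eqref{eq:sigmachoice} to kick in).

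The main obstacle, and the heart of the argument, is the geometric term $\sqrt{S(\rho_*+r^\theta)}$. For this I would exploit that $\widetilde O$ is transversal to the stable cones (as in Definition~\ref{transcond}) together with volume preservation: the set $T_{\ell\vu}\bigl(N_{\rho_*+cr^\theta}(\widetilde O)\bigr)$ has volume $\Cs S(\rho_*+r^\theta)$ and, by the invariance of the cone field and the hyperbolicity, concentrates in a tube around the 2-disk $T_{\ell\vu}\widetilde O$ of area $\sim S\blambda^{\ell\vu}$ with cross-section $\sim (\rho_*+r^\theta)\blambda^{-\ell\vu}$. Its intersection with the 1-dimensional stable curve $W^s_{\delta,\zeta}(w)$ is a union of short arcs, one per transversal crossing, of length $\lesssim (\rho_*+r^\theta)\blambda^{-\ell\vu}$. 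Estimating the number of generic crossings by elementary transversality (bounded by $\Cs\delta\cdot S\blambda^{\ell\vu}$ in the appropriate range of $\ell$) would give the too-weak linear bound $\Cs S(\rho_*+r^\theta)$; the sharper $\sqrt{\cdot}$ improvement comes from a Cauchy--Schwarz, balancing the number of pieces $N$ against their individual mass $\sim(\rho_*+r^\theta)\blambda^{-\ell\vu}/\delta$, using that the total image mass is bounded by $1$. The delicate point I expect to have to treat carefully is ensuring that the transversality constant between the expanded face of $T_{\ell\vu}\widetilde O$ and the stable direction remains uniform in $\ell$, which follows from the strict invariance of the stable cones under $T_{-\vu}$ combined with the transversality hypothesis on the original disk.
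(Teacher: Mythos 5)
Your reduction to the set $E_\ell\subset W^s_{\delta,\zeta}$ and the treatment of the two $\sigma^\ell$-terms are broadly in the spirit of the paper, but the argument for the main term $\sqrt{S(\rho_*+r^\theta)}$ has a genuine gap, on two counts. First, you assume $\widetilde O$ is transversal to the stable cones; the lemma makes no such hypothesis, and in the applications it fails: in Lemma~\ref{lem:discard} the lemma is run with $\widetilde O$ replaced by a stable piece $W_{\alpha,i}$, and even in \eqref{eq:step03} the disk is a section $O_l$, which (being transversal to the flow) contains directions in the stable cone. So the picture of the backward images of $W^s_{\delta,\zeta}$ crossing a thin tube around $\widetilde O$ in short transversal arcs breaks down precisely in the cases the lemma must cover: whole pieces $W_{\beta,i}$ can run parallel to $\widetilde O$ inside its $\rho_*$-neighbourhood. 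Second, even granting transversality, the crossing count ``number of crossings $\lesssim \delta S\blambda^{\ell\vu}$'' is not elementary -- it asserts that the proportion of the $\sim\delta\blambda^{\ell\vu}$ pieces of $T_{-\ell\vu}W^s_{\delta,\zeta}$ passing near $\widetilde O$ is proportional to $S$, which is essentially the equidistribution statement being proved. And the claimed Cauchy--Schwarz upgrade is backwards: if you really had the linear bound $\Cs S(\rho_*+r^\theta)$ together with total mass $\le\Cs$, the square-root bound would follow trivially (since $\min(x,1)\le\sqrt x$), so nothing is gained by Cauchy--Schwarz; the point is that the linear bound is \emph{not} attainable here, and the square root is forced by an error term, not extracted by an inequality.

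The paper's mechanism is different and is the part your proposal is missing. One introduces the stopping time $R$ at which a backward piece of $W^s_{\delta,\zeta}$ first becomes long ($\ge L_0/2$); on each interval $J_j$ where $R\equiv R_j<\ell$, distortion reduces $Z_{\beta,i}$ to (a bounded multiple of) $\delta^{-1}|J_j||\bar W_j|^{-1}|T_{(\ell-R_j)\vu}W_{\beta,i}|$ with $\bar W_j=T_{-R_j\vu}J_j$ long. Then each such intermediate image is thickened by width $\rho$ along the \emph{approximate unstable foliation} of Lemma~\ref{lem:unstable-fol} (Appendix~\ref{sec:unstable}); these tubes $\Omega_{\beta,i}$ have bounded overlap, measure $\gtrsim\rho\cdot$length (up to a $\Cs\rho^2$ loss from the set where the foliation is not iterable, property (9)), and their images $T_{-(\ell-R_j)\vu}\Omega_{\beta,i}$ lie in a $(\rho_*+r^\theta+\blambda^{-(\ell-R_j)\vu}\rho)$-neighbourhood of $\widetilde O$, so contact-volume invariance gives $\sum|T_{(\ell-R_j)\vu}W_{\beta,i}|\le\Cs\bigl[\rho^{-1}S(\rho_*+r^\theta)+S\blambda^{-\ell\vu/2}+\rho\bigr]$; the intervals with $R_j>\ell/2$ are handled by the subexponential complexity bound, giving the $\delta^{-1}\sigma^{\ell/k_1}$ term. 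The square root then comes from optimizing the foliation width, $\rho=\sqrt{S(\rho_*+r^\theta)}$ -- a trade-off between the volume-to-length conversion and the defect of the approximate foliation -- and no transversality of $\widetilde O$ to the stable direction is ever used. Without this (or some substitute converting the volume of the neighbourhood of $\widetilde O$ into an upstream mass bound), your main estimate does not go through.
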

Next, it is convenient to introduce the parameter $c_{*}\in(0,1)$ defined by
\begin{equation}\label{eq:h0}
c_{*}ea\vus=\sigma\, ,
\end{equation}
where $a=\Re(z)$, and assume that
$m$ is such that
\begin{equation}\label{eq:h1}
\sigma^{c_*m}\leq \delta r^{\frac\theta 2}.
\end{equation}
Applying Lemma \ref{lem:disco} with $\rho_{*}=r^{\theta}$, $\widetilde O= O_{l}$, we have that 
\[
\sum_i\sum_{\beta\in K_i}Z_{\beta,i}\leq \Cs r^{\frac\theta 2}\, .
\]
Thus,\footnote{ Note that if a box $B_{cr^\theta}(x_i)$ is cut by a manifold $O_i$ or by the boundary of $X_0$, then it is possible that $E_{\ell,i}=\emptyset$. In particular, the present estimate bounds the contributions of all the $W_{\alpha,i}$ for such a ``bad" box.}
\begin{equation}\label{eq:step03}
\left|\sum_{\ell\geq c_*k_1 m}\sum_{k,i}\sum_{\alpha\in D_{\ell,i}}\int_{-\vu}^{\vu}\!\!\!\!\! p_{m,\ell,z}(s)\int_{W_{\alpha,i}}\!\!\!\!\!\tilde \frp_{\delta, \zeta}\circ T_{\ell \vu}\cdot J^s_{\ell\vu}\cdot \cL_s f\right|\leq \Cs \frac{r^{\frac \theta 2}}{a^m}|f|_\infty\, .
\end{equation}

We are left with the small $\ell$ and the elements of $E_{\ell,i}$. To treat these cases, it is convenient to introduce  extra notation.  For each $\alpha\in A_{\ell}$, define $W^{c}_{\alpha}=\cup_{t\in[-\vu,\vu]}W_{\alpha}$. For each $W_{\alpha}\subset U_{\zeta', 0 }$, the manifold $\kappa_{\zeta'}(W_\alpha^{c})$ can be seen as the graph of $\bF_\alpha(\xi,s):=(F_\alpha(\xi),y_{\alpha}+\xi,N_\alpha(\xi)+s)$, $\xi,s\in\bR$, where $F_\alpha, N_\alpha$ are uniformly $C^{2}$ functions and $\bF_\alpha(\xi,0)$ is the graph of $W_\alpha$.

Given the above discussion,  to estimate the integrals in equation \eqref{eq:step0} it suffices to estimate the integrals
\begin{equation}\label{eq:step1}
\int_{-\vu}^{\vu}\!\!\!\!\! p_{m,\ell,z}(s)\int_{W_\alpha}\!\!\!\!\!\tilde \frp_{\delta, \zeta}\circ T_{\ell \vu}\cdot J^s_{\ell\vu}\cdot \cL_s f=\int_{W_\alpha^{c}}\!\!\!\!\! \bar p_{m,\ell,z,\alpha}\cdot\bar \frp_{\delta, \zeta}\circ T_{\ell \vu}\cdot \bar J^s_{\ell\vu}\cdot  f\, ,
\end{equation}
where $\bar p_{m,\ell,z,\alpha}\circ\kappa^{-1}_{\zeta'}\circ\bF_{\alpha}(\xi,s)=-p_{m,\ell,z}(-s)$,
$\bar \frp_{\delta, \zeta}\circ T_{\ell \vu}\circ \kappa^{-1}_{\zeta'}\circ\bF_{\alpha}(\xi,s)= \tilde \frp_{\delta, \zeta}\circ T_{\ell \vu}\circ\kappa^{-1}_{\zeta'}\circ\bF_{\alpha}(\xi,0)$, the same for $\bar J^{s}_{\ell,\nu}$ apart from a factor taking into account the speed of the flow, and the integrals are taken with respect to the volume form induced by the Riemannian metric.

Substituting \eqref{eq:step1} in \eqref{eq:step0}, setting $\bar \frp_{\ell, \zeta, \alpha,i}=\frac{\bar\frp_{\delta, \zeta}\circ T_{\ell \vu}\cdot \bar J^s_{\ell\vu}}{Z_{\ell, \alpha,i}}$ and remembering \eqref{eq:step03},\footnote{ The sum over the first $c_*m k_1$ elements is estimated by $|f|_\infty$ times the integral 
\[
\Cs\frac{a^{-m+1}}{(m-1)!}\int_0^{c_*m\vus}e^{-a x}(xa)^{m-1} dx\leq \Cs a^{-m}\frac{(c_*ma\vus)^{m-1}}{(m-1)!}\leq \Cs a^{-m}(c_*a e\vus)^m\, ,
\]
where we have used the Stirling formula.
}
\begin{equation}\label{eq:step3}
\begin{split}
\bA^s_\delta (\cR(z)^m (f))=&\sum_{\ell\in\bN^*}\sum_{\zeta,i}\sum_{\alpha\in A_{\ell,i}}Z_{\ell,\alpha,i}\bar\theta_{\zeta}\int_{W_{\alpha,i}^{c}} \bar p_{m,\ell,z,\alpha}(s)\phi_{r,i}\cdot\bar \frp_{\ell,\zeta,\alpha,i}\cdot f\\
=&\sum_{\ell\geq c_*mk_1}\sum_{\zeta,i}\sum_{\alpha\in E_{\ell,i}}Z_{\ell,\alpha,i}\bar\theta_{\zeta}\int_{W_{\alpha,i}^{c}} \bar p_{m,\ell,z, \alpha}(s) \phi_{r,i}\cdot\bar \frp_{\ell,\zeta, \alpha,i}\cdot  f\\
&\;+\cO(|f|_{\infty}([c_*ea\vus]^{m}+r^{\frac\theta 2})a^{-m}\, ,
\end{split}
\end{equation}
Note that \eqref{eq:h1} implies $[c_*ea\vus]^{m}=\sigma^{m}\leq \delta r^{\frac \theta 2}$.

To continue, following Dolgopyat, we must show that the sum over the manifolds in $E_{\ell,i}$ contains a lot of cancellations and this leads to the wanted estimate. In Dolgopyat scheme such cancellations take place when summing together manifolds that are at a distance larger than $r^{\vartheta}$, for some properly chosen $\vartheta>\theta$. To make the cancellations evident one must compare different leaves via the unstable holonomy (using the fact that it is $C^{1}$). In the present case, due to the discontinuities, the unstable holonomy is defined only on a Cantor set. 
To overcome this problem, we construct in Appendix \ref{sec:unstable} an approximate holonomy which is Lipschitz. Next (following \cite{Li}), we use the fact that the flow is contact to show that Lipschitz suffices to have the wanted cancellations, see Appendix \ref{sec:hoihoi}.  Unfortunately, the approximate holonomy is efficient only when its fibers are very short, in particular one cannot hope to use it effectively to compare leaves that are at a distance $r^{\theta}$. We need then to collect our weak leaves into groups that are at a distance smaller than $r$ and require that $\vartheta>1$.

To start with, we consider the line~\footnote{In the $\kappa_{\zeta_i}$ coordinates.} $x_{i}+(u,0,0)$, $u\in[-r^{\theta},r^{\theta}]$, and we partition it in intervals of length $r/3$. To each such interval $I$ we associate a point $x_{i,j}\in \cup_{\alpha\in E_{\ell,i}}W_{\alpha}^{c}\cap I$, if the intersection is not empty.  Next, we associate to each point $x_{i,j}$ Reeb coordinates $\tilde\kappa_{x_{i,j}}$. More precisely, let $x_{i,j}\in W_{\alpha}^{c}$, we ask that $x_{i,j}$ is at the origin in the $\tilde\kappa_{x_{i,j}}$ coordinates, that $\tilde\kappa_{x_{i,j}}(W_{\alpha}^{c})\subset \{(0,y,z)\}$, $y,z\in\bR$, and that the vector $D_{x_{i,j}}T_{-\ell \vu}(1,0,0)$ belongs to the unstable cone. Such changes of coordinates exist and are all uniformly smooth by Lemma \ref{lem:c2}. 

For each $x_{i,j}$, let us consider (in the coordinates $\tilde\kappa_{x_{i,j}}$)  the box $\cB_{r}=\{(\eta,\xi,s)\;:\; |\eta|\leq r,\,|\xi|\leq r^{\theta}, \, |s|\leq r^\theta\}$. We set $\cB_{r,i,j}=\tilde\kappa_{x_{i,j}}^{-1}(\cB_r)$. The next lemma ensures that Figure \ref{fig:smallbox} is an accurate representation of the manifolds intersecting $\cB_{r}$.
\begin{figure}[ht]\ 
\begin{tikzpicture}
\draw(6,-2)--(7.5,-2);    
\draw(7.5,-2)--(10,-1); 
\draw[dash pattern=on .5 pt  off 4 pt](8.5,-1)--(10,-1);
\draw[dash pattern=on .5 pt  off 4 pt](8.5,-1)--(6,-2);
\draw(6,2)--(7.5,2);    
\draw(7.5,2)--(10,3); 
\draw(8.5,3)--(10,3);
\draw(8.5,3)--(6,2);
\draw(6,-2)--(6,1.2)node[left, xshift=-2pt, yshift=12pt]{$\cB_{cr}$}-- (6,2);    
\draw(7.5,-2)--(7.5,2); 
\draw[dash pattern=on .5 pt  off 4 pt](8.5,-1)--(8.5,3);
\draw(10,-1)--(10,3);
\draw[<-](7.5,-2.1)--(9,-1.5)node[xshift=8pt, yshift=3pt, rotate=15]{$\scriptstyle c r^{\theta}$};
\draw[->](9.5,-1.3)--(10,-1.1);
\draw[<-](6,-2.1)--(6.5,-2.1)node[xshift=8pt, yshift=-1pt]{$\scriptstyle c r$};
\draw[->](7,-2.1)--(7.5,-2.1);
\pgfsetfillopacity{0.4}
\path[fill=gray](7,-1.5)..controls (8, -.9) ..(9.5, -.5)..controls (9.4, 1.5) ..(9.5,2.5)..controls(8,2).. (7,1.5)..controls (7,0).. (7,-1.5);
\pgfsetfillopacity{1}
\draw[dash pattern=on 2 pt  off 1 pt](7,-1.5)..controls (8, -.9)..(9.5, -.5); 
\draw[dash pattern=on 2 pt  off 1 pt] (9.5,2.5)..controls(8,2).. (7,1.5);
\draw(6.38,-1.95)..controls (6.8,-1.63)..(7,-1.5);
\draw(6.38,1.2)..controls (6.6,1.3).. (7,1.5);
\draw[dash pattern=on 2 pt  off 1 pt](9.5,2.5)--(10,2.67);
\draw(10,2.67)--(10.3,2.78);
\draw[dash pattern=on 2 pt  off 1 pt](9.5,-.5)--(10,-.4);
\draw(10,-.4)--(10.39,-.3);
\draw[scale around={1.6: (-1.5,.5)}, xshift=-3.575cm](9.5, 0)..controls (9.42, 1) ..(9.45,1.933)node[right,yshift=-.5cm]{$\tilde\kappa_{x_{i,j}}(W_{\alpha}^c)$};
\draw[scale around={1.6: (-1.5,.5)}, xshift=-3.575cm] (7,.94)..controls (7,0).. (7,-1.033);
\draw[->](5,-.5)node[left,yshift=-.2cm]{$\scriptstyle \tilde\kappa_{x_{i,j}}(W_{\alpha}^c)\cap \cB_{cr}$}--(7,.5);
\draw[<-] (5.9,-2)--(5.9,0.5)node[right, rotate=90, yshift=.1cm]{$\scriptstyle cr^\theta$};
\draw[->](5.9, 1)--(5.9,2);
\end{tikzpicture}
\caption{A manifold intersecting  $\cB_r$.}
\label{fig:smallbox}
\end{figure}

\begin{lemma}\label{lem:smallbox}
There exists $c>0$ such that, if a manifold $W_{\alpha}$, $\alpha\in E_{\ell,i}$, intersects $\cB_{r,i,j}$, then $\tilde\kappa_{x_{i,j}}(W_{\alpha}^c)\cap\partial \cB_{cr,i,j}$ is contained in the unstable boundary of $\cB_{cr,i,j}$, provided $\theta\in(\frac 12,1)$ and $\bar\lambda^{-c_{*}m}<r^{\frac{1-\theta}2}$.
\end{lemma}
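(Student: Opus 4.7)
The strategy rests on two geometric rigidities of the Reeb/Darboux coordinates $\tilde\kappa_{x_{i,j}}$. First, by construction $\tilde\kappa_{x_{i,j}}^*\alpha=\alpha_0$, so the Reeb flow acts by pure $s$-translation, $T_t(\eta,\xi,s)=(\eta,\xi,s+t)$. Hence $W_\alpha^c=T_{[-\vu,\vu]}(W_\alpha)$ is (up to the trivial flow-thickness) invariant under $s$-translation and can be written as a graph $\eta=F(\xi)$ depending only on $\xi$. In particular, showing $|F(\xi)|\le cr$ for $|\xi|\le cr^\theta$ suffices: it implies that $W_\alpha^c$ does not reach the faces $\{|\eta|=cr\}$ within $\cB_{cr,i,j}$, so its boundary intersection is confined to the faces $\{|\xi|=cr^\theta\}$ (and, at the endpoints of the flow interval, possibly $\{|s|=cr^\theta\}$), which together constitute the unstable boundary. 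Second, $W_{\alpha_0}^c\subset\{\eta=0\}$ is a stable-flow surface lying entirely in the plane $\eta=0$, so the stable direction on this plane is tangent to it and has vanishing $\eta$-component.

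For the slope bound, since $W_\alpha=T_{-\ell\vu}(W^s_{\delta,\zeta})$ is the backward image of a fake stable leaf, the contraction of the stable cone under backward iteration shows that the tangent to $W_\alpha$ agrees with the (approximate) true stable direction at each point up to an error $O(\blambda^{-\ell\vu})$. Invoking the Lipschitz approximation of the stable direction (the stable-bundle analogue of the construction in Appendix~\ref{sec:unstable}), together with the above vanishing on $\{\eta=0\}$, the $\eta$-component of the stable direction at $(\eta_0,\xi_0,s_0)$ is bounded by $C|\eta_0|$. Since the $\xi$-component stays bounded away from zero, this translates into
\begin{equation*}
|F'(\xi)|\;\le\; C\,|F(\xi)|+C\,\blambda^{-\ell\vu}.
\end{equation*}

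Given $|F(\xi_0)|\le r$ for some $\xi_0$ (since $W_\alpha\cap\cB_{r,i,j}\neq\emptyset$), Gronwall's inequality on the interval $|\xi-\xi_0|\le 2cr^\theta$ combined with the smallness of $r$ (so $e^{2Ccr^\theta}\le 2$) yields $|F(\xi)|\le 2r+4cr^\theta\blambda^{-\ell\vu}$. The relations $\ell\ge c_*mk_1$ and $\vus=k_1\vu$ give $\ell\vu\ge c_*m\vus$, and the hypothesis $\blambda^{-c_*m}<r^{(1-\theta)/2}$ then forces $\blambda^{-\ell\vu}\le r^{(1-\theta)\vus/2}$. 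Choosing $k_0$ large enough so that $\vus$ exceeds $2$ by a definite margin makes $r^\theta\blambda^{-\ell\vu}=o(r)$ as $r\to 0$, whence $|F(\xi)|\le cr$ for a suitable universal $c>0$, proving the claim.

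The main obstacle is establishing the Lipschitz regularity of the stable direction used in the slope bound above, because for piecewise smooth hyperbolic flows the actual stable foliation is only Holder continuous in general; this is where one must invoke (or adapt) the Lipschitz approximate foliation of Appendix~\ref{sec:unstable} to the stable bundle by time-reversal symmetry, combined with the geometric rigidity provided by the Darboux chart (flow direction constantly $(0,0,1)$ and tangent plane to $W_{\alpha_0}^c$ being exactly $\{\eta=0\}$).
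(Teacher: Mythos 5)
Your reduction to the graph bound $|F(\xi)|\le cr$ for $|\xi|\le cr^\theta$ (using that the Darboux chart turns the flow into an $s$-translation) matches the geometry of the paper, but the step that carries all the weight --- the differential inequality $|F'(\xi)|\le C|F(\xi)|+C\blambda^{-\ell\vu}$ --- is not justified, and it is exactly where the piecewise smoothness bites. It amounts to asserting that the tangent directions of the pieces $W_\alpha$ form a transversally Lipschitz line field, with a scale-independent constant, near the central leaf $\{\eta=0\}$, and that each actual $W_\alpha=T_{-\ell\vu}(\cdot)$ is tangent to it up to the smooth-case error $O(\blambda^{-\ell\vu})$. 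Neither claim is available: the true stable distribution is only measurable/H\"older here; the approximate foliation of Appendix~\ref{sec:unstable} has transversal Lipschitz constant of order $\rho^{-1}$ (property~(\ref{it:5})), not uniform, and its leaves are not invariant manifolds, so the backward images $W_\alpha$ are not tangent to it with any a priori error; and, most importantly, two points at unstable distance $\le \Cs r$ may have forward orbits cut by different singularity surfaces long before time $\ell\vu$, after which their narrow cones $DT_{-\ell\vu}\CC^{(s)}$ are governed by different branch compositions and need not be $O(|\eta_0|)$- or $O(\blambda^{-\ell\vu})$-close. A telltale sign is that your argument never really uses $\theta>\tfrac12$, whereas the conclusion genuinely needs it.

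The paper closes this gap with the contact form rather than with transversal regularity of the stable direction: it joins the two leaves by the unstable segment $J$ on the $\eta$-axis, follows its forward orbit to the first cutting time $t_0$, uses transversality of the (uncut) images of the two leaves to $\partial^c O_i$ to bound their lengths by $\Cs|T_{t_0}J|$, and uses invariance of $\alpha$ and $d\alpha$ to get $|T_{t_0}J|/|J|\asymp\Lambda$, where $\Lambda$ is the stable contraction accumulated up to $t_0$. Since leaves in $E_{\ell,i}$ cross the box with stable length $\ge cr^\theta$, this forces $cr^\theta\le\Cs\Lambda^2 r$, i.e.\ a \emph{common} hyperbolicity $\Lambda\gtrsim r^{-(1-\theta)/2}$ is experienced before the two orbits can be cut apart (and the hypothesis $\blambda^{-c_*m}<r^{(1-\theta)/2}$ guarantees this much hyperbolicity is available within time $\ell\vu$). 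Cone contraction by $\Lambda$ then aligns the two tangents only to within $\Cs r^{1-\theta}$ --- not $C|\eta_0|$ --- and integrating this slope difference over stable length $r^\theta$, plus the $C^2$ curvature contribution $r^{2\theta}$, gives the separation bound $\Cs(r+r^{2\theta})$, which is $\le cr$ precisely because $\theta>\tfrac12$. To repair your proof you would have to replace the Gronwall step by this contact/hyperbolicity lower bound; as written, the proposal has a genuine gap.
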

\begin{proof}
By construction, for each $\cB_{r,i,j}$ there is a manifold $W_{\alpha}^c$ going through  its center and perpendicular to $(1,0,0)$ (in the $\tilde\kappa_{x_{i,j}}$ coordinates). Let $\tau_\alpha\in [-\vu,\vu]$ be such that $\tilde W_\alpha=T_{\tau_\alpha}W_\alpha\cap I\neq \emptyset$.\footnote{ When no confusion arises, to ease notation, we will identify $\tilde\kappa_{x_{i,j}}(W_{\alpha}^c)$ and $W_{\alpha}^c$.}

Next, consider another $W_{\beta}^{c}$, $\beta\in E_{\ell,i}$, intersecting  $\cB_{r,i,j}$ and let $u$ be the intersection point with $I$. Again, let $\tilde W_\beta=T_{\tau_\beta}W_\beta$. Consider the segment $J=[0,u]$ and its trajectory $T_{t} J$, $t\in[0,\ell\vu]$.  Let $t_{0}$ be the first time $t$ for which $T_{t-\vu}J\cap (\cup_{i}\partial O_{i})\neq \emptyset$. Since $T_{t_{0}}\tilde W_{\alpha}, T_{t_{0}}\tilde W_{\beta}$ are uniformly transversal to $\cup_{t\in[0,2\vu]}T_t\partial O_{i}=\partial^c O_i$ and do not intersect it, it follows that their length must be smaller than $\Cs|T_{t_{0}}J|$, see figure \ref{fig:cut}.

\begin{figure}[ht]\ 
\begin{tikzpicture}
\draw (0,-1)--(0,0) node[right]{$T_{t_0}(J)$}--(0,1);
\draw[line width=1pt] (-2,-2.2).. controls (0, .4).. (2,2)node[left, xshift=-.2cm, yshift=-1cm]{$\partial^c O_i$} ; 
\draw(-.8,1.1)node[yshift=.2cm]{$\scriptscriptstyle T_{t_0}(\tilde W_\beta)$}..controls (-.5,1.2)..(0,1)..controls (.3, .88)..(.58,.93);
\draw(-.8,-1.1)..controls (-.5,-1.2)..(0,-1)..controls (.45,- .88)..(.75,-1.05)node[xshift=.2cm, yshift=-.2cm]{$\scriptscriptstyle T_{t_0}(\tilde W_\alpha)$};
\end{tikzpicture}
\caption{Meeting $\partial O_i$.}
\label{fig:cut}
\end{figure}

Hence, setting $\Lambda=\frac{|\tilde W_{\alpha,i}|}{|T_{t_0}\tilde W_{\alpha,i}|}$, by the invariance of the contact form it follows that
\[
\Cs^{-1}\Lambda\leq \frac {|T_{t_0}(J)|}{|J|}\leq \Cs\Lambda\, .
\]
Thus,
\begin{equation}\label{eq:cutcut}
cr^\theta\leq \Cs \Lambda |T_{t_0}\tilde W_{\alpha,i}|\leq\Cs \Lambda |T_{t_0}(J)|\leq \Cs \Lambda^2|J|\leq \Cs\Lambda^2 r\, .
\end{equation}
It follows that the forward dynamics in a neighbourhood of $J$ behaves like in the smooth case until it experiences a hyperbolicity $\Lambda$ of order at least $\Cs r^{-\frac{1-\theta}2}$. Moreover our conditions imply that this amount of hyperbolicity will be achieved in the time we are considering. In turn, this means that the tangent spaces of $\tilde W_{\alpha}$ and $\tilde W_{\beta}$, at zero and $u$ respectively, differ at most of $\Cs r^{1-\theta}$.\footnote{Since hyperbolicity $\Lambda$ implies that the image of the cone (which contains the tangent to the manifolds) has size, in the horizontal plane, $\Lambda^2$ while the axes of two cones at a distance $r$ can differ at most by $\Cs r$, which can be proved in the same manner in which is proven the $C^1$ regularity of the foliation in the  case of a smooth map, see \cite{KH}.} Thus the tangents to the manifolds at the above points can at most be at a distance $\Cs r$ inside the box. By the uniform $C^{2}$ bounds of the manifolds it follows that the distance between the two manifolds must be bounded by $\Cs (r+r^{2\theta})$. From this, Lemma~\ref{lem:smallbox} easily follows by choosing $c$ large enough.
\end{proof}
\begin{remark} Note that the estimates on the tangent plane to the manifolds contained in the previous lemma imply that the ``angle" between two nearby boxes $\cB_{r,i,j}$ is of the order  $r^{1-\theta}$, hence the maximal distance in the unstable direction is of order $r$. This implies that the covering $\{\cB_{r,i,j}\}$ has a uniformly bounded number of overlaps.
\end{remark}
Returning to the proof of Lemma~\ref{dolgolemma},
in view of the previous result it is convenient decompose $E_{\ell,i}$ as $\cup_{j}E_{\ell,i,j}$ where if $\alpha\in E_{\ell,i,j}$, then $\tilde\kappa_{x_{i,j}}(W_{\alpha,i})\cap \cB_{r}\neq\emptyset$. We can then rewrite \eqref{eq:step3} as
\begin{equation}\label{eq:step3.5}
\begin{split}
\bA^s_\delta (\cR(z)^m( f))
=&\sum_{\ell\geq c_*mk_1}\sum_{\zeta,i,j}\sum_{\alpha\in E_{\ell,i,j}}Z_{\ell,\alpha,i}\bar\theta_{\zeta}\int_{W_{\alpha,i}^{c}} \bar p_{m,\ell,z,\alpha}(s) \phi_{r,i}\cdot\bar \frp_{\ell,\zeta, \alpha,i}\cdot  f\\
&\;+\cO(|f|_{\infty}r^{\frac \theta 2})a^{-m}\, ,
\end{split}
\end{equation}

To elucidate the cancellation mechanism it is best to fix $\ell, i,j$ and use the above mentioned charts (in fact from now on, we will call the quantities in such a coordinate charts with the same names of the corresponding ones in the manifold). For each $\alpha\in E_{\ell,i,j}$, $W_\alpha\cap \cB_{cr}$ can be seen as the graph of $\bF_\alpha(\xi):=(F_\alpha(\xi),\xi,N_\alpha(\xi))$ for $\|\xi\|\leq cr$, where $F_\alpha, N_\alpha$ are uniformly $C^{2}$ functions, and, by Lemma \ref{lem:smallbox}, $|F_{\alpha}'|\leq \Cs r^{1-\theta}$. Note that, since both $\alpha$ and $d\alpha$ are invariant under the flow, and the manifolds are the image of manifolds with tangent space in the kernel of both forms, it follows that $N_\alpha'(\xi)=\xi F_\alpha'(\xi)$. 

To simplify notations, let us introduce the functions\footnote{To ease notation we suppress some indices.}
\begin{equation}\label{eq:defF}
\begin{split}
&\tilde \bF_\alpha(\xi,s)=(F_\alpha(\xi),\xi,N_\alpha(\xi)-s)\\
&\Xi_{\ell, r,i,\zeta,\alpha}=\phi_{r,i}\cdot\bar \frp_{\ell,\zeta,\alpha,i}\\
&{\bf F}_{\ell,m,i,\alpha}(\xi,s)= p(s)\frac{(\ell\vu-s)^{m-1}}{(m-1)!}e^{-z\ell\vu+as}\cdot \Xi_{\ell, r,i,\zeta,\alpha}\circ\bF_\alpha(\xi,s) \cdot\Omega_\alpha(\xi)\, ,
\end{split}
\end{equation}
where $\Omega_\alpha ds\wedge d\xi$ is the volume form on $W_\alpha^{c}$ in the coordinates determined by $\bF_\alpha$. Note that\footnote{By tracing the definition of $\bar\frp_{\delta, \zeta,i}$ just after \eqref{eq:step1}, of $Z_{\ell,\alpha,i}$ and  $\frp_{\ell,\zeta,\alpha,i}$ in \eqref{eq:ro-la1}, and of $\tilde\frp_{\delta,\zeta}$ after \eqref{eq:step0}, it follows that the only large contribution to the Lipschitz norm comes from $\phi_{r,i}$ and $p$. In particular \eqref{eq:ro-la1} implies that $|\frp_{\ell,\zeta,\alpha,i}|\leq\Cs r^{-\theta}$, the other estimate follows then by  property (ii) of the partition.}
\begin{equation}\label{eq:Xi}
\begin{split}
&|{\bf F}_{\ell,m,i,\alpha}|_{\infty}\leq \Cs r^{-\theta}\frac{(\ell\vu)^{m-1}}{(m-1)!}e^{-a\ell\vu}\\
&\|{\bf F}_{\ell,m,i,\alpha}\|_{\text{Lip}}\leq \Cs r^{-2\theta}\frac{(\ell\vu)^{m-1}}{(m-1)!}e^{-a\ell\vu}\, .
\end{split}
\end{equation}
We can then write,
\begin{equation}\label{eq:step4}
\begin{split}
&\int_{W_{\alpha,i}^{c}}  \bar p_{m,\ell,z,\alpha}\phi_{r,i}\cdot\bar \frp_{\ell,\zeta,\alpha,i}\cdot f\\
&=\int_{-\vu}^{\vu}ds\int_{\|\xi\|\leq cr^\theta} d\xi\; e^{ibs}{\bf F}_{\ell,m,i,\alpha}(\xi,s) f(F_\alpha(\xi),\xi,N_\alpha(\xi)+s)\, .
\end{split}
\end{equation}
At this point, we would like to compare different manifolds by sliding them along an approximate unstable direction.
To this end, we use the approximate unstable fibers $\Gamma_{i,j, r}^\up$ constructed in Appendix \ref{sec:unstable}. In short, for each coordinates $\tilde\kappa_{x_{i,j}}$, we can construct a Lipschitz foliation in $\cB_{cr}$ in a $\rho=r^\varsigma$ neighbourhood  of the ``stable" fiber (which is of length $\varrho=r^\theta$). In order to have the foliation defined in all $\cB_{cr}$ we need $\varsigma<1$, while for the foliation to have large part where it can be smoothly iterated backward as needed it is necessary that $\varsigma>\theta$, we thus impose
\begin{equation}\label{eq:varsigma}
\theta<\varsigma<1.
\end{equation}
The foliation can be locally described by a coordinate change $\bG_{i,j,\up}(\eta,\xi,s)=(\eta,G_{i,j,\up}(\eta,\xi), H_{i,j,\up}(\eta,\xi)+s)$ so that the fiber $\Gamma_{i,j,r}^{\up}(\xi,s)$ is the graph of $\bG_{i,j,\up}(\cdot, \xi,s)$. Note that, by construction, $\|G_{i,j,\up}'\|$ is small. We consider the holonomy $\Theta_{i,j,\alpha, \up}: W_\alpha\to W_*=\{x^u=0\}$ defined by $\{z\}=\Gamma_{i,j,r}^\up(\Theta_{i,j,\alpha,\up}(\{z\}))\cap W_\alpha$. Note that  
\[
\Theta_{i,j,\alpha,\up}(F_\alpha(\xi), \xi,N_\alpha(\xi))=(0,h_\alpha(\xi), \bar\omega_\alpha(\xi)).
\]
Accordingly, $\bG_{i,j,\up}(F_\alpha(\xi),h_\alpha(\xi),\bar\omega_\alpha(\xi))=\bF_\alpha(\xi)$, that is
\begin{equation}\label{eq:holo}
\begin{split}
&G_{i,j,\up}(F_\alpha(\xi),h_\alpha(\xi))=\xi\\
&H_{i,j,\up}(F_\alpha(\xi),h_\alpha(\xi))+\bar\omega_\alpha(\xi)=N_\alpha(\xi)\, .
\end{split}
\end{equation}

\begin{lemma}\label{lem:hpr}
There exists $\Cs,\ho_0>0$ such that for each $i,j, \ell$, $\ho\in [0,\ho_0]$ and $\alpha\in E_{\ell,i,j}$ the following holds true
\[
\begin{split}
&|h_\alpha(\xi)-\xi|+|h_\alpha^{-1}(\xi)-\xi|\leq \Cs r^{1-\varsigma}\;;\quad\left|1- h_\alpha'\right|\leq \Cs r^{1-\varsigma}\\
& |\bar\omega_{\alpha}|_{C^{1+\ho}}+ | h_{\alpha}|_{C^{1+\ho}}\leq \Cs\, , 
\end{split}
\]
provided 
\begin{equation}\label{eq:varvar}
\varsigma(1+\ho)\leq 1\, .
\end{equation}
\end{lemma}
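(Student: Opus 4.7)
The plan is to solve the system (\ref{eq:holo}) via the implicit function theorem, exploiting that by construction the central leaf $\eta=0$ of the approximate unstable foliation coincides with $W_*$, so $G_{i,j,\up}(0,\xi)=\xi$ and $H_{i,j,\up}(0,\xi)=0$ identically. The quantitative bounds on $\bG_{i,j,\up}$ supplied by Appendix~\ref{sec:unstable} — in particular the smallness of $\partial_\eta G_{i,j,\up}$, the proximity of $\partial_\xi G_{i,j,\up}$ to $1$, and a leafwise $C^{1+\ho}$ modulus for $G_{i,j,\up},H_{i,j,\up}$ with transverse Lipschitz regularity governed by the scale $\rho=r^\varsigma$ — will then produce all four inequalities.

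For the $C^0$ bound, write
\begin{equation*}
\xi-h_\alpha(\xi)=G_{i,j,\up}(F_\alpha(\xi),h_\alpha(\xi))-G_{i,j,\up}(0,h_\alpha(\xi))=\int_0^1\partial_\eta G_{i,j,\up}(tF_\alpha(\xi),h_\alpha(\xi))\,dt\cdot F_\alpha(\xi),
\end{equation*}
so that $|h_\alpha(\xi)-\xi|\leq\|\partial_\eta G_{i,j,\up}\|_\infty\cdot\|F_\alpha\|_\infty$. Since $\alpha\in E_{\ell,i,j}$ implies $W_\alpha^c\cap\cB_{r,i,j}\neq\emptyset$ and $|F_\alpha'|\leq \Cs r^{1-\theta}$ by Lemma~\ref{lem:smallbox}, integration on $|\xi|\leq cr^\theta$ yields $\|F_\alpha\|_\infty\leq \Cs r$, and the transverse estimate from Appendix~\ref{sec:unstable} closes the argument to produce the $\Cs r^{1-\varsigma}$ bound. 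The estimate on $|h_\alpha^{-1}(\xi)-\xi|$ follows by swapping the roles of $W_\alpha$ and $W_*$. For the derivative, implicit differentiation of the first identity in (\ref{eq:holo}) gives
\begin{equation*}
h_\alpha'(\xi)=\frac{1-\partial_\eta G_{i,j,\up}(F_\alpha(\xi),h_\alpha(\xi))\,F_\alpha'(\xi)}{\partial_\xi G_{i,j,\up}(F_\alpha(\xi),h_\alpha(\xi))},
\end{equation*}
and combining the three individual bounds yields $|1-h_\alpha'|\leq \Cs r^{1-\varsigma}$.

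For the $C^{1+\ho}$ bound, note that
\begin{equation*}
\bar\omega_\alpha(\xi)=N_\alpha(\xi)-H_{i,j,\up}(F_\alpha(\xi),h_\alpha(\xi)),
\end{equation*}
so $\bar\omega_\alpha$ and $h_\alpha$ are built from the uniformly $C^2$ functions $F_\alpha,N_\alpha$ (footnote~\ref{foo:dist}) composed with the restrictions of $G_{i,j,\up},H_{i,j,\up}$ to a single approximate unstable leaf. The main obstacle — and the reason for condition (\ref{eq:varvar}) — is that $\bG_{i,j,\up}$ is only Lipschitz in the transverse $\eta$ direction, so a genuine $C^{1+\ho}$ modulus for $h_\alpha$ and $\bar\omega_\alpha$ must be obtained by interpolating the leafwise $C^{1+\ho}$ regularity (at the longitudinal scale $\varrho=r^\theta$) against the transverse Lipschitz regularity (at the scale $\rho=r^\varsigma$). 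This interpolation is of the type $r^{\varsigma(1+\ho)}\leq r$, which forces $\varsigma(1+\ho)\leq 1$, upper-bounding $\ho$ by $\varsigma^{-1}-1$ and hence providing the small $\ho_0$ in the statement.
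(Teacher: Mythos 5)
Your proposal is correct and follows essentially the same route as the paper: all four bounds are extracted from the holonomy identities \eqref{eq:holo} using properties (\ref{it:1}), (\ref{it:5}), (\ref{it:6}) of the approximate foliation together with $|F_\alpha'|\le\Cs r^{1-\theta}$ and $|F_\alpha|\le \Cs r$ on the box, your formula for $h_\alpha'$ is exactly \eqref{eq:hprimo}, and the only variation is that your $C^0$ step uses the cone bound on $\partial_\eta G_{i,j,\up}$ directly instead of the mixed derivative $\partial_\xi\partial_\eta G_{i,j,\up}$, which works just as well (indeed it gives $\Cs r\le \Cs r^{1-\varsigma}$). Two cosmetic slips worth fixing: the composition $(F_\alpha(\xi),h_\alpha(\xi))$ is not a restriction of $G_{i,j,\up},H_{i,j,\up}$ to a single leaf (it crosses leaves, which is exactly why the transverse bounds (\ref{it:5})--(\ref{it:6}) enter), and the interpolation condition should read $r\le \Cs\, r^{\varsigma(1+\ho)}$, i.e.\ $r\,\rho^{-(1+\ho)}$ bounded, rather than $r^{\varsigma(1+\ho)}\le r$ — your stated conclusion $\varsigma(1+\ho)\le 1$ is nevertheless the correct one.
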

\begin{proof}
By Lemma \ref{lem:unstable-fol} and Remark \ref{rem:lip}, both $h_\alpha, \bar\omega_\alpha$ are uniformly Lipschitz functions. Indeed, 
\begin{equation}\label{eq:h-a}
\begin{split}
\xi= &G_{i,j,\up}(0,h_\alpha(\xi))+\int_0^{F_\alpha(\xi)}\!\!\!\!\!\!\!\!dz\;\partial_zG_{i,j,\up}(z,h_\alpha(\xi))\\
=&
h_\alpha(\xi)+\int_0^{F_\alpha(\xi)}\!\!\!\!\!\!\!\! dz\int_0^{h_\alpha(\xi)}\!\!\!\!\!\!\!\!dw\;\partial_w\partial_zG_{i,j,\up}(z,w)
\end{split}
\end{equation}
that is $h_\alpha(\xi)=\xi(1+\cO(r^{1-\varsigma}))$.
Moreover, differentiating the first of \eqref{eq:holo},
\begin{equation}\label{eq:hprimo}
h'_\alpha(\xi)=\frac{1-\partial_\eta G_{i,j,\up}(F_\alpha(\xi),h_\alpha(\xi))F'_\alpha(\xi)}{\partial_\xi G_{i,j,\up}(F_\alpha(\xi),h_\alpha(\xi))},
\end{equation}
and 
\[
\begin{split}
\partial_\xi G_{i,j,\up}(F_\alpha(\xi),h_\alpha(\xi))&=\partial_\xi G_{i,j,\up}(0,h_\alpha(\xi))+\int_0^{F_\alpha(\xi)}\!\!\!\!\!\!\! \partial_z\partial_\xi G_{i,j,\up}(z,h_\alpha(\xi)) dz\\
&=1+\cO(r^{1-\varsigma})\\
\partial_\eta G_{i,j,\up}(F_\alpha(\xi),h_\alpha(\xi))&=\partial_\eta G_{i,j,\up}(F_\alpha(\xi),0)+\int_{0}^{h_\alpha(\xi)}\!\!\!\!\!\!\!\partial_z\partial_\eta G_{i,j,\up}(F_\alpha(\xi),z) dz\\
&=\partial_\eta G_{i,j,\up}(F_\alpha(\xi),0)+\cO(r^{\theta-\varsigma})\, ,
\end{split}
\]
where we have used property (1) of the foliation representation (see Appendix \ref{sec:unstable}). Since $\partial_\eta \bG_{i,j,\up}$ belongs to the unstable cone, then $\partial_\eta G_{i,j,\up}$ is uniformly bounded. Also remember the estimate $|F'|\leq \Cs r^{1-\theta}$ obtained in the proof of Lemma \ref{lem:smallbox}.
Hence, taking into account \eqref{eq:varsigma}, $|1- h_\alpha'|\leq \Cs r^{1-\varsigma}$, and $h_\alpha$ is invertible with uniform Lipschitz constant.

To estimate the H\"older norm of $h'$ we use the above equations together with property \eqref{it:6} of the foliation:\footnote{We use also the fact that $|f|_{C^{\ho}}\leq\Cs |f|_{C^{0}}^{1-\ho}|f'|_{C^{0}}^{\ho}$.}
\[
\begin{split}
|h_{\alpha}|_{C^{1+\ho}}&\leq \Cs\left\{1+|F_{\alpha}'|_{\infty}|\partial_{\eta}G_{i,j,\up}|_{C^{\ho}}+|1-\partial_{\xi}G_{i,j,\up}|_{C^{\ho}}\right\}\\
&\leq \Cs\left\{1+r^{1-\theta}|\partial_{\xi}\partial_{\eta} G_{i,j,\up}|_{\infty}^{\ho}+\int_{0}^{r}|\partial_{\xi}\partial_{\eta}G_{i,j,\up}|_{C^{\ho}}\right\}\\
&\leq \Cs\left\{1+r^{1-\varsigma(1+\ho)}\right\}.
\end{split}
\]
Analogously, 
\[
|\bar\omega_{\alpha}|_{C^{1+\ho}}\leq C|h_{\alpha}|_{C^{1+\ho}}+|F_{\alpha}'|_{\infty}|\partial_{\eta}H_{i,j,\up}|_{C^{\ho}}+|\partial_{\xi}H_{i,j,\up}|_{C^{\ho}}\leq \Cs\left\{1+r^{1-\varsigma(1+\ho)}\right\}\, .
\]
This concludes the proof of Lemma \ref{lem:hpr}.
\end{proof}

Next, remember that the fibers of $\Gamma_{i,j,r}^\up$ in the domain $\Delta_\up$ can be iterated backward a time $\up\vu$ and still remain in the unstable cone. In the following we will use the notation
\[
\partial_{\up,i}f=\sup_{(\xi,s)\in \Delta_\up}\esssup_{|\eta|\leq r}|\langle\partial_\eta\bG_{\up}(\eta,\xi,s), (\nabla f)\circ \bG_{\up}(\eta,\xi,s)\rangle|\, .
\]

With the above construction and notations, and using Lemma \ref{lem:unstable-fol}, we can continue our estimate left at \eqref{eq:step4}
\begin{equation}\label{eq:step5}
\begin{split}
&\int_{W_{\alpha,i}^{c}} \bar p_{m\ell,z,\alpha} \phi_{r,i}\cdot\bar \frp_{\ell,\zeta,\alpha,i}\cdot  f\\
&=\int_{-\vu}^{\vu}ds\int_{\|\xi\|\leq cr^\theta} d\xi\; e^{ibs}{\bf F}_{\ell,m,i,\alpha}(\xi,s) f\circ\Theta_{i,j,\alpha,\up}(F_\alpha(\xi), \xi,N_\alpha(\xi)+s)\\
&\quad+ \cO(r\partial_{\up,i}f +r^{\varsigma}|f|_\infty)
\frac{(\ell\vu)^{m-1}}{(m-1)!}e^{-a\vu\ell}\\
&=-\int_{-2\vu}^{2\vu}ds\int_{\|\xi\|\leq cr^\theta} d\xi\; e^{ib(\omega_\alpha(\xi)-s)}{\bf F}^*_{\ell,m,i,\alpha}(\xi,s) f(0, \xi,s)\\
&\quad+ \cO(r^{1-\theta}\partial_{\up,i}f +r^{\varsigma-\theta}|f|_\infty)
\frac{(\ell\vu)^{m-1}}{(m-1)!}e^{-a\vu\ell}\vu\, ,
\end{split}
\end{equation}
where we have set $\omega_\alpha(\xi)=\bar\omega_\alpha\circ h_\alpha^{-1}(\xi)$,
\begin{equation}\label{eq:defFs}
{\bf F}^*_{\ell,m,i,\alpha}(\xi,s)={\bf F}_{\ell,m,i,\alpha}(h_\alpha^{-1}(\xi),\omega_\alpha(\xi)-s) |h'_\alpha\circ h_\alpha^{-1}(\xi)|^{-1}\, .
\end{equation}

At last we can substitute \eqref{eq:step5} into \eqref{eq:step3.5} and use the Schwartz inequality (first with respect to the integrals and then with respect to the sum on $i$ and $j$) to obtain
\begin{equation}\label{eq:almost}
\begin{split}
|\bA^s_\delta &\cR(z)^mf|\leq \Cs\sum_{\ell\geq c_*mk_1}e^{-a\ell\vu}\sum_{\zeta, i,j} |f|_\infty r^{\theta}\\
&\times\bigg[\sum_{\alpha,\beta\in E_{\ell,i,j}}\!\!\!\!Z_{\alpha,i} Z_{\beta,i}\!\!\int_{-2\vu}^{2\vu}\!\!\!\!\!\!\!ds\int_{\|\xi\|\leq cr^\theta}\!\!\!\!\!\!\!\!\!\!\!\!\!\!\!\!\!d\xi \, e^{ib(\omega_\alpha(\xi)-\omega_\beta(\xi))}{\bf F}^*_{\ell,m,i,\alpha}\overline{{\bf F}^*_{\ell,m,i,\beta}}\bigg]^{\frac 12}\\
&+\Cs(r^{1-\theta}\sup_i \partial_{\up,i}f+(r^{\frac\theta 2}+r^{\varsigma-\theta})|f|_\infty)a^{-m}\\
&\quad\quad\quad\leq \Cs\sum_{\ell\geq c_*mk_1}e^{-a\ell\vu}\sum_{\zeta} |f|_\infty r^{-\frac 1 2}\\
&\times\bigg[\sum_{i,j}\sum_{\alpha,\beta\in E_{\ell,i,j}}\!\!\!\!Z_{\alpha,i} Z_{\beta,i}\!\!\int_{-2\vu}^{2\vu}\!\!\!\!\!\!\!ds\int_{\|\xi\|\leq cr^\theta}\!\!\!\!\!\!\!\!\!\!\!\!\!\!\!\!\!d\xi \, e^{ib(\omega_\alpha(\xi)-\omega_\beta(\xi))}{\bf F}^*_{\ell,m,i,\alpha}\overline{{\bf F}^*_{\ell,m,i,\beta}}\bigg]^{\frac 12}\\
&+C(r^{1-\theta}\sup_i \partial_{\up,i}f+(r^{\frac\theta 2}+r^{\varsigma-\theta})|f|_\infty)a^{-m}\, .
\end{split}
\end{equation}
To conclude the proof
of Lemma~\ref{dolgolemma}, we need two fundamental, but technical, results whose proofs can be found in Appendix \ref{sec:hoihoi}.
The first allows to estimate the contribution to the sum of manifolds that are enough far apart, the other shows that manifolds that are too close are few. For each two sets $A,B$ such that $A_{r,i,j}=A\cap \cB_{r,i,j}\neq \emptyset$ and $B_{r,i,j}=B\cap \cB_{r,i,j}\neq \emptyset$, let $d_{i,j}(A,B)=d(A_{r,i,j},B_{r,i,j})$.
\begin{lemma}\label{lem:cancel} We have
\begin{equation}\label{eq:below-o}
|\partial_\xi\left[\omega_\alpha-\omega_\beta\right]|\geq \Cs d_{i,j}(W_{\alpha},W_{\beta}) \, .
\end{equation}
In addition, if $\sigma^{c_*m}\leq \Cs r^{\frac{\vartheta-\theta}2}$, then
\begin{equation}\label{eq:up-o}
\begin{split}
\bigg|\int_{\|\xi\|\leq cr^\theta}\!\!\!\!\!\!\!\!\!\!\!\!d\xi\,&e^{ib(\omega_\alpha(\xi)-\omega_\beta(\xi))}{\bf F}^*_{\ell,m,i,\alpha}\overline{{\bf F}^*_{\ell,m,i,\beta}}\,\bigg|_{\infty}\!\!\!\leq \Cs\frac{(\ell\vu)^{2m-2}}{[(m-1)!]^2}\\
&\times r^{-\theta}\left[\frac1{d_{i,j}(W_\alpha,W_\beta)^{1+\ho} b^\ho}+\frac1{r^{\theta}d_{i,j}(W_\alpha,W_\beta) b}\right]\, .
\end{split}
\end{equation}
\end{lemma}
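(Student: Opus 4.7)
\textbf{Proof plan for Lemma~\ref{lem:cancel}.} The first inequality \eqref{eq:below-o} is where the contact geometry enters decisively; the second \eqref{eq:up-o} is then a non-stationary phase estimate exploiting the first.

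For \eqref{eq:below-o}, I would compute $\omega_\alpha'$ explicitly. Starting from $\bar\omega_\alpha(\xi)=N_\alpha(\xi)-H_{i,j,\up}(F_\alpha(\xi),h_\alpha(\xi))$ and differentiating, using the crucial contact identity $N_\alpha'(\xi)=\xi F_\alpha'(\xi)$ (which is forced by the fact that the stable leaves lie in $\Ker\alpha_0 \cap \Ker d\alpha_0$), together with \eqref{eq:hprimo}, we get
\[
\bar\omega_\alpha'(\xi)=\bigl(\xi-\partial_\eta H_{i,j,\up}(F_\alpha(\xi),h_\alpha(\xi))\bigr)F_\alpha'(\xi)-\partial_\xi H_{i,j,\up}(F_\alpha(\xi),h_\alpha(\xi))h_\alpha'(\xi).
\]
After reparametrisation by $h_\alpha^{-1}$, the leading behavior of $\omega_\alpha'(\xi)$ is $\xi\,F_\alpha'(h_\alpha^{-1}(\xi))$ plus a perturbation controlled by the geometry of the approximate unstable foliation. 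The key point is that the analogous identity for the (fake) unstable fibers $\bG_{i,j,\up}$ contributes a \emph{cancelling} linear-in-$\xi$ term, so that the remaining nontrivial contribution to $\partial_\xi(\omega_\alpha-\omega_\beta)$ is essentially $\xi(F_\alpha'-F_\beta')$ plus error. Since $W_\alpha,W_\beta$ both pass through $\cB_{r,i,j}$ but are distinct at distance $d_{i,j}$, Lemma~\ref{lem:smallbox} and the contact relation give $|F_\alpha(\xi)-F_\beta(\xi)|\gtrsim d_{i,j}(W_\alpha,W_\beta)$, and after the integration recovering slope from values (using the uniform $C^2$ bounds on $F_\alpha,F_\beta$) one extracts the claimed lower bound. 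The main geometric point to verify carefully here is that the Lipschitz-only approximate holonomy still transports the contact information well enough that the dominant quadratic term survives—this is the step where one uses properties \eqref{it:6} of Appendix~\ref{sec:unstable} and the fact that $\bG_{i,j,\up}$ is $\alpha_0$-compatible modulo a small error.

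For \eqref{eq:up-o}, I would perform integration by parts in $\xi$ in the oscillatory integral
\[
I_{\alpha\beta}(s)=\int_{\|\xi\|\le cr^\theta} e^{ib(\omega_\alpha(\xi)-\omega_\beta(\xi))}\,{\bf F}^*_{\ell,m,i,\alpha}(\xi,s)\overline{{\bf F}^*_{\ell,m,i,\beta}(\xi,s)}\,d\xi,
\]
writing $e^{ib\Omega}=\frac{1}{ib\Omega'}\partial_\xi e^{ib\Omega}$ with $\Omega=\omega_\alpha-\omega_\beta$. The boundary contributions are bounded by $|{\bf F}^*|_\infty^2/(b|\Omega'|)$, which by \eqref{eq:below-o} gives a term of size $r^{-2\theta}\cdot(\ell\vu)^{2m-2}/[(m-1)!]^2 \cdot 1/(b d_{i,j})$. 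Noting that the $\xi$-domain has length $r^\theta$, this yields the second term $\frac{1}{r^\theta d_{i,j} b}$ after factoring out $r^{-\theta}$. The bulk term is $\int \partial_\xi\bigl({\bf F}^*_\alpha\overline{{\bf F}^*_\beta}/(ib\Omega')\bigr)\,e^{ib\Omega}\,d\xi$. Here one only has H\"older $C^{1+\ho}$ control on $\omega_\alpha,\omega_\beta$ (Lemma~\ref{lem:hpr}) and Lipschitz control on ${\bf F}^*$ (cf.\ \eqref{eq:Xi}), so a direct integration-by-parts would lose a full factor of $1/b$ but incur $|\Omega''|/|\Omega'|^2$ which is not available pointwise. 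The standard trick is to write $\Omega'(\xi)^{-1}$ as the sum of its value at some reference point plus a $C^\ho$ remainder, and then do one more integration by parts on the smooth part while estimating the H\"older part directly, gaining only $b^{-\ho}$ but with $|\Omega'|^{-(1+\ho)}$. This yields the first term $\frac{1}{d_{i,j}^{1+\ho} b^\ho}$. The condition $\sigma^{c_* m}\le \Cs r^{(\vartheta-\theta)/2}$ is used to ensure that the error terms introduced by the approximate (versus true) unstable holonomy stay below the main contribution.

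The chief obstacle I anticipate is in \eqref{eq:below-o}: verifying that the lower bound $\gtrsim d_{i,j}(W_\alpha,W_\beta)$ on $|\partial_\xi(\omega_\alpha-\omega_\beta)|$ survives the replacement of the true (measurable) unstable foliation by the Lipschitz approximation $\bG_{i,j,\up}$. The contact form is preserved by the flow, but $\bG_{i,j,\up}$ is only approximately $\alpha_0$-compatible, so the clean quadratic phase one would get in the Anosov contact setting of \cite{Li} is perturbed; one must track the perturbation and verify it is lower order than $d_{i,j}$ in the relevant range $d_{i,j}\ge r^\vartheta$. Once \eqref{eq:below-o} is in hand, the phase-integral estimate \eqref{eq:up-o} follows from fairly standard non-stationary phase with H\"older tails.
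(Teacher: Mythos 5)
There is a genuine gap in your treatment of \eqref{eq:below-o}, and it lies exactly at the point you flag as the ``key point''. If you differentiate $\bar\omega_\alpha(\xi)=N_\alpha(\xi)-H_{i,j,\up}(F_\alpha(\xi),h_\alpha(\xi))$ using $N_\alpha'=\xi F_\alpha'$, the fake-unstable fibers do not merely cancel ``a linear-in-$\xi$ term leaving essentially $\xi(F_\alpha'-F_\beta')$'': since by construction the fibers lie \emph{exactly} in $\Ker\alpha_0$ (property \eqref{it:2} of Appendix~\ref{sec:unstable}, $\partial_\eta H_{i,j,\up}=G_{i,j,\up}$) and $G_{i,j,\up}(F_\alpha(\xi),h_\alpha(\xi))=\xi$ by \eqref{eq:holo}, the term $\bigl(\xi-\partial_\eta H_{i,j,\up}\bigr)F_\alpha'$ vanishes identically, and what survives is $\omega_\alpha'(\xi)=-\partial_\xi H_{i,j,\up}(F_\alpha\circ h_\alpha^{-1}(\xi),\xi)$. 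Hence
\[
\partial_\xi\bigl[\omega_\beta-\omega_\alpha\bigr](\xi)=\int_{F_\alpha\circ h_\alpha^{-1}(\xi)}^{F_\beta\circ h_\beta^{-1}(\xi)}\partial_\xi G_{i,j,\up}(\eta,\xi)\,d\eta
=\bigl[F_\beta\circ h_\beta^{-1}(\xi)-F_\alpha\circ h_\alpha^{-1}(\xi)\bigr]\bigl(1+\cO(r^{1-\varsigma})\bigr),
\]
i.e.\ the phase derivative is (to leading order) the \emph{unstable separation} of the two manifolds, not $\xi(F_\alpha'-F_\beta')$. This distinction is essential: from $\xi(F_\alpha'-F_\beta')$ no lower bound of order $d_{i,j}(W_\alpha,W_\beta)$ can be extracted, because nearly parallel manifolds at distance $d$ can have $|F_\alpha'-F_\beta'|$ arbitrarily small (only the upper bound $|F_\alpha'-F_\beta'|\le\Cs r^{-\theta}|F_\alpha-F_\beta|$ is available, and it degenerates at $\xi=0$); your step ``recovering slope from values'' goes in the wrong direction and fails. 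What remains to be done after the displayed identity is to show that $|F_\beta\circ h_\beta^{-1}-F_\alpha\circ h_\alpha^{-1}|$ is uniformly comparable to $d_{i,j}(W_\alpha,W_\beta)$ on the whole box: one controls $|h_\beta^{-1}-h_\alpha^{-1}|$ by $\Cs r^{\theta-\varsigma}|F_\beta-F_\alpha|$, bounds $|N_\alpha-N_\beta|\le\Cs r|F_\alpha-F_\beta|$, and runs a Gronwall argument on $d_{\alpha,\beta}(\xi)=|F_\alpha(\xi)-F_\beta(\xi)|$ using the accumulated hyperbolicity (this is where the assumption $\sigma^{c_*m}\le\Cs r^{(\vartheta-\theta)/2}$ enters, guaranteeing $\blambda^{2\ell}\ge\Cs r^{\theta-\vartheta}$). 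Also note that your ``chief obstacle'' is moot: the approximate foliation is not merely ``approximately $\alpha_0$-compatible''; its fibers are exactly $\alpha_0$-horizontal, which is precisely what makes the computation clean (the paper obtains the same identity by Stokes' theorem, interpreting $\bar\omega_\alpha$ as a $d\alpha$-area).

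Concerning \eqref{eq:up-o}, your plan (non-stationary phase exploiting the first-derivative lower bound, with a H\"older correction yielding only $b^{-\ho}$) is the right spirit and the claimed final bound matches; the paper does not integrate by parts but partitions $[-cr^\theta,cr^\theta]$ into full oscillation periods of length $\delta_i\simeq (b\,d_{i,j})^{-1}$, linearizes the phase (error $\delta_i^{1+\ho}$) and freezes the amplitude (error $\delta_i^\ho r^{-2\theta}+\delta_i r^{-3\theta}$) on each. Be aware that your splitting of $1/\Omega'$ into a constant plus a $C^\ho$ remainder must be performed at that same local scale: done globally it produces $d_{i,j}^{-2}$ rather than $d_{i,j}^{-(1+\ho)}$, so carried out correctly your argument collapses onto the paper's.
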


\begin{lemma}\label{lem:discard} There exists $\ell_{0}\in \bN$ such that for all $\ell\geq\ell_{0}k_1$,  $\vartheta>0$, $\alpha\in E_{\ell, i}$ and each $r>0$
\[
\sum_{\beta\in D^\vartheta_{\ell,i,j,\alpha}}Z_{\beta,i}\leq \Cs [r^{\frac{\vartheta+\theta}2}+\delta^{-1}\sigma^{\frac{\ell}{k_1}}]\, ,
\]
where $D^\vartheta_{\ell,i,j,\alpha}=\{\beta\in E_{\ell,i,j}\;:\; d(W_\alpha, W_\beta)\leq r^\vartheta\}$. 
\end{lemma}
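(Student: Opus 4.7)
The plan is to reduce Lemma~\ref{lem:discard} to Lemma~\ref{lem:disco} by constructing, for each $W_\alpha$, a codimension-one disk $\widetilde{O}$ containing $W_\alpha$. Since $W_\alpha \subset \widetilde{O}$, any $\beta \in D^\vartheta_{\ell,i,j,\alpha}$ automatically satisfies $d(W_\beta,\widetilde{O})\leq d(W_\beta,W_\alpha)\leq r^\vartheta$, so that $D^\vartheta_{\ell,i,j,\alpha}\subset D_\ell(\widetilde{O},r^\vartheta)$ in the notation of Lemma~\ref{lem:disco}; the claimed bound will then follow from that lemma provided $\widetilde{O}$ is chosen so that the resulting estimate $\sqrt{S(r^\vartheta+r^\theta)}$ matches the target $r^{(\vartheta+\theta)/2}$.

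I would split the argument into two regimes according to whether the tube radius $r^\vartheta$ is smaller or larger than the box width $r$ in the unstable direction. When $\vartheta \geq 1$ (so $r^\vartheta\leq r$), I would build $\widetilde{O}$ by sliding each point of $W_\alpha$ along the fibers of the approximate unstable foliation $\Gamma^\up_{i,j,r}$ of Appendix~\ref{sec:unstable}, through a total unstable width $r^\vartheta$. Using the Lipschitz regularity of the foliation (Lemma~\ref{lem:unstable-fol}) together with the uniform $C^2$ bounds on $W_\alpha$ from \eqref{eq:defF}, this yields a codimension-one disk of measure $S\leq \Cs r^{\theta+\vartheta}$ with uniformly bounded curvature. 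When $\vartheta<1$, every $W_\beta\in E_{\ell,i,j}$ is within distance $\Cs r \leq \Cs r^\vartheta$ of $W_\alpha$ inside the box $\cB_{cr,i,j}$, so $D^\vartheta_{\ell,i,j,\alpha}=E_{\ell,i,j}$; here I would take $\widetilde{O}=W_\alpha^c$ (the flow extension, of area $S\sim r^{2\theta}$) and apply Lemma~\ref{lem:disco} with $\rho_*=\Cs r$.

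A direct computation then verifies the bound. In the first regime, $r^\vartheta\leq r^\theta$ gives
$$\sqrt{S(r^\vartheta+r^\theta)}\leq \Cs\sqrt{r^{\theta+\vartheta}\cdot r^\theta}=\Cs r^{\theta+\vartheta/2}\leq \Cs r^{(\vartheta+\theta)/2},$$
using $\theta\geq 0$. In the second regime, since $r\leq r^\theta$ (as $\theta<1$),
$$\sqrt{S(r+r^\theta)}\leq \Cs\sqrt{r^{2\theta}\cdot r^\theta}=\Cs r^{3\theta/2}\leq \Cs r^{(\vartheta+\theta)/2},$$
where the last inequality uses $\vartheta<1<2\theta$, which holds because Lemma~\ref{lem:smallbox} forces $\theta\in(1/2,1)$. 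The remaining terms $S\sigma^\ell$ and $\delta^{-1}\sigma^\ell$ supplied by Lemma~\ref{lem:disco} are trivially bounded by $\Cs\delta^{-1}\sigma^{\ell/k_1}$ (since $S\leq 1$, $\delta<1$, and $\sigma<1$), absorbing into the error term. The hypothesis $\ell\geq \ell_0 k_1$ in Lemma~\ref{lem:discard} is just inherited from the analogous requirement in Lemma~\ref{lem:disco}.

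The main obstacle, in my view, is producing the disk $\widetilde{O}$ of the first regime with genuinely bounded curvature: one must check that sliding $W_\alpha$ along leaves of $\Gamma^\up_{i,j,r}$ does not destroy the $C^2$ control needed for Lemma~\ref{lem:disco} to apply. This is where the Lipschitz structure of the approximate unstable foliation of Appendix~\ref{sec:unstable} is crucial, and one must verify that the resulting $\widetilde{O}$ lies inside the domain $\Delta_\up$ on which $\bG_{i,j,\up}$ is a well-defined, uniformly regular change of coordinates; this is possible because $\vartheta\geq 1>\varsigma$ in the first regime (recall \eqref{eq:varsigma}), so the fattening width $r^\vartheta$ is smaller than the foliation's range of definition $\rho=r^\varsigma$.
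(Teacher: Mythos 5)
Your reduction to Lemma~\ref{lem:disco} via a fattened disk does not work in the regime that actually matters ($\vartheta\ge 1$; recall the application takes $\vartheta=2+5\theta$). First, the surface you build by sliding $W_\alpha$ along the fibers of $\Gamma^{\up}_{i,j,r}$ is \emph{not} of uniformly bounded curvature: the foliation of Appendix~\ref{sec:unstable} is only Lipschitz transversally to its fibers, with mixed second derivatives of size $\rho^{-1}=r^{-\varsigma}$ (properties (\ref{it:5})--(\ref{it:6})), so the curvature of your disk can be of order $r^{-\varsigma}\to\infty$, and the hypothesis recorded in the footnote of Lemma~\ref{lem:disco} fails; your assertion that the Lipschitz structure yields bounded curvature is exactly the point that does not hold. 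Second, and more decisively, even if one tried to relax that hypothesis, the statement of Lemma~\ref{lem:disco} cannot be invoked for such a thin disk: the lemma unavoidably replaces $\rho_*$ by $\rho_*+r^\theta$ (it only knows that \emph{one} point of each $W_{\beta,i}$ is $\rho_*$-close to $\widetilde O$), and the key step of its proof bounds the relevant measure by $\Cs S(\rho_*+r^\theta)$, i.e.\ area times neighbourhood radius. For your disk, of dimensions $r^\theta\times r^\vartheta$ with $r^\vartheta\le r\ll r^\theta$, the $(\rho_*+r^\theta)$-neighbourhood is essentially a tube of radius $r^\theta$ around a curve of length $r^\theta$, of measure $\gtrsim r^{3\theta}\gg S(\rho_*+r^\theta)\sim r^{2\theta+\vartheta}$; moreover your disk is tangent to (indeed built out of) the very unstable fibers used inside the proof of Lemma~\ref{lem:disco}. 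A telltale sign is that your computation would give the bound $r^{\theta+\vartheta/2}$, strictly \emph{stronger} than the $r^{(\theta+\vartheta)/2}$ of the lemma, which the paper only reaches after optimizing the free fiber length $\rho$; a black-box use of the auxiliary lemma should not beat that optimization.

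What the paper does instead is not a black-box reduction but a rerun of the proof of Lemma~\ref{lem:disco} with $\widetilde O$ replaced by the curve $W_{\alpha,i}$ itself and $\rho_*=r^\vartheta$, up to \eqref{eq:common-1}. The crucial extra input, unavailable for a general disk and for a general $W_{\beta,i}$, is that the elements of $E_{\ell,i,j}$ are nearly parallel graphs over the same stable interval: by \eqref{eq:dab} (together with the accompanying control of $N_\alpha-N_\beta$), $d(W_\alpha,W_\beta)\le r^\vartheta$ upgrades to containment of \emph{all} of $W_{\beta,i}$, and hence of the sets $T_{-(\ell-R_j)\vu}\Omega_\beta$, in a $(\Cs r^\vartheta+\blambda^{-\vu\ell}\rho)$-neighbourhood of $W_{\alpha,i}$ --- the $+r^\theta$ loss disappears. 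The portion of each fiber-surface in this neighbourhood is then a strip of measure $\Cs r^{\theta+\vartheta}$, which yields the bracket $\rho^{-1}r^{\theta+\vartheta}+\rho$ plus the short-piece term $\Cs\delta^{-1}\sigma^{\ell/k_1}$ as in \eqref{eq:common-3}, and choosing $\rho=r^{(\theta+\vartheta)/2}$ gives the claim for all $\vartheta>0$ at once (your case split, and the identification $D^\vartheta_{\ell,i,j,\alpha}=E_{\ell,i,j}$ for $\vartheta<1$, are unnecessary). If you wish to keep the spirit of your reduction, the correct move is to redo the argument of Lemma~\ref{lem:disco} for the curve exploiting this parallelism, not to fatten the curve and quote the lemma.
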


To end the proof of Lemma~\ref{dolgolemma}, it is then convenient to assume that 
\begin{equation}\label{eq:h3}
\sigma^{c_{*}m}\leq r^{\frac{\vartheta+\theta}2}\delta\, .
\end{equation}
Applying Lemmata \ref{lem:cancel} and \ref{lem:discard} to \eqref{eq:almost} with $\up=m$ and $f=\cR(z)^m \tilde\psi$, we obtain\footnote{Recall that $\|f\|_{L^\infty}\leq a^{-m}\|\tilde\psi\|_{L^\infty}$ while $\partial_{m,i}f\leq \Cs (a+\ln\blambda)^{-m}\|\tilde\psi\|_{H^1_\infty}$ since $H^1_\infty$ functions are (or, better, have a representative) Lipschitz (see \cite[Section 4.2.3, Theorem 5]{EG}), with Lipschitz constant given by $\|\tilde\psi\|_{H^1_\infty}$. Lipschitz functions in $\bR^{3}$ are Lipschitz when restricted to a $C^{2}$ curve and Lipschitz functions are almost surely differentiable by Rademacher's Theorem.}
\[
\begin{split}
&|\bA^s_\delta \cR(z)^{2m}\tilde\psi|\\
&\qquad \leq \Cs\sum_{\ell\geq c_*m}|\tilde\psi|_{L^\infty}\frac{a^{-2m}e^{-a\ell\vu}(\ell\vu)^{m-1}}{(m-1)!\;r^{\frac 1 2}} \bigg[\frac{r^{-\vartheta(1+\ho)-\theta}}{b^{\ho}}+\frac{r^{-2\theta-\vartheta}}{b}+r^{\frac{\vartheta+\theta}2}\bigg]^{\frac 12}\\
&\qquad\qquad+\Cs(r^{1-\theta}\nu_{a}^{m}\|\tilde\psi\|_{H^1_\infty}+(r^{\frac\theta 2}+r^{\varsigma-\theta})|\tilde\psi|_{L^\infty})a^{-2m}\\
& \qquad\leq  \Cs a^{-2m}\left(\frac{r^{-\frac {1+(1+\ho)\vartheta+4\theta} 2}}{b^{\frac \ho 2}}+r^{\frac{\vartheta-3\theta-2}4}+r^{\varsigma-\theta}+r^{\frac\theta 2}\right)|\tilde\psi|_{L^\infty}\\
&\qquad\qquad+\Cs a^{-2m}r^{1-\theta}\nu_{a}^{m}\|\tilde\psi\|_{H^1_\infty}\, ,
\end{split}
\]
where $\nu_{a}=(1+a^{-1}\ln\blambda)^{-1}$.
For the reader's convenience, we collect all the conditions imposed along the computation
\[
c_{*}ea\vus=\sigma <1 \;;\;\; \sigma^{c_*m}\leq r^{\frac{\vartheta+\theta}2}\delta \;;\;\;\frac 12 <\theta<\varsigma<(1+\ho)^{-1} \, .
\]
We choose $\varsigma=\frac{3\theta}2$, $\vartheta=2+5\theta$, $b=r^{-\frac{4+8\vartheta+18\theta}{\ho}}$,
$\gamma_0=\frac{\ho\theta}{2(4+8\vartheta+18\theta)}$. Finally, if we choose $\theta=\frac 58$, $\ho<\frac 1{15}$ and $m\geq \Cs a\gamma \ln b$, for some appropriate fixed constant $\Cs$, we satisfy all the remaining conditions and  Lemma~\ref{dolgolemma} follows.
(Note that the best possible $\gamma_0$  given by this argument is
$\frac{\ho}{155}\leq \frac 1{2325}$.)
\qed

\section{Finishing the proof}
\label{dodo}

In this section, we prove Proposition~ \ref{dolgo}, using the
key bound from Lemma~\ref{dolgolemma}.

\begin{proof}[Proof of Proposition \ref{dolgo}]
Recall that $\|\cdot \|=\|\cdot\|_{\widetilde \HHH_p^{r,s,0}(R)}$
with $-1+1/p<s<-r<0<r<1/p$ for $p \in (0,1)$.
It will be convenient to assume
$$
p >3\, , |s|\le 2r\, .
$$
Let $A$  be as in \eqref{reallybounded'} and
\eqref{needit}, and let $\gamma_0>0$ be given
by Lemma~\ref{dolgolemma}. (They do not depend
on $r$, $s$, or $p$.)
Let us consider $|b|>b_0$, $a> 10 A$,
and decompose $n=Lm+m+2$, with $m$ even for even $n$, or
$n=Lm+m+3$, with $m$ even for odd $n$
(for simplicity, we consider even $n$), where $L\ge 1$  and $b_0$ will be chosen
later. We will collect at the end of the argument the upper  and lower bounds relating
$m$ and $a \ln |b|$ which will have appeared along the way, and check that they are consistent.

Take $r'$ with $\max(-1+1/p, r-\beta)<r' <s< 0$ and so that 
$r' > -\frac{\gamma_0}{2p}$.
We view $r'$  as fixed, but we
may  need to choose larger $p$ and smaller $r$, $|s|$,
this may affect some constants noted $\Cs$, but will not create
any problems, since we may take larger $b_0$  (without
affecting $\Cs$). 

Our starting point is 
the Lasota-Yorke estimate \eqref{controlCeta''} from  Lemma~\ref{controlCeta}:  For any integer $N\ge 1$  such that
$(1+3N)r<1/p$ (take the largest such integer), setting
$$ \eta :=\Lambda|s|+\frac{A}{N}\, , 
$$
for some constant $\Lambda$ independent of  $p$, $r$, $s$ and $r'$,
and also on $p$ for large enough $p$ (see the beginning of the proof of Lemma~\ref{controlCeta})
we have
\begin{align}
\label{start'}
\| \RR(z)^{Lm+1+m+1} (\psi)\|
&\le \Cs  (a+ \ln (1/\lambda))^{-Lm} |b|^{N(r-s)}  \|\RR(z)^{m+1}(\psi)\|\\
\nonumber &\,\,
+\Cs^N  (a-\eta)^{-Lm}|b|^{N(r-s)-r'} \|\RR(z)^{m+1}(\psi)\|_{H^{r'}_p(X_0)} \, ,
\end{align}
where we used that our assumptions ensure 
$(1+|z|)^q(1+1/(a-A)) <\Cs |b|^q$.
Recall that $\lambda\in(0,1)$ depends on $r$ and $s$.
(This is why the  consequence \eqref{LYR} of the Lasota-Yorke Lemma~\ref{LY0}  combined
with Lemma~\ref{bq} would not suffice here.) In fact, if $t_0$
is chosen large enough then there exists $\widehat c>0$ so that
$\ln (1/\lambda)\ge r \widehat c$, recalling our assumption $-s<2r$.

Using now \eqref{controlCeta'} from Lemma~\ref{controlCeta}, we get for the same $N$ and $\eta$,
$$
\|\RR(z)^{\ell+1}\| \le \Cs^N \frac{|b|^{N(r-s)} }{ (a-\eta)^{\ell}}\, ,
\quad \forall \ell \ge 0 \, .
$$
Thus, 
the first term of \eqref{start'} is bounded by
\begin{align}\label{eq2'}
\Cs^N\frac{  |b|^{N(r-s)}}{ (a+ \ln (1/\lambda))^{Lm}}  
\frac{|b|^{N(r-s)}}{(a-\eta)^{m}} 
\|\psi\|\, .
\end{align}
Now, if   $L$ is large enough so that  the strict inequality
below holds
\begin{equation}\label{lowL}
\eta=\Lambda|s|+\frac{A}{N}\le r(2\Lambda  + 3 p A) <L\frac{r \widehat c}{4}\le L \frac{\ln(1/\lambda)}{4}\, ,
\end{equation} 
then \eqref{eq2'}  is not larger than
\begin{align*}
&\Cs^N |b|^{2N(r-s)}  \left ( \frac {1} {a+\ln(1/\lambda)/2} \right
)^{Lm+m+2} \|\psi\|\, .
\end{align*}

Therefore,  if
\begin{equation}\label{bdlogb1}
Lm+m+2\ge \frac{2N(r-s) \ln |b|+2N \ln \Cs} {\ln (1+(\ln(1/\lambda)/(2a))-\ln(1+(\ln(1/\lambda) /(4a))}\, ,
\end{equation}
the first term of   \eqref{start'} is not larger than
$$ 
\left ( \frac {1} {a+\ln(1/\lambda)/8} \right )^{Lm+m+2} \|\psi\| \, .
$$

\smallskip
We may thus concentrate on the second term of   \eqref{start'}, that
is, the weak norm contribution. 
Taking
$\epsilon=b^{-\sigma}$ (for $\sigma >2$ to be determined later),  and
using \eqref{controlCeta'} from the proof
of Lemma~\ref{controlCeta} (which gives $\| \RR(z)^{m}\|_{H^{r'}_p(X_0)}\le \Cs \Lambda (a-\Lambda|r'|)^{-m}$), we see that
\begin{align}\label{eq4'}
& \Cs ^N
(a-\eta)^{-Lm} |b|^{N(r-s)-r'} \|\RR(z)^{m+1}(\psi)\|_{H^{r'}_p(X_0)}\\
\nonumber
&\qquad\qquad\le  \Cs^N   \frac{|b|^{N(r-s)-r'}}{(a-\eta)^{Lm}}
\bigl [
\|\RR(z)^{m+1}( \MMM_{\epsilon}(\psi))\|_{\widetilde \HHH^{r',0,0}_p}
\\
\nonumber& \qquad\qquad\qquad  \qquad\qquad\qquad\qquad
  +  \Cs (a-\Lambda |r'|)^{-m} \|\psi-\MMM_{\epsilon}(\psi)\|_{H^{r'}_p(X_0)}
  \bigr ]\, .
\end{align}
By Lemma~\ref{mollbound2} and Corollary~\ref{StrStr}
(using also $\widetilde \HHH^{r,s,0}_p\subset H_p^{s}(X_0)$),   the second term
in \eqref{eq4'} is bounded by
\begin{align}\label{eq5'}
\Cs^{N}    (a-\eta)^{-Lm} (a-\Lambda|r'|)^{-m}|b|^{N(r-s)-r'}
  \epsilon^{s-r'} \|\psi\|_{\widetilde \HHH^{r,s,0}_p}\, .
\end{align}
Next, 
we get
$$
\Cs^{N}     
\frac{|b|^{N(r -s)-r'-\sigma(s-r')}}{(a-\eta)^{Lm}(a-\Lambda|r'|)^{-m}}
\le \left ( \frac{1}{a+\eta}\right)^{Lm+m+2}
$$
if $\sigma$ and $|b|$ are  large enough and
\begin{equation}\label{bdlogb2}
Lm+m+2\le \frac{(\sigma(s-r')- N(r -s)+r')\ln |b|-N \ln \Cs} 
{\ln (1+\eta/a)-(1+1/L)^{-1}\ln(1-\eta/a)-(L+1)^{-1}\ln(1-\Lambda|r'|/a)}\, .
\end{equation}
(The right-hand-side above is positive for large enough $\sigma$.)  

\medskip
The first term in \eqref{eq4'} will be more tricky to handle.
Lemma~\ref{Abound}
implies that this  term  is bounded  by $\Cs^{N }  (a-\eta)^{-Lm} |b|^{N(r-s)-r'}$
multiplied by
\begin{align}
\nonumber    
\|\RR(z)( \Id_{X_0}\AAA_{\delta}& (\RR(z)^{m}(\MMM_{\epsilon}(\psi))))\|_{\widetilde \HHH^{r',0,0}_p}
\\ \nonumber &\qquad
+ 
\|\RR(z)((\id- \Id_{X_0}\AAA_{\delta} )(\RR(z)^{m}(\MMM_{\epsilon}(\psi))))\|_{\widetilde \HHH^{r',0,0}_p}\\
\label{eq6'}&\le
\Cs  
\| \Id_{X_0}\AAA_{\delta} (\RR(z)^{m}(\MMM_{\epsilon}(\psi)))\|_{L^p(X_0)}
\\ \label{forgotten} &\qquad
+ \Cs (a-A)^{-1} 
\delta^{s-r'}
\|\RR(z)^{m}(\MMM_{\epsilon}(\psi))\|_{H^{s}_p(X_0)}\, .
\end{align}
(We used   that $(\RR(z)^{m}(\MMM_{\epsilon}(\psi))$ is supported
in $X_0$, although $\MMM_{\epsilon}(\psi)$ is not necessarily supported
in $X_0$, and
the bounded inclusion
$L^p(X_0)\subset \widetilde \HHH^{r',0,0}_p(R)$
for   $r'\le 0$.)

Since $d=3$,
by the Dolgopyat bound (Lemma~\ref{dolgolemma}),
there exist $\Cs>0$, $\gamma_0 >0$, $\bar \lambda>1$, all
independent of $p$, $r$, and $s$,  so that for 
$\gamma> \gamma_0$ (we shall take
$\gamma >1$),
if 
\begin{equation}
\label{bdlogb0}
m \ge 2 \Cs a \gamma \ln| b|
\end{equation}
then \eqref{eq6'} times $\Cs^{N}   (a-\eta)^{-Lm} |b|^{N(r-s)-r'}$
is bounded by
\begin{align}\label{eq66'}
\frac{\Cs^N |b|^{N(r-s)-r'-\gamma_0}    }{(a-\eta)^{Lm}a^{m}}
\biggl (  \|\MMM_{\epsilon} (\psi)\|_{L^\infty(M)}
+\frac{ \|\MMM_{\epsilon}( \psi)\|_{H^1_\infty(M)}}{(1+(\ln \bar \lambda)/a)^{m/2}}
\biggr )\, .
\end{align}

Now,    Lemma~\ref{mollbound1}, Lemma~\ref{embed},
Corollary~\ref{StrStr}, and the Sobolev embeddings 
give
\begin{align}\label{eq7'}
\| \MMM_{\epsilon}( \psi)\|_{L^\infty(M)}
&\le  \Cs \|\MMM_\epsilon( \psi)\|_{H^{d/p}_p(M)}
 \le \Cs\epsilon^{s-d/p} \|\psi\|_{H^{s}_p(M)}\\
 \nonumber &=\Cs\epsilon^{s-d/p} \| \psi\|_{H^{s}_p(X_0)}\le \Cs\epsilon^{s-d/p} 
\|\psi\|_{\widetilde \HHH^{r,s,0}_p}\, ,
\end{align}
(using $\psi=\Id_{X_0}\psi$) and  
\begin{align}
\label{eq8'}
\|\MMM_\epsilon(\psi)\|_{H^{1}_\infty(M)}&
\le \Cs  \|\MMM_\epsilon(\psi)\|_{H^{1+d/p}_p(M)} 
\le C\epsilon^{s-1-d/p} \|\psi\|_{H^{s}_p(M)}\\
\nonumber &=C\epsilon^{s-1-d/p} \| \psi\|_{H^{s}_p(X_0)}
\le \Cs\epsilon^{s-1-d/p} \|\psi\|_{\widetilde \HHH^{r,s,0}_p}\, .
\end{align}

On the one hand, the estimate  \eqref{eq7'} then  gives
the following bound for the first term of \eqref{eq66'}
$$
\Cs^{N} \frac{ |b|^{N(r-s)+r'-\gamma_0+\sigma(d/p-s)}}
{(a-\eta)^{Lm}    a^{m}}
\le \left ( \frac{1}{a+\eta}\right)^{Lm+m+2}\, ,
$$
if  $\gamma_0-N(r-s)+r'-\sigma(d/p-s)>
\gamma_0/2$ (taking $r$ and $|s|$ smaller and
$p$ larger if necessary), $|b|$ is large enough, and
\begin{equation}\label{bdlogb3}
Lm+m+2\le \frac{(\gamma_0-N(r-s)+r'-\sigma(d/p-s))\ln |b|-N \ln \Cs}
{\ln (1+\eta/a)-\ln(1-\eta/a)}\, .
\end{equation}

On the other hand,  \eqref{eq8'} gives
(recall that $\gamma_0<1/2$) that the second term of
\eqref{eq66'} is bounded by
$$
\Cs^{N}  \frac{b^{N(r-s)-r'-\gamma_0+\sigma(d/p+1-s)}}
{(a-\eta)^{Lm}    a^{m}(1+(\ln \bar \lambda)/a)^{m/2}}
\le \left ( \frac{1}{a+(\ln \bar \lambda)/(4L)}\right)^{Lm+m+2}
$$
if 
\begin{equation}\label{bdlogb4}
m\ge 2 \frac{(\sigma(d/p+1-s) -\gamma_0+N(r-s)-r')\ln|b| +N \ln \Cs}
{\ln (1+(\ln \bar \lambda)/a)+2(L+1)[\ln(1-\eta/a) -\ln (1+(\ln \bar \lambda)/(4La))] } \, .
\end{equation}
We may assume that $\ln (1+(\ln \bar \lambda)/a)>
2(L+1)[ \ln (1+(\ln \bar \lambda)/(4La))- \ln(1+\eta/a)]$.

We must still estimate \eqref{forgotten}
times $\Cs^{N}  (a-\eta)^{-Lm} |b|^{N(r-s)-r'}$.
Take $\delta=b^{-\gamma}$.
By   \eqref{prel} in the proof
of Lemma~\ref{controlCeta}
(as for \eqref{eq4'}), and by Corollary ~\ref{StrStr}
followed by 
Lemma~\ref{mollbound1} applied to $r=r'=s$ (which does not give us any gain), 
we get,
using also the bounded inclusion
$H^{s}_p(X_0)\subset \widetilde  \HHH^{r,s,0}_p$,
\begin{align}\label{eq9'}
& \Cs^{N} (a-\eta)^{-Lm}|b|^{N(r-s)-r'}
\delta^{s-r'}
\|\RR(z)^{m}(\MMM_\epsilon(\psi))\|_{H^{s}_p(X_0)}\\
\nonumber &\qquad\le 
\Cs^N\Lambda (a-\eta)^{-Lm}   \frac{|b|^{N(r-s)-r'}}{(a-\Lambda|s|)^ {m}}
|b|^{-\gamma (s-r')}
\|\psi\|_{\widetilde \HHH^{r,s,0}_p}\, .
\end{align}
Then,
we have
$$
\Cs^N (a-\eta)^{-Lm}    (a-\Lambda|s|)^{-m} |b|^{N(r-s)-r'-\gamma (s-r')} 
\le \left ( \frac{1}{a+\eta}\right)^{m+Lm+2}
$$
if (recall \eqref{lowL} and note that $\Lambda|s| < \eta$)
\begin{equation}\label{bdlogb5}
m+Lm+2\le \frac{(-N(r-s)+r'+\gamma (s-r'))\ln |b|-N \ln \Cs }
{\ln (1+\eta/a)-\ln(1-\eta/a)} \, .
\end{equation}
(The right-hand side above is positive if, $r'<0$ and $\gamma>\gamma_0$ being fixed,
we take $|s|$ and $r$  close enough to $0$ 
and $p>3$ large enough, recalling $(1+3N)r<1/p$.)
This takes care of 
\eqref{eq9'}.

Along the way, we have collected the lower bounds \eqref{bdlogb1},
\eqref{bdlogb0},  and \eqref{bdlogb4}. 
Taking $b_0$ large enough (depending possibly on
$p$, $r$, $s$, in particular through
$L$ and $N$) and $|b|\ge b_0$, they are all implied by
\begin{equation}\label{bdlow}
m\ge \tilde c_1 a \ln |b| \, ,
\end{equation}
where $\tilde c_1$ is a possibly large constant, which is independent
of  $L$, $a$, $b$, $p$, $r$,  and $s$, but grows
linearly like $\gamma>1$.

Up to taking larger $\sigma$,  the upper bounds 
\eqref{bdlogb2}, \eqref{bdlogb3},
and \eqref{bdlogb5} are compatible with 
the lower bound \eqref{bdlow}
(this determines $\tilde c_2 > \tilde c_1$)
if  $p$ is large enough, $r$, $|s|$ are small enough, $\eta=\Lambda |s|+A/N$ is small enough,
and $b_0$ is large enough.
Finally, we take $\nu=\min(\frac{\ln(1/\lambda)}{8}, \eta, \frac{\ln(\bar \lambda)}{4L})$.
(Note that $\nu$ depends on $r$, $s$, $p$, through $\eta$, $L$, and $\lambda$.)
\end{proof}

\appendix

\section{Geometry of contact flows}\label{2c}

In this appendix, we recall some facts about the geometry of contact flows that may not be obvious to all  readers.

\begin{lemma} All ergodic cone hyperbolic contact flows on a compact manifold are Reeb flows.
\end{lemma}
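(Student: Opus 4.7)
The plan is to show that $\alpha(V)$ is a flow-invariant function, use ergodicity to conclude it is constant, and then rescale $\alpha$ by that constant.

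First I would observe that, by the definition of a contact flow recalled at the start of Subsection~1.2 (namely $\frac{d}{dt}\alpha(DT_t v)\equiv 0$), the flow preserves the one-form, i.e., $T_t^*\alpha = \alpha$ on each flow box. Since $V$ generates $T_t$, one has $DT_t(x)V(x) = V(T_t x)$, and therefore on each orbit (in the interior of each flow box),
\[
\alpha(V)(T_t x) = \alpha_{T_t x}(DT_t(x) V(x)) = (T_t^*\alpha)_x(V(x)) = \alpha(V)(x)\, .
\]
Thus $\alpha(V)$ is an invariant function. Since $\alpha$ is $C^2$ and $V$ is $C^1$ in the interior of each flow box, $\alpha(V)$ is continuous there. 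Ergodicity with respect to the invariant Lebesgue measure then implies that $\alpha(V)$ equals some constant $c\in\bR$ almost everywhere, and continuity extends this equality to all regular points.

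Next I would use Cartan's formula: $\mathcal{L}_V \alpha = i_V d\alpha + d(\alpha(V)) = 0$. Since $\alpha(V)\equiv c$ is constant we get $d(\alpha(V))=0$, hence $i_V d\alpha \equiv 0$, i.e., $V\in \ker d\alpha$. It remains to verify $c\neq 0$: if $c=0$ then $V\in \ker\alpha \cap \ker d\alpha$, but since $\alpha$ is contact, $d\alpha$ is symplectic on the hyperplane distribution $\ker\alpha$, so $\ker\alpha \cap \ker d\alpha=\{0\}$, forcing $V\equiv 0$, in contradiction with the fact that $V$ is the generator of a nontrivial flow (with the hyperbolicity furnished by Definition~\ref{PiecHyp}). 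Therefore $c\neq 0$, and setting $\tilde\alpha := c^{-1}\alpha$ gives a contact form with $\tilde\alpha(V)\equiv 1$ and $V\in\ker d\tilde\alpha$, i.e., $V$ is the Reeb vector field of $\tilde\alpha$, so that $T_t$ is the Reeb flow of $\tilde\alpha$.

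The only real subtlety, which I expect to be the main point to address carefully, is the piecewise smooth nature of the flow: $V$ and hence $\alpha(V)$ are only defined and continuous in the interior of the flow boxes $B_{i,j}$, and the invariance argument works orbit-by-orbit only for orbits that remain in a single flow box for the relevant time interval. This is handled by noting that $\alpha(V)$, being $T_t$-invariant on a full-measure set of orbits and continuous on each open flow box, must equal the a.e. constant $c$ on every such flow box (by denseness of the full-measure set). Since hyperbolicity and transversality (Definition~\ref{PiecHyp}(2) and Definition~\ref{transcond}) guarantee $V$ is everywhere nonzero on $X_0$, the contradiction argument for $c\neq 0$ above applies at any interior point.
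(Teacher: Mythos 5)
Your argument is correct, but it follows a genuinely different route from the paper's. The paper first uses hyperbolicity: by pushing stable and unstable vectors with the flow and using invariance of $\alpha$ and of $d\alpha$, it shows $\alpha(v^u)=\alpha(v^s)=0$ and $d\alpha(V,v^u)=d\alpha(V,v^s)=d\alpha(V,V)=0$, and then concludes $V\in\Ker(d\alpha)$ from the decomposition $\xi=aV+v^s+v^u$; only afterwards does it invoke ergodicity to make $\alpha(V)$ constant a.e.\ and rescale. You reverse the order and avoid the hyperbolic splitting altogether: invariance of $\alpha$ makes $\alpha(V)$ a flow-invariant function, ergodicity (plus continuity on the open flow boxes) makes it a constant $c$, and then Cartan's formula $0=\mathcal{L}_V\alpha=i_Vd\alpha+d(\alpha(V))$ gives $V\in\Ker(d\alpha)$ directly; the nondegeneracy of $d\alpha$ on $\Ker(\alpha)$ rules out $c=0$. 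What each approach buys: yours is shorter and more general (it applies to any ergodic piecewise smooth flow preserving a contact form, hyperbolicity being needed only to know $V\not\equiv 0$, and it makes explicit the nonvanishing of the constant, a point the paper leaves implicit when it writes $v(w)=\bar v^{-1}$); the paper's route yields as a byproduct that $\alpha$ annihilates the stable and unstable directions, which is conceptually aligned with how the contact form is exploited elsewhere in the paper. Your handling of the piecewise-smoothness issue (a.e.\ invariance plus continuity on flow-box interiors) is at the same level of care as the paper's, which also only asserts invariance of $\alpha_w(V(w))$ for almost all $w$; the citation of the transversality condition in your last paragraph is unnecessary but harmless.
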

\begin{proof}
Let $v^{u}$ be an unstable vector then, by the invariance of the contact form $\alpha$,
\[
\alpha(v^{u})=\lim_{t\to\infty}T_{-t}^{*}\alpha(v_{u})=\lim_{t\to \infty}\alpha((T_{-t})_*v^{u})=0\, .
\]
Analogously, $\alpha(v^{s})=0$.
Since $T_{t}^{*}d\alpha=d(T_{t}^{*}\alpha)$, we have that also $d\alpha$ is invariant.

Let $V$ be the vector field generating the flow, then for each tangent vector $\xi$ we can decompose it as $\xi=aV+v^{s}+v^{u}$. But $d\alpha(V,V)=0$, $d\alpha(V,v^{s})=\lim_{t\to\infty}d\alpha(V,(T_{t})_*v^{s})=0$ and analogously $d\alpha (V,v^{u})=0$, thus $V$ is the kernel of $d\alpha$.

Let us define $v(w)=\alpha_w(V(w))$, $w\in M$. By the invariance of $\alpha$ we have, for almost all $w$,
\[
v(w)=\alpha_{T_tw}(T_{*t}V(w))=\alpha_{T_t w}(V(T_t w))=v(T_t(w))\, .
\]
Since the flow is ergodic it follows that, almost surely, $v(w)=\bar v^{-1}\in\bR$.
Let us  define the new one form $\beta=\bar v\alpha$. Then $d\beta=\bar vd\alpha$, and $\beta\wedge d\beta=\bar v^2\alpha d\alpha$ is still a volume form, hence $\beta$ is still a contact form, and is invariant with respect to the flow. Moreover $\beta(V)=1$, hence the flow is Reeb.
\end{proof}

\begin{remark}
In the smooth case ergodicity is not necessary since contact implies automatically mixing, \cite{KB}. 
\end{remark}

\begin{lemma}\label{lem:contact-inv} All diffeomorphisms preserving the standard contact form  can be written as
\[
K(x,y,z)=(A(x,y), B(x,y), z+C(x,y)),
\]
where 
\[
\det\begin{pmatrix} \partial_{x}A&\partial_{y}A\\ \partial_{x}B&\partial_{y}B\end{pmatrix}=1\, ,
\] 
$\partial_{x} C=B\partial_{x}A-y$ and $\partial_{y}C=B\partial_{y}A$.
\end{lemma}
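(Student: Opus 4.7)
The plan is to exploit the fact that any diffeomorphism preserving $\alpha_0 = dz - y\, dx$ must also preserve the associated Reeb vector field, and then to translate the pullback identity $K^*\alpha_0 = \alpha_0$ into component equations.

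First I would identify the Reeb field of $\alpha_0$. Writing $V = V_x \partial_x + V_y \partial_y + V_z \partial_z$, the conditions $\alpha_0(V) = 1$ and $\iota_V d\alpha_0 = 0$ (with $d\alpha_0 = dx \wedge dy$) force $V = \partial_z$. Since the Reeb field is determined by $\alpha_0$ alone, any $K$ with $K^*\alpha_0 = \alpha_0$ satisfies $DK \cdot \partial_z = \partial_z$, i.e., $\partial_z K \equiv (0,0,1)$. Writing $K(x,y,z) = (A,B,D)$, this gives $A_z = B_z = 0$ and $D_z = 1$, so $A = A(x,y)$, $B = B(x,y)$, and $D = z + C(x,y)$ for some function $C$.

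Next I would compute
\[
K^*\alpha_0 = d(z + C) - B\, dA = dz + (C_x - B A_x)\, dx + (C_y - B A_y)\, dy,
\]
and set this equal to $dz - y\, dx$. Matching the $dx$- and $dy$-coefficients yields exactly $\partial_x C = B A_x - y$ and $\partial_y C = B A_y$, which is one of the two conditions in the statement.

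Finally, to obtain the determinant condition, I would use either of two equivalent routes. Route one: since $K^*\alpha_0 = \alpha_0$ implies $K^* d\alpha_0 = d\alpha_0$, compute $dA \wedge dB = (A_x B_y - A_y B_x)\, dx \wedge dy$, which must equal $dx \wedge dy$. Route two (internal consistency): the one-form $(B A_x - y)\, dx + (B A_y)\, dy$ must be closed for $C$ to exist, and the compatibility $\partial_y(B A_x - y) = \partial_x(B A_y)$ reduces, after cancelling $B A_{xy}$ on both sides, to precisely $A_x B_y - A_y B_x = 1$. Either way the Jacobian determinant equals one, completing the proof. There is no real obstacle here—every step is a direct calculation—the only conceptual input is the observation that preserving $\alpha_0$ forces preservation of the Reeb field, which is what reduces the map to the prescribed form.
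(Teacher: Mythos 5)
Your proof is correct, and it reaches the triangular form of $K$ by a different route than the paper. The paper starts from a completely general $K=(A,B,C)$ with all components depending on $(x,y,z)$, writes the invariance as $d(C-z)=B\,dA-y\,dx$, derives the three closedness equations \eqref{eq:change}, and then extracts $\partial_z A=\partial_z B=0$ by an algebraic combination of the first two equations together with the third (the unit determinant), after which $C-z$ is seen to be $z$-independent. You instead invoke the contact-geometric fact that a diffeomorphism with $K^*\alpha_0=\alpha_0$ also preserves $d\alpha_0$ and hence pushes the Reeb field $\partial_z$ forward to itself (a two-line check using invertibility of $DK$ and uniqueness of the vector $w$ with $\alpha_0(w)=1$, $\iota_w d\alpha_0=0$), which immediately gives $A_z=B_z=0$ and $D_z=1$; it would be worth writing out that short verification rather than just citing the observation, but there is no gap. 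From that point on the two arguments coincide: matching the $dx$- and $dy$-coefficients of $K^*\alpha_0=\alpha_0$ yields the equations for $C$, and the determinant condition is exactly the closedness/compatibility condition (your Route 2 is the paper's third equation in \eqref{eq:change}; your Route 1 via $K^*d\alpha_0=d\alpha_0$ is an equivalent shortcut). Your approach buys a more conceptual and slightly shorter reduction to the triangular form, at the cost of using the Reeb-field preservation property and the diffeomorphism hypothesis explicitly; the paper's computation is more pedestrian but self-contained at the level of the component equations.
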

\begin{proof}
Let us consider the general change of coordinates defined by 
\[
K(x,y,z)=(A(x,y,z), B(x,y,z), C(x,y,z))\, .
\]
The condition that $\alpha$ is left invariant can be written as
\[
dC- BdA=dz-y dx
\]
that is $d(C-z)=BdA-y dx$. This is possible only if $BdA-y dx$ is a closed form, i.e.,
\begin{equation}\label{eq:change}
\begin{split}
&\partial_z  B\partial_y A-\partial_z A\partial_y B=0\\
&\partial_z  B\partial_x A-\partial_z A\partial_x B=0\\
&\partial_y B\partial_x A-\partial_y A\partial_x B=1\, .
\end{split}
\end{equation}

Multiplying the first by $\partial_{x}A$ and subtracting it to the second multiplied by $\partial_{y}A$, we obtain $\partial_{z}A\left\{\partial_{x}A\partial_{y}B-\partial_{y}A\partial_{x}B\right\}=0$. Which, by the last of the above, implies $\partial_{z}A=0$. This, in turn, implies $\partial_{z}B=0$. From which it follows that $C-z$ is independent of $z$. This implies the lemma.
\end{proof}

Using the above fact we can prove the following lemma:

\begin{lemma}\label{lem:c2}
Consider the leaf $(F(x^{s}), x^{s}, N(x^{s}))$ with tangent space in the stable cone, the point $(\bar x,\bar y,\bar z)=(F(\bar y), \bar y,N(\bar y))$,  and the vector $v$ in the unstable cone and in the kernel of $\alpha$. Then there exist Reeb coordinates in which the selected point is the origin of the coordinates, the leaf reads $(0,x^{s},0)$ and the vector $(1,0,0)$.
\end{lemma}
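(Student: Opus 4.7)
The plan is to invoke Lemma~\ref{lem:contact-inv}: every Reeb diffeomorphism is of the form $K(\eta,\xi,\zeta)=(A(\eta,\xi),B(\eta,\xi),\zeta+C(\eta,\xi))$, where $(A,B)$ is a local symplectomorphism of the $(x,y)$-plane and $C$ is determined (up to an additive constant) by $\partial_\xi C=B\,\partial_\xi A$ and $\partial_\eta C=B\,\partial_\eta A-\xi$, these two equations being compatible precisely because $(A,B)$ is symplectic. Thus the task reduces to constructing a symplectomorphism germ $(A,B)$ at the origin whose jet matches the prescribed data, and then recovering $C$ by integration.

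First I would fix the data on $\{\eta=0\}$ so that $K$ sends the $\xi$-axis onto the leaf: set $B(0,\xi)=\bar y+\xi$ and $A(0,\xi)=F(\bar y+\xi)$. Recover $C(0,\xi)$ by integrating $\partial_\xi C(0,\xi)=B(0,\xi)\,\partial_\xi A(0,\xi)=(\bar y+\xi)F'(\bar y+\xi)$. Using the identity $N'(\xi)=\xi F'(\xi)$ (which, as explained below \eqref{eq:defF}, expresses the fact that both $\alpha$ and $d\alpha$ vanish on the leaf) and the initial value $C(0,0)=\bar z=N(\bar y)$, integration gives $C(0,\xi)=N(\bar y+\xi)$ exactly, so that $K(0,\xi,0)$ indeed traces out the leaf.

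Next I would extend $(A,B)$ to first order in $\eta$ so as to realize $DK(0)(1,0,0)=v$. Write $v=(v^u,v^s,v^0)$ and $A=F(\bar y+\xi)+\eta u_1(\xi)+O(\eta^2)$, $B=\bar y+\xi+\eta u_2(\xi)+O(\eta^2)$. The symplectic condition on $\{\eta=0\}$ becomes $u_1(\xi)-F'(\bar y+\xi)u_2(\xi)=1$, and we want $(u_1(0),u_2(0))=(v^u,v^s)$, which requires the scalar normalization $v^u-F'(\bar y)v^s=1$. This can always be arranged by rescaling $v$: the quantity $v^u-F'(\bar y)v^s$ is non-zero because $v$ lies in the unstable cone whereas $(F'(\bar y),1)$ spans the tangent to the leaf, which lies in the stable cone, and the two cones intersect only at the origin. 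The contact constraint at the origin is then automatic: $\partial_\eta C(0,0)=\bar y\,v^u$, which agrees with $v^0$ since $v\in\ker\alpha$ at $(\bar x,\bar y,\bar z)$ forces $v^0=\bar y v^u$.

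Finally, having fixed $u_1,u_2$, extend $(A,B)$ to a genuine symplectomorphism on a neighbourhood of the origin; this is a routine two-dimensional application of the symplectic neighbourhood theorem (or, more concretely, by solving an explicit ODE using the generating-function formalism around the Lagrangian curve $\xi\mapsto(F(\bar y+\xi),\bar y+\xi)$). Then define $C$ globally on a simply connected neighbourhood by integrating the $1$-form $B\,dA-\xi\,d\eta$, which is closed by the symplectic condition, normalized by $C(0,0)=\bar z$; this automatically reproduces the values of $C(0,\xi)$ found above. The resulting $K$ is the desired Reeb chart, and its inverse $\tilde\kappa_{x_{i,j}}=K^{-1}$ provides the coordinates claimed. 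The main technical point is the symplectic extension with the prescribed boundary data and jet at the origin; everything else is a direct consequence of Lemma~\ref{lem:contact-inv} and the identity $N'=\xi F'$.
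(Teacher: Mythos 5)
Your proof is correct in substance, but it takes a different route from the paper's. Both arguments hinge on Lemma~\ref{lem:contact-inv} and on the identity $N'(\xi)=\xi F'(\xi)$ (i.e.\ the leaf is tangent to $\ker\alpha$), but the paper normalises by composing three completely explicit elementary contact changes of coordinates: it first straightens the whole foliation $(x^u+F(x^s),x^s,x^0+N(x^s))$ via the explicit solution $A(x,y)=x-F(y)+F(\bar y)$ of the transport equation (with $B(x,y)=y$, and $C$ recovered by integration, essentially $-N(y)$), uses the kernel condition to see that the straightened leaves are parallel to the $x^s$-axis, then translates the point to the origin by the affine contact map $z=\zeta+\tilde z+\tilde y\,\xi$, and finally applies a linear symplectic shear (lifted by a quadratic correction of $z$) with $a=u^{-1}$, $d=u$, $c=-s$, which sends the vector to $(1,0,0)$ exactly. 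You instead construct the inverse chart $K$ in one stroke, prescribing its restriction to the axis and its $1$-jet at the origin, invoking a two-dimensional symplectic extension, and integrating the closed form $B\,dA-\xi\,d\eta$ to recover $C$; your computations (the identity $C(0,\xi)=N(\bar y+\xi)$, the constraint $v^0=\bar y\,v^u$, the nonvanishing of $v^u-F'(\bar y)v^s$ via transversality of the cones together with the kernel conditions) are all correct, so the approach is sound and conceptually economical.

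Two caveats are worth recording. First, since you enforce the normalisation $v^u-F'(\bar y)v^s=1$ by rescaling $v$ (a sign flip may also be needed), your chart sends $v$ only to a nonzero multiple of $(1,0,0)$; the paper gets $(1,0,0)$ on the nose because the scale is absorbed into the shear $a=u^{-1}$, $d=u$. This is immaterial for the application (only the direction enters the cone condition) and is repaired by post-composing with the contact scaling $(x,y,z)\mapsto(\lambda x,\lambda^{-1}y,z)$. Second, the extension step you call routine is where the paper's explicitness pays off: prescribing the transversal derivative $u_1(\xi)=1+F'(\bar y+\xi)v^s$ along the whole axis and then correcting the Jacobian (say by an ODE in $\eta$) generally yields a chart one derivative less regular than $F$ and $N$, whereas the paper's composition is exactly as smooth as the data; the uniform $C^2$ bounds invoked where the lemma is used (after Lemma~\ref{lem:smallbox}) thus come for free in the paper's version, so with your route one should either prescribe the jet only at the origin or fall back on the explicit maps to retain them.
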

\begin{proof}
We can  consider the foliation 
\[
(\cW(x^{u},x^{s}), x^{s}, w(x^{u}, x^{s}, x^{0}))=(x^{u}+F(x^{s}), x^{s}, x^{0}+N(x^{s}))\, .
\]
Let us consider a change of coordinates that preserves $\alpha$. By Lemma \ref{lem:contact-inv} the change of coordinates reads  $(A(x^{u},x^{s}), B(x^{u},x^{s}), x^{0}+C(x^{u},x^{s}))$.
The fact that the foliation is sent into horizontal leaves means that $A(\cW(x^{u},x^{s}), x^{s})$ must be independent of $x^{s}$. Hence,  $A$ must be a solution of the first order PDE 
\[
\begin{split}
0&=(\partial_{x^{u}} A)(\cW(x^{u},x^{s}), x^{s})\partial_{x^{u}} \cW(x^{u},x^{s})+(\partial_{x^{s}} A)(\cW(x^{u},x^{s}),\xi)\\
&=\left[(\partial_{x^{u}} A)\cdot \Gamma+(\partial_{x^{s}} A)\right]\circ\bW(x^{u},x^{s})\, ,
\end{split}
\]
where $\bW(x^{u},x^{s})=(\cW(x^{u},x^{s}),x^{s})$ and $\Gamma(x^{u},x^{s})=(\partial_{x^{s}} \cW)\circ \bW^{-1}(x^{u},x^{s})$.\footnote{For example choose $A(x,y)=x-F(y)+F(\bar y)$.}
In other words, we have shown that there exist coordinates in which the foliation reads $(W(x^{u}),x^{s}, H(x^{u},x^{s}, x^{0}))$. At last, since the  leaves are in the kernel of $\alpha$, $\partial_{x^{s}}H=0$, the foliation is made of lines parallel to the $x^{s}$ coordinates, as required.

Let $(\tilde x, \tilde y,\tilde z)$ be the selected point in the new coordinates and perform the change of coordinates
\[
\begin{split}
&x=\xi+\tilde x\, , \qquad y=\eta+\tilde y\\
&z=\zeta+\tilde z+\tilde y \xi\, ,
\end{split}
\]
which sends the selected point to $(0, 0,0)$ and hence the manifold in $(0,x^s,0)$. 

Next, consider the changes of coordinates given by
\[
\begin{split}
&\eta=ax+b y\, , \qquad \xi=c x+dy\\
&\zeta=z+\frac{ac}2 x^{2}+ad xy+\frac{bd}2 y^{2}\, ,
\end{split}
\]
with $ad-cb=1$. Let the vector have coordinates $(u,s,t)$. Then $b=0$ ensures that the horizontal is sent to the horizontal, $a=u^{-1}, d=u, c=-s$, imply that in the new coordinates the vector reads $(1,0, \tilde t)$. But since the vector belongs to the kernel of the contact form and is at the point zero, we must have $\tilde t=0$.
\end{proof}


\section{Basic facts on the local spaces $H^{r,s,q}_p$}\label{localspaces}

We adapt  the bounds of
\cite[\S 4.1]{BG2}, 
state a result about interpolation between Lasota-Yorke
inequalities due to S. Gou\" ezel (Lemma~\ref{tricksg}),
and prove  Lemma~\ref{noglue}, necessary
in view of the ``glueing" procedure in Step 2 of Lemma~\ref{lemcompose}
used in Step ~3 of the proof of the Lasota-Yorke claim Lemma~\ref{LY0}.
We start with a Leibniz bound:

\begin{lemma}
\label{Leib} Fix $\tilde \beta \in (0,1)$.
Let $s\le 0\le r$, $q\ge 0$ be real numbers
with  
\begin{equation}\label{condLeib}
(1+q/r)(r-s)<\tilde \beta\, .
\end{equation} For any $p \in (1,\infty)$, there exists a
constant $\Cs$ such that for any $C^{\tilde \beta}$ function $g :
\real^d \to \complex$,
\begin{equation*}
\norm{ g \cdot \omega}{H_p^{r,s,q}}\le \Cs
\|g\|_{C^{\tilde \beta}} \norm{\omega}{H_p^{r,s,q}}\, .
\end{equation*}
\end{lemma}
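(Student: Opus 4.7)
The plan is to adapt the proof of \cite[Lemma 4.1]{BG2} (which handles the case $q=0$ under the condition $r-s<\tilde\beta$) to the three-parameter setting, by incorporating the extra flow-direction multiplier $(1+|\xi^0|^2)^{q/2}$ into the Littlewood--Paley and paraproduct analysis. The starting point is a dyadic decomposition in the three groups of variables $\xi^u$, $\xi^s$, $\xi^0$, which is compatible with the anisotropic symbol $a_{r,s,q}$ of Definition~\ref{space}. By a standard Marcinkiewicz argument (as used already in the proof of Lemma~\ref{CompositionDure}), the $H^{r,s,q}_p$ norm of a tempered distribution is comparable to a Littlewood--Paley square-function norm built from products of one-dimensional dyadic blocks in each of the three directions.

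The main step is then Bony's paraproduct decomposition
\begin{equation*}
g\cdot\omega = T_g\omega + T_\omega g + R(g,\omega),
\end{equation*}
carried out with respect to the anisotropic Littlewood--Paley blocks. The ``low--high'' piece $T_g\omega$ is controlled by $\|g\|_{L^\infty}\|\omega\|_{H^{r,s,q}_p}$ by the classical argument, since only low frequencies of $g$ are convolved with $\omega$ and $a_{r,s,q}$ behaves well under this operation. The ``high--low'' piece $T_\omega g$ and the resonant piece $R(g,\omega)$ are the delicate terms: here we need to trade off the $\tilde\beta$ regularity of $g$ against the anisotropic smoothness of $\omega$.

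The crucial observation is that the condition $(1+q/r)(r-s)<\tilde\beta$ is exactly what is needed for the symbol $a_{r,s,q}(\xi)$ to be controlled, at every frequency scale, by $(1+|\xi|)^{\tilde\beta-\epsilon}$ times a bounded multiplier adapted to the Hölder scale. Indeed, the worst frequency sectors are those concentrated in the stable (where $|\xi^s|\gg |\xi^u|,|\xi^0|$ makes $a_{r,s,q}$ comparable to $|\xi^s|^{r+s}$) and flow (where $|\xi^0|\gg|\xi^u|,|\xi^s|$ gives $|\xi^0|^{r+q}$) directions; the bound $(1+q/r)(r-s)<\tilde\beta$ is designed so that in all intermediate regimes the ``effective positive Sobolev exponent'' hitting $g$ stays below $\tilde\beta$, with a uniform margin $\epsilon>0$. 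Given this, the estimate of $T_\omega g$ and $R(g,\omega)$ reduces to the classical fact that a $C^{\tilde\beta}$ function acts as a bounded pointwise multiplier on $H^{\sigma}_p$ for $|\sigma|<\tilde\beta$, applied slice by slice to the dyadic pieces and then reassembled by Littlewood--Paley square-function estimates.

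The main obstacle I expect is the bookkeeping for the resonant term $R(g,\omega)$: the three-directional dyadic setup produces many frequency configurations, and in each one must verify that the factor extracted from $a_{r,s,q}$ does not exceed $(1+|\xi|)^{\tilde\beta-\epsilon}$. This is where the precise algebraic form $(1+q/r)(r-s)$ (rather than, say, the naive $r-s+q$) must appear, reflecting the relative weights of $r$, $s$, $q$ in the symbol. Once the frequency case analysis is done, the final assembly via the Minkowski-type inequality (Remark~\ref{minkoo}) and complex interpolation between the easy $q=0$ case and a companion bound with $q$ at its maximal admissible value yields the stated inequality with constant depending only on $r$, $s$, $q$, $p$, $\tilde\beta$.
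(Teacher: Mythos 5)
There is a genuine gap, and it sits exactly at the center of your argument: the bounds for $T_\omega g$ and $R(g,\omega)$ are asserted, not proved. Saying that they ``reduce to the classical fact that a $C^{\tilde \beta}$ function acts as a bounded pointwise multiplier on $H^{\sigma}_p$ for $|\sigma|<\tilde\beta$, applied slice by slice to the dyadic pieces'' is not a reduction in the anisotropic setting: $a_{r,s,q}$ is not a function of a single scalar frequency, and a dyadic block of the (isotropically) H\"older function $g$ mixes the three frequency directions, so it can move a frequency of $\omega$ from a sector where $a_{r,s,q}$ is small (of size $|\xi^s|^{r+s}$, with $r+s$ possibly negative) to one where it is large (of size $|\xi^0|^{r+q}$). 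Controlling exactly this transfer is the whole content of the lemma, and you explicitly defer it (``the main obstacle I expect''). Moreover, your justification of the hypothesis is not correct: \eqref{condLeib} is \emph{not} ``exactly what is needed'' for a sector-by-sector comparison of $a_{r,s,q}$ with $(1+|\xi|)^{\tilde\beta-\epsilon}$. A direct frequency analysis would naturally yield a condition of the type $r+q-s<\tilde\beta$, which is \emph{weaker} than \eqref{condLeib} (indeed $(1+q/r)(r-s)=r+q-s+q|s|/r\ge r+q-s$), so the specific algebraic form $(1+q/r)(r-s)$ does not emerge from such an analysis; asserting that it ``must appear'' there indicates the case analysis has not actually been checked.

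The form of \eqref{condLeib} is in fact an artifact of a much shorter argument, which is the paper's proof: for $q=0$ the statement is \cite[Lemma 22]{BG1}, valid when $r-s<\tilde\beta$; for $r=s=0$ and $0<q<\tilde\beta$ it follows by Fubini from the one-dimensional multiplier bound in the flow variable; and the general case is obtained by complex interpolation between $H^{r_0,s_0,0}_p$ and $H^{0,0,q_0}_p$ with $r_0=q_0=r+q$, $s_0=r_0 s/r$, at parameter $\theta=r/r_0$, since $a_{r_0,s_0,0}^{\theta}\,a_{0,0,q_0}^{1-\theta}=a_{r,s,q}$. The endpoint conditions are $r_0-s_0<\tilde\beta$ and $q_0<\tilde\beta$, and $r_0-s_0=(1+q/r)(r-s)$ is precisely \eqref{condLeib}, which also dominates $q_0=r+q$ because $1-s/r\ge 1$. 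So either carry out the full three-directional paraproduct bookkeeping (under which even the weaker condition would suffice, hence a fortiori \eqref{condLeib}), or use the interpolation reduction, which closes the proof in a few lines and explains where the hypothesis comes from; as written, your proposal does neither.
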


\begin{proof}
Note that $(r+q)(1-s/r)>\max(r+q, r-s)$.

If $q=0$ then the proof of \cite[Lemma 22]{BG1} gives the statement
if $r-s <\tilde \beta$. If $s=r=0$ and $0<q<\tilde \beta$, we can use Fubini.
If $q > 0$ and $r-s>0$, it suffices to observe that $H^{r,s,q}_p$ can be obtained
by interpolating between $H^{r_0, s_0, 0}_p$ and $H^{0,0,q_0}_p$
where $r_0=r+q>0$, $s_0 = r_0 s/r<0$, $q_0= r+q$, at $r/r_0\in (0, 1)$, and that our conditions
ensure $r_0-s_0 <\tilde \beta$ and $q_0<\tilde \beta$.
\end{proof}

The following extension of a classical result of Strichartz
\cite{Str}  is an adaptation of  \cite[Lemma 4.2]{BG2}.

\begin{lemma}[Strichartz bound]
\label{lem:multiplier} Let $1<p<\infty$, $s \le 0 \le r$ and $q\ge 0$ be real
numbers so that
\begin{align}
\label{starstar}1/p-1<s (1+\frac{q}{r})\le &0\le r(1+\frac{q}{r}) <1/p   \, .
\end{align}
Let $e_1,\dots,e_d$ be a basis of $\real^d$, such
that $e_{d_u+1},\dots,e_{d-1}$ form a basis of $\{0\}\times
\real^{d_s}\times \{0\}$ and
$e_{d}$ forms a basis of $\{0\}\times
\real$. There exists a constant $\Cs$ (depending only on
$p,s,r, q$ and the norm of the matrix change of coordinate between
$e_1,\dots,e_d$ and the canonical basis of $\real^d$) so that, for
any subset $U$ of $\real^d$ whose intersection with almost
every line directed by a vector $e_i$ has at most $M_{cc}$ connected
components,
\begin{equation*}
\norm{\Id_{U} \omega}{H_p^{r,s,q}} \le \Cs M_{cc}
\norm{\omega}{H_p^{r,s,q}}\, .
\end{equation*}
\end{lemma}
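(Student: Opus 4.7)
The plan is to adapt \cite[Lemma 4.2]{BG2}, which establishes the case $q=0$ via Strichartz's theorem \cite{Str}, by supplementing it with a Fubini-based bound in the flow direction and then using complex interpolation to recover the general $q\ge 0$ case. After a linear change of variables (whose Jacobian is absorbed into $\Cs$) we may assume that $e_1,\dots,e_d$ is the canonical basis; the hypothesis that $e_{d_u+1},\dots,e_{d-1}$ span $\{0\}\times\real^{d_s}\times\{0\}$ and $e_d$ spans $\{0\}\times\{0\}\times\real$ ensures that this change of variables respects the block structure used to define $a_{r,s,q}$.

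The first endpoint is the case $q=0$, in which \eqref{starstar} reduces to $-1+1/p<s\le 0\le r<1/p$ and \cite[Lemma 4.2]{BG2} yields $\|\Id_U\omega\|_{H_p^{r,s,0}}\le \Cs M_{cc}\|\omega\|_{H_p^{r,s,0}}$ (the proof goes through Strichartz's 1D characterisation of $H^r_p(\real)$ by finite differences along each $e_i$-line, then Fubini, complex interpolation between the canonical positive and zero indices, and finally duality to reach $s<0$). The second endpoint is purely along the flow direction: by Fubini, $H^{0,0,Q}_p\cong L^p(\real^{d-1};H^Q_p(\real_{x^0}))$, and since almost every line parallel to $e_d$ meets $U$ in at most $M_{cc}$ connected intervals, applying Strichartz's 1D result on each such line gives $\|\Id_U\omega\|_{H^{0,0,Q}_p}\le \Cs M_{cc}\|\omega\|_{H^{0,0,Q}_p}$ whenever $0\le Q<1/p$.

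To combine the two endpoints, put $r_0=r+q$, $s_0=s(1+q/r)$, $Q=r+q$, and $\theta=r/(r+q)\in(0,1)$ (the degenerate case $r=0$ is handled directly by the Fubini bound). Condition \eqref{starstar} reads precisely $-1+1/p<s_0\le 0\le r_0<1/p$ and $Q<1/p$, so both endpoint bounds apply. An elementary computation shows
\[
a_{r,s,q}(\xi)=\bigl(a_{r_0,s_0,0}(\xi)\bigr)^{\theta}\cdot\bigl((1+|\xi^0|^2)^{Q/2}\bigr)^{1-\theta},
\]
which, via Triebel's  theory of complex interpolation for anisotropic Bessel potential spaces with product symbols (see the reminders in \cite[\S 3.1]{BG1} together with \cite[\S 1.18.4]{Tr}), yields the identification
\[
\bigl[H^{r_0,s_0,0}_p,H^{0,0,Q}_p\bigr]_{\theta}=H^{r,s,q}_p.
\]
Since $\Id_U$ is bounded on each endpoint with constant $\Cs M_{cc}$, complex interpolation then delivers the desired bound on $H^{r,s,q}_p$ with constant $\le \Cs M_{cc}$.

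The main technical obstacle is verifying the complex interpolation identification in the third step, because the three-parameter anisotropic scale with the isotropic factor $(1+|\xi|^2)^{r/2}$ (rather than a product of single-variable factors) is not explicitly listed in every standard reference. To handle this one checks directly that the couple $(H^{r_0,s_0,0}_p,H^{0,0,Q}_p)$ is compatible in the sense of \cite[\S 2.3]{BL}, that the symbol $a_{r,s,q}/(a_{r_0,s_0,0}^{\theta}(1+|\xi^0|^2)^{Q(1-\theta)/2})$ is identically $1$, and that  the remaining symbols arising under multiplication by $a_{r_0,s_0,0}^{z}$ for $z$ in the critical strip satisfy uniform Mikhlin/Marcinkiewicz estimates, so that the three-line lemma applies as in \cite[Lemma 25]{BG1}. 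All other steps are routine adaptations of \cite[Lemma 4.2]{BG2} and of the reductions already used repeatedly in Appendix \ref{localspaces}.
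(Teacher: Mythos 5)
Your proposal is correct and follows essentially the same route as the paper: the paper also reduces to the two endpoints $H^{r_0,s_0,0}_p$ (with $r_0=r+q$, $s_0=r_0 s/r$, handled by the Strichartz argument of \cite[Lemma 23]{BG1}/\cite[Lemma 4.2]{BG2} under $-1+1/p<s_0\le 0\le r_0<1/p$) and $H^{0,0,r_0}_p$ (Fubini plus one-dimensional Strichartz in the flow direction), interpolates exactly as in the proof of Lemma~\ref{Leib}, and concludes with the linear change of coordinates preserving the $\real^{d_s}$ and flow directions. The only difference is presentational: you perform the coordinate change first and spell out the interpolation identification in detail, which the paper leaves to the cited references.
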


\begin{proof}
Note that $(1+|q|/r) s \le s\le 0$.
The conditions on $r,s,q$ are obtained by interpolating as in the
proof of Lemma~\ref{Leib}, and using the condition
$-1+1/p <t < 1/p$ for $H^t_p(\real^d)$ coming from \cite{Str}
(see the proof of \cite[Lemma 23]{BG1}).
One finishes by a linear change of coordinates preserving the
$\real^{d_s}$ and $\real^{d_0}=\real$ directions.
\end{proof}

The following is essentially 
\cite[Lemma 4.3]{BG2}, itself based on \cite[Lemma 28]{BG1}.

\begin{lemma}[Localisation principle]
\label{lem:localization}
Let $\mathbb K$ be a compact subset of $\real^d$. For each $m\in
\integer^d$, consider a function $\eta_m$ supported in $m+\mathbb K$,
with uniformly bounded $C^1$ norm. For any $p\in (1,\infty)$
and $s\le 0 \le r$, $q\ge 0$ with 
\begin{equation}\label{locall}
(1+q/r)(r-s)<1\, ,
\end{equation} 
there exists $\Cs >0$
so that 
\begin{equation*}
\left(\sum_{m\in \integer^d} \norm{\eta_m \omega }{H_p^{r,s,q}}^p\right)^{1/p}
\le \Cs  \norm{\omega}{H_p^{r,s,q}} \, .
\end{equation*}
If, in addition, we assume that $\sum_{m\in \integer^d} \eta_m(x)=1$
for all $x$, then
\begin{equation}\label{converse}
\norm{\omega}{H_p^{r,s,q}}   \le \Cs \left(\sum_{m\in \integer^d} \norm{\eta_m \omega }{H_p^{r,s,q}}^p\right)^{1/p}  \, .
\end{equation}
\end{lemma}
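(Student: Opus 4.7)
The plan is to adapt \cite[Lemma 4.3]{BG2}, which in turn extends \cite[Lemma 28]{BG1}, to our three-parameter anisotropic scale $H^{r,s,q}_p$ with the additional flow-direction index $q\ge 0$. The sole geometric input is that since each $\eta_m$ is supported in the shifted compactum $m+\mathbb K$, the supports have uniformly bounded intersection multiplicity $M=M(\mathbb K)$; every instance of ``summing over $m$'' below ultimately reduces to this fact.

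For the direct inequality, I would first do the base case $r=s=q=0$: bounded multiplicity and the uniform $L^\infty$ bound on the $\eta_m$ give $\sum_m\|\eta_m\omega\|_{L^p}^p\le \Cs\|\omega\|_{L^p}^p$ immediately. For $H^1_p$ (i.e.\ $r=1$, $s=q=0$) the Leibniz identity $D(\eta_m\omega)=(D\eta_m)\omega+\eta_m D\omega$ combined with the uniform $C^1$ bound on $\eta_m$, the characterisation $\|\omega\|_{H^1_p}\sim\|\omega\|_{L^p}+\|D\omega\|_{L^p}$ (see \cite[Prop 2.1.2]{RS}), and the base case applied to both $\omega$ and $D\omega$, yields the bound with $H^1_p$ on the right-hand side. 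Duality in $\ell^p$-valued spaces and complex interpolation (via \cite[Theorem 5.1.2]{BL}) then extend this to all isotropic $H^t_p$ with $|t|<1$.

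Next, I would promote to the anisotropic scale $H^{r,s,q}_p$. The point is that $H^{r,s,q}_p$ can be realised by complex interpolation between suitable ``single-block'' endpoints: the $q$-block is treated via Fubini in the $x^0$ variable, reducing to the one-dimensional version of the step above; the ``$q=0$'' slice with $s<0<r$ is handled along the lines of \cite[Lemma 28]{BG1}. The condition $(1+q/r)(r-s)<1$ is exactly what ensures that both endpoints of the interpolation fall within the range $|t|<1$ where the isotropic step applies, so that $C^1$ cutoffs remain bounded multipliers throughout. I expect this to be the main technical step, essentially paralleling the corresponding paragraph in \cite[Lemma 28]{BG1} combined with the $d$-dimensional Fubini observation from \cite[Beginning of \S4]{BG1}.

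Finally, for \eqref{converse}: assuming $\sum_m\eta_m\equiv 1$, choose a second $C^1$ partition of unity $\{\tilde\eta_m\}$ with $\tilde\eta_m\equiv 1$ on $\supp\eta_m$ and still of bounded multiplicity $M'$. Writing $\tilde\eta_m\omega=\tilde\eta_m\sum_{m'\in N(m)}\eta_{m'}\omega$ with $N(m)=\{m':\,\supp\tilde\eta_m\cap\supp\eta_{m'}\ne\emptyset\}$ of cardinality at most $M'$, the triangle inequality in $\ell^p$ together with the direct inequality applied to the partition $\{\tilde\eta_m\}$ yields \eqref{converse}. The hard part throughout is the anisotropic interpolation: checking that \eqref{locall} gives enough room to keep the $C^1$-multiplier argument valid at every intermediate point, and that the interpolation endpoints can be chosen compatibly with $s\le 0\le r$ and $q\ge 0$.
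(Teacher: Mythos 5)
Your treatment of the direct inequality is essentially sound and follows the same route as the paper: the condition \eqref{locall} is exactly what lets one treat $C^1$ cutoffs as bounded multipliers (this is Lemma~\ref{Leib} with $\tilde\beta=1$), the $q$-block is handled by Fubini in the flow variable, the two-index slice is \cite[Lemma 28]{BG1}/\cite[Lemma 4.3]{BG2}, and complex interpolation of the vector-valued operator $\omega\mapsto(\eta_m\omega)_m$ (using $[\ell^p(\BB_0),\ell^p(\BB_1)]_\theta=\ell^p([\BB_0,\BB_1]_\theta)$) glues the endpoints; the inequality $r+q\le(1+q/r)(r-s)<1$ keeps both endpoints in the admissible range, as you say.

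The converse \eqref{converse}, however, has a genuine gap. Two problems: first, a family $\{\tilde\eta_m\}$ that is simultaneously a partition of unity and identically $1$ on $\supp\eta_m$ cannot exist once the supports overlap (which they must, since $\sum_m\eta_m\equiv1$ with continuous compactly supported pieces); but this is cosmetic, since all you need is a uniformly $C^1$ family equal to $1$ on the supports. The real issue is logical: your two ingredients both point the wrong way. The triangle inequality applied to $\omega=\sum_m\eta_m\omega$ only yields an $\ell^1$ sum $\sum_m\|\eta_m\omega\|_{H^{r,s,q}_p}$, not the $\ell^p$ sum, and ``the direct inequality applied to $\{\tilde\eta_m\}$'' gives $\bigl(\sum_m\|\tilde\eta_m\omega\|^p\bigr)^{1/p}\le \Cs\|\omega\|$, i.e.\ an upper bound on the localized sums by the global norm — useless for bounding $\|\omega\|$ from above. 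The missing ingredient is precisely the nontrivial localization property of these Triebel-type spaces, namely that $\|\sum_m v_m\|_{H^{r,s,q}_p}^p\le \Cs\sum_m\|v_m\|_{H^{r,s,q}_p}^p$ whenever the $v_m$ are supported near the lattice points with bounded overlap; this is \cite[Theorem 2.4.7(i)]{Trie}, invoked through \cite[Remark 29]{BG1}, and it is a structural theorem about the spaces (tied to the third microscopic index matching the integrability $p$), not a consequence of the direct bound or of passing to a second cutoff family. A duality variant of your idea (pair $\omega$ with $g$, insert $\tilde\eta_m$, apply H\"older in $m$, then the direct inequality for $\{\tilde\eta_m\}$ on the dual space $H^{-r,-s,-q}_{p'}$) could in principle close the argument, but it requires the direct inequality on the dual space, which your hypotheses do not cover: the analogue of \eqref{locall} for the dual indices reads $(1+q/|s|)(r-s)<1$ and can fail (e.g.\ $r=0.4$, $s=-0.1$, $q=0.3$), so this step would need a separate justification rather than a citation of what you proved.
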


\begin{proof}
For the first bound,
our condition on $r,s,q$ ensures we can apply Lemma~\ref{Leib} to $\tilde \beta=1$
so that $\Cs$ only depends on the $C^1$ norm of $\eta_m$ (see the proof of
\cite[Lemma 4.3]{BG2}).
For the second bound, we refer to \cite[Remark 29]{BG1} and 
\cite[Theorem 2.4.7(i)]{Trie}.
\end{proof}

The following lemma on partitions of unity is a modification of 
\cite[Lemma 32]{BG1}:
\begin{lemma}[Partition of unity]
\label{lem:sum} Let $r$, $s$, $q$ be
arbitrary real numbers. There exists a constant $\Cs$ such
that, for any distributions $v_1,\dots, v_l$ with compact
support in $\real^d$, belonging to $H_p^{r,s,q}$, there exists a
constant $C$  with
\begin{equation}\label{B.3}
\norm{ \sum_{i=1}^l v_i}{H_p^{r,s,q}}^p \le \Cs m^{p-1} \sum_{i=1}^l
\norm{v_i}{H_p^{r,s,q}}^p 
+ C \sum_{i=1}^l \norm{v_i}{H_p^{r-1,s,q}}^p \, ,
\end{equation}
where $m$ is the intersection multiplicity of the supports $K_i$ of
the $v_i$'s, i.e., $m=\sup_{x\in \real^d} \Card\{i \st x\in K_i \}$, and
letting $K'_i$ be neighbourhoods of the
$K_i$ having the same intersection multiplicity as the
$K_i$, and choosing $C^\infty$ functions $\Psi_i$ so that
$\Psi_i$ is supported in $K'_i$ and $\equiv 1$ on $K_i$, we have
\begin{equation}\label{supcst}
C \le \Cs m^{p-1}\sup_i \| \Psi_i\|_{C^1}\, .
\end{equation}
Replacing $H_p^{r-1,s,q}$ by $H_p^{r',s,q}$ with $r-1<r'\le r$
in the right-hand-side of \eqref{B.3}, 
we can replace the $C^1$ norm by the $C^{r-r'}$
norm \eqref{supcst}.
\end{lemma}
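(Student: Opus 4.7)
\smallskip
\noindent\textbf{Plan.} The proof will follow the strategy of the isotropic prototype \cite[Lemma~32]{BG1}, extended to the three-parameter anisotropic spaces. Set $\Lambda=\Lambda_{r,s,q}:=\FFF^{-1}\circ a_{r,s,q}\circ\FFF$, which is an isomorphism $H_p^{r,s,q}\to L^p$ with $\norm{v}{H_p^{r,s,q}}=\norm{\Lambda v}{L^p}$. Since $\Psi_i\equiv 1$ on $\supp v_i$, we have $v_i=\Psi_iv_i$, so the basic identity
\begin{equation*}
\Lambda\Big(\sum_{i=1}^{l}v_{i}\Big)\;=\;\sum_{i=1}^{l}\Psi_{i}\Lambda v_{i}\;+\;\sum_{i=1}^{l}[\Lambda,\Psi_{i}]v_{i}
\end{equation*}
splits the norm of $\sum_iv_i$ into a ``main'' term and a ``commutator'' remainder, which will respectively furnish the two summands in \eqref{B.3}.

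\smallskip
\noindent For the main sum, each summand $\Psi_{i}\Lambda v_{i}$ is supported in $K_i'$, and the intersection multiplicity of the $K_i'$ is $m$. Choosing the $\Psi_i$ with $0\le\Psi_i\le 1$ (which is possible and does not affect the $C^{1}$-bound), the elementary pointwise Jensen inequality for at most $m$ nonzero summands gives
$\big|\sum_{i}\Psi_{i}(x)(\Lambda v_{i})(x)\big|^{p}\le m^{p-1}\sum_{i}|(\Lambda v_{i})(x)|^{p}$
for a.e.\ $x$; integrating over $\real^{d}$ produces exactly $m^{p-1}\sum_{i}\norm{v_{i}}{H_p^{r,s,q}}^{p}$, which is the $\Cs m^{p-1}$-contribution of \eqref{B.3} with no dependence on $\norm{\Psi_{i}}{C^{1}}$.

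\smallskip
\noindent The main obstacle will be to bound the commutator sum $\sum_{i}[\Lambda,\Psi_{i}]v_{i}$ by the second term of \eqref{B.3}, and in particular to replace the naive triangle-inequality factor $l^{p-1}$ by the correct $m^{p-1}$. Two ingredients must be combined. \emph{(i) Pointwise anisotropic commutator estimate:} The weight factorises as $a_{r,s,q}=a_{r}\cdot(1+|\xi^{s}|^{2})^{s/2}\cdot(1+|\xi^{0}|^{2})^{q/2}$, and the two partial factors commute with multiplication by the $C^{1}$ function $\Psi_{i}$ up to Marcinkiewicz-bounded Fourier-multiplier errors (argument as in Lemma~\ref{CompositionDure}); only the isotropic factor $a_{r}$ contributes to the commutator, so the classical scalar bound $\norm{[\Lambda_{r},\Psi]u}{L^p}\le\Cs\norm{\Psi}{C^{1}}\norm{u}{H_p^{r-1}}$ from \cite{Tr,TrB} yields
\[\norm{[\Lambda,\Psi_{i}]v_{i}}{L^{p}}\le\Cs\norm{\Psi_{i}}{C^{1}}\norm{v_{i}}{H_p^{r-1,s,q}}.\]
\emph{(ii) Recovery of the $m^{p-1}$ factor:} rewriting $[\Lambda,\Psi_{i}]v_{i}=(1-\Psi_{i})\Lambda v_{i}$ and using that the kernel of $\Lambda$ is smooth and rapidly decaying off the diagonal (with anisotropic decay, obtained from the Paley--Wiener estimates on $\FFF^{-1}a_{r,s,q}$), one shows that $[\Lambda,\Psi_{i}]v_{i}$ is, modulo a Schwartz-type tail, essentially localised in a fixed dilate $K_i''\supset K_i'$; the $K_i''$ can be chosen to have intersection multiplicity bounded by $Cm$. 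Running the pointwise-Jensen argument of the main term once more on the $K_i''$ produces the $m^{p-1}$ factor, and the decaying tails are absorbed via the standard Triebel molecular decomposition \cite{TrB,BL}. This is exactly the technical heart of \cite[Lemma~32]{BG1}, which we will adapt direction by direction; the main issue to verify is the off-diagonal decay of the kernel of $\Lambda$, which holds uniformly in the anisotropic variables provided $r,s,q$ are finite.

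\smallskip
\noindent Finally, the variant $H_p^{r',s,q}$ with $r-1<r'\le r$ will be handled by replacing, in step~(i), the $C^{1}$-commutator bound by its $C^{r-r'}$-H\"older version: complex interpolation (Remark~\ref{variants}) between the trivial $C^{0}$ multiplier bound on $H_p^{r,s,q}$ and the $C^{1}$ commutator bound on $H_p^{r-1,s,q}$ gives
\[\norm{[\Lambda,\Psi_{i}]v_{i}}{L^{p}}\le\Cs\norm{\Psi_{i}}{C^{r-r'}}\norm{v_{i}}{H_p^{r',s,q}},\]
which, propagated through the same step~(ii), yields \eqref{supcst} with $\norm{\Psi_{i}}{C^{r-r'}}$ in place of $\norm{\Psi_{i}}{C^{1}}$.
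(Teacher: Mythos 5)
Your overall strategy is the one the paper itself uses: the paper's proof of Lemma~\ref{lem:sum} consists in running the proof of \cite[Lemma 32]{BG1}, whose engine is the estimate of \cite[Lemma 2.7]{Cinfty} for $f\cdot\FFF^{-1}(a\FFF v)$ when $f$ vanishes on $\supp v$ (checked to extend to the symbol $a_{r,s,q}$), with the $C^{r-r'}$ variant coming from the integration-by-parts term there plus interpolation. Your splitting $\Lambda v_i=\Psi_i\Lambda v_i+(1-\Psi_i)\Lambda v_i$ and the multiplicity/Jensen treatment of the main term are exactly that scheme. The gaps are in how you justify the remainder term.

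First, the reduction to the isotropic commutator is not valid. Multiplication by $\Psi_i$ does not commute with the multipliers $(1+|\xi^s|^2)^{s/2}$, $(1+|\xi^0|^2)^{q/2}$ ``up to Fourier-multiplier errors'': those commutators are not multipliers, Lemma~\ref{CompositionDure} (composition with block-diagonal linear maps) is irrelevant here, and the cross term you would have to absorb, namely $\Lambda_{r}$ applied to the commutator of $\Psi_i$ with the anisotropic factor, gains a derivative only in the $\xi^s,\xi^0$ variables; since $(1+|\xi^s|^2)^{-1/2}$ does not dominate $(1+|\xi|^2)^{-1/2}$, this is not controlled by $\norm{v_i}{H_p^{r-1,s,q}}$, so ``only $a_r$ contributes to the commutator'' is false. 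Second, the ``classical'' bound $\norm{[\Lambda_r,\Psi]u}{L^p}\le \Cs\norm{\Psi}{C^1}\norm{u}{H^{r-1}_p}$, with a constant involving only the $C^1$ norm, is not in \cite{Tr,TrB} and is false for orders larger than one (take $\Psi_N(x)=N^{-1}\sin(Nx)$ and $u$ with compactly supported Fourier transform: the commutator is of size $N^{r-1}$ while the right-hand side stays bounded); recall the lemma is stated for arbitrary $r,s,q$, and the effective order in the flow variable is $r+q$. What rescues the estimate, and what both \eqref{supcst} and its $C^{r-r'}$ refinement come from, is precisely that $1-\Psi_i$ vanishes on $\supp v_i$, allowing one to write $(1-\Psi_i)(x)=(1-\Psi_i)(x)-(1-\Psi_i)(y)$ against the kernel and integrate by parts in $\xi$, i.e.\ the mechanism of \cite[Lemma 2.7]{Cinfty} applied directly to $a_{r,s,q}$; you write the identity $[\Lambda,\Psi_i]v_i=(1-\Psi_i)\Lambda v_i$ but then discard the vanishing when invoking a generic commutator theorem. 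Relatedly, your interpolation between a $C^0$ bound and the $C^1$ bound yields the factor $\norm{\Psi_i}{C^1}^{r-r'}$, not the $C^{r-r'}$ norm asserted in \eqref{supcst}, which the paper obtains from the fractional gain $|x-y|^{r-r'}$ in the same integration by parts (interpolation is used only when $r-r'\notin\integer$). Finally, your step (ii) is only a gesture: rapid off-diagonal decay of the kernel does not by itself control the superposition of tails from $l\gg m$ mutually distant small supports, and since the constant in \eqref{supcst} is independent of $l$, this is exactly the part that must be taken from the actual proof of \cite[Lemma 32]{BG1}, as the paper does.
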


\begin{proof}
Apply the proof of \cite[Lemma 32]{BG1}, noting
that \cite[Lemma 2.7]{Cinfty} also holds for our symbol
$a_{r,s,q}$.
The bound \eqref{supcst},  including the claim
about $r-1<r'\le r$, comes from the term in the integration by parts
in the proof of \cite[Lemma 2.7]{Cinfty}, and interpolation
if $r-r'\notin \integer$.
\end{proof}

The following class of local diffeomorphisms (adapted from 
\cite{BG2}) will be useful:

\begin{definition}\label{3.1} For $C>0$ let $D^1_{2}(C)$ denote  the set of $C^1$
diffeomorphisms $\Psi$ defined on a subset of $\real^d$, sending
stable leaves to stable leaves, flow directions to flow directions, and such that
\begin{align*}
&\max\bigl ( \sup |D\Psi(x^u,x^s,x^0)|, \sup |D\Psi^{-1}(x^u,x^s, x^0)|,\\
&\qquad\qquad \qquad\sup_{x^u,x^s,\tilde x^0, x^0} \frac{|D\Psi(x^u,x^s, x^0)-D\Psi(x^s, x^s, \tilde x^0)|}{|x^0-\tilde x^0|}\\
&\qquad\qquad\qquad
\sup_{x^u,x^s,\tilde x^s, x^0} \frac{|D\Psi(x^u,x^s, x^0)-D\Psi(x^s,\tilde x^s, x^0)|}{|x^s-\tilde x^s|}\bigr )\le C \, .
\end{align*}
\end{definition}

Adapting the  proof of \cite[Lemma 4.7]{BG2} gives:

\begin{lemma}
\label{lemcomposeD1alpha}
Let $C>0$, and let $s \le 0\le r$ and $q\ge 0$ be so that
\eqref{locall} holds.
There exists a constant $C'>0$  so that for any
$\Psi\in D^1_{2}(C)$  whose range contains a ball
$B(z,C_0^{1/2})$, and for any distribution $\omega\in
H_p^{r,s,q}$ supported in $B(z,C_0^{1/2}/2)$, the composition
$\omega\circ \Psi$ is well defined, and
\begin{equation}
\norm{\omega \circ \Psi}{H_p^{r,s,q}}\le C' \norm{\omega}{H_p^{r,s,q}}\, .
\end{equation}
\end{lemma}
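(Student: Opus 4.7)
The plan is to adapt the proof of \cite[Lemma 4.7]{BG2}, which handles the analogous statement for the spaces $H_p^{r,s}$ without the flow-direction parameter $q$. The starting observation is that any $\Psi\in D^1_{2}(C)$ that simultaneously preserves stable leaves and flow directions must take the product-like form
\[
\Psi(x^u,x^s,x^0)=(A(x^u),\, B(x^u,x^s),\, C(x^u,x^0)),
\]
since the $u$-component must be constant along stable leaves (so independent of $x^s$) and along flow lines (so independent of $x^0$). The hypotheses of Definition~\ref{3.1} then say, in addition to overall bi-Lipschitz bounds, that $B(x^u,\cdot)$ is $C^{1+\mathrm{Lip}}$ in $x^s$ and $C(x^u,\cdot)$ is $C^{1+\mathrm{Lip}}$ in $x^0$, uniformly in $x^u$. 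The support condition on $\omega$ guarantees that $\omega\circ\Psi$ is well defined, with support in the preimage of $B(z,C_0^{1/2}/2)$.

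First I would prove three ``pure-direction'' composition estimates and then interpolate. For the isotropic parameter $r$ with $0\le r<1/p$, the usual Triebel theory for bi-Lipschitz changes of coordinates gives
\[
\|\omega\circ\Psi\|_{H^r_p}\le \Cs\|\omega\|_{H^r_p},
\]
exactly as in \cite[Lemma 4.7]{BG2}. For the stable parameter $s$, one freezes $x^u$ and $x^0$ and uses Fubini along each stable leaf; on that leaf the map reduces to $x^s\mapsto B(x^u,x^s)$, which is bi-Lipschitz with a Lipschitz derivative, and is therefore bounded on the one-dimensional anisotropic $x^s$-Sobolev space of order $s$ (since $-1+1/p<s\le 0$), uniformly in the frozen variables. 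The bound in $H_p^{0,s,0}$ follows by integrating in the frozen variables. Exactly the same argument, Fubini along flow lines with $(x^u,x^s)$ frozen and the map $x^0\mapsto C(x^u,x^0)$, gives the corresponding bound for the flow parameter $q\ge 0$, using the Lipschitz-derivative regularity of $C$ in $x^0$.

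The second step is to combine the three estimates via complex interpolation between the spaces $H^{r_0,0,0}_p$, $H^{0,s_0,0}_p$ and $H^{0,0,q_0}_p$, which is permitted at the level of the symbols $a_{r,s,q}$ (see the discussion in Subsection~\ref{sec:def_fol} and \cite[Lemma 18]{BG1}, \cite{Tr}). One chooses the three extreme exponents so that, after interpolation with the appropriate parameters, one lands exactly on $H_p^{r,s,q}$; this is possible precisely because $H^{r,s,q}_p$ sits in the natural scale generated by $a_{r,s,q}$. Condition~\eqref{locall}, $(1+q/r)(r-s)<1$, is what guarantees that at each interpolation step the regularity demanded from $\Psi$ does not exceed $C^{1+\mathrm{Lip}}$ in the relevant directions, so the pointwise product structure of $\Psi$ is sufficient.

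The main technical obstacle is ensuring that the interpolation does not secretly demand H\"older regularity from $\Psi$ in the $x^u$-direction, where we only have $C^1$. This is handled as in \cite[Lemma 4.7]{BG2}: because the $u$-component of $\Psi$ depends on $x^u$ alone and $B$, $C$ are smooth in the remaining variables with uniform estimates in $x^u$, the composition operator decomposes as a bounded multiplier in the $x^u$-variable (using Lemma~\ref{Leib} at $\tilde\beta=1$, whose hypothesis is exactly \eqref{locall}) combined with the one-dimensional Fubini estimates above. Chaining these bounded operators and using the compact support of $\omega$ to absorb the localisation constants yields the desired estimate with a constant $C'$ depending only on $C$, $C_0$, $p$, $r$, $s$, $q$.
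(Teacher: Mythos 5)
Your proposal is correct and follows the paper's high-level strategy (localise, prove composition bounds at pure-direction endpoints using only bi-Lipschitz control together with Lipschitz regularity of the derivative along stable leaves and flow lines, then complex-interpolate, with \eqref{locall} guaranteeing both that the target $H^{r,s,q}_p$ lies in the interpolation hull of the endpoints, since it forces $r+|s|+q<1$, and that the cutoff multiplication via Lemma~\ref{Leib} at $\tilde\beta=1$ is bounded). Where you genuinely differ is in how the endpoint bounds are produced: the paper considers the operator $\omega\mapsto(\gamma\omega)\circ\Psi$ for a fixed cutoff $\gamma$ and interpolates between $L^p$, $H^{1,0,0}_p$, $H^{0,-1,0}_p$, $H^{0,0,\pm 1}_p$, obtaining the negative-exponent bounds by adapting the second step of the proof of Lemma~25 in \cite{BG2} (a duality argument in the full space, which uses exactly the Lipschitz regularity of the Jacobian along stable leaves and along flow directions); you instead exploit the product form $\Psi=(A(x^u),B(x^u,x^s),C(x^u,x^0))$ forced by the invariance of the stable and flow subspaces under $D\Psi$, identify $H^{0,s,0}_p$ with $L^p\bigl(\real^{d_u+1};H^s_p(\real^{d_s})\bigr)$ by a leafwise Fubini identity (and similarly in the flow direction), and reduce to uniform $d_s$- and one-dimensional composition estimates, the transverse change of variables being controlled by the diagonal blocks of $D\Psi$. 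This is an appealing, somewhat more self-contained route. Two small points to tighten: the parenthetical restriction $-1+1/p<s$ is neither assumed in the lemma nor needed — the leafwise bound for negative exponents comes from duality plus Lipschitz regularity of the leafwise Jacobian and holds down to $s=-1$, which you actually need if your interpolation endpoints are of the form $H^{0,-1,0}_p$ (the condition $s>-1+1/p$ is only relevant for characteristic-function multipliers, not for composition); and for the interpolation step you should, as the paper does, fix one globally defined operator $\omega\mapsto(\gamma\omega)\circ\Psi$ with a Lipschitz cutoff $\gamma$ equal to $1$ on $B(z,C_0^{1/2}/2)$ and supported in $B(z,C_0^{1/2})$ — this replaces your somewhat vague appeal to a ``bounded multiplier in the $x^u$-variable'' and makes the interpolation of operator bounds legitimate.
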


\begin{proof}
Without loss of generality, we may assume $z=\Psi^{-1}(z)=0$.
Let $\gamma$ be a $C^\infty$ function equal to $1$ on
$B(0,C_0^{1/2}/2)$ and vanishing outside of $B(0,C_0^{1/2})$.
We want to show that the operator $\MM : \omega\mapsto (\gamma
\omega) \circ \Psi$ is bounded by $C'$ as an operator from
$H_p^{r,s,q}$ to itself. By interpolation (see e.g. the
proof of Lemma~\ref{Leib}), it is sufficient to
prove this statement for for $L^p$, for $H_p^{1,0,0}$,   for
$H_p^{0,-1,0}$ and for $H_p^{0,0,1}$, $H_p^{0,0,-1}$. This can be done 
by adapting the second step of the proof
of Lemma 25 in \cite{BG2}. The result
there is formulated for $C^{2}$ diffeomorphisms, but a
glance at the proof there indicates that the $C^2$
regularity  is only used in the sense of Lipschitz regularity
of the Jacobian along the stable
leaves, in the argument for $H_p^{0,-1}$. In our setting,
we shall also need  Lipschitz regularity
of the Jacobian along the flow
directions, in the argument for $H_p^{0,0,-1}$. The
definition of $D^1_{2}(C)$ ensures that the Jacobian is
indeed Lipschitz  along stable leaves and along flow directions.
\end{proof}

The idea for the following lemma was explained to us by S\'ebastien Gou\"ezel:

\begin{lemma}[Interpolation of Lasota-Yorke-type inequalities]\label{tricksg}
Let $1< p < \infty$ and $s\le 0\le r$, $q\ge 0$,
$q', q''\ge 0$, and $s' \le s$, $r' \le r$.
Let $L$ be an operator for which there exist
constants $c_u$, $c_s$ and $C_u\ge 0$, $C_s\ge 0$ so that
\begin{align}\label{a}
\norm{L w}{H_p^{r,0,0}} < c_u \norm{w}{H_p^{r,0,0}}  + 
C_u \norm{w}{H_p^{r',0,q'}}\, ,
\forall w \in H_p^{r,0,0}\\
\label{b} \norm{L w}{H_p^{0,s,0}} < c_s \norm{w}{H_p^{0,s,0}}  + C_s \norm{w}{H_p^{0,s',q''}}\, ,
\forall w \in H_p^{0,s,0}\, ,
\end{align}
then for each $\theta \in [0,1]$   and every $\omega \in H_p^{\theta r,(1-\theta)s,0}$
\begin{align*}
\norm{L \omega}{H_p^{\theta r, (1- \theta)s,0}} 
&< c_u^\theta c_s^{1-\theta} 
\norm{\omega}{H_p^{\theta r, (1-\theta)s,0}}+ C_u^\theta C_s^{1-\theta}\norm{\omega}{H_p^{\theta r',(1-\theta)s',\theta q'+(1-\theta)q''}}  \\
&\qquad\qquad
+ c_u^\theta C_s^{1-\theta} \norm{\omega}{H_p^{\theta r,(1-\theta)s',(1-\theta) q''}}
+ C_u^\theta c_s^{1-\theta}\norm{\omega}{H_p^{\theta r',(1-\theta)s,\theta q'}} \, .
\end{align*}
\end{lemma}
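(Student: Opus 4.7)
The plan is to apply the complex interpolation method to the pair of Lasota-Yorke-type inequalities (a) and (b), exploiting Triebel's interpolation identity for the scale $H_p^{r,s,q}$ (cf.\ \cite[Lemma~18]{BG1}),
\[
[H_p^{r_1,s_1,q_1},H_p^{r_2,s_2,q_2}]_{\vartheta}=H_p^{(1-\vartheta)r_1+\vartheta r_2,\,(1-\vartheta)s_1+\vartheta s_2,\,(1-\vartheta)q_1+\vartheta q_2},
\]
which is already in repeated use in the paper. A direct check with this identity identifies the target norm $\|\cdot\|_{H_p^{\theta r,(1-\theta)s,0}}$ with $\|\cdot\|_{[H_p^{r,0,0},H_p^{0,s,0}]_{1-\theta}}$, and each of the four norms appearing on the right-hand side of the conclusion with one of the four interpolations $[H_p^{r_a,0,q_a},H_p^{0,s_b,q_b'}]_{1-\theta}$ obtained by pairing one summand of (a) with one summand of (b), i.e.\ with $(r_a,q_a)\in\{(r,0),(r',q')\}$ and $(s_b,q_b')\in\{(s,0),(s',q'')\}$. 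Thus the whole statement amounts to an interpolation between the two Lasota-Yorke inequalities, producing one cross term per pair $(a,b)\in\{1,2\}^2$.

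Concretely, I would fix $\omega$ in a dense subspace of the relevant intersection and, for each pair $(a,b)$, construct via Calderon's definition an analytic family $\omega_z^{(ab)}$ on the strip $0\le \Re z\le 1$ with $\omega_\theta^{(ab)}=\omega$ and the standard Calderon boundary controls adapted to the interpolation pair $(H_p^{r_a,0,q_a},H_p^{0,s_b,q_b'})$, i.e.
\[
\sup_{\Re z=1}\|\omega_z^{(ab)}\|_{H_p^{r_a,0,q_a}}\vee\sup_{\Re z=0}\|\omega_z^{(ab)}\|_{H_p^{0,s_b,q_b'}}\le (1+\varepsilon)\|\omega\|_{[H_p^{r_a,0,q_a},H_p^{0,s_b,q_b'}]_{1-\theta}}.
\]
Then, applying the vector-valued three-lines lemma to the analytic function $z\mapsto L\omega_z^{(ab)}$ with values in $H_p^{r,0,0}+H_p^{0,s,0}$, and bounding the boundary values via (a) on $\Re z=1$ and (b) on $\Re z=0$, would yield an estimate on $\|L\omega\|_{H_p^{\theta r,(1-\theta)s,0}}$ in which the correct coefficient $c_{u,a}^{\theta}c_{s,b}^{1-\theta}$ (with the convention $c_{u,1}=c_u$, $c_{u,2}=C_u$, and similarly for $c_{s,b}$) multiplies $\|\omega\|_{[H_p^{r_a,0,q_a},H_p^{0,s_b,q_b'}]_{1-\theta}}$. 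Summing the four such bounds delivers the statement.

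The main technical obstacle is the handling of the ``other'' term that (a) and (b) produce on each boundary: each Lasota-Yorke estimate carries two summands, only one of which is the one selected by the chosen pair $(a,b)$, so a naive single-family application inflates the bound by cross terms of the wrong shape (in particular the interpolation inequality $\|\omega\|_X^{1-\theta}\|\omega\|_Y^\theta\ge\|\omega\|_{[X,Y]_{1-\theta}}$ points in the wrong direction for a direct log-convexity argument on the constant family). This difficulty is resolved by introducing, in each of the four tailored analytic families, a scalar analytic weight $\mu^{(ab)}(z)=e^{(z-\theta)\alpha^{(ab)}}$ with $\alpha^{(ab)}\in\real$ chosen so that, when the three-lines lemma is applied to $\mu^{(ab)}(z)L\omega_z^{(ab)}$, the combination of the exponential weight with the boundary choice of the Calderon family suppresses the unwanted summand of (a) (resp.\ (b)) at the corresponding boundary, so that only the ``$(a,b)$'' contribution survives. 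Once the correct weights $\alpha^{(ab)}$ and families are in place, the remainder of the proof is a routine bookkeeping step in which Triebel's identity identifies each cross contribution with the correct interpolated Triebel norm appearing in the conclusion.
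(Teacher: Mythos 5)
There is a genuine gap at the step you yourself flag as the main obstacle. A scalar analytic factor $\mu^{(ab)}(z)=e^{(z-\theta)\alpha^{(ab)}}$ multiplies $L\omega_z^{(ab)}$ uniformly on each boundary line $\Re z=0$ and $\Re z=1$; it therefore rescales \emph{both} summands of the Lasota--Yorke bound (a) (resp.\ (b)) by the same factor and cannot selectively ``suppress the unwanted summand''. The real difficulty is not a question of relative scaling between the two boundaries: a Calder\'on family built for the couple $(H_p^{r_a,0,q_a},H_p^{0,s_b,q_b'})$ controls the boundary values only in those two spaces, and gives no bound whatsoever on $\norm{\omega_z}{H_p^{r',0,q'}}$ at $\Re z=1$ (when $(r_a,q_a)=(r,0)$) or on $\norm{\omega_z}{H_p^{0,s',q''}}$ at $\Re z=0$ (when $(s_b,q_b')=(s,0)$), so the three-lines argument produces boundary terms you cannot estimate. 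Equivalently, your scheme would need the continuous inclusion of the interpolation of the intersection-type spaces defined by the right-hand sides of (a) and (b) into the sum of the four interpolated norms, and that inclusion is false in general. This is precisely why the paper warns, just before invoking this lemma in the proof of Lemma~\ref{noglue}, that ``interpolating between Lasota-Yorke inequalities is not licit in general''.

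The paper's proof avoids this by never interpolating a two-term inequality. By the Marcinkiewicz-type multiplier theorem, the right-hand side of (a) is equivalent to a \emph{single} Triebel-type norm $|\omega|_{a_u}$ with the sum symbol $a_u=c_u(1+|\xi|^2)^{r/2}+C_u(1+|\xi|^2)^{r'/2}(1+|\xi^0|^2)^{q'/2}$, and similarly (b) becomes $\norm{L\omega}{H_p^{0,s,0}}<|\omega|_{a_s}$. The hypotheses are then genuine one-term operator bounds between symbol-defined spaces, to which standard complex interpolation applies; Triebel's interpolation theorem for such weighted spaces identifies the interpolated source norm with $|\cdot|_{a_u^\theta a_s^{1-\theta}}$, and expanding the product symbol (and using the multiplier theorem once more) yields exactly the four terms of the conclusion with the constants $c_u^\theta c_s^{1-\theta}$, $C_u^\theta C_s^{1-\theta}$, $c_u^\theta C_s^{1-\theta}$, $C_u^\theta c_s^{1-\theta}$. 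If you want to salvage your write-up, replace the weighted three-lines device by this reduction to sum symbols; your identification of the four right-hand-side norms via Triebel's interpolation identity is correct and can be kept.
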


\begin{proof}[Proof of Lemma ~\ref{tricksg}]
For a Triebel-type symbol $a(\xi^u,\xi ^s, \xi ^0)$ we write $|\omega|_a$ 
for the Triebel norm $\|\FF^{-1} (a \FF \omega) \|_{L^p}$.
By classical multiplier theorems (see \cite{Stein}),
the norm $c_u \norm{\omega}{H_p^{r,0,0}} + C_r \norm{\omega}{H_p^{r',0,q}}$ 
is equivalent to the Triebel norm with symbol
$a_u(\xi^u,\xi^s,\xi^0)$ equal to
$$
c_u (1+|\xi^u|^2+
|\xi^s|^2+|\xi^0|^2)^{r/2} + C_r(1+|\xi^u|^2+
|\xi^s|^2+|\xi^0|^2)^{r'/2} (1+|\xi^0|^2)^{q/2}\, .
$$ 
Therefore, (\ref{a}) reads
$$\norm{L (\omega)}{H_p^{r,0,0} }< |\omega|_{a_u}\, .$$
Similarly, defining
$$a_s(\xi^u,\xi^s, \xi^0)=c_s (1+|\xi^s|^2)^{s/2}+ C_s (1+|\xi ^s|^2)^{s'/2}
(1+|\xi^0|^2)^{q/2}\, ,
$$
(\ref{b}) becomes $\norm{L (\omega)}{H_p^{0,s,0}} < |\omega|_{a_s}$.
One may thus apply standard interpolation. 
A result of Triebel gives that the interpolation of $|.|_{a_u}$ and $|.|_{a_s}$ is
equivalent to $|.|_{a_u^\theta a_s^{1-\theta}}$,
with uniform equivalence constants over the norms
we are considering. 
\end{proof}

Finally, we shall need a new result, specific to our flow situation
(and which is proved with the help of Lemma~\ref{tricksg}):

\begin{lemma} [Composing with a  perturbation in the
flow direction]\label{noglue}
Let $\Delta$ be defined by
$
\Delta(x^u,x^s, x^0)=(x^u, x^s, x^0+  \delta(x^s,x^u))\, ,
x^s\in \real^{d_s}\, ,
x^u\in \real^{d_u}\, ,\,  \, x^0\in \real\, , 
$
where $\delta:\real^{d_s+d_u}\to \real$ is a $C^{1}$
map whose range contains a ball
$B(z,C_0^{1/2})$.
Then, for any $s<0<r$ and $q\ge 0$
so that \eqref{locall} holds, 
there is $C$ so that 
for any distribution $\omega$ supported in $B(z,C_0^{1/2}/2)$, 
$$
\| \omega \circ \Delta\|_{H^{r,s,q}_p}\le C \|\omega\|_{H^{r, s, q+r-s}_p}\, .
$$
\end{lemma}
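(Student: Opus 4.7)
The argument exploits two special features of $\Delta(x^u,x^s,x^0)=(x^u,x^s,x^0+\delta(x^s,x^u))$: first, $|\det D\Delta|\equiv 1$, so $T_\Delta\omega:=\omega\circ\Delta$ is an $L^p$-isometry; second, because $\delta$ does not depend on $x^0$ and $\Delta$ leaves the $(x^u,x^s)$-coordinates fixed, $T_\Delta$ commutes with $\partial_{x^0}$ and hence with any Fourier multiplier that is a function of $\xi^0$ alone. In particular, writing $(1-\partial_{x^0}^2)^{q/2}$ for the multiplier with symbol $(1+|\xi^0|^2)^{q/2}$, one has
\[
\|T_\Delta\omega\|_{H^{r,s,q}_p}=\|T_\Delta[(1-\partial_{x^0}^2)^{q/2}\omega]\|_{H^{r,s,0}_p},\qquad \|(1-\partial_{x^0}^2)^{q/2}\omega\|_{H^{r,s,r-s}_p}=\|\omega\|_{H^{r,s,q+r-s}_p}.
\]
The first step is therefore to reduce the statement to the $q=0$ case, which reads $\|T_\Delta\omega\|_{H^{r,s,0}_p}\le C\|\omega\|_{H^{r,s,r-s}_p}$ under the assumption $r-s<1$ (condition \eqref{locall} at $q=0$).

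The second step is to establish two elementary endpoint inequalities. \emph{Endpoint A}: for $\sigma\in[0,1]$,
\[
\|T_\Delta w\|_{H^{\sigma,0,0}_p}\le C\|w\|_{H^{\sigma,0,\sigma}_p}.
\]
At $\sigma=0$ this is the $L^p$-isometry. At $\sigma=1$ it follows from the chain rule $\partial_{x^j}(T_\Delta w)=T_\Delta(\partial_{x^j}w)+(\partial_{x^j}\delta)\,T_\Delta(\partial_{x^0}w)$ for $j\in\{1,\dots,d-1\}$, together with $\partial_{x^0}(T_\Delta w)=T_\Delta(\partial_{x^0}w)$; since $\delta\in C^1$ and $T_\Delta$ is an $L^p$-isometry, this yields the $H^{1,0,0}_p$ bound in terms of $\|w\|_{H^{1,0,1}_p}$. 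Intermediate $\sigma\in(0,1)$ follow by complex interpolation of $T_\Delta:H^{\sigma,0,\sigma}_p\to H^{\sigma,0,0}_p$. \emph{Endpoint B}: for $s_0\in(-1,0]$,
\[
\|T_\Delta w\|_{H^{0,s_0,0}_p}\le C\|w\|_{H^{0,s_0,-s_0}_p}.
\]
Since $\Delta^{-1}(x^u,x^s,x^0)=(x^u,x^s,x^0-\delta(x^s,x^u))$ has the same shape as $\Delta$, the chain-rule argument applied to $|s_0|$ stable derivatives gives $\|T_{\Delta^{-1}}\phi\|_{H^{0,|s_0|,0}_{p'}}\le C\|\phi\|_{H^{0,|s_0|,|s_0|}_{p'}}$ for $|s_0|\in[0,1]$; taking adjoints and using the commutation of $T_{\Delta^{-1}}$ with $(1-\partial_{x^0}^2)^{-|s_0|/2}$ converts this into Endpoint B.

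The third step is to combine Endpoints A and B via Gou\"ezel's Lemma~\ref{tricksg}, applied to $L=T_\Delta$ with $c_u=c_s=0$, $C_u=C_s=C$, $r=r'=r_0$, $s=s'=s_0$, $q'=r_0$, $q''=-s_0$, at the interpolation parameter
\[
\theta=\frac{r}{r-s}\in(0,1),\qquad r_0=\frac{r}{\theta}=r-s,\qquad s_0=\frac{s}{1-\theta}=-(r-s),
\]
so that $\theta r_0=r$, $(1-\theta)s_0=s$, and $r_0=|s_0|=r-s<1$ by \eqref{locall}. The two cross terms in the conclusion of Lemma~\ref{tricksg} are proportional to $c_u$ or $c_s$ and hence vanish, leaving
\[
\|T_\Delta w\|_{H^{r,s,0}_p}\le C\|w\|_{H^{r,s,\theta r_0+(1-\theta)|s_0|}_p}=C\|w\|_{H^{r,s,r-s}_p},
\]
which together with the reduction of the first step proves the lemma.

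\textbf{Main obstacle.} The delicate point is that, with $\delta$ only in $C^1$, the chain rule underlying Endpoints A and B is available directly only up to $\sigma=1$ and $|s_0|=1$, so the interpolation must be arranged inside the strip $r_0,|s_0|\le 1$. Condition \eqref{locall} at $q=0$ gives exactly $r-s<1$, which is what is needed for the balanced choice $\theta=r/(r-s)$ to produce $r_0=|s_0|=r-s<1$ while matching $\theta r_0=r$ and $(1-\theta)s_0=s$. A secondary subtlety, essential throughout, is the repeated use of the commutation of $T_\Delta$ (and $T_{\Delta^{-1}}$) with Fourier multipliers in $\xi^0$: this is what allows the flow parameter $q$ to enter the final estimate only as an overall shift rather than through the interpolation algebra, and what makes the asymmetric loss $r-s$ in the flow direction the correct one.
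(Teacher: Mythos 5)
Your proposal is correct, and it uses the same basic toolbox as the paper's proof of Lemma~\ref{noglue} (Jacobian one, the chain rule identities $\partial_{x^0}(\omega\circ\Delta)=(\partial_{x^0}\omega)\circ\Delta$ and $\partial_{x^j}(\omega\circ\Delta)=(\partial_{x^j}\omega)\circ\Delta+(\partial_{x^j}\delta)(\partial_{x^0}\omega)\circ\Delta$, duality for the negative stable index, and complex interpolation in the Triebel scale), but the organization is genuinely different and buys something. The paper works at the endpoints $(1,0,0)$, $(0,-1,0)$, $(0,0,0)$, keeps two-term (Lasota--Yorke shaped) inequalities \eqref{ng2}--\eqref{ng3}, invokes Lemma~\ref{tricksg} twice with nonzero leading constants, and only at the very end absorbs the flow index $q$ by interpolating \eqref{ng4} with the trivial bound \eqref{ng5}; you instead peel off $q$ at the outset by the exact commutation of $T_\Delta$ with Fourier multipliers in $\xi^0$ alone (an identity of norms, since $a_{r,s,q}=a_{r,s,0}\,(1+|\xi^0|^2)^{q/2}$ and, in partial Fourier transform, $T_\Delta$ is multiplication by $e^{i\xi^0\delta(x^s,x^u)}$), and you merge each endpoint estimate into a single dominating norm ($H^{1,0,1}_p$, resp.\ $H^{0,1,1}_{p'}$ for $\Delta^{-1}$ before dualizing), so that only plain operator interpolation is needed and Lemma~\ref{tricksg} is used in the degenerate case $c_u=c_s=0$, where the caveat about interpolating Lasota--Yorke inequalities is vacuous. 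Your Endpoint~B (adjoint of the $T_{\Delta^{-1}}$ bound, then a multiplier shift to move the loss from the target to the source) is an equivalent repackaging of the paper's hands-on duality computation \eqref{ng2}. Two dividends of your route: the constants and the bookkeeping are simpler, and the hypothesis actually needed is only $r-s<1$ for every $q\ge 0$, which is weaker than \eqref{locall} as stated (the full condition \eqref{locall} is still needed where the lemma is applied, because of Lemmata~\ref{Leib} and \ref{lem:localization}, but not for this composition bound itself). Two small points of hygiene you should make explicit: for non-integer $|s_0|$ the phrase ``chain rule applied to $|s_0|$ stable derivatives'' should be replaced by the same endpoint-plus-interpolation scheme you use for Endpoint~A; and, as in the paper, all identities (commutation with $(1-\partial_{x^0}^2)^{q/2}$, adjoint relation $(T_{\Delta^{-1}})^*=T_\Delta$, chain rule) should be verified on a dense class of smooth functions and the estimates extended by continuity.
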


\begin{proof}
As usual, we proceed by interpolation. 
The Jacobian of $\Delta$ and of $\Delta^{-1}$ is equal to $1$.
Therefore
\begin{equation}\label{ng0}
\| \omega \circ \Delta\|_{H^{0,0,0}_p}\le \|\omega\|_{H^{0, 0,0}_p}\, .
\end{equation}
Since $\partial_{x^0}(\omega \circ \Delta)=(\partial_{x^0}\omega) \circ \Delta$
and $\partial_{x^s}(v \circ \Delta^{-1})=(\partial_{x^s}v) \circ \Delta^{-1}+
(\partial_{x^0}v) \circ \Delta^{-1} \partial_{x^s} \delta$, we have
\begin{equation}\label{ng2}
\| \omega \circ \Delta\|_{H^{0,-1,0}_p}\le \|\omega\|_{H^{0,-1,0}_p}
+C \|D\delta\|_{L^\infty}  \|\omega\|_{H^{0,-1,1}_p}\, .
\end{equation}
Indeed, letting $\Omega_{s}\in H^{0,0,0}_p$ be such that $\partial_{x^s}\Omega_{s}=\omega$,
and writing $D\delta$ for $\partial_{x^s}\delta$,
\begin{align*}
&\sup_{\{\psi:|\psi |_{H^{0,1,0}_{p'}}\le 1\}}\int (\omega\circ \Delta)\cdot  \psi\, dx=\sup_\psi\int \omega\cdot  (\psi \circ \Delta^{-1})\, dx\\
&\qquad=\sup_\psi\biggl (-\int \Omega_{s} \cdot ((\partial_{x^s} \psi) \circ \Delta^{-1})
-\int \Omega_{s}\cdot  ((\partial_{x^0} \psi) \circ \Delta^{-1}) \cdot D \delta\biggr ) \\
&\qquad=\sup_\psi \biggl (-\int (\Omega_{s}\circ \Delta) \partial_{x^s} \cdot \psi
-\int (\Omega_{s}\circ \Delta )\cdot (\partial_{x^0} \psi) \cdot (D \delta\circ \Delta)\biggr ) \\
&\qquad\le \int |\Omega_{s}\circ \Delta |^p
+\sup_\psi  \int ( (\partial_{x^0} \Omega_{x^s})\circ \Delta) \cdot \psi 
\cdot (D \delta\circ \Delta)
\\
&\qquad\le (\int |\Omega_{x^s}|^p)^{1/p}
+ \sup_\psi \int( [(\partial_{x^0} \Omega_{x^s})D \delta]\circ \Delta ) \cdot \psi  \\
&\qquad\le |\omega|_{H^{0,-1,0}_p}
+ \sup |D \delta| (\int |(\partial_{x^0} \Omega_{x^s})\circ \Delta  |^p )^{1/p} \le |\omega|_{H^{0,-1,0}_p}
+ \sup |D \delta|  |\omega|_{H^{0,-1,1}_p}\, .
\end{align*}
Using also $\partial_{x^u}(\omega \circ \Delta)=(\partial_{x^u}\omega) \circ \Delta
+(\partial_{x^0}\omega \circ \Delta) \partial_{x^u} \delta$, 
$\partial_{x^s}(\omega \circ \Delta)=\partial_{x^s}(\omega) \circ \Delta+
(\partial_{x^0}\omega \circ \Delta) \partial_{x^s} \delta$, we get
\begin{equation}\label{ng3}
\| \omega \circ \Delta\|_{H^{1,0,0}_p}\le \|\omega\|_{H^{1,0,0}_p}
+C \|D \delta\|_{L^\infty}  \|\omega\|_{H^{0,0,1}_p}\, . 
\end{equation}
Beware that interpolating between Lasota-Yorke inequalities is not licit in general.
However, for the simple symbols in presence, we may use Lemma~\ref{tricksg} (for $C_u=r=r'=q'=0$ and
$s=-1$, $s'=-2$),
and we deduce from \eqref{ng0}--\eqref{ng2}  that
$$
\| \omega \circ \Delta\|_{H^{0,-(1-\theta''), 0}_p}\le \|\omega\|_{H^{0,-(1-\theta''),0}_p}
+C \|D \delta\|_{L^\infty} ^{1-\theta''} \|\omega\|_{H^{0,-2(1-\theta''),1-\theta''}_p}\, ,\, \,
\forall \theta'' \in [0,1]\, .
$$
and  from 
Lemma~\ref{tricksg} (for $C_s=s=s'=q''=0$,
$r=1$, $r'=0$)  and \eqref{ng0}--\eqref{ng3}  that
$$
\| \omega \circ \Delta\|_{H^{\theta',0,0}_p}\le \|\omega\|_{H^{\theta',0,0}_p}
+C \|D \delta\|_{L^\infty} ^{\theta'} \|\omega\|_{H^{0,0,\theta'}_p}\, , \,
\forall \theta' \in [0,1]\, .
$$
Since $\norm{w}{H_p^{\theta \theta',-2(1-\theta)(1-\theta''),(1-\theta) (1-\theta'')}}$ and
$\norm{w}{H_p^{0, -(1-\theta)(1-\theta''),\theta \theta'}}$,
are dominated by
$\norm{w}{H_p^{r, s,r-s}}$, 
using again  Lemma~\ref{tricksg} to interpolate
at $(r,s,0)=\theta (\theta',0,0)+(1-\theta)(0,-(1-\theta''),0)$, we get
\begin{align}\label{ng4}
\| \omega \circ \Delta\|_{H^{r,s,0}_p}&\le C\bigl ( \|\omega\|_{H^{r, s, 0}_p}+ 
\max(\|D \delta\|_{L^\infty}^{r-s},\|D \delta\|_{L^\infty}^r,
\|D \delta\|_{L^\infty}^{|s|}) \|\omega\|_{H^{r, s, r-s}_p}\bigr )\\
\nonumber &\le C  \|\omega\|_{H^{r, s, r-s}_p}\, .
\end{align}
Finally, since $\partial_{x^0}(\omega \circ \Delta)=(\partial_{x^0}\omega) \circ \Delta$, we have
\begin{equation}\label{ng5}
\| \omega \circ \Delta\|_{H^{0,0,1}_p}\le \|\omega\|_{H^{0,0,1}_p}\, .
\end{equation}
Therefore, interpolating with \eqref{ng4}, we get
the lemma.
\end{proof}

\section{Admissible charts are invariant under
composition}\label{iteratechart}

Lemma ~\ref{lemcompose} below is the analogue of \cite[Lemma 3.3]{BG2}.
(There will be a nontrivial difference, the presence of the map $\Delta_m$.)
We need one more notation:

\begin{definition}
Let $\CC$ and $\tilde \CC$ be extended cones (Definition ~
\ref{extcone}). If an invertible matrix $\AAc:\real^d\to \real^d$ sends
$\CC$ to $\tilde\CC$ compactly, let
$\lambda_u(\AAc)=\lambda_u(\AAc,\CC,\tilde\CC)$ be the least
expansion under $\AAc$ of vectors in $\CC^u$, and
$\lambda_s(\AAc)=\lambda_s(\AAc,\CC,\tilde\CC)$ be the inverse of the
least expansion under $\AAc^{-1}$ of vectors in $\tilde\CC^s$.
Denote by $\Lambda_u(\AAc)=\Lambda_u(\AAc,\CC,\tilde\CC)$ and $
\Lambda_s(\AAc)=\Lambda_s(\AAc,\CC,\tilde\CC)$ the strongest
expansion and contraction coefficients of $M$ on the same
cones.
\end{definition}

\begin{lemma}
\label{lemcompose}
Let $\CC$ and $\tilde\CC$ be extended cones
and let $\beta \in (0,1)$. For any large enough
$C_0$ (depending on $\CC$ and $\tilde\CC$) and any $C_1 >
2C_0$, there exist constants $C$ (depending on $\CC$,
$\tilde\CC$ and $C_0$) and $\epsilon$ (depending on $\CC$,
$\tilde \CC$, $C_0$ and $C_1$) satisfying the following
properties:

Let $\TT$ be a $C^{2}$ diffeomorphism of $\real^d=\real^{d_u}\times\real^{d_s}
\times \real$ with
$\TT(0)=0$,  so that,
$D\TT(z)(0,0,v^0)=(0,0,v^0)$ for all $z\in \real ^d$ and $v^0\in \real$, and,
setting $\AAc= D\TT(0)$,
\begin{align}
\nonumber &
\norm{ \TT^{-1}  \circ \AAc -\id }{C^{2}} \le \epsilon\, , \quad
\AAc \text{  sends } \CC \text{ to } \tilde\CC \text{ compactly, } \\
 \label{suffhyp}
&
\lambda_{s}(\AAc)^{1-\beta} \Lambda_{u}(\AAc)^{1+\beta} \lambda_{u}(\AAc) ^{-1}  <\epsilon \, ,
\qquad  \lambda_u(\AAc)>\epsilon^{-1}\, ,\,\,  \lambda_s(\AAc)^{-1}>\epsilon^{-1} \, .
\end{align}
Let $\JJ \subset \real^d$ be a finite set such that $|m-m'|\ge C$
for all $m\neq m'\in \JJ$, and consider a family of charts
$\{\phi_m\in \FF(m,\tilde\CC^s, \beta, C_0, C_1)\mid m\in \JJ\}$.
Then, defining
\begin{equation*}
\JJ'= \{ m \in \JJ \mid  B(m,d) \cap \TT(B(0,d))\ne \emptyset \} \, ,
\end{equation*}
and setting  $\Pi(x^u,x^s, x^0)=(x^u,0,0)$, we have:

(a) $|\Pi m -\Pi m'|\ge C_0$ for all $m\not=m'$ in $\JJ'$.

(b) There exist $\phi'\in \FF(0,\CC^s, \beta, C_0, C_1)$,  and
diffeomorphisms $\TTT_m$, for $m\in \JJ'$, such that
\begin{equation}\label{newphi}
\TT^{-1}\circ \phi_m=\phi'\circ \TTT_m
\quad \text{ on }
\phi_m^{-1}( B(m,d) \cap \TT(B(0,d))) \, ,\, \, \forall m \in \JJ'\, .
\end{equation}

(c) For each $m \in \JJ'$, we can write $\TTT_m=\Psi \circ
D^{-1} \circ  \Psi_m\circ \Delta_m$, where
\begin{itemize}
\item The diffeomorphism $\Psi_m(x^u, x^s, x^0)
=(\tilde \psi_m(x^u, x^s), x^0)$ is in
$D^1_{2}(C)$, its  range contains  $B(\Pi
(m^u,0),C_0^{1/2})$, and 
$$\Psi_m( \Delta_m
\phi_m^{-1}(B(m,d))) \subset
B(\Pi m, C_0^{1/2}/2)\, .
$$
\item
$\Delta_m(x^u, x^s, x^0)=(x^u, x^s, x^0+\delta_m(x^u, x^s))$, where
$\delta_m$ is a $C^1$ function 
defined on $B(m^u, 0), C_0^{2/3})$ with $\|D\delta_m\|_{L^\infty}\le C$.
\item The matrix $D$ is block diagonal, of the form $D=\left(\begin{smallmatrix} A&0&0\\0&B&0\\0&0&1
\end{smallmatrix}\right)$ with
\begin{equation*}
|Av|\ge C^{-1}\lambda_u(A) |v| \text{ and } |Bv|\le C\lambda_s(A) |v|\, .
\end{equation*}
\item The diffeomorphism $\Psi$ is in $D^1_{2}(C)$,
its range contains $B(0, C_0^{1/2})$.
\end{itemize}
\end{lemma}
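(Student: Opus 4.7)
The proof will closely follow the structure of \cite[Lemma 3.3]{BG2}, with the genuinely new ingredient being the treatment of the flow direction (yielding the extra factor $\Delta_m$). The overall strategy is: first pull back the stable foliation carried by the $\phi_m$'s through $\TT^{-1}$ and glue the local pieces into a single foliation at the origin; then extract the hyperbolic diagonal part $D^{-1}$; finally package everything that remains into the ``smooth'' diffeomorphisms $\Psi$, $\Psi_m$, $\Delta_m$. Claim (a) is immediate from the requirement $|m-m'|\ge C$ together with the fact that $\AAc$ contracts $\CC^s$ and the stable cone does not meet the unstable direction: after pulling back by $\TT^{-1}$, two distant points $m,m'\in\TT(B(0,d))$ that live on distinct stable leaves project under $\Pi$ (into the unstable direction) to points distant by at least $C_0$, provided $C$ is chosen sufficiently large depending on the aperture of $\tilde\CC^s$.

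For claim (b), the main step is the graph transform. The stable leaves of the $\phi_m$ are almost parallel (by \eqref{smooths}--\eqref{smoothsecond}); under $\TT^{-1}$ they are contracted by $\lambda_u(\AAc)^{-1}$ in the unstable direction and stretched by $\lambda_s(\AAc)^{-1}$ in the stable direction, with a quadratic correction of size $\Lambda_u(\AAc)^{1+\beta}\lambda_s(\AAc)^{1-\beta}$ in the $C^{1+\beta}$ norm by a standard Taylor-expansion-plus-cone-invariance argument. The bunching \eqref{suffhyp} is designed exactly to make this correction small relative to $\lambda_u(\AAc)$, yielding a well-defined $C^{1+\beta}$ global foliation at the origin whose graph function $F'$ satisfies all the admissibility estimates \eqref{smooths}--\eqref{smoothsecond} with the constant $C_1$, as in \cite[App. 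A]{BG2}. The $C^1$ regularity of the cones combined with $\TT(0)=0$ and $\AAc$ sending $\CC$ to $\tilde\CC$ compactly ensures that the pulled-back leaves lie in $\CC^s$ and that the resulting chart $\phi'$ is defined on $B(0,C_0)$ once $C_0$ is large enough.

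The flow direction is where the genuinely new difficulty lies, and it is also the \emph{main obstacle}. Because $D\TT$ fixes the flow direction pointwise but $\TT$ itself only preserves flow lines, not their natural parametrization at each point, when we compute $\TT^{-1}\circ\phi_m$ the flow-direction component picks up an $m$-dependent function of $(x^u,x^s)$. If we try to absorb this into a single chart $\phi'$ with $\partial_{x^0}\tilde F'\equiv 1$, we cannot make the glueing work across different $m$: requiring a common $\tilde F'$ would force all the functions $\tilde f_m$ to be cohomologous modulo the dynamics, a condition we explicitly do not impose (cf.\ the remark after \eqref{smoothtilde} about ceiling times being cohomologous to a constant). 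The resolution, as the lemma statement anticipates, is to collect the obstruction in a preprocessing shift $\Delta_m(x^u,x^s,x^0)=(x^u,x^s,x^0+\delta_m(x^u,x^s))$, where $\delta_m$ is the difference, computed on $B(m^u,0,C_0^{2/3})$, between the flow-direction part produced by $\phi_m$ and the flow-direction part of the common $\phi'$; Lipschitz control of $D\delta_m$ follows from the $C^2$ regularity of $\TT$ together with the fact that $D\TT$ preserves the $x^0$-axis. This is also why, in Lemma \ref{noglue}, one pays a regularity loss of $r-s$ in the flow direction when composing with $\Delta_m$.

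Finally, the factorisation $\TTT_m=\Psi\circ D^{-1}\circ\Psi_m\circ\Delta_m$ will be obtained by linearising $\TT$ at $0$ (extracting $D$ via an orthogonal change of coordinates that also preserves the flow direction, using $D\TT(z)(0,0,v^0)=(0,0,v^0)$) and writing the residue on each side as a map in $D^1_2(C)$. The map $\Psi$ only depends on $\TT$ near the origin and is the same for all $m$; the map $\Psi_m$ accounts for the deviation of $\TT$ from its linearisation along the orbit from $0$ to $m$, and stable-leaf and flow-direction Lipschitz regularity for $D\Psi_m$ come from the $C^2$ smoothness of $\TT$ combined with bunching, exactly as in \cite[App. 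A]{BG2}. The conditions $\Psi_m(\Delta_m\phi_m^{-1}(B(m,d)))\subset B(\Pi m, C_0^{1/2}/2)$ and that the ranges of $\Psi$, $\Psi_m$ contain the respective $C_0^{1/2}$-balls are consequences of the smallness of $\epsilon$ relative to $C_0$, which dictates the quantifier choice $\epsilon=\epsilon(C_0,C_1)$ announced in the statement.
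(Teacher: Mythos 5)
Your outline captures several correct ingredients (bunching gives the $C^{1+\beta}$ control, $\Delta_m$ is what makes the glueing possible in the flow direction, the $D^1_{2}$ bounds come from $C^2$ smoothness), but two points are genuinely off, and they are exactly the points where the proof is delicate. First, your explanation of where $\Delta_m$ comes from contradicts the hypotheses: since $D\TT(z)(0,0,v^0)=(0,0,v^0)$ for \emph{all} $z$, one has $\TT(x^u,x^s,x^0)=\TT(x^u,x^s,0)+(0,0,x^0)$, so $\TT$ preserves the unit-speed parametrization of the flow lines exactly; the time shift is not caused by ``$\TT$ preserving flow lines but not their parametrization''. In the paper's proof the shift appears when the chart $\phi_m$, whose flow component is $x^0+\tilde f_m(x^u,x^s)$, is pushed through the shear $Q$ (not an orthogonal map) that straightens $\AAc(\real^{d_u}\times\{0\}\times\{0\})$: one is forced to take $\delta_m=\tilde f_m-\pi_3(P F_m)$, see \eqref{defdelta} in Lemma~\ref{lemcomposeQ}, so that the intermediate chart $\phi^{(1)}_m$ has flow component exactly $x^0$. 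This is not cosmetic, because it determines where $\Delta_m$ sits in the factorization: it is produced together with $\Psi_m$ at the $Q$-step, before the glueing and before $D^{-1}$ is applied.

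Second, and more seriously, the order of operations you propose --- pull everything back by the full $\TT^{-1}$, glue ``at the origin'', then extract $D^{-1}$ --- does not yield the uniformity required in (b) and (c). The paper factors $\TT^{-1}=(\TT^{-1}\AAc)\,(Q')^{-1}\,D^{-1}\,Q$ and glues only after the $Q$-step: there the straightened charts $\phi^{(1)}_m$ are supported in the disjoint balls $B(\Pi m,C_0^{1/2})$ (this is exactly what claim (a) is for), their flow component has been normalized by $\Delta_m$, and the glueing cutoffs have derivatives bounded in terms of $C_0$ alone; the contraction $D^{-1}$ is applied afterwards, to the already-glued chart, and it is at that step --- estimates \eqref{heart'}--\eqref{heart''} --- that the bunching condition \eqref{suffhyp} produces the $1/C_1$ constants of \eqref{smooths}--\eqref{smoothsecond} and \eqref{smoothu}. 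If instead you glue after applying $\TT^{-1}$, the pieces live on slabs of unstable width $O(\lambda_u(\AAc)^{-1})$, the cutoff and interpolation terms are no longer controlled by constants depending only on $C_0$, and nothing guarantees that the constant $C$ in $D^1_{2}(C)$ and the admissibility constants of $\phi'$ are independent of the hyperbolicity of $\AAc$ --- which is the whole content of (c). Relatedly, your description of $\Psi_m$ as ``the deviation of $\TT$ from its linearisation along the orbit from $0$ to $m$'' is not what the factorization requires: the nonlinear deviation $\TT^{-1}\AAc-\id$ and the shear $(Q')^{-1}$ are absorbed at the last step into the single, $m$-independent pair $(\phi',\Psi)$, while $\Psi_m$ depends only on $\phi_m$ and $Q$. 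As written, the sketch does not explain how the factorization $\TTT_m=\Psi\circ D^{-1}\circ\Psi_m\circ\Delta_m$ with these uniformity properties would be produced.
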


Note that  (c) implies in particular that each $\TTT_m$ sends
stable leaves to stable leaves. Note also that if $C_0$ is
large enough, then $\phi' \in \FF(0, \CC^s, \beta, C_0, C_1)$ implies
$(\phi')^{-1}(B(0,d))\subset B(0, C_0^{1/2}/2)$ (because
$\|(\phi')^{-1}\|_{C^1} \le \Cs$ by Lemma ~ \ref{lempropphi}).

\begin{remark}\label{shorter}
Composing with translations, we deduce a more general result
from Lemma ~\ref{lemcompose}, replacing $0$ by $\ell \in
\real^d$, and allowing $\TT(\ell)\ne \ell$: Just replace $\AAc$ by
$D\TT(\ell)$, the projection $\Pi$ by
$\Pi(x^u,x^s,x^0)=(x^u,x^s_{\TT(\ell)},x^0_{\TT(\ell)})$, where 
$\TT(\ell)=(x^s_{\TT(\ell)}, x^u_{\TT(\ell)}, x^0_{\TT(\ell)})$, and assume that
\begin{equation*}
\norm{ (\TT^{-1} [\cdot + \TT(\ell)]-\ell) \circ D\TT(\ell)
-\id }{C^{2}} \le \epsilon
\end{equation*}
and that $D\TT(\ell)$ sends  $\CC $ to $\tilde\CC$ compactly.
One then uses the condition $B(m,d) \cap \TT(B(\ell,d))\ne
\emptyset$ to define $\JJ'$. Of course, $\phi'$ is then in
$\FF(\ell ,\CC^s,\beta,  C_0, C_1)$, equality \eqref{newphi} holds on
$\phi_m^{-1}( B(m,d) \cap \TT(B(\ell,d)))$, and the range of
$\Psi$  contains $B(\ell, C_0^{1/2})$. Finally, we have
$(\phi')^{-1}(B(\ell,d))\subset B(\ell, C_0^{1/2}/2)$.
\end{remark}

The proof below is an adaptation of the proof of
\cite[Lemma 3.3]{BG2}.

\begin{proof}
[Proof of Lemma~\ref{lemcompose}]
We shall write $\pi_1$, $\pi_2$, and
$\pi_3$ for, respectively, the first, second and third projection in $\real^d=\real^{d_u}\times \real^{d_s}\times\real$.

\emph{Step zero: Preparations.} We shall write $\Cs$ and
$\epsilons$ for a large, respectively small, constant,
depending only on $\CC,\tilde\CC$, that may vary from line to
line. For the other parameters, we will always specify if they
depend on $C_0$ or $C_1$.

The set $\AAc(\real^{d_u}\times \{0\}\times \{0\})$ is contained in
$\tilde\CC^u$, hence uniformly transversal to $\{0\}\times
\real^{d_s+1}$. Therefore, it can be written as a graph $\{(x^u,P(x^u))\}$
for some matrix $P$ with norm depending only on $\tilde\CC$.
Let $Q(x^u,x^s, x^0)=(x^u,(x^s, x^0)-P(x^u))$, 
so that $Q\AAc$ sends $\real^{d_u}\times
\{0\}\times \{0\}$ 
and $\{0\}\times \{0\} \times \real$
to itself. In the same way, $\AAc^{-1}(\{0\}\times
\real^{d_s}\times \{0\})$ is contained in $\CC^s$, hence it is a graph
$\{(P_u'(x^s),x^s, P'_0(x^s))\}$. Letting $Q'(x^u, x^s, x^0)(x^u-P_u'(x^s),x^s, x^0-P'_0(x^s))$, the matrix
$D=Q\AAc(Q')^{-1}$ leaves
$\{0\}\times \{0\}\times \real$, $\real^{d_u}\times \{0\}\times \{0\}$, 
and $\{0\}\times
\real^{d_s}\times \real$ invariant, i.e., it is block-diagonal, of the form $
\left(\begin{smallmatrix} A&0&0\\0&B&0\\0&0&\sigma
\end{smallmatrix}\right)$, and moreover
$|Av|\ge \Cs^{-1}\lambda_u |v|$ and $|Bv|\le \Cs\lambda_s
|v|$ (since the matrices $Q$ and $Q'$, as well as their
inverses,  are uniformly bounded in terms of $\CC$ and
$\tilde\CC$) and the scalar matrix $\sigma=1$.

We can readily prove  assertion (a) of the lemma. Let $m\in
\JJ'$, there exists $z \in B(m,d) \cap \TT(B(0,d))$. The set
$Q\TT(B(0,d))=DQ'(\TT^{-1}\AAc)^{-1}(B(0,d))$ is included in
$\{(x^u,x^s, x^0)\st |(x^s, x^0)|\le \Cs\}$ for some constant $\Cs$ (the role of
$Q$ is important here). Since $Qz\in Q\TT(B(0,d))$, we obtain
$|\pi_2(Q z)|\le \Cs$ and
$|\pi_3(Qz)|\le \Cs$. Since $|z-m|\le d$, we also have
$|Qz-Qm|\le \Cs$, hence $|\pi_2(Qm)|\le \Cs$,
$|\pi_3(Qm)|\le \Cs$ (for a different
constant $\Cs$). Since 
$$
Qm-\Pi m(m^u, \pi_2(Qm),\pi_3(Qm)) - (m^u,0,0)(0,\pi_2(Qm),\pi_3(Qm) )
\, ,$$ 
we obtain 
\begin{equation}
\label{piQm}
|Qm-\Pi m|\le \Cs\, .
\end{equation}

Since the points $m\in \JJ'$ are far apart by assumption, the
points $Qm$ for $m\in \JJ'$ are also far apart, and it follows
that the points $\Pi m$ are also far apart. Increasing the
distance between points in $\JJ'$, we can in particular ensure
that $|\Pi m-\Pi m'|\ge C_0$ for any $m\not=m'\in \JJ'$,
proving (a).

The strategy of the rest of the proof  is the
following: We write
\begin{equation}
\TT^{-1}=\TT^{-1}\AAc \cdot (Q')^{-1}\cdot D^{-1} \cdot Q \, .
\end{equation}
We shall start from the partial foliation given by the maps
$\phi_m$ for $m\in \JJ$, apply $Q$ (Step ~1) to obtain a new
partial foliation at $Q m$, modify it via glueing (Step ~2) to
obtain a global foliation, and then push this foliation
successively with $D^{-1}$ (Step~ 3), $(Q')^{-1}$ (Step ~4), and
$\TT^{-1}\AAc$ (last step).

We next define spaces of functions which will be used in the proof.
For $\Cs>0$, let us denote by $\DD(\Cs)$ the class of $C^1$
maps $\Psi$ defined on an open subset of $\real^j$
(the value of $j$ will be clear from the context), taking
their values in $\real^j$ and satisfying
\begin{equation}
\label{defD}
\Cs^{-1} |z-z'|\le |\Psi(z)-\Psi(z') | \le \Cs |z-z'|,
\end{equation}
for any $z,z'$ in the domain of definition of $\Psi$. It follows
that any such $\Psi$ is a local diffeomorphism, and that $\|D\Psi\|\le
\Cs$, $\|(D\Psi)^{-1}\|\le \Cs$.

For $\beta\in (0,1)$, we
denote by $\KK=\KK^{\beta}(C)$ the class of matrix-valued
functions $K$ on $\real^{d-1}$ such that,
for all $x^u,y^u\in \real^{d_u}$, for all $x^s,y^s\in \real^{d_s}$,
\begin{align*}
& |K(x^u,x^s)|\le C\, ,\\
& |K(x^u,x^s)-K(y^u,x^s)|\le C |x^u-y^u|^\beta \, ,
|K(x^u,x^s)-K(x^u,y^s)|\le C |x^s-y^s|\, ,\\
& |K(x^u,x^s)-K(y^u,x^s)-K(x^u,y^s)+K(y^u,y^s)| \le C |x^u-y^u|^\beta |x^s-y^s|^{1-\beta} \, .
\end{align*}
The spaces of local diffeomorphisms
$\DD(\Cs)$ and of matrix-valued functions
$\KK(\Cs)=\KK^{1,\beta}(\Cs)$ were introduced in 
\cite{BG2}. As in Remark~A.6 of 
Appendix
A of \cite{BG2}, we will
write $\KK(\Cs,A)$ for the functions defined on a set $A$ and
satisfying the inequalities defining $\KK(\Cs)$ ($A$ will
sometimes be omitted when the domain of definition is obvious).
The map $\phi_m=(F_m(x^u, x^s),x^s, x^0+\tilde
f_m(x^s, x^u))$ belongs to $\DD(\Cs)$ (see the proof of Lemma
~ \ref{lempropphi}), the matrix-valued function $DF_m$
belongs to $\KK(\Cs, B((m^u, m^s), C_0))$ (boundedness of $DF_m$ is
proved in Lemma ~ \ref{lempropphi}, while the H\"{o}lder-like
properties are given by \eqref{smooths}--\eqref{smoothsecond}).

\medskip

\emph{First step: Pushing the foliations with $Q$.} We
formulate in detail  the construction in this first step (a
version of Lemma~ \ref{lemcomposeQ} will be used also in the
last step, replacing $Q$ by $\TT^{-1} \AAc$, while steps 2-3-4 are
much simpler). The  statement below is the analogue in our setting
of \cite[Lemma 3.5]{BG2}:

\begin{lemma}
\label{lemcomposeQ} (Notation as in Lemma ~\ref{lemcompose} and Step 0 of its proof.)
There exists a constant $\Cs$ such that, if $C_0$ is large
enough and $C_1>2C_0$, for any $m=(m^u, m^s, m^0)\in \JJ'$ there
exist maps 
$\Psi_m:B((m^u, m^s),C_0^{2/3})\times \real\to \real^d$,
$\phi_m^{(1)}: B(\Pi m, C_0^{1/2}) \to \real^d$, and a map $\Delta_m$
from $B((m^u, m^s),C_0^{2/3})\times \real$ to
itself such that
\begin{equation*}
\phi_m^{(1)}\circ \Psi_m\circ \Delta_m
=Q\circ \phi_m \text{ on } \phi_m^{-1}(B(m,d)) \, .
\end{equation*}
Moreover, $\Psi_m$ is a diffeomorphism in $D^1_{2}(\Cs)$
whose  range contains $B(\Pi m, C_0^{1/2})$, and
$\Psi_m(\Delta_m \phi_m^{-1}(B(m,d))) \subset B(\Pi m, C_0^{1/2}/2)$,
while $\Delta_m(x^u, x^s, x^0)(x^u, x^s, x^0+\delta_m(x^u, x^s))$  with
$\|\delta_m\|_{C^1}\le C$.
Finally, 
$$\phi_m^{(1)}(x^u,x^s, x^0)=(F^{(1)}_{m}(x^u,x^s),x^s, 
x^0)
$$ 
on $B(\Pi m,
C_0^{1/2})$, with $F^{(1)}_m$ a $C^1$ map
with $DF^{(1)}_m$ in $\KK(\Cs,B(
(m^u,0), C_0^{1/2}) )$.
\end{lemma}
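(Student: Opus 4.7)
The plan is to compute $Q \circ \phi_m$ explicitly in block coordinates, isolate the flow-direction perturbation as $\Delta_m$, and then carry out a graph-transform argument in the $(x^u, x^s)$-plane to produce $\Psi_m$ and $\phi_m^{(1)}$. The main point is that once $\Delta_m$ is extracted, the remaining factorisation reduces to the $(d-1)$-dimensional problem already handled in \cite[Lemma 3.5]{BG2}.

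First I would decompose $P = (P_s, P_0)$ with $P_s : \real^{d_u} \to \real^{d_s}$ and $P_0 : \real^{d_u} \to \real$, so that $Q(x^u, x^s, x^0) = (x^u, x^s - P_s(x^u), x^0 - P_0(x^u))$. A direct computation gives
\[
Q \circ \phi_m(x^u, x^s, x^0) = \bigl(F_m(x^u, x^s),\, x^s - P_s(F_m(x^u, x^s)),\, x^0 + \delta_m(x^u, x^s)\bigr),
\]
where $\delta_m(x^u, x^s) := \tilde f_m(x^u, x^s) - P_0(F_m(x^u, x^s))$ is $C^1$ with $\|D\delta_m\|_\infty \le \Cs$ by Definition \ref{defcharts} for $\tilde f_m$, Lemma \ref{lempropphi} for $F_m$, and boundedness of $P_0$. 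Setting $\Delta_m(x^u, x^s, x^0) = (x^u, x^s, x^0 + \delta_m(x^u, x^s))$ absorbs the flow-direction shift, so that $Q \circ \phi_m \circ \Delta_m^{-1}$ has trivial dependence on $x^0$.

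Next I would reduce the factorisation $\phi_m^{(1)} \circ \Psi_m = Q \circ \phi_m \circ \Delta_m^{-1}$ to a graph-transform problem in $(x^u, x^s)$. Setting $\tilde\psi_m(x^u, x^s) := (x^u,\, x^s - P_s(F_m(x^u, x^s)))$, $\Psi_m(x^u, x^s, x^0) := (\tilde\psi_m(x^u, x^s), x^0)$, and defining $F_m^{(1)}$ by $F_m^{(1)} \circ \tilde\psi_m = F_m$, all three coordinates match. The Jacobian $D\tilde\psi_m$ is block lower triangular in $(x^u, x^s)$ with diagonal blocks $I$ and $I - P_s \partial_{x^s} F_m$; the lower-right block is invertible with uniform bounds because tangent vectors to the graph of $F_m$ lie in $\tilde\CC^s$ (so $\partial_{x^s} F_m$ is bounded), and $Q\tilde\CC^s$ remains transversal to $\real^{d_u}\times\{0\}\times\{0\}$ by construction of $Q$. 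A standard mean-value and open-mapping argument then shows that $\tilde\psi_m$ is a diffeomorphism from $B((m^u, m^s), C_0^{2/3})$ onto a set containing $B(\Pi m, C_0^{1/2})$, using \eqref{piQm} to locate $\tilde\psi_m(m^u, m^s)$ near $\Pi m$ once $C_0$ is large enough.

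To conclude, I would verify the regularity claims. The bound $\Psi_m \in D^1_{2}(\Cs)$ from Definition \ref{3.1} reduces to Lipschitz dependence of $D\tilde\psi_m$ on $x^s$ (which is \eqref{smooths}), trivial $x^0$-dependence of $\tilde\psi_m$, and the uniform $C^1$-diffeomorphism bounds, which follow from the estimates above and the uniform invertibility of $I - P_s \partial_{x^s} F_m$. For $DF_m^{(1)} \in \KK(\Cs, B((m^u, 0), C_0^{1/2}))$, I would differentiate the identity $F_m = F_m^{(1)} \circ \tilde\psi_m$ to get $DF_m^{(1)} = (DF_m) \cdot (D\tilde\psi_m)^{-1}$ evaluated at $\tilde\psi_m^{-1}$, then check each of the four $\KK$ conditions using \eqref{smooths}, \eqref{smoothu}, \eqref{smoothsecond} on $DF_m$ together with the smoothness of $\tilde\psi_m^{-1}$; this is the same bookkeeping as in \cite[Lemma 3.5]{BG2} and its Appendix A.6, which I would import rather than redo. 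The main obstacle is precisely this anisotropic H\"older bookkeeping: the class $\KK$ mixes a $\beta$-H\"older modulus in $x^u$ with a Lipschitz modulus in $x^s$, and propagating these moduli through $F_m^{(1)} = F_m \circ \tilde\psi_m^{-1}$ crucially needs the joint mixed estimate \eqref{smoothsecond}, not just the two separate moduli. The new feature compared to \cite{BG2} --- the factor $\Delta_m$ --- is itself painless to produce, but it is precisely what later forces the regularity loss in the flow direction absorbed by Lemma \ref{noglue}, and motivates the special form \eqref{tildenew} of admissible charts.
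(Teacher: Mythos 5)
Your construction is, up to one omission, the paper's own proof in closed form: your $\delta_m(x^u,x^s)=\tilde f_m(x^u,x^s)-P_0(F_m(x^u,x^s))$ is exactly the function defined by \eqref{defdelta}, and your $\tilde\psi_m(x^u,x^s)=(x^u,\,x^s-P_s(F_m(x^u,x^s)))$ is exactly the map $\Lambda^{(0)}$ whose second component is $L_{x^u}$ from \eqref{def_Lx}, so defining $F^{(1)}_m$ by $F^{(1)}_m\circ\tilde\psi_m=F_m$ amounts to taking $\phi^{(1)}_m=Q\circ\phi_m\circ\Delta_m^{-1}\circ\Gamma^{(0)}$. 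What you drop is the paper's second straightening $\Gamma^{(1)}(x^u,x^s,x^0)=((L^{(1)})^{-1}(x^u),x^s,x^0)$, which is absorbed into $\Psi_m=(\Gamma^{(0)}\circ\Gamma^{(1)})^{-1}$ precisely to force the normalisation $F^{(1)}_m(x^u,0)=x^u$. That normalisation is not written in the statement you were given, but it is what the rest of the proof of Lemma~\ref{lemcompose} consumes: Step~2 needs it to conclude $F^{(2)}_m(x^u,0)=x^u$ after glueing with the trivial foliation, and Step~3 uses $\partial_{x^u}F^{(3)}(x^u,0)=\id$ to get the reinforced form of \eqref{smoothu}. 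With your $\Psi_m$ one has instead $F^{(1)}_m(x^u,0)=F_m(x^u,x^s_*(x^u))$ where $x^s_*$ solves $x^s_*=P_s(F_m(x^u,x^s_*))$, which is not $x^u$ in general; so either reinstate the factor $(L^{(1)})^{-1}$ in the unstable coordinate (it costs nothing: $L^{(1)}\in\DD(\Cs)$ and the $\KK$-calculus is unaffected) or accept that you prove a strictly weaker lemma than the one the enclosing argument uses.

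Two points of rigour in your middle step. First, the uniform invertibility of $x^s\mapsto x^s-P_s(F_m(x^u,x^s))$ does not follow from transversality of $Q\tilde\CC^s$ to $\real^{d_u}\times\{0\}\times\{0\}$: you need a lower bound on the $x^s$-component of the image vectors, i.e.\ quantitative separation from the whole subspace $\real^{d_u}\times\{0\}\times\real$; the paper obtains this as \eqref{ellx_diffeo} by comparing a vector of the weak-stable cone $\tilde\CC^{ws}$ with a vector of the graph of $P\subset\tilde\CC^u$ having the same first and third components. Moreover, since the increment of $L_{x^u}$ involves the averaged derivative $\int_0^1\partial_{x^s}F_m(x^u,x^s+\sigma w)w\,d\sigma$ and not a single derivative, pointwise invertibility of $I-P_s\partial_{x^s}F_m$ is not enough for global injectivity: one uses the convexity of the cone slices, as in \eqref{useconv}, to keep the averaged tangent vector inside $\tilde\CC^{ws}$ before invoking the cone gap; your ``standard mean-value and open-mapping argument'' should be fleshed out in exactly this way (it is then \cite[Lemmas A.1--A.2]{BG2}). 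Second, on the zoomed charts $\delta_m$ itself is of size comparable to $|m^0|$, hence not uniformly bounded; only $\|D\delta_m\|_{L^\infty}\le\Cs$ holds, which is what you claim and all that Lemma~\ref{lemcompose}(c) and Lemma~\ref{noglue} require.
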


If $\EE$ is the foliation given by
$\phi_m(x^u,x^s, x^0)=(F_m(x^u,x^s),x^s, \tilde F_m(x^u,x^s,x^0))$, 
then by definition $\phi_m^{(1)}$
sends the stable leaves of $\real^d$ to the foliation $Q(\EE)$,
i.e., $\phi_m^{(1)}$ is the standard parametrisation of the
foliation $Q(\EE)$.

\begin{proof}[Proof of Lemma ~\ref{lemcomposeQ}]
Fix $m=(m^u,m^s, m^0) \in \JJ'$. The map $Q\circ \phi_m$ does not
qualify as $\phi_m^{(1)}$ for three reasons. First, $\pi_2 \circ
Q \circ \phi_m(x^u,x^s, x^0)$ is  not equal to $x^s$ in general. Second,
$ \pi_3 \circ Q \circ \phi_m(x^u,x^s, x^0)$ 
is  not equal to $x^0$ in general.
Thirdly,
$\pi_1 \circ Q\circ \phi_m(x^u,0,x^0)$ is not equal to $x^u$ in general.
We shall use two maps $\Gamma^{(0)}$ and $\Gamma^{(1)}$ (sending
stable leaves to stable leaves, and flow directions to flow directions) 
and a perturbation of the flow direction
$\Delta_m$ to solve these three
problems. The map $\Gamma^{(0)}$ will have the form
$\Gamma^{(0)}(x^u,x^s, x^0)=(x^u,G(x^u,x^s), x^0)$, where, for fixed $x^0$ and $x^u$, the map
$x^s\mapsto G(x^u,x^s)$ is a diffeomorphism of the stable leaf
$\{x^u\}\times \real^{d_s}\times \{x^0\}$, so that $\pi_2 \circ Q \circ \phi_m
\circ \Gamma^{(0)} (x^u,x^s,x^0)=x^s$, while
the map
$\Delta_m(x^u, x^s, x^0)(x^u, x^s, x^0+\delta_m(x^u,x^s))$ is  so that $
\pi_3\circ Q \circ \phi_m \circ \Delta_m^{-1}
\circ \Gamma^{(0)}  (x^u,x^s,x^0)\equiv x^0$
(note that $\pi_2 \circ Q \circ \phi_m
\circ \Gamma^{(0)} (x^u,x^s,x^0)=\pi_2 \circ Q \circ \phi_m\circ \Delta_m^{-1}
\circ \Gamma^{(0)} (x^u,x^s,x^0)$). In particular, $Q \circ
\phi_m\circ \Delta_m^{-1}\circ \Gamma^{(0)}(x^u,0,x^0)$ is of the form 
$(L^{(1)}(x^u),0,x^0)$
for some map $L^{(1)}$. Choosing
$$\Gamma^{(1)}(x^u,x^s,x^0)=((L^{(1)})^{-1}(x^u),x^s, x^0)$$ solves our last
problem: the map
$$\phi_m^{(1)}= Q \circ \phi_m \circ \Delta_m^{-1}
\circ \Gamma^{(0)} \circ \Gamma^{(1)}
$$ satisfies 
$\pi_2\circ \phi_m^{(1)}(x^u,x^s,x^0)=x^s$,
$\pi_3\circ \phi_m^{(1)}(x^u,x^s,x^0)=x^0$, 
and $\pi_1 \circ
\phi_m^{(1)}(x^u,0,x^0)=x^u$, as desired. 
(And it still has the property that
$\partial_{x^0}\pi_1\circ \phi_m^{(1)}(x^u,x^s,x^0)\equiv 0$.)
Then, the map
$\Psi_m=(\Gamma^{(0)}\circ \Gamma^{(1)})^{-1}$
sends
stable leaves to stable leaves and flow directions to flow directions, 
and $Q\circ \phi_m \phi_m^{(1)} \circ \Psi_m\circ \Delta_m$.

We shall now be more precise, justifying the existence of the
maps mentioned above, and estimating their domain of
definition, their range and their smoothness.

\emph{The maps $\Gamma^{(0)}$ and
$\Delta_m$.} For fixed $x^u$  the map $(x^u, x^s)\mapsto
G(x^u,x^s)$ should satisfy $\pi_2  \circ Q \circ
\phi_m(x^u,G(x^u,x^s),x^0)=x^s$, i.e., it should be the inverse to the map
\begin{equation}
\label{def_Lx}
L_{x^u} : x^s\mapsto \pi_2\circ Q \circ \phi_m(x^u,x^s, x^0)  x^s-\pi_2(PF_m(x^u,x^s))\, ,
\end{equation}
where we denote $\phi_m(x^u, x^s, x^0)=(F_m(x^u,x^s),x^s, 
\tilde F_m(x^s, x^0))$.  We claim that the
map  $L_{x^u}$ is invertible onto its image, and that there exists
$\epsilons^0>0$  such that
\begin{align}
\label{ellx_diffeo}
&|L_{x^u}(y^s)-L_{x^u}(x^s)|\ge 
\epsilons^0 |y^s- x^s|\, ,\\
\nonumber&\qquad \qquad\forall x^u\in B(m^u,C_0) \, , \quad \forall x^s,
y^s\in B(m^s,C_0)
\, .
\end{align}
Indeed, fix $x^u \in B(m^u, C_0)$, let $w=y^s-x^s$, write
$F(x^s)=F_m(x^u,x^s)$. We have
\begin{equation}
\label{jlqsmfdlkjqmdslf}
L_{x^u}( y^s)-L_{x^u}(x^s)    w -\pi_2\biggl (P \int_{\sigma =0}^1 \partial_{x^s} F(x^s+\sigma w) w\,  d \sigma \biggr )
\, .
\end{equation}
Define 
$$\CC^{ws}=\{v+(0,0, x^0)\mid v \in \CC^s, x^0 \in \real\}\, .$$
Each vector $(\partial_{x^s} F(x^s+\sigma w)w,w,x^0)$ belongs to $\tilde\CC^{ws}$.
Since this cone is  transversal to $\real^{d_u}\times
\{0\}\times \{0\}$ and defined
through a linear map
(see \eqref{conedeff} and the corresponding
footnote), the set $\tilde\CC^{ws} \cap (\real^{d_u} \times \{w\} \times
\{x^0\})$ is
convex, hence
\begin{equation}
\label{useconv}
v_1  \left(\int_{\sigma =0}^1 \partial_{x^s} F(x^s+\sigma w) w \, d \sigma, w,x^0\right)
\in \tilde \CC^{ws}\, .
\end{equation}
On the other hand, since the graph of $P$ is included in
$\tilde \CC^u$, 
$$v_2= (\int_{\sigma =0}^1 \partial_{x^s} F(x^s+\sigma w)
w \, d \sigma , P \int_{\sigma =0}^1 \partial_{x^s} F(x^s+\sigma w) w\, d \sigma)$$ 
belongs to
$\tilde\CC^u$. Let $\epsilons^0>0$ be such that
$B(v,\epsilons^0|v|) \cap \tilde\CC^u =\emptyset$ for any $v\in
\tilde\CC^{ws} -\{0\}$. Since $v_1 \in \tilde \CC^{ws}$ and $v_2 \in
\tilde \CC^u$, we get $|v_1-v_2|\ge \epsilons^0 |v_1|$. As
$v_1$ and $v_2$ have the same first
and third components if
$x^0:=\pi_3(P \int_{\sigma =0}^1 \partial_{x^s} F(x^s+\sigma w) w\, d \sigma )$, this gives
$|\pi_2(v_1)-\pi_2(v_2)|\ge \epsilons^0|v_1|$, i.e.,
\begin{equation*}
\left|w - \pi_2(P \int_{\sigma =0}^1 \partial_{x^s} F(x^s+\sigma w) w \, d \sigma)
\right|\ge \epsilons^0 |w|\, ,
\end{equation*}
which implies \eqref{ellx_diffeo} by \eqref{jlqsmfdlkjqmdslf}.

For each $x^s$  the map $(x^u, x^s)\mapsto
\delta_m(x^u,x^s)$ should satisfy 
$$
\pi_3  \circ Q \circ
\phi_m(x^u,G(x^u,x^s),x^0-\delta_m(x^u, G(x^u,x^s)))\equiv x^0\, ,
$$ 
i.e., 
\begin{equation}
\label{def_HH}
\tilde F_m(x^u,
G(x^u, x^s),x^0-\delta_m(x^u, G(x^u,x^s))) -\pi_3(PF_m(x^u, G(x^u,x^s)))
=x^0 \, .
\end{equation}
Letting $P_3=\pi_3 P$, and recalling that
$\partial_{x^0}\tilde F_m(x^s,x^0)\equiv 1$,
the condition reads, for $y^s=G(x^u, x^s)$,
\begin{equation}\label{defdelta}
\tilde f_m(x^u, y^s)
-\pi_3(PF_m(x^u, y^s))
=\delta_m(x^u, y^s)\, .
\end{equation}

The map $\Lambda^{(0)}:(x^u,x^s)\mapsto (x^u,L_{x^u}(x^s))$ is well defined
on $B( (m^u, m^s), C_0)$, its derivative is bounded by a constant $\Cs$,
and its second   component satisfies \eqref{ellx_diffeo}.
Then, \cite[Lemma
~A.1]{BG2} (with $x^u$ and $x^s$ exchanged) shows that
$\Lambda^{(0)} \in \DD(\Cs,B( (m^u, m^s), C_0))$ for some constant $\Cs$. In
particular, $\Lambda^{(0)}$ admits an inverse
which also belongs to $\DD(\Cs)$. 
By \cite[Lemma
~A.2]{BG2}, the range of $\Lambda^{(0)}$ 
contains the ball $B( \Lambda^{(0)}(m), C_0/\Cs)$. Moreover,
$\Lambda^{(0)}(m^u, m^s)=(m^u, \pi^2(Qm))$. By \eqref{piQm}, we have $|Qm - \Pi
m|\le \Cs$, hence the domain of definition of $(\Lambda^{(0)}))^{-1}(x^u, x^s)
=(x^u, G(x^u, x^s))$
contains $B( \Pi m, C_0/\Cs -\Cs)$. If $C_0$ is large enough,
this contains $B(\Pi m, C_0^{2/3})$.

Finally, one gets from   \eqref{defdelta}  and Lemma~\ref{lempropphi}
that
$\delta_m(x^u, x^s)$ is defined on  $B((m^u, 0), C_0^{2/3})$ with
$\|\delta_m\|_{C^1}\le \Cs$ (in particular,
$\Delta_m \in \DD(\Cs)$),  and we put
\begin{align*}
&  \Gamma^{(0)}(x^u, x^s, x^0)=(x^u, G(x^u, x^s), x^0)  ((\Lambda^{(0)}))^{-1}(x^u, x^s), x^0)\, ,\, \,\\
&  \Delta_m(x^u, x^s, x^0)=x^0+\delta_m(x^u, x^s)\, .
\end{align*}

\emph{The map $\Gamma^{(1)}$.} Consider $\phi_m^{(0)}Q\circ \phi_m \circ \Delta_m^{-1} \circ \Gamma^{(0)}$. It is a composition of maps
in $\DD(\Cs)$, hence it also belongs to $\DD(\Cs)$. Moreover,
its restriction to $\real^{d_u}\times \{0\}\times \real$ has the form
$(x^u,0,x^0)\mapsto (L^{(1)}(x^u),0,x^0)$. It follows that the map
$L^{(1)}$ (defined on a subset of $\real^{d_u}$) also satisfies
the inequalities defining $\DD(\Cs)$. In particular, this map is
invertible, and we may define
$\Gamma^{(1)}(x^u,x^s,x^0)=((L^{(1)})^{-1}(x^u),x^s, x^0)$. This map belongs to
$\DD(\Cs)$. By construction, $\phi_m^{(1)} = Q \circ
\phi_m  \circ \Delta_m^{-1} \circ\Gamma^{(0)} \circ \Gamma^{(1)}$ can be written as
$$(F_m^{(1)}(x^u,x^s),x^s,x^0)$$ 
with $F_m^{(1)}(x^u,0)=x^u$.

We have $\phi_m^{(0)}(Q m)=Q m$. Since $|\Pi m-Qm| \le \Cs$ by
\eqref{piQm}, and $\phi_m^{(0)}$ is Lipschitz, we obtain
$|\phi^{(0)}_m(\Pi m)-\Pi m|\le \Cs$, i.e.,
$|L^{(1)}(m^u)-m^u|\le \Cs$. Since $L^{(1)}\in \DD(\Cs)$,
\cite[Lemma~A.2]{BG2} shows that $L^{(1)}(B(m^u,C_0^{2/3}))$
contains the ball $B(m^u,C_0^{2/3}/\Cs-\Cs)$. Therefore, it
contains the ball $B(m^u,C_0^{1/2})$ if $C_0$ is large enough.
Hence, the domain of definition of the map $\Gamma^{(1)}$
contains $B(\Pi m, C_0^{1/2})$. This shows that $\phi_m^{(1)}$
is defined on $B(\Pi m, C_0^{1/2})$.

\emph{The map $\Psi_m$.} We can now define
$$
\Psi_m=(\Gamma^{(0)}\circ\Gamma^{(1)})^{-1}=(L^{(1)}(x^u),L_{x^u}(x^s),
x^0)\, ,
$$
so that $Q\circ
\phi_m=\phi_m^{(1)} \circ \Psi_m\circ \Delta_m$. We have seen that $\Psi_m\in
\DD(\Cs)$, hence $D\Psi_m$ and $D\Psi_m^{-1}$ are uniformly
bounded. To show that $\Psi_m \in D^1_{2}(\Cs)$, we
should check that $|D\Psi_m(x^u,x^s, x^0)-D\Psi_m(x^u, y^s, x^0)| \le
\Cs|x^s-y^s|$. This follows directly from the construction
and the inequality \eqref{smooths} for $DF_m$.
Finally, since $\Psi_m\in \DD(\Cs)$ and $\Delta_m\in \DD(\Cs)$,
\begin{equation*}
\Psi_m( \Delta_m \phi_m^{-1}(B(m,d))) \subset \Psi_m(B(m, \Cs))
\subset B( \Psi_m(m), \Cs)\, .
\end{equation*}
Since $Q m=\phi_m^{(1)}(\Psi_m(m))$ and $\Pi m=\phi_m^{(1)}(\Pi
m)$, we get $|\Psi_m(m) -\Pi m|\le \Cs |Qm-\Pi m|\le \Cs$ by
\eqref{piQm}. Therefore, $\Psi_m( \phi_m^{-1}(B(m,d))) \subset
B( \Pi m, \Cs)$, and this last set is included in $B(\Pi m,
C_0^{1/2}/2)$ if $C_0$ is large enough.

\emph{The regularity of $DF_m^{(1)}$.} To finish the proof, we
should prove that $DF_m^{(1)}$  satisfies the
bounds defining $\KK(\Cs)$ for some constant
$\Cs$ independent of $C_0$. Since $\phi_m^{(1)}=Q\circ \phi_m
\circ \Delta_m^{-1}
\circ \Gamma^{(0)}\circ \Gamma^{(1)}$, we have
\begin{align}
\label{eqdphim1}  D \phi_m^{(1)}&  ( DQ \circ \phi_m \circ \Delta_m^{-1}\circ \Gamma^{(0)}\circ \Gamma^{(1)})
\cdot (D\phi_m\circ \Delta_m^{-1}\circ \Gamma^{(0)}\circ \Gamma^{(1)})\\
\nonumber
&\qquad \qquad\qquad
\cdot (D \Delta_m^{-1}\circ \Gamma^{(0)}\circ \Gamma^{(1)})
\cdot( D\Gamma^{(0)} \circ \Gamma^{(1)})
\cdot D\Gamma^{(1)}\, .
\end{align}
Since $\KK$ is invariant under multiplication (\cite[Proposition~
A.4]{BG2}), and under composition by Lipschitz maps
sending stable leaves to stable leaves (\cite[Proposition~
A.5]{BG2}), it is sufficient to show that $D\phi_m$, $D\Delta_m^{-1}$,
$D\Gamma^{(0)}$, and $D\Gamma^{(1)}$ all satisfy the bounds
defining $\KK(\Cs)$
(note that this is where \eqref{smoothu}--\eqref{smoothsecond} will be used)
For $D\phi_m$, this follows from our assumptions,
and for $D\Delta_m^{-1}$ from our assumptions
and \eqref{defdelta}.

Since $\Gamma^{(0)}=((\Lambda^{(0)})^{-1}, H)$, we have
$D\Gamma^{(0)}=(D\Lambda^{(0)})^{-1}\circ \Gamma^{(0)}$. Since
$D\Lambda^{(0)}$ is expressed in terms of $DF_m$, it belongs to
$\KK$. As $\KK$ is invariant under inversion (\cite[Proposition
A.4]{BG2}) and composition, we obtain $D\Gamma^{(0)} \in
\KK(\Cs)$.

Since $D\phi_m^{(1)}(x^u,0,x^0)=\id$, it follows from
\eqref{eqdphim1} that, on the set $\{(x^u,0,x^0)\}$, $D\Gamma^{(1)}$
is the inverse of the restriction of a function in $\KK$, and in
particular $$D\Gamma^{(1)}(x^u,0, x^0)$$ is a $\beta$-H\"{o}lder continuous
function of $x^u$, by \cite[(A.7)]{BG2} and a Lipschitz function of
$x^0$ by construction. Since
$D\Gamma^{(1)}(x^u,x^s, x^0)$ only depends on $x^u$ and
$x^0$, it follows that
$D\Gamma^{(1)}$ belongs to $\KK$. 
This concludes the proof of
Lemma ~\ref{lemcomposeQ}.
\end{proof}

\smallskip

We return to the proof of Lemma~\ref{lemcompose}:

\emph{Second step: Glueing the foliations $\phi^{(1)}_{m}$
together.}

Just as in \cite{BG2}, a glueing step is necessary (see Step 3
of the proof of the Lasota-Yorke Lemma~\ref{LY0}: the localisation lemma
gives
$\sum_{m\in \integer^d} \norm{\eta_m \omega }{H_p^{r,s,q}}^p
\le \Cs  \norm{\omega}{H_p^{r,s,q}}^p$
but {\em not} 
$\sum_{m\in \integer^d} \norm{\eta_m \omega_m }{H_p^{r,s,q}}^p
\le \sup_m \Cs  \norm{\omega_m}{H_p^{r,s,q}}^p$). 

Let $\gamma(x^u, x^s)$ be a $C^\infty$ function equal to $1$ on the
ball $B(C_0^{1/2}/2)$, vanishing outside of $B(C_0^{1/2})$. Let
$\phi^{(1)}_{m}(x^u,x^s, x^0)=(F^{(1)}_m(x^u,x^s),x^s, x^0)$
be a foliation defined
by Lemma~ \ref{lemcomposeQ}, and put
\begin{align*}
\phi^{(2)}_{m}(x^u,x^s, x^0)&=(\gamma(x^u-m^u,x^s)(F^{(1)}_m(x^s, x^u)-x^u)+x^u, x^s,   x^0) \, .
\end{align*}
By construction, 
$$
\phi^{(2)}_{m}(x^u,x^s, x^0)=(F^{(2)}_m(x^u,x^s),x^s, x^0)\, ,
$$ 
with
$F^{(2)}_m(x^u,0)=x^u$. In addition
$
\phi^{(2)}_{m}$ defines a foliation on the ball of radius
$C_0^{1/2}$ around $\Pi m$, coinciding with $\phi^{(1)}_{m}$ on
$B(\Pi m,C_0^{1/2}/2)$, with $F^{(2)}_{m}$ equal to 
$x^u$ on the $(x^u, x^s)$-boundary of $B(\Pi m,C_0^{1/2})$. Moreover, $DF^{(2)}_m$ is expressed in
terms of $\gamma$, $D\gamma$, $F_m^{(1)}$ and $DF_m^{(1)}$. All
those functions belong to $\KK(\Cs)$ (the first three functions
are Lipschitz and bounded, hence in $\KK(\Cs)$, while we proved
in Lemma ~\ref{lemcomposeQ} that $DF_m^{(1)}\in \KK(\Cs)$).
Therefore, $DF^{(2)}_m \in \KK(\Cs)$ by a modification of \cite[Proposition
~A.4]{BG2}.
We proved in (a) that the balls $B(\Pi m, C_0^{1/2})$ for
$m\in\JJ'$ are disjoint, therefore all the foliations $\phi^{(2)}_{m}$ can be
glued together (with the trivial stable foliation outside of
$\bigcup_{m\in \JJ'} B(\Pi m,C_0^{1/2})$), to get a single
foliation parameterised by $\phi^{(2)}:\real^d \to \real^d$. We
emphasize that this new foliation is not necessarily contained
in the cone $Q( \tilde\CC^s)$, since the function $\gamma$
contributes to the derivative of $\phi^{(2)}$. Nevertheless, it
is uniformly transversal to the direction $\real^{d_u}\times\{0\} \times \real$,
and this will be sufficient for our purposes.  Write
$$\phi^{(2)}(x^u, x^s, x^0)(F^{(2)}(x^u,x^s),x^s, x^0)
$$
where $F^{(2)}$ coincides
everywhere with a function $F_m^{(2)}$ or with the function
$(x^u,x^s)\mapsto x^u$.
Since all the derivatives of those functions
belong to $\KK(\Cs)$, it follows that $DF^{(2)}\in \KK(\Cs)$ 
(for some other constant $\Cs$, worse than the previous one due
to the glueing). Since we will need to reuse this last constant,
let us denote it by $\Cs^{(0)}$.

\medskip

\emph{Third step: Pushing the  foliation $\phi^{(2)}$ with
$D^{-1}$.} This  very simple step is the heart of the argument and
this is where
\eqref{suffhyp} is needed: Define a new foliation by
\begin{align}
\label{heart}
 &F^{(3)}(x^u,x^s)=A^{-1}F^{(2)}(Ax^u, Bx^s) \, ,
\, \, \phi^{(3)}(x)=(F^{(3)}(x^u,x^s),x^s, x^0) \, ,
\end{align}
so that $D^{-1}\phi^{(2)}=\phi^{(3)}D^{-1}$. 
We have
$F^{(3)}(x^u,0)=x^u$. Moreover
\begin{align*}
& \partial_{x^u} F^{(3)}(x^u,x^s)=A^{-1}(\partial_{x^u} F^{(2)})(Ax^u, Bx^s) A\,,  \\&\partial_{x^s} F^{(3)}(x^u,x^s)=A^{-1}(\partial_{x^s} F^{(2)})(Ax^u, Bx^s) B \, .
\end{align*}
In particular, if $|A^{-1}|$ and $|B|$ are small enough (which
can be ensured by decreasing $\epsilon$ in \eqref{suffhyp}), we
can make $\partial_{x^s} F^{(3)}$  arbitrarily small. 
Since $|B|\le
1 \le |A|$, it also follows that (see \cite[(3.12)]{BG2})
\begin{equation}
\label{heart'}
\begin{split}
\raisetag{-30pt}
|D F^{(3)}(x^u,x^s)-D F^{(3)}(x^u, y^s)| &  \le
 |A^{-1}| |A| \Cs^{(0)} |B| |x^s-y^s| \, .
\end{split}
\end{equation}

In the same way as \eqref{heart'} (see \cite[(3.13)]{BG2}),
\begin{equation}
\label{heart''}
\begin{split}
|DF^{(3)}(x^u,x^s)-D&F^{(3)}(x^u, y^s)-DF^{(3)}(y^u,x^s)+DF^{(3)}(y^u,  y^s))|
\\&
\le |A^{-1}| |A| \Cs^{(0)} |A|^\beta |B|^{1-\beta} |x^u-y^u|^\beta
 |x^s-y^s|^{1-\beta}
\, .
\end{split}
\end{equation}
If the bunching constant $\epsilon$ in \eqref{suffhyp} is small
enough (depending on $C_1$), we can ensure that the two last
equations are bounded, respectively, by $|x^s-y^s|/(2C_1)$
and $|x^u-y^u|^\beta |x^s-y^s|^{1-\beta} / (4C_0^2C_1)$, i.e.,
the map $F^{(3)}$ satisfies the requirements \eqref{smooths} and
\eqref{smoothsecond} for admissible foliations, with better constants.

Taking $y^s=0$ in \eqref{heart''}, we obtain
\begin{align*}
&|DF^{(3)}(x^u,x^s) - DF^{(3)}(y^u,x^s)|\\
&\qquad \le |x^u-y^u|^\beta |x^s|^{1-\beta} / (4C_0^2C_1)+ 
|DF^{(3)}(x^u,0) - DF^{(3)}(y^u,0)|\, .
\end{align*}
Moreover, $\partial_{x^u} F^{(3)}(x^u,0)=\partial_{x^u}
F^{(3)}(y^u,0)=\id$, so that
(see the computation in the lines above \cite[(3.14)]{BG2}),
\begin{align*}
|DF^{(3)}(x^u,0) - DF^{(3)}(y^u,0)|
&
\le |A^{-1}| |B| \Cs^{(0)} |A|^\beta |x^u-y^u|^\beta\, .
\end{align*}
The quantity $|A^{-1}| |B| |A|^\beta$ is bounded by $\Cs
\lambda_u^{-1} \lambda_s \Lambda_u^\beta$. Choosing $\epsilon$
small enough in \eqref{suffhyp}, it can be made arbitrarily
small. For $|x^s|\le C_0^2$, this yields
\begin{equation}
|DF^{(3)}(x^u,x^s) - DF^{(3)}(y^u,x^s)| \le |x^u-y^u|^\beta/(2C_1)\, ,
\end{equation}
which is a small reinforcement of \eqref{smoothu}.

We see that for fixed  $\CC$, $\widetilde C$,  there is a constant
$C$ depending only on $C_0$ so that
the smallness condition on $\epsilon$ is of the
form
\begin{equation}\label{smalleps}
\epsilon \le \frac{C}{C_1}\, .
\end{equation}
\medskip

\emph{Fourth step: Pushing the foliation $\phi^{(3)}$ with
$(Q')^{-1}$.} Define maps 
$$
F^{(4)}(x^u,x^s)=F^{(3)}(x^u, x^s)+P'_u(x^s)\, ,\, \, \, 
\tilde F^{(4)}(x^s, x^0)= x^0+P'_0(x^s)
$$ and
let $\phi^{(4)}(x^u,x^s, x^0)=(F^{(4)}(x^u,x^s),x^s, \tilde F^{(4)}(x^s, x^0))$. The corresponding
foliation is the image of $\phi^{(3)}$ under $(Q')^{-1}$. Let
us fix a cone $\CC^s_1$ which sits compactly  between $\CC^s_0$
and $\CC^s$. Since the graph $\{(P_u'(x^s),x^s, P_0'(x^s)\}$ is contained in
$\CC^s_0$, the foliation $F^{(4)}$ is contained in $\CC^s_1$ if
$\partial_{x^s} F^{(3)}$ is everywhere small enough. Moreover, the
bounds of the previous step concerning $DF^{(3)}$ directly
translate into the following bounds for $DF^{(4)}$ for all
$x^u,y^u\in \real^{d_u}$ and all $x^s, y^s\in B(0,C_0^2)$:
\begin{align}
\label{controleF4}
&| DF^{(4)}(x^u,x^s) -DF^{(4)}(x^u,y^s) | \le |x^s-y^s| / (2C_1)\, ,
\\ \label{controleF4'}
&|DF^{(4)}(x^u,x^s)- DF^{(4)}(y^u, x^s)|\le  |x^u- y^u|^\beta /(4C_0^2C_1)\, ,
\\ \label{controleF4''}
\begin{split}
&| D F^{(4)}(x^u,x^s)- DF^{(4)} (x^u,y^s)-DF^{(4)}(y^u,x^s)+DF^{(4)}(y^u,y^s) |
\\& \hphantom{| DF^{(4)}(x^u,x^s) -DF^{(4)}(x^u,y^s) |}
\le |x^u-y^u|^\beta |x^s-y^s|^{1-\beta}/(2C_1)\, ,
 \end{split} \, .
\end{align}
In particular, since $\partial_x F^{(4)}(x^u,0)=\id$, the bound
\eqref{controleF4} implies that $\partial_{x^u} F^{(4)}$ is bounded
and has a bounded inverse on a ball of radius $C_1\ge 2C_0$.

In addition, linearity of $P'_0$ implies that
\begin{equation}
\label{controleF4tilde}
 | D\tilde F^{(4)}(x^s,x^0) -D\tilde F^{(4)}(y^s,y^0) |=0\, .
\end{equation}
\medskip

\emph{Last step: Pushing the foliation $\phi^{(4)}$ with
$\TT^{-1}\AAc$.} Let $\UU = \TT^{-1}\AAc$, and consider $\phi'$ the
foliation obtained by pushing  $\phi^{(4)}$ with
$\UU$. We claim that $\phi'$ belongs to $\FF(0,\CC^s, C_0,
C_1)$, and that we can write $\UU\circ \phi = \phi' \circ
\Psi'$ for some $\Psi'\in D_1^{2}(\Cs)$.

To prove this, we follow the arguments in the proof of Lemma
~\ref{lemcomposeQ} (with simplifications here since $\UU$ is
close to the identity, noting also that $\UU$ preserves the flow direction
so that $\UU(x^u, x^s, x^0)(U_u(x^u, x^s), U_s(x^u, x^s), U_0(x^u,x^s, x^0)$ with
$U_0(x^u, x^s, x^0)=x^0+u_0(x^u, x^s)$,
and in particular the property $\partial_{x^0}F'\equiv0$ will
be given for free). First, fix $x^u$, and consider the map
$L_{x^u} : x^s\mapsto \pi_2\circ \UU \circ \phi^{(4)}(x^u,x^s, x^0)$. Writing
$\UU=\id+\VV$ where $\norm{\VV}{C^{2}}\le \epsilon$, we
have $L_{x^u}(x^s)= x^s + \pi_2\circ \VV (F^{(4)}(x^u,x^s),x^s,\tilde F^{(4)}(x^s, x^0))$.
(Note that $L_{x^u}$ does not depend on $x^0$ because
$U_s$ doesn't.) Since
$F^{(4)}$ is bounded in $C^1$ on the ball $B(0,2C_1)$, it
follows that, if $\epsilon$ is small enough, then the
restriction of $L_{x^u}$ to the ball $B(0,2C_1)$ (in $\real^{d_s}$) is
arbitrarily close to the identity. Therefore, its inverse is
well defined, and we can set $\Gamma^{(0)}(x^u,x^s, x^0)=(x^u,
L_{x^u}^{-1}(x^s), x^0)$. By construction, the map $\UU\circ
\phi^{(4)}\circ \Gamma^{(0)}(x^u,0, x^0)$ has the form $(L^{(1)}(x^u),
0, x^0+ L^{(2)}_{x^u})$ for some function $L^{(1)}$, which is bounded in
$C^{1+\beta}$ and arbitrarily close to the identity in $C^{1}$
if $\epsilon$ is small, and
some function $L^{(2)}(x^u)$ which is bounded in
$C^{1+\beta}$ and arbitrarily close to zero in $C^{1}$,
if $\epsilon$ is small. Let
$\Gamma^{(1)}(x^u,x^s, x^0)=((L^{(1)})^{-1}(x^u), x^s, x^0-L^{(2)}(x^u))$, then the map
$\phi'= \UU \circ \phi^{(4)}\circ \Gamma^{(0)}\circ
\Gamma^{(1)}$ is defined on the set $\{(x^u,x^s, x^0) \st |x^s|\le C_1\}$
(which contains $B(0,C_0)$), and it takes the form
$\phi'(x^u,x^s,x^0)=(F'(x^u,x^s), x^s, x^0+\tilde f'(x^u, x^s))$ for some functions $F'$, $\tilde f'$, with
$F'(x^u,0)=x^u$ and $\tilde f'(x^u,0)=0$. 

Since $\phi'$ is obtained by composing $\phi^{(4)}$ with
diffeomorphisms arbitrarily close to the identity, it follows
from \eqref{controleF4}--\eqref{controleF4''} that $F'$
satisfies \eqref{smooths}--\eqref{smoothsecond} and
from \eqref{controleF4} (recall that
$\tilde F^4$ does not depend on $x^u$) that $\tilde f'$ satisfies
\eqref{smoothtilde}.  Indeed, the present
analogue of \eqref{eqdphim1} is
\begin{equation*}
D \phi'  ( D\UU\circ \phi^{(4)} \circ \Gamma^{(0)}\circ \Gamma^{(1)})
\cdot (D\phi^{(4)}\circ \Gamma^{(0)}\circ \Gamma^{(1)})
\cdot( D\Gamma^{(0)} \circ \Gamma^{(1)})
\cdot D\Gamma^{(1)}\, ,
\end{equation*}
where $\Gamma^{(0)}$ and $\Gamma^{(1)}$ here satisfy the same
properties as the maps with the same names in the proof of
Lemma ~\ref{lemcomposeQ}, and where $D\UU \circ \phi^{(4)}
\circ \Gamma^{(0)}\circ \Gamma^{(1)}$ is bounded and
Lipschitz and thus belongs to $\KK$. We may thus argue exactly
as in the last step of the proof of Lemma ~\ref{lemcomposeQ}
for $F'$, while the case of $\tilde f'$ is easier.

Moreover, since
$(\partial_{x^s} F^{(4)}(z)w,w, \partial_{x^s} \tilde F^{(4)} w)$ takes its values in the cone
$\CC^s_1$, it follows that $(\partial_{x^s} F'(z)w,w,\partial_{x^s} 
\tilde f'(z))$ lies in the
cone $\CC^s$ if $\UU$ is close enough to the identity. Hence,
the foliation defined by $\phi'$ is contained in $\CC^s$. This
shows that $\phi'$ belongs to $\FF(0,\CC^s, C_0, C_1)$.

Finally, the function $\Psi = (\Gamma^{(0)}\circ
\Gamma^{(1)})^{-1}$ belongs to $D_1^{2}(\Cs)$. This
concludes the proof of Lemma~\ref{lemcompose}.
\end{proof}


\section{Approximate unstable foliations}\label{sec:unstable}
Let us consider $\bar x\in U_{i,j,\ell,1}$ and $\varrho$ so small that $B(\bar x, \varrho)\subset U_{i,j,\ell,1}\cap B_{i,j}$.\footnote{By $B(\bar x, \varrho)$ we mean the ball of radius $\varrho$ centered at $\bar x$.} First we describe all the objects in $B(\bar x, \varrho)$ by using the chart $\kappa_{i,j,\ell}$. 

Let $W$ be a surface with curvature bounded by some fixed constant $\Cs$ such that $\bar x\in W$, $\partial(W\cup B(\bar x, \varrho))\subset  \partial B(\bar x, \varrho)$ and with tangent space, at each point,  given by the span of the flow direction and a vector, in the kernel of $\alpha$, contained in the stable cone. Recall that in the present coordinates the contact form has the expression $\alpha_0= dx^0-x^sdx^u$. Note that almost every point in $W$ has a well-defined unstable direction. We can thus assume without loss of generality that the unstable direction is well defined at $\bar x$. By Lemma \ref{lem:c2} we can then change coordinates so that $\bar x=0$, $W=\{(0,\xi,s)\}_{\xi,s\in\bR}$ and the unstable direction at $\bar x$ is given by $(1,0,0)$. From now on we will work in such coordinates without further mention.

Our goal is to define smooth approximate strong unstable foliations in a $c\rho<\varrho$ neighbourhood of $W$.  

More precisely, we look for $C^{1+Lip}$ foliations $\Gamma_m$, described by the triangular change of coordinates $\bG_m(\eta,\xi,s)=(\eta, G_m(\eta,\xi), H_m(\eta,\xi)+s)$, with domains $\Delta_m\subset \{\xi\in\bR^2\;;\; \|\xi\|\leq \varrho\}$ and constants $c,\ho>0\,, \sigma\in (0,1)$, $m_{0}\in\bN$ such that
\begin{enumerate}
\item\label{it:1} $G_m(0,\xi)=\xi, H_m(0,\xi)=0$\,;
\item\label{it:2} $\partial_\eta H_m=G_m$ (i.e., $\alpha_0(\partial_\eta\bG)=0$)\,;
\item\label{it:3} for all $m_{0}\geq m'\geq m$, $\Delta_{m'}\subset \Delta_m$\,;
\item\label{it:4} for all $m_{0}\geq m'\geq m$ and $\xi\in\Delta _{m'}$, $\|G_m(\xi,\cdot)- G_{m'}(\xi,\cdot)\|_{C^1}\leq c \sigma^{m}$, $\|H_m(\xi,\cdot)- H_{m'}(\xi,\cdot)\|_{C^1}\leq c \sigma^{m}$\,;
\item\label{it:5} $\|\partial_\xi\partial_\eta \bG_m\|_\infty+\|\partial_s\partial_\eta \bG_m\|_\infty\leq c \rho^{-1}$\,;
\item\label{it:6} $\|\partial_\xi\partial_\eta \bG_m\|_{C^\ho}+\|\partial_s\partial_\eta \bG_m\|_{C^\ho}\leq c \rho^{-1-\ho}$\,;
\item\label{it:8} if $\xi\in \Delta_m$, then  $\partial_\eta T_{-\vus n}\bG_m(\eta, \xi)$ is well defined and belongs to the unstable cone for all $n\leq m\leq m_0$ and $\|\eta\|\leq c\rho$\,;
\item\label{it:9} $m(\Delta_m^c\cap W)\leq c \rho$, for all $m\leq m_{0}$.
\end{enumerate}
\begin{remark} Note that the above properties are not all independent, we spelled them out in unnecessary details for the reader's convenience. In particular, (\ref{it:1}) is just a condition on the parametrisation used to describe the foliation and can be assumed without loss of generality; (\ref{it:3}, \ref{it:8}) imply (\ref{it:4}) due the the usual cone contraction of hyperbolic dynamics.
\end{remark}

\begin{lemma} \label{lem:unstable-fol} There exists $\ho_{0}>0$ such that, provided $\varrho>\rho>0$, $\ho\in [0,\ho_{0}]$,\footnote{In fact,  it should be possible to have the result for each $\ho\in[0,1)$, but  this is not necessary for our purposes.} for each $m_{0}\in \bN$, there exists at least one set of foliations $\Gamma_m$, $m\in \{0,\dots, m_{0}\}$, in a $\rho$-neighbourhood of $W$, which satisfies the properties (\ref{it:1}-\ref{it:9}).
\end{lemma}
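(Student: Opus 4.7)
I will construct the foliations $\Gamma_m$ inductively, via backward iteration of the flow combined with a reparametrisation that enforces the triangular form $\bG_m(\eta,\xi,s)=(\eta,G_m(\eta,\xi),H_m(\eta,\xi)+s)$. The contact condition $\partial_\eta H_m=G_m$ in (\ref{it:2}) will be automatic because the leaves we construct lie in $\ker\alpha_0$: indeed, a curve of the form $\eta\mapsto(\eta,G(\eta,\xi),H(\eta,\xi)+s)$ lies in $\ker\alpha_0=\ker(dx^0-x^sdx^u)$ exactly when $\partial_\eta H(\eta,\xi)=G(\eta,\xi)$, and the contact form is preserved by the Reeb flow on each smooth piece.

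The initial foliation is $G_0(\eta,\xi)=\xi$, $H_0(\eta,\xi)=\eta\cdot\xi_1$ on $\Delta_0=\{\|\xi\|\le\varrho\}$, whose leaves are contained in $\ker\alpha_0$ and have $\eta$-tangent close to $(1,0,0)$, hence in the unstable cone. Inductively, given $\bG_m$ on $\Delta_m$, let
\[
\Delta_{m+1}=\{\xi\in\Delta_m\ :\ T_{-\vus}\text{ is }C^2\text{ on }\{\bG_m(\eta',\xi)\ :\ \|\eta'\|\le c\rho\}\}.
\]
For $\xi\in\Delta_{m+1}$ consider the image curve $T_{-\vus}\bG_m(\cdot,\xi)$. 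By hyperbolicity its tangent vector is contracted toward the true unstable direction, so the first component of this tangent remains bounded below and the curve may be reparametrised as the graph over the $\eta$-axis of a map $(G_{m+1}(\cdot,\xi),H_{m+1}(\cdot,\xi)+s)$; condition (\ref{it:2}) is preserved because $T_{-\vus}^*\alpha_0=\alpha_0$ on the relevant smooth piece. This defines $\bG_{m+1}$, and (\ref{it:8}) holds by construction together with the inductive hypothesis.

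The bound (\ref{it:9}) on the measure of $\Delta_m^c\cap W$ follows from the transversality assumption (Definition~\ref{transcond}) and subexponential complexity (Definition~\ref{domin}): the set $\Delta_m^c\cap\Delta_{m-1}$ corresponds to curves hit by one of the discontinuity manifolds of $T_{-\vus}$ within an $\eta$-window of size $c\rho$. Because the stable cone (containing $\partial_\xi\bG_m$) is uniformly transversal to the images of these discontinuities, the measure of the bad $\xi$'s at step $m$ is bounded by $\Cs\rho\cdot\nu^{m}$ for some $\nu<1$, exactly as in the proof of Lemma~\ref{lem:r-boundary}; summing gives (\ref{it:9}) and, together with cone contraction, gives (\ref{it:4}) directly. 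For the regularity bounds (\ref{it:5})--(\ref{it:6}), observe that $\partial_\eta\bG_m$ is, after reparametrisation, the unit tangent to a $C^2$ curve with curvature bounded uniformly (by the fixed $C^2$ norm of $T_{-\vus}$ and the bounded distortion along the one-dimensional map induced by the flow). Its variation in the transverse parameters $(\xi,s)$ is therefore controlled by the transverse scale $\rho$ of the foliation, yielding the $\rho^{-1}$ bound in (\ref{it:5}); the H\"older refinement (\ref{it:6}) follows by the standard trick $|f|_{C^\ho}\le\Cs|f|_{C^0}^{1-\ho}|f'|_{C^0}^{\ho}$ applied to $\partial_\xi\partial_\eta\bG_m$, which is Lipschitz on the smooth pieces of $T_{-\vus m}$ (with Lipschitz norm of order $\rho^{-2}$), provided $\ho\le\ho_0$ is small enough that interpolation goes through uniformly in~$m$.

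\textbf{Main obstacle.} The delicate point is to secure the bounds (\ref{it:5})--(\ref{it:6}) with constants independent of $m$ and $m_0$. At each iteration, the graph reparametrisation is a near-identity change of coordinates whose H\"older norm could in principle accumulate. The crucial observation is that the cone $\eta$-tangent contraction is strict (with rate bounded by $\bar\lambda^{-\vus}$), so the reparametrisation acts as a strict contraction on the $C^{1+\ho}$ norm of the defining functions modulo a bounded inhomogeneous term coming from the $C^2$ norm of $T_{-\vus}$. This yields a uniform a priori bound provided $\ho_0$ is small enough that $\bar\lambda^{-\vus(1+\ho_0)}<1$, which then closes the induction. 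The construction thus mirrors the classical graph-transform proof of the unstable manifold theorem, the only (but essential) difference being that we restrict the graph transform to the set $\Delta_m$ on which $m_0$ iterations remain smooth — this is precisely what allows the resulting $\Gamma_m$ to be globally Lipschitz despite the true unstable foliation being only measurable.
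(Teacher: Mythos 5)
There is a genuine gap — in fact two, and the second is the heart of the matter.

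First, your graph transform runs in the wrong direction. You define $\bG_{m+1}$ by taking $T_{-\vus}\bG_m(\cdot,\xi)$ and claim that "by hyperbolicity its tangent vector is contracted toward the true unstable direction." Under the \emph{backward} flow the opposite happens: $DT_{-\vus}$ maps generic unstable-cone vectors toward the stable/flow directions, so after a few steps the image curve is no longer a graph over the $\eta$-axis with small slope, and property (\ref{it:8}) — that the fibers of $\Gamma_m$, iterated backward up to $m$ steps, \emph{stay} in the unstable cone — is exactly what fails rather than what "holds by construction." Moreover your $\bG_{m+1}$ lives near $T_{-(m+1)\vus}W$, not in a $\rho$-neighbourhood of $W$ as the statement requires. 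The only way to manufacture curves at $W$ whose first $m$ backward iterates remain in the unstable cone is to start from a cone/contact-kernel foliation near the backward iterates $\cW_m$ and push it \emph{forward} $m$ times; this is what the paper does (its "allowed foliations," whose $C^{1}$ and $C^{1+\ho}$ bounds improve under forward iteration by standard distortion estimates). Relatedly, your closing condition $\blambda^{-\vus(1+\ho_0)}<1$ is vacuous (since $\blambda>1$), a sign that the actual constraint determining $\ho_0$ has not been identified.

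Second, and more importantly, your construction only defines $\Gamma_m$ on the shrinking good sets $\Delta_m$, whereas the lemma (and its use in Section~\ref{carlangelo} via Remark~\ref{rem:lip} and \eqref{eq:step5}) requires a foliation defined and uniformly $C^{1+\mathrm{Lip}}$ on the \emph{whole} $\rho$-neighbourhood, with $\Delta_m$ merely recording where it can additionally be iterated backward. Restricting to the set of fibers untouched by singularities reproduces precisely the Cantor-set obstruction the lemma is designed to remove. The paper's proof spends essentially all of its effort on this point: at each forward step the fibers cut by $\partial B_{i,j}$ (and the short pieces of $\cW_m$, controlled via Lemma~\ref{lem:r-boundary}) are replaced by a fixed reference foliation $\Gamma$ with leaves $(\eta,\xi,s+\xi\eta)$, and an explicit interpolation (the functions $\theta_0,\theta,\sigma$ built from bump functions, with $\zeta(\eta,\xi)=\xi\eta+\int_0^\eta\sigma$) glues the pushed-forward foliation to $\Gamma$ so that the result is again in the kernel of $\alpha_0$ (property (\ref{it:2})), stays in the unstable cone, matches $C^{1}$ across the glueing, and keeps the "allowed" bounds (\ref{it:5})--(\ref{it:6}) with constants uniform in $m$ — this is where the smallness of $\ho_0$ is really used. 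Your proposal never confronts this glueing, so properties (\ref{it:5}), (\ref{it:6}) on the full neighbourhood, the global Lipschitz holonomy, and ultimately the statement itself are not established; the measure estimate (\ref{it:9}) is the only part where your argument (via transversality, complexity, and Lemma~\ref{lem:r-boundary}) matches the paper's.
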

\begin{proof}
First of all, notice that once we construct the foliation over $W_0=\{(0,\xi,0)\}_{\xi\in\bR}$ we can obtain the foliation on $W$ by simply flowing it with the dynamics and $G, H$ will have automatically the wanted $s$ dependence. Contrary to the notation in section \ref{carlangelo}, we will call $\cW_{m}$ the set of regular manifolds obtained by $W_0$ under backward iteration by $T_{\vus}$.

Second, for each $\ho\in (0,1)$, $K>0$, there exists $c>0$ and $\cu, \cu_{1}\in (0,1)$, $\cu_{1}<\cu$, close enough to one, with the following properties. Consider any foliation in a $\cu^m\rho$ neighbourhood of $\cW_m$ aligned with the unstable cone, with fibers in the kernel of the contact form and with $C^1$ and $C^{1+\ho}$ norm bounded by $Kc\cu^{-m}\rho^{-1}$ and $Kc\cu^{-m(1+\ho)}\rho^{-1-\ho}$ respectively (in the sense of conditions (\ref{it:5}, \ref{it:6}) above). Then the image under any $T_{k\vus}$ provides a foliation in the $\cu^{m-k}\rho$ neighbourhood of $\cW_{m-k}$ satisfying the same conditions of $\cW_m$ but with bounds $c\cu^{-m}\cu_{1}^k\rho^{-1}$ and $c\cu^{-m(1+\ho)}\cu_{1}^{k(1+\ho)}\rho^{-1-\ho}$, provided $\rho$ is chosen small enough and $c$ large enough. This follows by standard distortion estimates as in the construction of stable manifolds for a smooth Anosov map, see \cite{KH}. We will call such foliations {\em allowed.}

Our strategy will be as follows: For each given allowed foliation $\Gamma^*_m$ defined in a $\cu^m\rho$ neighbourhood of $\cW_m$ we will show how to construct foliations $\Gamma_n$ represented by coordinates $\bG_n$, $n\leq m$, satisfying conditions (\ref{it:1}-\ref{it:9}). The problem, of course, is what to do with the fibers that are cut by a singularity.

As a first, very rough, approximation of the unstable foliation, let us choose in each $V_{i,j,\ell,0}=\kappa_{i,j,\ell}(U_{i,j,\ell,0})$ the foliation given by the leaves $\{\eta, \xi,s+\xi\eta\}_{\eta\in\bR}$. We call $\Gamma$ the resulting set of foliations, note that the tangent space to the leaves belongs the unstable cone and to the kernel of the contact form.~\footnote{By, if necessary, restricting the chart, we can assume without loss of generality that the unstable cone is given by the condition $\|\xi\|+\|s\|\leq K_{0}\|\eta\|$ for some $K_{0}$ small.}

Next, we proceed by induction. Given $\Gamma^*_0$ we set $\Delta_0=W_0$ and chose $\Gamma^*_0$ itself as foliation. All the non vacuous conditions are then satisfied. Next, suppose we have defined a construction of the foliation for all $n<m$ and we are given an allowed foliation $\Gamma^*_m$ in the neighbourhood of $\cW_m$. Note that by the transversality condition on the singularities, there exists $c_*>1$ such that any fiber of $T_\vus\Gamma^*_m$ which has been cut by $\partial B_{i,j}$ and intersects $\cW_{m-1}$ must intersect $T_\vus(\partial_{c_*\cu^{m}\rho} \cW_m)$. We define ${\overline W}_m=\cup_{W\in\cW_m}W$ and $S_m$ to be the union of the elements of $\cW_m$ shorter than $2c_*\cu^m\rho$.

We are now ready to define an allowed foliation $\Gamma^*_{m-1}$. On the set ${\overline W}_{m-1}\setminus (T_\vus(\partial_{2c_*\cu^m\rho}\cW_m))$ it is simply given by $T_\vus\Gamma^*_m$. On $(T_\vus(\partial_{c_*\cu^m\rho}\cW_m))\cup T_\vus S_m$ we define it to be $\Gamma$.\footnote{Note that, by choosing $L_0$ in Lemma  \ref{lem:r-boundary}, we can always assume that each element of $\cW_m$ is contained in some $U_{i,j,\ell,1}$, thus one can use the corresponding element of $\Gamma$.} 
Inside small intervals at whose boundaries the foliation has
now been defined,  we must interpolate.
To do so precisely, it is best to write explicitly the objects of interest.

Let $(f(\xi),\xi, g(\xi))_{\xi\in [a',b']}$ be the graph of
an element of $\cW_{m-1}$ and $[a,b]$, $a'<a<b<b'$, the interval on which we want to define the interpolating foliation.\footnote{We can assume without loss of generality that the foliation has been already defined in $[a',a]\cup[b,b']$ and satisfies the wanted properties.} By construction there exist fixed constants $c_-, c_{+}>0$ such that $c_+\rho\cu^{m-1}\geq |b-a|\geq c_-\rho\cu^{m-1}$. Let $F(\xi)=(f(\xi),\xi, g(\xi))$ and $\gamma_a(\eta,\xi)=F(\xi)+(\eta, \sigma_a(\eta,\xi), \zeta_a(\eta,\xi))$, $\gamma_b(\eta,\xi)=F(\xi)+(\eta, \sigma_b(\eta,\xi), \zeta_b(\eta,\xi))$, $|\eta|\leq\cu^{m-1} \rho$, a parametrisation of the two foliations we must interpolate. Clearly, we can assume without loss of generality $\sigma_a(0,\xi)=\sigma_b(0,\xi)=\zeta_a(0,\xi)=\zeta_b(0,\xi)=0$. By construction the above curves are in the kernel of the contact form, that is $\partial_\eta \zeta_a(\eta,\xi)=\xi+\sigma_a(\eta,\xi)$, $\partial_\eta \zeta_b(\eta,\xi)=\xi+\sigma_b(\eta,\xi)$, and in the unstable cone, that is $|\partial_\eta\sigma_a|+|\partial_\eta\zeta_a|\leq\cu K_0$ and $|\partial_\eta\sigma_b|+|\partial_\eta\zeta_b|\leq \cu K_0$.\footnote{The $\cu$ comes from the fact that the foliation is either an image of a foliation  already in the unstable cone or is a fixed foliation well inside the cone.} In addition, since $\Gamma^{*}_{m}$ is allowed,
\[
\begin{split}
&|\partial_\xi\partial_\eta\sigma_a|_{C^0}\leq c\cu^{-m}\cu_{1} \rho^{-1}, \quad\quad |\partial_\xi\partial_\eta\sigma_a|_{C^\ho}\leq c[\cu^{-m}\cu_{1} \rho^{-1}]^{1+\ho}\\
&|\partial_\xi\partial_\eta\sigma_b|_{C^0}\leq c\cu^{-m}\cu_{1} \rho^{-1}, \quad\quad |\partial_\xi\partial_\eta\sigma_b|_{C^\ho}\leq c[\cu^{-m}\cu_{1} \rho^{-1}]^{1+\ho}\, .
\end{split}
\]

Next, fix once and for all $\bar\vf, \bar\psi\in C^2(\bR,[0,1])$,  such that $\bar \vf(0)=\partial_\xi\bar\vf(0)=\partial_\xi\bar\vf(1)=\bar\psi(0)=\bar\psi(1)=0$, $\bar\vf(1)=1$, $\int_0^1\bar\psi(\xi)=1-c_1$, $c_1$ to be chosen later small enough. Define then $\vf(\xi)=\bar\vf(\frac{\xi-a}{b-a})$ and $\psi(\xi)=\bar\psi(\frac{\xi-a}{b-a})$. Clearly, $\vf(a)=\partial_\xi\vf(a)=\partial_\xi\vf(b)=\psi(a)=\psi(b)=0$, $\vf(b)=1$, $\int_a^b\psi(\xi)=(1-c_{1})(b-a)$. Define 
\[
\begin{split}
&\theta_0(\eta,\xi)=\left(\partial_\xi\sigma_b(\eta,b) \frac{\xi-a}{b-a}+\partial_\xi\sigma_a(\eta,a) \frac{b-\xi}{b-a}\right)(1-\psi(\xi))\\
&\theta(\eta,\xi)=\theta_0(\eta,\xi)-\frac 1{(1-c_{1})(b-a)}\psi(\xi)\int_a^b\theta_0(\eta,z)dz\\
&\sigma(\eta,\xi)=\sigma_b(\eta,b)\vf(\xi)+\sigma_a(\eta,a)(1-\vf(\xi))+\int_a^\xi\theta(\eta,z)dz\, ,
\end{split}
\]
for all $\|\eta\|\leq \cu^{m-1} \rho$ and $\xi\in(a,b)$. Next, define $\zeta(\eta,\xi)=\xi\eta+\int_0^\eta \sigma(z,\xi)dz$. We can then define $\Gamma^*_{m-1}$ for $\xi\in[a',b']$ as the foliation with fibers
\[
\gamma(\eta,\xi)=F(\xi)+\begin{cases} \gamma_a(\eta,\xi)\quad &\text{for }\xi\in [a',a]\\
(\eta,\sigma(\eta,\xi),\zeta(\eta,\xi))\quad &\text{for }\xi\in (a, b)\\
\gamma_b(\eta,\xi)\quad &\text{for }\xi\in [b,b']\, .
\end{cases}
\]
The definition of $\zeta$ implies that the leaves of $\gamma$ are in the kernel of the contact form.
Moreover, note that $\sigma(\eta,a)=\sigma_{a}(\eta,a)$ and
\[
\sigma(\eta,b)=\sigma_{b}(\eta,b)+\int_{a}^{b}dz\theta_{0}(\eta,z)\left[1-\frac 1{(1-c_{1})(b-a)}\int_{a}^{b}dw\psi(w)\right]
=\sigma_{b}(\eta,b)\, .
\]
From this follows $\gamma\in C^{0}$. In addition, $\partial_{\eta}\sigma(\eta,a)=\partial_{\eta}\sigma_{a}(\eta,a)$ and, since
\[
\begin{split}
&\partial_{\eta}\theta_{0}(\eta,\xi)=\left(\partial_{\eta}\partial_\xi\sigma_b(\eta,b) \frac{\xi-a}{b-a}+\partial_{\eta}\partial_\xi\sigma_a(\eta,a) \frac{b-\xi}{b-a}\right)(1-\psi(\xi))\\
&\partial_{\eta}\theta(\eta,\xi)=\partial_{\eta}\theta_0(\eta,\xi)-\frac 1{(1-c_{1})(b-a)}\psi(\xi)\int_a^b\partial_{\eta}\theta_0(\eta,z)dz\\
&\partial_{\eta}\sigma(\eta,\xi)=\partial_{\eta}\sigma_b(\eta,b)\vf(\xi)+\partial_{\eta}\sigma_a(\eta,a)(1-\vf(\xi))+\int_a^\xi\partial_{\eta}\theta(\eta,z)dz\, .
\end{split}
\]
It follows that $\partial_{\eta}\gamma\in C^{0}$ and a similar computation shows $\partial_\xi\gamma\in C^0$. Since all the quantities are piecewise $C^2$, it follows that $\gamma\in C^{1+\text{Lip}}$. In addition,
\[
\begin{split}
\|\partial_{\eta}\sigma\|_{L^{\infty}}&\leq \cu K_{0}+c\cu^{-m+1}\rho^{-1}\left[\int_{a}^{b}(1-\psi)+\int_{a}^{b}\frac{\psi}{(1-c_{1})(b-a)}\int_{a}^{b}(1-\psi)\right]\\
&\leq \cu K_{0}+2c c_{+}c_{1} \, .
\end{split}
\]
Which, by choosing $c_{1}$ small enough, ensures that the fibers of $\gamma$ belong to the unstable cone.
Finally, we have,
\[
\partial_{\xi}\partial_{\eta}\sigma(\eta,\xi)=\left(\partial_{\eta}\sigma_b(\eta,b)-\partial_{\eta}\sigma_a(\eta,a)\right)\vf'(\xi)+\partial_{\eta}\theta(\eta,\xi)\, ,
\]
which implies $\partial_{\xi}\partial_{\eta}\sigma(\eta,a)=\partial_{\xi}\partial_{\eta}\sigma_{a}(\eta,a)$ and $\partial_{\xi}\partial_{\eta}\sigma(\eta,b)=\partial_{\xi}\partial_{\eta}\sigma_{b}(\eta,b)$ and also $\partial_{\eta}\partial_{\xi}\gamma\in C^{0}$. The last estimate is 
\begin{align*}
\|\partial_{\xi}\partial_{\eta}\sigma\|_{L^{\infty}}
&\leq 2K_{0}\|\vf'\|_{L^{\infty}}+\|\partial_{\eta}\theta(\eta,\xi)\|_{L^{\infty}}\\
&\leq 2K_{0}\|\bar \vf'\|_{L^{\infty}}c_-^{-1}\rho^{-1}\cu^{-m+1}+ c\cu^{-m}\cu_{1}\rho^{-1}\, ,
\end{align*}
which yields $\|\partial_{\xi}\partial_{\eta}\sigma\|_{L^{\infty}}\leq c\cu^{-m+1}\rho^{-1}$, provided $c$ is large enough.
Similar computations verify the $ C^{\ho}$ bounds, provided $\ho>0$ is chosen small enough.

In other words $\gamma$ is an allowed foliation.

By the inductive hypotheses we can then take $\Gamma^*_{m-1}=\gamma$ as the starting point to construct foliations $\tilde \bG_n^{m}$ and domains $\Delta_n$, for $n<m-1$, satisfying hypotheses (1-8). We then define the domain $\Delta_m=T_{(m-1)\vus}\left[T^{-(m-1)\vus}\Delta_{m-1}\setminus T_\vus(\partial_{2c_*\cu^m\rho}\cW_m)\right]$.

To conclude we define $\bG_{m}$, $m\leq m_{0}$, to be the foliations $\tilde \bG^{m_{0}}_n$ obtained by the above procedure when starting from the initial allowed foliation $\Gamma$. It is immediate to check that the above construction satisfies properties (\ref{it:1}-\ref{it:8}). Property (\ref{it:9}) follows by noticing that $\Delta_m^c\subset \cup_{n\leq m}T_{n\vus}(\partial_{2c_*\cu^n\rho\cW_n})$ and by applying Lemma \ref{lem:r-boundary} with $\delta=\varrho$, $r=2c_*\rho$ and $\vartheta=\cu$.  
\end{proof}

\begin{remark} \label{rem:lip} Note that, given two manifolds of size $\varrho$ uniformly $C^2$ and uniformly transversal to the unstable direction having a distance less than $\rho$, one can use the above lemma to construct a uniformly Lipschitz holonomy, approximating the unstable one, between the two manifolds.\footnote{Indeed $\partial_\eta\bG$ provides a Lipschitz vector field, hence one can consider the associated flow, the holonomy map is nothing else than the Poincar\'e map between the two manifolds. It follows then by standard computations that the  the time taken by the flow to go between the two surfaces is Lipschitz and bounded by a fixed constant times $\rho$. Finally,  (5) readily implies that the holonomy is Lipschitz as well.}
\end{remark}

\section{Cancellations estimates}\label{sec:hoihoi}

In this appendix we detail the basic, but technical, cancellations estimates in the Dolgopyat argument.
\begin{proof}[{\bfseries Proof of Lemma} {\normalfont \ref{lem:cancel}}]
To start with, let us obtain a formula for $\omega_\alpha$, this is the analogous of the formulae in \cite{Li, KB} adapted to the present context. By translating $W_\alpha$ along the flow direction we can assume, without loss of generality, that the leaf of $\Gamma_{i,j,r}^\up$ starting from $\bar x$ intersects $W_\alpha$. Let $\tilde W_\alpha=\Theta_{i,\alpha,\up} W_\alpha\subset W^i_*$ and consider the path $\gamma(\xi)$ running  from $\bar x$ along the leaf of $\Gamma_{i,j,r}^\up$ up to $W_\alpha$, then along $W_\alpha$ up to $\bar x+(F_\alpha(\xi),\xi,N_\alpha(\xi))$ and back to $W_*^i$ along the leaf of $\Gamma_{i,j,r}^\up$ again, then to the axis $(\eta,\xi, s)=(\bar x^u,\xi,\bar x^0)$ along the flow direction and finally back to $\bar x$ along the axis, (see Figure \ref{fig:omega} for a pictorial explanation). By construction,
\[
\begin{split}
\bar\omega_\alpha(\xi)=&\int_{\gamma(\xi)}\alpha=\int_{\Sigma_\alpha(\xi)}d\alpha+\int_{\Omega_\alpha(\xi)}d\alpha=\int_{\Sigma_\alpha(\xi)}d\alpha\\
=&\int_0^\xi dz\int_0^{F_\alpha(z)}d\eta\,\partial_\xi G_{i,j,\up}(\eta, h_\alpha(z))h'_\alpha(z)\\
=&\int_0^{h_\alpha(\xi)}\!\!\!\! dz\int_0^{F_\alpha\circ h^{-1}_\alpha(z)}\!\!\!\!d\eta\,\partial_\xi G_{i,j,\up}(\eta,z)\, ,
\end{split}
\]
where we have used the fact that all the curve $\gamma(\xi)$, apart for the piece in the flow direction, is in the kernel of $\alpha$ and  Stokes' Theorem. Moreover, $\Sigma_\alpha(\xi)$ is the surface traced by the fibers of $\Gamma^\up_{i,r}$ while moving along $W_\alpha$ up to $\xi$ and $\Omega_\alpha(\xi)$  the portion of $W^i_*$ between the $\xi$ axes and the curve $\Theta_{i,\alpha,\up}(W_\alpha)$ up to $h_\alpha(\xi)$.
\begin{figure}[ht]\ 
\begin{tikzpicture}
\draw (-2,0)--(0,0) node[right=1pt, yshift=-6pt]{$\bar x$}--(4,0)  node[below=13pt]{$W_*^i$}--(10,0) node[left=72pt, yshift=6pt] {$\scriptstyle\Omega_\alpha(\xi)$};
\draw(0,-2)--(0,2) node[left=2pt]{$\Gamma^\up_{i,r}(0,0)$}--(0, 6);
\draw(-2,-2)--(3,3) node[right=12pt] {$\Sigma_\alpha(\xi)$};
\draw[line width=1pt] (0,4).. controls (3.5, 4.8).. (6.6,4.4) node[above=2pt]{$W_\alpha$} .. controls (7.9, 4.4) .. (9.6, 4.7); 
\draw (9.6,4.7) node[left=55pt, below=40pt]{$\Gamma^\up_{i,r}(h_\alpha(\xi),\bar\omega_\alpha(\xi))$}..controls (9.8, 2.5).. (9.5,0.5);
\draw[->](9.2,3)--(9.7,3);
\draw[line width=1pt, dash pattern=on 2 pt  off 1 pt] (0,0)..controls (4.5, .4) ..(6, .4) node[above=28pt, xshift=-48pt]{$\Theta_{i,\alpha,\up}(W_\alpha)$} ..controls (6.3, .4) ..(9.5,.5);
\draw[->] (4.5,1.4)--(5.5,.45);
\draw[dash pattern=on 1 pt  off 1 pt](9,0) node[below=0pt,xshift=-6pt]{$\scriptscriptstyle h_\alpha(\xi)$}--(9.5, 0.5);
\draw[dash pattern=on 1 pt  off 1 pt](9.6,4.7)--(9.6,.4);
\draw[dash pattern=on 1 pt  off 1 pt](9.6,.4)--(9.2,0) node[below=0pt]{$\scriptscriptstyle \xi$};
\draw[dash pattern=on 1 pt  off 1 pt](9.5, 0.5)--(0.5,0.5) node[left=-2pt] {\mbox{\boldmath$\scriptstyle \bar\omega_\alpha(\xi)$}};
\pgfsetfillopacity{0.2}
\draw[fill=gray] (-3,-1)--(-1,1)--(-1,1)--(11,1)--(9,-1)--(-3,-1);
\pgfsetfillopacity{0.4}
\path[fill=gray](0,0)..controls (4.5, .4) ..(6, .4)..controls (6.3, .4) ..(9.5,.5)--(9,0)--(0,0);
\end{tikzpicture}
\caption{Definition of $\bar\omega_\alpha$.}
\label{fig:omega}
\end{figure}

By Lemma \ref{lem:c2} we can assume without loss of generality that $\bar x=0$.
Using properties \eqref{it:1} and \eqref{it:5} of the approximate foliations in appendix \ref{sec:unstable}, it follows that 
\begin{equation}\label{eq:o-ab}
\begin{split}
\partial_\xi \omega_\beta(\xi)-\partial_\xi \omega_\alpha(\xi)&=\int_{F_\alpha\circ h^{-1}_\alpha(\xi)}^{F_\beta\circ h^{-1}_\beta(\xi)}\!d\eta\;\partial_\xi G_{i,j,\up}(\eta,\xi)\\
&=\int_{F_\alpha\circ h^{-1}_\alpha(\xi)}^{F_\beta\circ h^{-1}_\beta(\xi)}d\eta\left[ 1+\int_0^\eta dz\partial_\xi \partial_zG_{i,j,\up}(z,\xi)\right]\\
&=\left[F_\beta\circ h^{-1}_\beta(\xi)-F_\alpha\circ h^{-1}_\alpha(\xi)\right](1+\cO(r^{1-\varsigma}))\, .
\end{split}
\end{equation}
To continue note that, in analogy with \eqref{eq:h-a} we have 
\begin{equation}\label{eq:h-ab}
\begin{split}
\left|h^{-1}_\beta(\xi)-h^{-1}_\alpha(\xi)\right|&=\left|\int_{F_\alpha\circ h^{-1}_\alpha(\xi)}^{F_\beta\circ h^{-1}_\beta(\xi)}\!\!\!\!\!\!\!dz\int_0^\xi dw\; \partial_z\partial_wG_{i,\up}(z,w)\right|\\
&\leq \Cs r^{\theta-\varsigma}\left|F_\beta\circ h^{-1}_\alpha(\xi)-F_\alpha\circ h^{-1}_\beta(\xi)\right|\, .
\end{split}
\end{equation}
To conclude recall that we are working in coordinates in which $|F'_{\alpha}|\leq \Cs r^{1-\theta}$, cf. the proof of Lemma \ref{lem:smallbox}, hence
\[
\begin{split}
\left|F_\beta\circ h^{-1}_\beta-F_\alpha\circ h^{-1}_\alpha\right|&\geq\left| F_\beta\circ h^{-1}_\beta-F_\alpha\circ h^{-1}_\beta\right|-\Cs r^{1-\theta}|h^{-1}_\beta-h^{-1}_\alpha|\\
&\geq\left| F_\beta\circ h_{\beta}^{-1}-F_\alpha\circ h_{\beta}^{-1}\right|-\Cs r^{1-\varsigma}\left|F_\beta\circ h^{-1}_\beta-F_\alpha\circ h^{-1}_\alpha\right|
\end{split}
\]
which, together with \eqref{eq:o-ab}, proves \eqref{eq:below-o} provided we can show that the distance between the manifolds is comparable with $|F_\beta-F_\alpha|$ at any point. To prove such a fact recall that in \eqref{eq:cutcut}, and following lines, we have seen that 
\[
|F_\beta'(\xi)-F_\alpha'(\xi)|\leq \Cs c^{-1}r^{-\theta}|F_\beta(\xi)-F_\alpha(\xi)|\, ,
\]
provided that 
\[
\blambda^{2\ell}\geq \Cs r^\theta |F_\beta(\xi)-F_\alpha(\xi)|^{-1}\geq \Cs r^{\theta-\vartheta}\, ,
\]
which is ensured by our assumptions.
Thus setting $d_{\alpha,\beta}(\xi)=|F_\alpha(\xi)-F_\beta(\xi)|$ we have
\[
\frac d{d\xi} d_{\alpha,\beta}(\xi)\leq \Cs c^{-1}d_{\alpha,\beta}(\xi)r^{-\theta}\, ,
\] 
Hence, by Gronwall's lemma, $|d_{\alpha,\beta}|_\infty\leq \Cs d_{\alpha,\beta}(0)$ and 
\begin{equation}\label{eq:dab}
|d_{\alpha,\beta}(\xi)-d_{\alpha,\beta}(0)|\leq \frac 12d_{\alpha,\beta}(0)\, ,
\end{equation}
for all $|\xi|\leq r^\theta$, provided $c$ has been chosen large enough.
Next, remember that $H_{i,j,\up}'=G_{i,j,\up}$, thus  $|H_{i,j,\up}'|_\infty\leq \Cs r$ and
\[
|N_\alpha(0)-N_\beta(0)|= |H_{i,j,\up}(F_\alpha(0),0)-H_{i,j,\up}(F_\beta(0),0)|\leq\Cs r|F_\alpha(0)-F_\beta(0)|\, .
\]
In addition, since $N_\alpha'(\xi)=\xi F_\alpha(\xi)$, it follows that $|N_\alpha-N_\beta|_\infty\leq\Cs r|F_\alpha-F_\beta|_\infty$.
Which proves
\begin{equation}\label{eq:okone}
\partial_\xi \omega_\beta(\xi)-\partial_\xi \omega_\alpha(\xi)\geq \Cs d_{i,j}(W_{\alpha},W_{\beta})\, .
\end{equation}

To prove the second statement, let us introduce $\omega_{\alpha,\beta}(\xi)=\omega_\alpha(\xi)-\omega_\beta(\xi)$ and $A_{\alpha,\beta}=\frac{[(m-1)!]^2}{(\ell\vu)^{2m-2}}{\bf F}^*_{\ell,m,i,\alpha}\overline{{\bf F}^*_{\ell,m,i,\beta}}$.  Next we introduce a sequence $a_i$, $a_0=-cr^\theta$, such that $\partial_\xi\omega_{\alpha,\beta}(a_i)(a_{i+1}-a_i)=2\pi b^{-1}$ and let $M\in\bN$ be such that $a_M\leq c r^\theta$ and $a_{M+1}> cr^\theta$. Also, we establish the following notation: $\delta_i=a_{i+1}-a_i$. 
By Lemma \ref{lem:hpr} it follows that
\[
|\omega_{\alpha,\beta}(\xi)-\omega_{\alpha,\beta}(a_i)-\partial_\xi\omega_{\alpha,\beta}(a_i)(\xi-a_i)|\leq \Cs \delta_i^{1+\ho}\, .
\]
In addition, equations \eqref{eq:Xi}, \eqref{eq:defFs}, and Lemma \ref{lem:hpr} imply
\[
|A_{\alpha,\beta}(\xi,s)-A_{\alpha,\beta}(a_i,s)|\leq \Cs \{\delta_i^\ho r^{-2\theta}+\delta_ir^{-3\theta}\}\, .
\]
Then, remembering \ref{eq:dab} and using the first part of the lemma,
\[
\begin{split}
&\left|\int_{a_i}^{a_{i+1}} e^{-ib\omega_{\alpha,\beta}(\xi)} A_{\alpha,\beta}(\xi,s) d\xi\right|\\
&\quad=\left|
\int_{a_i}^{a_{i+1}}  e^{-ib[\partial_\xi\omega_{\alpha,\beta}(a_i)\xi+\cO(\delta_i^{1+\ho})]} [A_{\alpha,\beta}(a_i)+\cO((\delta_i^{\ho}      r^{-2\theta}+r^{-3\theta}\delta_i)] d\xi\right|\\
&\quad\leq\Cs\left(b\delta_i^{1+\ho}r^{-2\theta}+\delta_{i}^{\ho}r^{-2\theta}+r^{-3\theta}\delta_i\right)\delta_i\\
&\quad\leq\Cs\left(\frac{r^{-2\theta}}{d_{i,j}(W_\alpha,W_\beta)^{1+\ho} b^\ho}+\frac{r^{-3\theta}}{d_{i,j}(W_\alpha,W_\beta) b}\right)\delta_i \, .
\end{split}
\]
We may be left with the integral over the interval $[a_M,cr^\theta]$ which is trivially bounded by $\Cs r^{-2\theta}\delta_M\leq \Cs [r^{2\theta}bd_{i,j}(W_\alpha, W_\beta)]^{-1}$.
The statement follows since the manifolds we are considering have length at most $cr^\theta$, hence $\sum_{i=0}^{M-1}\delta_i \leq cr^\theta$.
\end{proof}

\begin{proof}[{\bfseries Proof of Lemma} {\normalfont \ref{lem:disco}}]
We start by introducing a function $\bar R:W^s_{\delta, \zeta}\to\bN$ such that $\bar R(\xi)$ is the first $t\in\bN$ at which $T_{-t\vu}\xi$ belongs to a regular component of $T_{-t\vu}W^s_{\delta, \zeta}$ of size larger than $L_{0}/2$. We define then $R(t)=\min\{\bar R(t),\ell\}$. Let $\cP=\{J_{i}\}$ be the coarser partition of $W^s_{\delta, \zeta}$ in intervals on which $R$ is constant. Note that, for each $W_{\beta,i}$, $T_{\ell\vu}W_{\beta,i}\subset J_{j}$ for some $J_{j}\in\cP$.

Let $\Sigma_{\ell,j}=\{(\beta,i)\;:\;W_{\beta,i}\in D_{\ell}(\widetilde O,\rho_{*}),\; T_{\ell\vu}W_{\beta,i}\subset J_{j}\}$. Then, by the usual distortion estimates, for each $(\beta,i)\in \Sigma_{\ell,j}$
\[
\begin{split}
Z_{\beta,i}&\leq \Cs\int_{W_{\beta,i}}\delta^{-1}J^{s}T_{\ell\vu}\leq \Cs\delta^{-1}\frac{|J_{j}|}{|\bar W_{j}|}\int_{W_{\beta,i}} J^{s}T_{(\ell-R_{j})\vu}\\
&\leq \Cs\delta^{-1}\frac{|J_{j}|}{|\bar W_{j}|}|T_{(\ell-R_{j})\vu}W_{\beta,i}|\,,
\end{split}
\]
where $R_{j}=R(J_{j})$ and $\bar W_{j}=T_{-R_{j}\vu}J_{j}$. Note that, by construction, either $|\bar W_{j}|\geq L_{0}/2$ or $R_j=\ell$.

Let us analyze first the case in which $R_j<\ell$.
We apply Lemma \ref{lem:unstable-fol} to $\bar W_{j}$, with $\varrho=|\bar W_j|\geq L_{0}/2$, in order to obtain a foliation $\Gamma$ transversal to $\bar W_{j}$ with leaves of size $\rho<\frac{L_0}{2c}$.\footnote{We work in coordinates in which $\bar W_j$ is flat, this can always be achieved by Lemma \ref{lem:c2}.}  Let $\Omega_{\beta,i}$ be the set of leaves that intersect $\Delta_{\ell-R_{j}}\cap T_{(\ell-R_{j})\vu}W_{\beta,i}$. By the construction of the covering $B^{\theta}_{cr}(x_i)$, the $\Omega_{\beta,i}$ have at most $\Cs$ overlaps. In addition, by the uniform transversality between stable and unstable direction, 
\[
\begin{split}
\sum_{(\beta,i)\in \Sigma_{\ell,j}}m(\Omega_{\beta,i})&\geq \Cs\!\!\sum_{(\beta,i)\in \Sigma_{\ell,j}}|\Delta_{\ell-R_{j}}\cap T_{(\ell-R_{j})\vu}W_{\beta,i}|\rho\\
&\geq \Cs\!\!\sum_{(\beta,i)\in \Sigma_{\ell,j}}\!\! |T_{(\ell-R_{j})\vu}W_{\beta,i}|\rho-\Cs\rho^{2}\, ,
\end{split}
\]
where we have used the estimate on the complement of $\Delta_{\ell-R_{j}}$ given by property \eqref{it:9} of the foliation.

Accordingly, for each $j$ such that $R_j<\ell$,
\begin{equation}\label{eq:common-1}
\sum_{(\beta,i)\in \Sigma_{\ell,j}}Z_{\beta,i}\leq \Cs\delta^{-1} |J_{j}|\left[\rho^{-1}\sum_{(\beta,i)\in \Sigma_{\ell,j}}m(T_{-(\ell-R_{j})\vu}\Omega_{\beta})+\rho\right]\,,
\end{equation}
where we have used the invariance of the volume associated to the contact form.
Remembering that the $T_{-(\ell-R_{j})\vu}\Omega_\beta$ have a fixed maximal number of overlaps and since they are all contained in a $\rho_{*}+r^\theta+\blambda^{-(\ell-R_j)\vu}\rho$ neighbourhood of $\widetilde O$ we have,\footnote{Remember that the $J_j$ are all disjoint , $\cup_j J_j =W^s_{\delta,\zeta}$ and $|W^s_{\delta,\zeta}|\leq \delta$.}
\begin{equation}\label{eq:common-2}
\begin{split}
\sum_j\sum_{(\beta,i)\in \Sigma_{\ell,j}}Z_{\beta,i}\leq &\sum_{\{j\;:\; R_{j}\leq \frac \ell 2\}} \Cs\delta^{-1} |J_{j}|\left[\rho^{-1}S(\rho_{*}+r^\theta)+S\blambda^{-\frac{\ell}2\vu}+\rho\right] \\
&+\sum_{\{j\;:\; R_{j}> \frac\ell 2\}}\sum_{(\beta,i)\in \Sigma_{\ell,j}}Z_{\beta,i}\, .
\end{split}
\end{equation}
By our assumption on complexity (Definition \ref{domin}), it follows that the number of pieces in $T_{-k\vu}W^{s}_{\delta}$ that have always been shorter than $L_{0}$ grows at most sub-exponentially with $k$. Remember that $\sigma\geq \blambda^{-\frac \vus2} $, see \eqref{eq:sigmachoice}. Then there exists $\ell_{0}\in\bN$ such that, the number of pieces that are never longer than $L_{0}/2$ in $k\geq \ell_{0}k_1$ time steps are bounded by $(\blambda^\vus\nu)^{\frac k{k_1}}$. Then, remembering again \eqref{eq:sigmachoice},
\[
\sum_{\{j\;:\; R_{j}>\frac  \ell 2\}}\sum_{(\beta,i)\in \Sigma_{\ell,j}}Z_{\beta,i}\leq \sum_{\{j\;:\; R_{j}> \frac \ell 2\}}\Cs\delta^{-1}|J_{j}|
\leq\Cs\sum_{k=\frac \ell 2}^\ell \delta^{-1}\nu^{k}
\leq \Cs\delta^{-1} \sigma^{\frac\ell{k_1}}\, .
\]

The result follows by choosing $\rho=S^{\frac 12}(\rho_{*}+r^\theta)^{\frac 12}$.
\end{proof}

\begin{proof}[{\bfseries Proof of Lemma} {\normalfont \ref{lem:discard}}]
We argue exactly like in the proof of Lemma \ref{lem:disco}, where $\tilde O$ is replaced by $W_{\alpha,i}$ and $\rho_{*}=r^{\vartheta}$, up to formula \eqref{eq:common-1}. At this point, we notice that $T_{-(\ell-R_{j})\vu}\Omega_\beta$ are all contained in a $r^{\vartheta}+\blambda^{-\vu\ell}\rho$ neighbourhood of $W_{\alpha,i}$. Then, arguing as in \eqref{eq:common-2}
\begin{equation}\label{eq:common-3}
\begin{split}
\sum_{(\beta,i)\in \Sigma_{\ell,j}}Z_\beta\leq &\sum_{\{j\;:\; R_{j}\leq \frac\ell 2\}} \Cs\delta^{-1} |J_{j}|\left[\rho^{-1}r^{\theta+\vartheta}+r^{\theta}\blambda^{-\ell\vu}+\rho\right] +\Cs\delta^{-1} \sigma^{\frac\ell{k_1}}\\
&\leq \Cs\left[\rho^{-1}r^{\theta+\vartheta}+\rho\right] +\Cs\delta^{-1} \sigma^{\frac\ell{k_1}}\, .
\end{split}
\end{equation}

The result follows by choosing $\rho=r^{\frac {\theta+\vartheta} 2}$.
\end{proof}

\bibliographystyle{alpha}

\end{document}